\documentclass[11pt, a4paper]{article}

\usepackage[dvipsnames]{xcolor}
\usepackage{hyperref}

\usepackage[margin=2.5cm]{geometry}
\usepackage{mathtools}
\usepackage{amssymb}
\usepackage{amsthm}
\usepackage{amsmath}
\usepackage{mathrsfs}
\usepackage{empheq}

\usepackage{array,multirow,blkarray}

\usepackage[toc,page]{appendix}

\usepackage[nodayofweek]{datetime}

\usepackage{textcomp}

\usepackage{thmtools} 
\usepackage{thm-restate}

\usepackage{bbm}
\usepackage[]{dsfont}
\usepackage{authblk}
\usepackage{setspace}
\usepackage[shortlabels]{enumitem}
\usepackage[
  disable,
colorinlistoftodos, shadow, linecolor=white, backgroundcolor=white]{todonotes}

\usepackage{upgreek}

\newcommand{\Le}{\kappa}
\newcommand{\circuits}{\mathcal{C}}

\usepackage[linesnumbered,commentsnumbered,ruled,vlined]{algorithm2e}

\SetKw{run}{run}

\usepackage{bm}
\newcommand{\norm}[1]{\left\lVert#1\right\rVert}

\newcommand{\Diag}{\operatorname{diag}}
\newcommand{\pr}[2]{\left\langle #1, #2 \right\rangle}
\newcommand{\supp}{\mathrm{supp}}
\newcommand{\1}{\mathbbm{1}}
\newcommand{\rk}{\operatorname{rk}}
\newcommand{\kd}{{\dot{\kappa}}}
\newcommand{\primes}{\mathbb{P}}
\newcommand{\Q}{\mathbb{Q}}
\newcommand{\set}[1]{\left\{ #1 \right\}}
\newcommand{\divides}{\mid}
\newcommand{\st}{\mathrm{s.t.}}
\newcommand{\OPT}{\mathrm{OPT}}

\newcommand{\spn}{\operatorname{span}}

\newcommand{\KK}{\mathcal K}
\newcommand{\EE}{\mathcal F}

\newcommand{\bD}{\mathbf{D}}

\newcommand{\lcm}{\mathrm{lcm}}

\newcommand{\eps}{\varepsilon}
\newcommand{\proj}{\Pi}
\newcommand{\lal}{\mathrm{ll}}

\newcommand{\poly}{\operatorname{poly}}

\usepackage{accents}

\newcounter{oraclecf}
\newcounter{algDe orithm saved}

\makeatletter
 
\makeatother

\usepackage{cleveref}

\usepackage[noadjust]{cite}
\usepackage{booktabs}

\usepackage{algorithmic}

\newcommand\myshade{100}
\colorlet{mylinkcolor}{NavyBlue}
\colorlet{mycitecolor}{YellowOrange}
\colorlet{myurlcolor}{Aquamarine}

\hypersetup{
  linkcolor  = mylinkcolor!\myshade!black,
  citecolor  = mycitecolor!\myshade!black,
  urlcolor   = myurlcolor!\myshade!black,
  colorlinks = true,
}

\newtheorem{theorem}{Theorem}[section]

\newtheorem{Def}[theorem]{Definition}
\newtheorem{lemma}[theorem]{Lemma}

\newtheorem{prop}[theorem]{Proposition}

\newtheorem{cor}[theorem]{Corollary}

\newtheorem{claim}{Claim}[theorem]
\newtheorem{conj}{Conjecture}[theorem]

\newtheoremstyle{case}{}{}{\itshape}{1em}{}{:}{ }{}
\theoremstyle{case}

\definecolor{darkred}{RGB}{180, 0, 0}
\definecolor{darkgreen}{rgb}{0, 0.6, 0}
\definecolor{darkblue}{RGB}{51,51,178}
\definecolor{lightgray}{RGB}{231,231,231}
\definecolor{lightblue}{RGB}{180,180,254}
\definecolor{lightred}{HTML}{FEB4B4}
\definecolor{darkcyan}{HTML}{7FBFBF}

\DeclareMathOperator*{\argmin}{arg\,min}

\newcounter{myquestion}

\newcounter{mycomment}
\newcommand{\Comment}[2][BN]{\refstepcounter{mycomment}{\setstretch{0.7}\todo[inline, 
backgroundcolor=black!30!white
, size=\small]{\textbf{Comment[#1\themycomment]:} \\ #2}}}

\renewcommand{\phi}{\varphi}
\renewcommand{\rho}{\varrho}

\newcommand{\R}{\mathbb{R}}
\newcommand{\Z}{\mathbb{Z}}
\newcommand{\N}{\mathbb{N}}

\newcounter{Hequation}

\makeatletter
\g@addto@macro\equation{\stepcounter{Hequation}}
\makeatother

\title{Circuit imbalance measures and linear programming
\thanks{This project has received funding from the European Research Council (ERC) under the European Union's Horizon 2020 research and innovation programme (grant agreement ScaleOpt--757481)}
}

\author{Farbod Ekbatani}
\author{Bento Natura} 
\author{L{\'{a}}szl{\'{o}} A. V{\'{e}}gh}
\affil{London School of Economics and Political Science \protect\\
 \texttt{\{F.Ekbatani, B.Natura, L.Vegh\}@lse.ac.uk}}

\begin{document}

\maketitle

\begin{abstract}
We study properties and applications of various circuit imbalance measures associated with linear spaces.
These measures describe possible ratios between nonzero entries of 
support-minimal nonzero vectors of  the space. The fractional circuit imbalance measure turns out to be a crucial parameter in the context of linear programming, and two integer variants can be used to describe integrality properties of associated polyhedra. 

We give an overview of the properties of these measures, and survey classical and recent applications, in particular, for linear programming algorithms with running time dependence on the constraint matrix only, and for circuit augmentation algorithms. We also present new bounds on the diameter and circuit diameter of polyhedra in terms of the fractional circuit imbalance measure.
\end{abstract}

\tableofcontents

\newpage

\listoftodos

\section{Introduction}
\label{sec:intro}

For a linear space $W\subset \R^n$, $g\in W$  is an 
 \emph{elementary vector} if $g$ is a support minimal nonzero vector in $W$, that is, no $h\in W\setminus\{0\}$ exists such that $\supp(h)\subsetneq \supp(g)$, where $\supp$ denotes the support of a vector. 
 A \emph{circuit} in $W$ is the support of some elementary vector; these are precisely the circuits in the associated  linear matroid $\mathcal{M}(W)$. We let $\EE(W)\subseteq W$ and $\circuits_W\subseteq 2^n$
 denote the set of elementary vectors and circuits in the space $W$, respectively.

Elementary vectors were first studied in the 1960s by Camion \cite{camion1964}, Tutte \cite{Tutte1965}, Fulkerson \cite{Fulkerson1968}, and Rockafellar \cite{RockafellarTheEV}.
Circuits play a crucial role in matroid theory and have been extremely well studied.  
For regular subspaces (i.e., kernels of totally unimodular matrices), elementary vectors have $\pm 1$ entries; this fact has been at the heart of several arguments in network optimization since the 1950s. 

The focus of this paper is on various  \emph{circuit imbalance measures}. We give an overview of classical and recent applications, and their relationship with other condition measures. We will mainly focus on applications in linear programming, mentioning in passing also their relevance to integer programming. 

\paragraph{Three circuit imbalance measures}
There are multiple ways to quantify  how `imbalanced' elementary vectors of a subspace can be. We define three different measures that capture various fractionality and integrality properties.

We will need some simple definitions.
The linear spaces $\{0\}$ and  $\R^n$ will be called \emph{trivial} subspaces; all other subspaces are \emph{nontrivial}. A linear subspace of $\R^n$ is a \emph{rational linear space} if it admits a basis of rational vectors. Equivalently, a rational linear space can be represented as the image of a rational matrix.
For an integer vector $v\in\Z^n$, let $\lcm(v)$ denote the least common multiple of the entries $|v_i|$, $i\in [n]$.

For every $C\in \circuits_W$, the elementary vectors with support $C$ form a one-dimensional subspace of $W$. We pick a representative $g^{C,W}\in \EE(W)$ from this subspace.
If $W$ is not a rational subspace, we select $g^{C,W}$ arbitrarily. For rational subspaces, we select $g^{C,W}$ as an integer vector with the largest common divisor of the coordinates being 1;
this choice is unique up to multiplication by $-1$.  When clear from the context, we omit the index $W$ and simply write $g^C$.
We now define the fractional circuit imbalance measure and two variants of integer circuit imbalance measure. 

\begin{Def}[Circuit imbalances]\label{def:kappa} For a non-trivial linear subspace $W\subseteq \R^n$, let us define the following notions:
\begin{itemize}
    \item
The \emph{fractional circuit imbalance measure} of $W$ is
\[
\kappa_W:=\max\left\{\left|\frac{g^C_j}{g^C_i}\right|:\,C\in\circuits_W, i,j\in C\right\}\, .
\]
\item If $W$ is a rational linear space,  the \emph{lcm-circuit imbalance measure}  is
\[
\kd_W:=\lcm\left\{\lcm(g^C):\,C\in\circuits_W\right\}\, .
\]
\item If $W$ is a rational linear space, the 
 \emph{max-circuit imbalance measure}  is
\[
\bar\kappa_W:=\max\left\{\|g^C\|_\infty:\,C\in\circuits_W\right\}\, .
\]
\end{itemize}
For trivial subspaces $W$, we define $\Le_W=\kd_W=\bar\kappa_W=1$.
Further, we say that the rational subspace $W$ is \emph{anchored}, if every vector $g^C$, $C\in\circuits_W$ has a $\pm 1$ entry.
\end{Def} 
Equivalently, in an anchored subspace every elementary vector $g\in\EE(W)$ has a nonzero entry such that all other entries are integer multiples of this entry.

 The term \emph{`circuit imbalance measure'} will refer to the fractional measure $\kappa_W$.
Note that $1\le \kappa_W\le \bar\kappa_W\le\dot\kappa_W$ and $\kappa_W=1$ implies $\bar\kappa_W=\dot\kappa_W=1$. This case plays a distinguished role and turns out to be equivalent to $W$ being a regular linear space (see Theorem~\ref{thm:tu_iff_kappa_1}).

 Another important case is when $\dot\kappa_W=p^\alpha$ is a prime power. In this case, $W$ is anchored, and  $\kappa_W=\bar\kappa_W=\dot\kappa_W$.
The linear space will be often represented as $W=\ker(A)$ for a 
a matrix $A\in \R^{m\times n}$.
We will use  $\EE(A)$, $\circuits_A$, $\Le_A$, $\kd_A$, $\bar\Le_A$ to refer to the corresponding quantities in $\ker(A)$.

\medskip

An earlier systematic study of elementary vectors was done in
Lee's work ~\cite{Lee89}. He mainly focused on the max-circuit imbalance measure; we give a quick comparison to the results in Section~\ref{sec:basic}. The fractional circuit imbalance measure played a key role in the paper \cite{DHNV20} on layered-least-squares interior point methods; it turns out to be  a close proxy to the well-studied condition number $\bar\chi_W$. As far as the authors are aware, the lcm-circuit imbalance measure has not been explicitly studied previously.

\paragraph{Overview and contributions}
Section~\ref{sec:prelim} introduces some background and notation.
Section~\ref{sec:basic} gives an overview of fundamental properties of $\Le_W$ and $\kd_W$. In particular, Section~\ref{sec:subdet} relates circuit imbalances to subdeterminant bounds. 
We note that many extensions of totally unimodular matrices focus on matrices with bounded subdeterminants. Working with circuit imbalances directly can often lead to stronger and conceptually cleaner results.
Section~\ref{sec:HK} presents an extension of the Hoffman-Kruskal characterization of TU matrices. Section~\ref{sec:self-dual} shows an important self-duality property of $\Le_W$ and $\kd_W$. Section~\ref{sec:matrix} studies `nice' matrix representations of subspaces with given lcm-circuit imbalances.
Section~\ref{sec:triangle} proves a multiplicative triangle-inequality for $\Le_W$. Many of these results were previously shown by Lee~\cite{Lee89}, Appa and Kotnyek~\cite{Appa04}, and by Dadush et al.~\cite{DHNV20}. We present them in a unified framework, extend some of the results, and provide new proofs.

Section~\ref{sec:connections} reveals connections between $\Le_W$ and the well-studied condition numbers $\bar\chi$ studied in the context of interior point methods, and $\delta$ studied---among other topics---in the analysis of the shadow simplex method. In particular, we show that previous diameter bounds for polyhedra can be translated to strong diameter bounds in terms of the condition number $\Le_W$ (Theorem~\ref{thm:kappa-diameter}).

Section~\ref{subsec:def_chi_bar_star} studies the best possible values of $\Le_{W}$ that can be achieved by rescaling the variables. We present the algorithm and min-max characterization from \cite{DHNV20}. Further, we characterize when a subspace can be rescaled to a regular one; we also give a new proof of a theorem from~\cite{Lee89}.

Section~\ref{sec:hoffman} shows variants of Hoffman-proximity bounds in terms of $\Le_W$ that will be used in  subsequent algorithms. In Section~\ref{sec:only-A}, we study algorithms for linear programming whose running time only depends on the constraint matrix $A$, and reveal the key role of $\Le_A$ in this context. Section~\ref{sec:black-box} shows how the Hoffman-proximity bounds can be used to obtain a black-box algorithm with $\Le_A$-dependence as in \cite{DadushNV20}, and Section~\ref{sec:LLS} discusses layered least squares interior point methods \cite{DHNV20,Vavasis1996}.

Section~\ref{sec:augment} gives an overview of circuit diameter bounds and circuit augmentation algorithms, a natural class of LP algorithms that work directly with elementary vectors. As a new result, we present an improved iteration bound on the steepest-descent circuit augmentation algorithm, by extending the analysis of the minimum mean-cycle cancelling algorithm of Goldberg and Tarjan (Theorem~\ref{thm:steepest-main}).

Section~\ref{sec:graver} gives an outlook to integer programming, showing the relationship between the max-circuit imbalance and Graver bases. Finally, Section~\ref{sec:conjecture} formulates a conjecture on circuit decompositions with bounded fractionality.

\section{Preliminaries}\label{sec:prelim}

We let $[n] := \{1, \ldots, n\}$. For $k\in\N$, a number $q\in\Q$ is \emph{$1/k$-integral} if it is an integer multiple of $1/k$. Let $\primes\subseteq\N$ denote the set of primes.
Let $\R_{++}$ denote the set of positive reals, and $\R_+$ the set of nonnegative reals.

For a prime number $p\in \primes$, the $p$-\emph{adic valuation} for $\mathbb{Z}$ is the function $\nu_p\colon \Z \to \N$ defined by
\begin{equation}\label{eq:p-adic}
  \nu_p(n) =
  \begin{cases}
      \max \{v \in \N : p^v \divides n\} & \text{if } n \neq 0\\
      \infty & \text{if } n=0.
  \end{cases}
\end{equation}

We denote the support of a vector $x \in \R^n$ by $\supp(x) = \{i\in [n]: x_i \neq 0\}$.  We let ${\1}_n$ denote the $n$-dimensional all-ones vector, or simply $\1$, whenever the dimension is clear from the context. Let $e_i$ denote the $i$-th unit vector.

For vectors $v, w \in \R^n$ we denote by $\min\{v,w\}$ the vector $z \in \R^n$ with $z_i = \min\{v_i, w_i\}, i \in [n]$; analogously for $\max\{v,w\}$. Further, we use the notation $v^+ = \max\{v, 0_n\}$ and $v^- = \max\{-v, 0_n\}$ 
; note that both $v^+$ and $v^-$ are nonnegative vectors. 
For two vectors $x, y \in \R^n$, we let $\pr{x}{y}=x^\top y$ denote their scalar product. For sets $S, T \subseteq \R$ we let $S \cdot T = \{st | s \in S, t \in T\}$.

We let $I_n\in \R^{n\times n}$ denote the $n$-dimensional identity matrix.  We let $\bD_n$ denote the set of all positive definite $n\times n$
diagonal matrices. 
For a vector $v \in \R^n$, we denote by $\Diag(v)$ the diagonal matrix whose $i$-th diagonal entry is $v_i$ and for a matrix $A\in\R^{m\times n}$, let $A_1,A_2,\ldots,A_n\in \R^n$ denote the column vectors, and $A^1,A^2,\ldots,A^m\in\R^m$ denote the row vectors, transposed. For $S\subseteq [n]$, let $A_S$ denote the submatrix formed by the columns of $A$ and for $B\subseteq [n]$, $|B|=m$, we say that $A$ is in \emph{basis form} for $B$ if $A_B=I_m$.

We will use $\ell_1,\ell_2$ and $\ell_\infty$ vector norms, denoted as $\|.\|_1,\|.\|_2$, and $\|.\|_\infty$, respectively. By $\|v\|$, we always mean the 2-norm $\|v\|_2$. Further, for a matrix $A\in\R^{m\times n}$, $\|A\|$ will refer to the $\ell_2\to\ell_2$ operator norm, 
and $\|A\|_{\max}=\max_{i,j}|A_{ij}|$ to the max-norm.

For an index subset $I\subseteq [n]$, we use $\pi_I: \R^n \rightarrow \R^I$ for the coordinate
projection. That is, $\pi_I(x)=x_I$, and for a subset $S\subseteq
\R^n$, $\pi_I(S)=\{x_I:\, x\in S\}$.
We let $\R^n_I = \{x \in \R^n : x_{[n]\setminus I} = 0\}$.

For a subspace $W\subseteq \R^n$, we let $W_I=\pi_I(W\cap \R^n_I)$. It is easy to see that $\pi_I(W)^\perp = (W^\perp)_I$. 
Assume we are given a matrix $A\in \R^{m\times n}$ such that $W=\ker(A)$. Then, $W_I=\ker(A_I)$, and we can obtain a matrix $A'$ from $A$ such that $\pi_I(W)=\ker(A')$ by performing a Gaussian elimination of the variables in $[n]\setminus I$.

For a subspace $W \subseteq \R^n$, we define by $\Pi_W\colon \R^n \to \R^n$ the orthogonal projection onto $W$.

For a set of vectors $V=\{v_i:\,  i\in I\}$ we let $\spn(V)$ denote the linear space spanned by the vectors in $V$. For a matrix $A\in\R^{m\times n}$, $\spn(A)\subseteq\R^{m}$ is the subspace spanned by the columns of $A$.
A \emph{circuit basis} of a subspace $W\subseteq \R^n$ is a set ${\cal F}\subseteq {\EE}(W)$ of $\rk(W)$ linearly independent elementary vectors, i.e., $\spn({\cal F})=W$.

\paragraph{Linear Programming (LP) in matrix formulation} We will use LPs in the following standard primal and dual form for $A\in\R^{m\times n}$, $b\in \R^m$, $c\in \R^n$. 

\begin{equation}
  \label{LP_primal_dual} \tag{LP$(A,b,c)$}
  \begin{aligned}
  \min \; &\pr{c}{x} \quad \\
  Ax& =b \\
  x &\geq 0. \\
  \end{aligned}
  \quad\quad\quad
  \begin{aligned}
  \max \; & \pr{y}{b} \\
  A^\top y + s &= c \\
  s & \geq 0. \\
  \end{aligned}
\end{equation}

\paragraph{Linear Programming  in subspace formulation}
Since our main focus is on properties of subspaces, it will be more natural to think about linear programming in the following \emph{subspace formulation}.
For $A$, $b$ and $c$ as above,  let $W=\ker(A)\subseteq \R^n$ and $d\in \R^n$ such that $Ad=b$. We assume the existence of such a vector $d$ as otherwise the primal program is trivially infeasible. We can write \ref{LP_primal_dual}  in the following equivalent form:
\begin{equation}
\tag{LP$(W,d,c)$}
\label{LP-subspace-f}
\begin{aligned}
\min \; & \pr{c}{x} \\ 
x &\in W + d \\
x &\geq 0.\,
\end{aligned}
\quad\quad\quad
\begin{aligned}
\max \; & \pr{c - s}{d} \\
s &\in W^\perp + c \\
s & \geq 0.
\end{aligned}
\end{equation}

\paragraph{Conformal circuit decompositions}
\label{par:sign_consistent}
We say that the vector $y \in \R^n$ \emph{conforms to}
$x\in\R^n$ if $x_iy_i > 0$ whenever $y_i\neq 0$. 
Given a subspace $W\subseteq \R^n$, a \emph{conformal circuit decomposition} of a vector $z\in W$ is a decomposition
\[
z=\sum_{k=1}^h  g^k,
\]
where
$h\le n$ and  $g^1,g^2,\ldots,g^h\in \EE(W)$ are elementary vectors that are conformal with
$z$. A fundamental result on elementary vectors asserts the existence of a conformal circuit decomposition, see e.g. \cite{Fulkerson1968,RockafellarTheEV}.

\begin{lemma} \label{lem:sign-cons}
For every subspace $W\subseteq \R^n$,  every $z\in W$ admits a conformal circuit decomposition. 
\end{lemma}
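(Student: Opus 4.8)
The plan is to prove Lemma~\ref{lem:sign-cons} by induction on $|\supp(z)|$, extracting one conformal elementary vector at a time and peeling it off. The base case $z=0$ is the empty decomposition. For the inductive step, assume $z\in W\setminus\{0\}$ and let $S=\supp(z)$.

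First I would produce a single elementary vector conforming to $z$. Consider the set of all $w\in W$ with $\supp(w)\subseteq S$ and $w$ conforming to $z$ (equivalently, $w_iz_i\ge 0$ for all $i$); this set contains $z$ itself, so it is nonempty, and I would pick such a $w$ with $\supp(w)$ minimal among all nonzero such vectors. The claim is that this minimal $w$ is an elementary vector of $W$. Indeed, suppose not: then there is $h\in W\setminus\{0\}$ with $\supp(h)\subsetneq\supp(w)$. By possibly replacing $h$ with $-h$ we may assume $h_j z_j>0$ for some $j\in\supp(h)$ (some coordinate of $h$ must have a strict sign agreement or disagreement with $z$ on $\supp(w)\subseteq S$; if $h$ is supported where $z$ vanishes we would already contradict $\supp(h)\subseteq\supp(w)\subseteq\supp(z)$). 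Now consider $w-\lambda h$ for $\lambda>0$: as $\lambda$ increases from $0$, each coordinate $w_i-\lambda h_i$ either stays fixed (where $h_i=0$) or moves; choosing $\lambda^*$ to be the smallest positive value at which some coordinate $i\in\supp(w)$ with $h_i z_i>0$ vanishes, the vector $w':=w-\lambda^* h$ still lies in $W$, still conforms to $z$ (all coordinates where $h_i$ and $z_i$ agree in sign shrink toward zero but do not overshoot by the choice of $\lambda^*$; coordinates where they disagree grow but keep their sign; coordinates off $\supp(w)$ stay zero since $\supp(h)\subseteq\supp(w)$), and has $\supp(w')\subsetneq\supp(w)$ with $w'\neq 0$ unless $w$ was a scalar multiple of $h$, which is impossible since $\supp(h)\subsetneq\supp(w)$. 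This contradicts minimality, so $w$ is elementary.

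Next I would scale this elementary vector to peel off as much of $z$ as possible while staying conformal. Replace $w$ by $g:=\alpha w$ where $\alpha=\min\{z_i/w_i : i\in\supp(w)\}>0$ (well-defined and positive since $w$ conforms to $z$, so $z_i/w_i>0$ on $\supp(w)$). Then $g\in\EE(W)$, $g$ conforms to $z$, and the residual $z':=z-g$ lies in $W$, conforms to $z$, and satisfies $\supp(z')\subsetneq\supp(z)$ — strictly, because the coordinate achieving the minimum in $\alpha$ is zeroed out, while no new coordinates are introduced since $\supp(g)\subseteq\supp(z)$ and no sign flips occur by the choice of $\alpha$. Applying the induction hypothesis to $z'$ yields $z'=\sum_{k=1}^{h'}g^k$ with each $g^k$ elementary and conforming to $z'$; since $z'$ conforms to $z$, each $g^k$ also conforms to $z$, and $g$ conforms to $z$, so $z=g+\sum_{k=1}^{h'}g^k$ is a conformal circuit decomposition with at most $|\supp(z)|\le n$ terms.

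The main obstacle is the verification in the second paragraph that the support-minimal conforming vector is genuinely elementary: one must carefully track what happens to each coordinate under the adjustment $w-\lambda h$, distinguishing coordinates where $h$ and $z$ agree in sign (these bound $\lambda$ from above), where they disagree (these only reinforce conformality), and where $h$ vanishes (these are untouched), and confirm that the resulting vector is nonzero and has strictly smaller support. Everything else — the scaling step and the induction bookkeeping — is routine once this is in place.
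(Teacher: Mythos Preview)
Your argument is correct and complete: the support-minimal conforming vector is indeed elementary (your case analysis for $w-\lambda^* h$ is right, and $w'\neq 0$ follows since $w=\lambda^* h$ would force $\supp(w)=\supp(h)$), and the greedy peeling step genuinely strictly shrinks the support, giving at most $|\supp(z)|\le n$ terms.

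However, your route is quite different from the paper's. The paper gives a one-paragraph polyhedral argument: it observes that the set $F\subseteq W$ of vectors conformal with $z$ is a polyhedral cone whose extreme rays are (scalar multiples of) elementary vectors, and then invokes the Minkowski--Weyl theorem to write $z\in F$ as a conic combination of at most $n$ such rays. Your approach is the classical constructive one---iteratively extract a conforming elementary vector and subtract a maximal multiple---which is more elementary (no appeal to the structure theory of polyhedral cones) and directly algorithmic, at the cost of the somewhat delicate verification you flag. The paper's proof is shorter and more conceptual but relies on recognizing that the rays of $F$ are exactly the conforming elementary directions, which is essentially the same minimality fact you prove by hand. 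Both are standard; yours has the advantage that it immediately yields the ``maximal'' decompositions used later in Section~\ref{sec:conjecture}.
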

\begin{proof}
Let $F \subseteq W$ be the set of vectors conformal with $z$. $F$ is a polyhedral cone; its
 faces  correspond to inequalities of the form $y_k \geq 0$, $y_k\leq 0$,
or $y_k = 0$.
The rays (edges) of $F$ are of the form $\{\alpha g:\,  \alpha\ge 0\}$  for $g\in\EE(W)$. Clearly, $z\in F$, and thus, $z$ can be written as a conic combination of at most $n$ rays by the Minkowski--Weyl theorem. Such a decomposition yields a conformal circuit decomposition.
\end{proof}

\paragraph{Linear matroids}
For a linear subspace $W\subseteq\R^n$, let ${\cal M}(W)=([n],{\cal I})$ denote the associated linear matroid, i.e. the matroid defined by the set of circuits $\circuits_W$. Here, ${\cal I}$ denotes the set of independent sets; $S\in {\cal I}$ if and only if there exists no $z\in W\setminus \{0\}$ with $\supp(z)\subseteq S$; the maximal independent sets are the \emph{bases}.
We refer
the reader to \cite[Chapter 39]{schrijver} or \cite[Chapter
5]{frankbook} for relevant definitions and background on matroid theory.

 Assume $\rk(W)=m$ and $W=\ker(A)$ for $A\in\R^{m\times n}$. Then $B\subseteq [n]$, $|B|=m$ is a basis in ${\cal M}(A):={\cal M}(W)$ if and only if $A_B$ is nonsingular; then, $A'=A_B^{-1}A$ is in basis form for $B$ 
such that $\ker(A')=W$. 

The matroid $\cal M$ is \emph{separable}, if the ground set $[n]$ can be
partitioned into two nonempty subsets $[n]=S\cup T$ such that $I\in
{\cal I}$ if and only if $I\cap S,I\cap T\in {\cal I}$. In this case, the matroid is the direct sum of its
restrictions to $S$ and $T$. In particular, every circuit is fully
contained in $S$ or in $T$. For the linear matroid ${\cal M}(A)$, separability means that $\ker(A)=\ker(A_S)\oplus \ker(A_T)$. 
 In this case, we have $\kappa_A=\max\{\kappa_{A_S},\kappa_{A_T}\}$ and  $\kd_A=\lcm\{\kd_{A_S},\kd_{A_T}\}$; 
solving \ref{LP_primal_dual} can be decomposed into two
subproblems, restricted to the columns in $A_S$ and in $A_T$.

Thus, for most concepts and problems considered in this paper, we can focus on the \emph{non-separable} components of ${\cal M}(W)$.
The following characterization will turn out to be very useful, see e.g.
\cite[Theorem 5.2.5]{frankbook}.

\begin{prop}\label{prop:conn}
A matroid ${\cal M}=([n],{\cal I})$ is non-separable if and only if for 
any $i,j\in [n]$, there exists a circuit containing $i$ and $j$.
\end{prop}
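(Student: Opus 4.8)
The plan is to prove both implications by contraposition, starting with the easy one. For the contrapositive of ``$\mathcal{M}$ non-separable $\Rightarrow$ every pair lies in a common circuit'', suppose $[n]=S\cup T$ is a partition into two nonempty parts witnessing that $\mathcal{M}$ is separable. As noted above, every circuit of $\mathcal{M}$ is then contained entirely in $S$ or entirely in $T$; hence for any $i\in S$ and $j\in T$ no circuit contains both $i$ and $j$, as desired.

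For the other direction I again argue the contrapositive: assuming there exist $i_0,j_0\in[n]$ lying in no common circuit, I will exhibit a separation. Define a relation $\approx$ on $[n]$ by $i\approx j$ iff $i=j$ or some circuit of $\mathcal{M}$ contains both $i$ and $j$. Reflexivity and symmetry are immediate; the crucial point, addressed below, is transitivity. Granting it, let $E_1,\dots,E_k$ be the $\approx$-classes; since $i_0\not\approx j_0$ we have $k\ge 2$, so $S:=E_1$ and $T:=[n]\setminus E_1$ are both nonempty. Every circuit has all its elements pairwise $\approx$-related, hence lies inside a single class, hence inside $S$ or inside $T$. Therefore a set $I\subseteq[n]$ contains a circuit iff $I\cap S$ or $I\cap T$ does, i.e.\ $I\in\mathcal{I}$ iff $I\cap S\in\mathcal{I}$ and $I\cap T\in\mathcal{I}$, so $\{S,T\}$ witnesses separability.

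The main obstacle is transitivity of $\approx$, which I would prove by a minimal-counterexample argument using the strong circuit elimination axiom (see, e.g., \cite[Chapter~5]{frankbook}). Suppose $e_1\approx e_2$ and $e_2\approx e_3$ with $e_1,e_2,e_3$ pairwise distinct, and among all pairs of circuits $C_1\ni e_1,e_2$ and $C_2\ni e_2,e_3$ choose one minimizing $|C_1\cup C_2|$. If $e_3\in C_1$ or $e_1\in C_2$ then $C_1$ respectively $C_2$ already witnesses $e_1\approx e_3$, so assume neither holds; then $C_1\ne C_2$ and $e_2\in C_1\cap C_2$. Eliminating $e_2$ from $C_1,C_2$ while retaining $e_1$ gives a circuit $C_3$ with $e_1\in C_3\subseteq(C_1\cup C_2)\setminus\{e_2\}$. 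If $e_3\in C_3$ we are done; otherwise $C_3\not\subseteq C_1$ (else $C_3\subsetneq C_1$, impossible for circuits), so $C_3$ meets $C_2$ in some $g\notin\{e_1,e_2,e_3\}$, and eliminating $g$ from $C_2,C_3$ while retaining $e_3$ produces a circuit through $e_3$ inside $(C_1\cup C_2)\setminus\{g\}$; combining this with $C_1$ either exhibits a circuit through $e_1$ and $e_3$, or produces a valid pair with strictly smaller union, contradicting minimality. Checking that this case analysis genuinely closes — in each branch tracking which of $e_1,e_3$ survives and that the relevant union strictly decreases — is the one delicate step; everything else is routine. Alternatively, one may simply cite \cite[Theorem~5.2.5]{frankbook} for the statement.
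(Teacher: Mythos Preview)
The paper does not actually prove this proposition; it is stated as a known fact with the reference ``see e.g.\ \cite[Theorem 5.2.5]{frankbook}'', which is precisely the citation you offer as your alternative. So on the level of what the paper does, your last sentence already matches it.

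Your direct argument goes further than the paper, and the overall structure (contrapositives, the relation $\approx$, reducing everything to transitivity, and attacking transitivity via strong circuit elimination with a minimal-counterexample pair) is the standard one. However, the final case of your transitivity sketch does not close as written. After you produce $C_4$ with $e_3\in C_4\subseteq (C_1\cup C_2)\setminus\{g\}$, the problematic branch is when $e_1\notin C_4$ \emph{and} $e_2\notin C_4$. In that branch, ``combining $C_4$ with $C_1$'' does not give you a valid pair for your minimization: your pairs are required to share the specific element $e_2$, but $e_2\notin C_4$, so $(C_1,C_4)$ is not admissible; and a further elimination between $C_1$ and $C_4$ (which do intersect, since otherwise $C_4\subseteq C_2$ would force $C_4=C_2\ni e_2$) produces yet another circuit for which the same trichotomy recurs without an obvious strictly decreasing quantity. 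One standard repair is to minimize instead over all pairs $(D_1,D_2)$ of circuits with $e_1\in D_1$, $e_3\in D_2$, and $D_1\cap D_2\neq\emptyset$ (dropping the requirement that the common element be $e_2$); then in the bad branch the pair $(C_3,C_4)$ is admissible with $C_3\cup C_4\subseteq (C_1\cup C_2)\setminus\{e_2\}$, which is strictly smaller. You were candid that this step is ``the one delicate step''; it does need more than what you wrote, and since the paper itself just cites the result, doing the same is entirely adequate here.
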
 
\section{Properties of the imbalance measures}\label{sec:basic}

\paragraph{Comparison to well-scaled frames}
Lee's work ~\cite{Lee89} on \emph{`well-scaled frames'},
investigated the following closely related concepts. For a set $S\subseteq \Q$ the rational linear space $W$ is \emph{$S$-regular} if for every elementary vector $g\in \EE(F)$, there exists a $\lambda\neq 0$ such that all nonzero entries of $\lambda g$ are in $S$. For $S=\{-k,\ldots,k\}$, the subspace is called \emph{$k$-regular}. 
For $k,\Omega\in \N$, a subspace is \emph{$k$-adic of order $\Omega$} if it is $S$-regular for $S=\{\pm 1,\pm k,\ldots,\pm k^\Omega\}$. The \emph{frame} of the subspace $W$ refers to the set of elementary vectors $\EE(W)$.

Using our terminology, 
a subspace is $k$-regular if and only if $\bar\kappa_W=k$, and every $k$-adic subspace is anchored. Many of the properties in this section were explicitly or implicitly shown in Lee~\cite{Lee89}. However, it turns out that many  properties are simpler and more natural to state in terms of either $\kappa_W$ and $\dot\kappa_W$. Roughly speaking, the fractional circuit imbalance $\kappa_W$ is the key quantity of interest for continuous properties, particularly relevant for proximity results in linear programming. On the other hand, the lcm-circuit imbalance $\dot\kappa_W$ captures most clearly the integrality properties. The max-circuit imbalance $\bar\kappa_W$ interpolates between these two, although, as already noted by Lee, it is the right quantity for proximity results in integer programming (see Section~\ref{sec:graver}).

Appa and Kotnyek \cite{Appa04} also use the term $k$-regularity in a different sense, as a natural extension of unimodularity. This turns out to be strongly related to $\kd_W$; see Lemma~\ref{lem:sub-inverse} and Corollary~\ref{cor:HK-kappa}.

\paragraph{The key lemma on basis forms}
The following simple proposition turns out to be extremely useful in deriving properties of $\Le_W$ and $\kd_W$. The first statement is from \cite{DadushNV20}.
\begin{prop}\label{prop:kappa-max}
For every matrix $A\in \R^{m\times n}$ with $\rk(A) = m$,
\[
    \Le_A = \max\left\{ \|A_B^{-1} A\|_{\max} : A_B \text{ non-singular $m \times m$-submatrix of } A\right\}\, .
\]
Moreover, for each nonsingular $A_B$, all nonzero entries of $A_B^{-1} A$ have absolute values between $1/\Le_A$ and $\Le_A$ and are $1/\kd_A$-integral.
\end{prop}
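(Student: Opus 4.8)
The plan is to connect the entries of $A_B^{-1}A$ directly to elementary vectors of $W = \ker(A)$. Fix a nonsingular $m\times m$ submatrix $A_B$ and put $M := A_B^{-1}A$, so that $M$ is in basis form for $B$ and $\ker(M) = \ker(A) = W$. For each nonbasic index $j \in [n]\setminus B$, consider the vector $z^j \in \R^n$ defined by $z^j_j = 1$, $z^j_i = -M_{ij}$ for $i \in B$ (indexing the rows of $M$ by $B$ via $A_B = I_m$ in basis form), and $z^j_k = 0$ for all other nonbasic $k$. One checks that $Mz^j = 0$, so $z^j \in W$. The first key step is to argue that $z^j$ is (a scalar multiple of) an elementary vector: its support is contained in $B \cup \{j\}$, and since $B$ is a basis of $\mathcal{M}(W)$, the only circuits inside $B\cup\{j\}$ are the fundamental circuit $C_j$ of $j$ with respect to $B$; hence $\supp(z^j) \subseteq C_j$ and $z^j$ is support-minimal, i.e. $z^j \in \EE(W)$ up to scaling.

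Next I would extract the three claimed bounds. Since $z^j_j = 1 \neq 0$, the nonzero entries of $z^j$ are exactly the ratios $z^j_i / z^j_j$ for $i \in \supp(z^j)$, which are, by definition, bounded in absolute value by $\kappa_W = \kappa_A$ between $1/\kappa_A$ and $\kappa_A$; this gives $|M_{ij}| = |z^j_i| \in [1/\kappa_A, \kappa_A]$ for every nonzero entry in column $j$ with $i \in B$. The entries $M_{ij}$ with $i \notin B$ are either $1$ (if $i = j$) or $0$, so the bound $\Le_A = \max_B \|A_B^{-1}A\|_{\max}$ follows once we also note the reverse inequality — any elementary vector $g^C$ can be realized this way by choosing $B$ to contain $C \setminus \{j\}$ for some $j \in C$ and completing to a basis, which is where the first (already-cited) statement of the proposition is used. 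For $1/\kd_A$-integrality: the primitive integer representative $g^{C_j,W}$ has $\lcm(g^{C_j}) \mid \kd_W$ by definition of $\kd_W$, so all its entries are $1/\kd_W$-integral; since $z^j$ is a scalar multiple of $g^{C_j}$ normalized so that $z^j_j = 1$, and $g^{C_j}_j$ divides $\lcm(g^{C_j})$ which divides $\kd_W$, dividing through preserves $1/\kd_W$-integrality. Thus every entry of $M$ is $1/\kd_A$-integral.

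The main obstacle I anticipate is the bookkeeping in the last step: one must be careful that normalizing $g^{C_j}$ to have a $1$ in coordinate $j$ keeps all entries $1/\kd_W$-integral, which requires that $g^{C_j}_j \mid \kd_W$ — this holds because $g^{C_j}_j$ is one of the $|g^{C_j}_i|$ whose least common multiple divides $\kd_W$ (as $\kd_W$ is the lcm over all circuits of $\lcm(g^C)$). A second minor point to handle cleanly is the indexing convention: writing $M$ in basis form means its rows are naturally indexed by $B$, and one should state precisely how the fundamental circuit $C_j$ and the vector $z^j$ inherit this indexing so that $\supp(z^j) = \{j\} \cup \{i \in B : M_{ij} \neq 0\}$ is literally the fundamental circuit. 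Once these are pinned down, the argument is short. I would also remark that for the trivial case $m = n$ (so $A_B = A$ is the whole matrix and $W = \{0\}$) the statement is vacuous since $A_B^{-1}A = I$ and $\kappa_A = \kd_A = 1$, consistent with the convention for trivial subspaces.
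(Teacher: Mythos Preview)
Your proposal is correct and follows essentially the same approach as the paper's proof: both construct the fundamental-circuit vector $z^j$ with $z^j_j=1$ from each nonbasic column of $A_B^{-1}A$, use its elementarity to bound entries in $[1/\kappa_A,\kappa_A]$, and for the reverse inequality pick the circuit attaining $\kappa_A$ and a basis containing all but one of its indices. The only point to make explicit is that, for equality, the index $j$ removed from $C$ should be one where $|g^C_j|$ is minimal (the paper states this; your ``for some $j\in C$'' leaves it implicit).
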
 
\begin{proof}
Consider the matrix $A'=A_B^{-1}A$ for any non-singular $m\times m$ submatrix $A_B$. Let us renumber the columns such that $B$ corresponds to the first $m$ columns. 
Then, for every $m+1\le j\le n$, the $j$th column of $A'$ corresponds to an elementary vector $g$ where $g_j=1$, and $g_i=-A'_{ij}$ for $i\in [m]$. Hence, $\|A'\|_{\max}$ gives a lower bound on $\Le_A$. This also implies that all nonzero entries are between $1/\Le_A$ and $\Le_A$. To see that all entries of $A'$ are $1/\kd_A$-integral, note that $g=g'/\alpha$ for a vector $g'$ where all entries are integer divisors of $\dot\kappa_A$. Since $g_j=1$, it follows that $\alpha$ itself is an integer divisor of $\kd_A$.

To see that the maximum in the first statement is achieved, take the elementary vector $g^C$ that attains the maximum in the definition of $\Le_A$; let $g_j^C$ be the minimum absolute value element.
Let us select a basis $B$ such that $C\setminus \{j\}\subseteq B$. Then, the largest absolute value in the $j$-th column of $A_B^{-1}A$ will be $\Le_A$.
\end{proof}

\subsection{Bounds on subdeterminants}\label{sec:subdet}
For an integer matrix $A\in \Z^{m\times n}$, we define 
\begin{equation}
\begin{aligned}
  \Delta_A &:= \max \{|\det(B)|: B \text{ is a  nonsingular submatrix of } A\},\mbox{ and}\\
  \dot\Delta_A &:=\lcm \{|\det(B)|: B \text{ is a nonsingular submatrix of } A\}.\\
  \end{aligned}
\end{equation} 
The matrix is \emph{totally unimodular (TU)}, if $\Delta_A=1$: thus, all subdeterminants are $0$ or $\pm 1$. This class of matrices plays a foundational role in combinatorial optimization,  see e.g., \cite[Chapters 19-20]{SchrijverLPIP}. A significant example is the node-arc incidence matrix of a directed graph. A key property is that they define integer polyhedra, see Theorem~\ref{thm:HK} below.
A polynomial-time algorithm is known to decide whether a matrix is TU, 
based on the deep decomposition theorem by Seymour from 1980 \cite{Seymour1980}.

The next statement is implicit in \cite[Proposition 5.3]{Lee89}. 
\begin{prop}
  \label{prop:kappa-delta}
  For every integer matrix $A\in \Z^{m\times n}$, $\bar\Le_A\le \Delta_A$ and $\kd_A\le\dot\Delta_A$.
\end{prop}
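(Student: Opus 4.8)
The plan is to make the elementary vectors explicit. For a circuit $C\in\circuits_A$ I will write down the (up to sign, unique) elementary vector supported on $C$ using Cramer's rule, obtaining an \emph{integer} vector whose coordinates are maximal minors of the column submatrix $A_C$. Both claimed bounds then follow by observing that these coordinates are, in absolute value, subdeterminants of $A$, and by passing from this vector to the primitive representative $g^C$.

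In detail, fix $C\in\circuits_A$ and set $k=|C|$. Since $C$ is a circuit, the columns $\{A_i:i\in C\}$ are minimally dependent, so $\rk(A_C)=k-1$; choose a row set $R$ with $|R|=k-1$ and $\rk(A_{R,C})=k-1$, where $A_{R,C}$ denotes the submatrix with rows $R$ and columns $C$. Ordering $C=\{c_1<\dots<c_k\}$, define $g\in\Z^n$ by $g_{c_\ell}=(-1)^\ell\det(A_{R,C\setminus\{c_\ell\}})$ and $g_j=0$ for $j\notin C$. A cofactor-expansion identity---stack any one row of $A_{R,C}$ on top of $A_{R,C}$ and expand the resulting singular $k\times k$ matrix along that top row---shows $A_{R,C}\,g_C=0$. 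Because $\ker(A_C)\subseteq\ker(A_{R,C})$ and both spaces are one-dimensional, $g_C\in\ker(A_C)$, so $g\in\ker(A)$. Since $A_{R,C}$ has full row rank, at least one of its maximal minors $\det(A_{R,C\setminus\{c_\ell\}})$ is nonzero, so $g\neq 0$; and as $C$ is a circuit, $\emptyset\neq\supp(g)\subseteq C$ forces $\supp(g)=C$. Hence $g$ is an integer elementary vector with support $C$. For every $c_\ell\in C$ we then have $g_{c_\ell}\neq 0$, so $A_{R,C\setminus\{c_\ell\}}$ is a nonsingular $(k-1)\times(k-1)$ submatrix of $A$, whence $|g_{c_\ell}|\le\Delta_A$ and $|g_{c_\ell}|\mid\dot\Delta_A$.

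It remains to replace $g$ by the primitive representative $g^C$. Since $g^C$ is an integer vector with coprime coordinates and $g$ is a nonzero integer vector proportional to it, $g=\alpha g^C$ for some positive integer $\alpha$ (after possibly replacing $g^C$ by $-g^C$); thus $|g^C_i|$ divides $|g_i|$ for every $i\in C$. Combining with the previous step, $|g^C_i|\le|g_i|\le\Delta_A$ and $|g^C_i|\mid\dot\Delta_A$ for all $i\in C$. Taking maxima over $i$ gives $\|g^C\|_\infty\le\Delta_A$, and since each $|g^C_i|$ divides $\dot\Delta_A$, also $\lcm(g^C)\mid\dot\Delta_A$. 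Finally, maximizing over $C\in\circuits_A$ yields $\bar\kappa_A\le\Delta_A$, and taking the least common multiple over $C\in\circuits_A$ of quantities each dividing $\dot\Delta_A$ yields $\kd_A\mid\dot\Delta_A$, hence $\kd_A\le\dot\Delta_A$. (If $\ker(A)$ is trivial then $\bar\kappa_A=\kd_A=1$ while $\Delta_A,\dot\Delta_A\ge 1$ for $A\neq 0$, so the inequalities are immediate.)

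I expect the only delicate point to be the first step: arranging the cofactor identity with the correct signs so that the kernel vector's coordinates are genuine maximal minors of $A_C$, and checking $\ker(A_C)=\ker(A_{R,C})$ so that being annihilated by the rows in $R$ already implies membership in $\ker(A)$. Everything afterwards is elementary divisibility and inequality arithmetic. An alternative that avoids writing the cofactor identity by hand is to realize $C$ as the fundamental circuit of some $j\in C$ with respect to a basis $B\supseteq C\setminus\{j\}$ and invoke the column formula for $A_B^{-1}A$ together with Cramer's rule on $A_B$, exactly as in the proof of Proposition~\ref{prop:kappa-max}; this produces the same minors up to the common factor $\det(A_B)$, which cancels on passing to $g^C$.
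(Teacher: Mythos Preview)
Your proof is correct and follows essentially the same approach as the paper: select a full-rank $(|C|-1)\times|C|$ row submatrix of $A_C$, write the elementary vector via cofactors (Cramer's rule), and pass to the primitive representative $g^C$ by dividing out the positive integer gcd. Your write-up is considerably more explicit than the paper's---in particular you spell out why $\ker(A_C)=\ker(A_{R,C})$ and why the scalar $\alpha$ is an integer, points the paper leaves implicit---but the underlying argument is the same.
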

\begin{proof}
Let  $C\in\circuits_A$ be a circuit, and select a submatrix $\hat A\in \Z^{(|C|-1)\times |C|}$ of $A$ where the columns are indexed by $C$, and the rows are linearly independent. 
Let $\hat A_{-i}$ be the square submatrix resulting from deleting the column corresponding to $i$ from $\hat A$.
From Cramer's rule, we see that $|g^C_i|=|\det(\hat A_{-i})|/\alpha$ for some $\alpha\in\Q$, $\alpha \ge 1$. This implies both claims $\bar\Le_A\le \Delta_A$ and $\kd_A\le\dot\Delta_A$.
\end{proof}

In Propositions~\ref{prop:B_inverse} and \ref{prop:B_inverse_General}, 
we show that for any matrix $A \in \Q^{m \times n}$ there exists a matrix $\tilde A \in \Z^{m \times n}$ such that $\ker(A) = \ker(\tilde A)$ and $\dot\Delta_{\tilde A} \le (\kd_A)^{m}$.

To see an example where $\Delta_A$ can be much larger than $\kappa_A$, let $A\in \Z^{n\times \binom{n}{2}}$ be the node-edge incidence matrix of a complete undirected graph on $n$ nodes; assume $n$ is divisible by $3$. The determinant corresponding to any submatrix corresponding to an odd cycle is $\pm 2$. Let $H$ be an edge set of $\frac{n}3$ node-disjoint triangles. Then $A_H$ is a square submatrix with determinant $\pm 2^{n/3}$. In fact, $\Delta_A=2^{n/3}$ in this case, since $\Delta_A$ for a node-edge incidence matrix equals the maximum number of node disjoint odd cycles, see  \cite{grossmanMinorsIncidenceMatrix1995}.
On the other hand, $\kappa_A=\bar\kappa_A=\kd_A\in \{1,2\}$ for the incidence matrix $A$ of any undirected graph; see Section~\ref{sec:HK}.

\medskip

For TU-matrices, the converse of Proposition~\ref{prop:kappa-delta}  is also true. In 1956, Heller and Tompkins \cite{heller1957,heller1956} introduced the \emph{Dantzig property}. A matrix $A\in\R^{m\times n}$ has the Dantzig property if $A^{-1}_B A$ is a $0,\pm 1$-matrix for every nonsingular $m\times m$ submatrix $A_B$. According to Proposition~\ref{prop:kappa-max}, this is equivalent to $\kappa_A=1$. Theorem~\ref{thm:tu_iff_kappa_1} below can be attributed to Cederbaum \cite[Proposition (v)]{Cederbaum57}; see also Camion's PhD thesis \cite[Theorem 2.4.5(f)]{camion1964}.
The key is the following lemma that we formulate for general $1/\kd_A$ for later use.

\begin{lemma}\label{lem:sub-inverse}
Let $A=(I_m|A')\in\R^{m\times n}$. Then, for any nonsingular square submatrix $M$ of $A$, the inverse $M^{-1}$ is $1/\kd_A$-integral, with non-zero entries between $1/\Le_A$ and $\Le_A$ in absolute value.
\end{lemma}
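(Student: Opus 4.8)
The plan is to reduce the statement about an arbitrary nonsingular square submatrix $M$ of $A=(I_m\mid A')$ to the statement of Proposition~\ref{prop:kappa-max}, which controls matrices of the form $A_B^{-1}A$. The key observation is that $M$ — a nonsingular $k\times k$ submatrix of $A$ using row set $R\subseteq[m]$, $|R|=k$, and column set $S$ — can be embedded into a full nonsingular $m\times m$ submatrix $A_B$ of $A$, by completing $S$ with the unit columns $e_i$ for $i\in[m]\setminus R$ (these are available precisely because $A$ contains the identity block $I_m$). After a suitable permutation of rows and columns, $A_B$ becomes block-triangular,
\[
A_B=\begin{pmatrix} M & * \\ 0 & I_{m-k}\end{pmatrix},
\]
so $A_B$ is nonsingular and $A_B^{-1}$ has $M^{-1}$ as a diagonal block, namely $A_B^{-1}=\begin{pmatrix} M^{-1} & * \\ 0 & I_{m-k}\end{pmatrix}$.

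Now I would invoke Proposition~\ref{prop:kappa-max}: since $A$ itself contains the columns $e_i$, $i\in[m]$, the matrix $A_B^{-1}A$ contains the columns $A_B^{-1}e_i$, which are exactly the columns of $A_B^{-1}$; hence every entry of $A_B^{-1}$ — in particular every entry of its diagonal block $M^{-1}$ — appears as an entry of $A_B^{-1}A$. By the ``moreover'' part of Proposition~\ref{prop:kappa-max}, every entry of $A_B^{-1}A$ is $1/\kd_A$-integral, and every \emph{nonzero} entry has absolute value in $[1/\Le_A,\Le_A]$. This immediately gives the claim for $M^{-1}$.

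The one point requiring a little care — and the place where the argument is least automatic — is the bookkeeping of permutations: one must check that completing the column set $S$ of $M$ with unit columns indexed outside $R$ yields an $m\times m$ submatrix whose determinant is (up to sign) $\det(M)$, so that nonsingularity of $M$ genuinely implies nonsingularity of $A_B$, and that after reordering rows and columns the inverse really does restrict to $M^{-1}$ on the $M$-block. This is routine linear algebra (expanding the determinant along the unit columns, and inverting a block-triangular matrix), but it is the substantive content of the reduction. Once that is in place, no further estimation is needed: the bounds are inherited verbatim from Proposition~\ref{prop:kappa-max}.
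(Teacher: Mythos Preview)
Your proposal is correct and follows essentially the same approach as the paper: extend the column set of $M$ by the unit columns $e_i$, $i\in[m]\setminus R$, to a full nonsingular $m\times m$ submatrix $A_B$, observe that $M^{-1}$ appears as a block of $A_B^{-1}$, and then read off the entrywise bounds from Proposition~\ref{prop:kappa-max} applied to $A_B^{-1}A$ (which contains the columns of $A_B^{-1}$ because $A$ contains $I_m$). One cosmetic slip in your bookkeeping: with rows in $R$ listed first and columns in $S$ listed first, the unit columns $e_i$ for $i\notin R$ vanish on the $R$-rows, so the block form is \emph{lower}-triangular, $A_B=\begin{pmatrix} M & 0\\ L & I_{m-k}\end{pmatrix}$ and $A_B^{-1}=\begin{pmatrix} M^{-1} & 0\\ -LM^{-1} & I_{m-k}\end{pmatrix}$, exactly as the paper writes it; this does not affect the argument.
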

\begin{proof}
Let $M$ be any $k \times k$ nonsingular submatrix of $A$; w.l.o.g., let us assume that it uses the first $k$ rows of $A$. Let $B$ be the set of columns of $M$, along with the $m-k$ additional columns $i\in [k+1,m]$, i.e., the last $m-k$ unit vectors from $I_m$. Thus,
$A_B\in \R^{m\times m}$ is also nonsingular. After permuting the columns, this can be written in the form
\[
A_B=\left(\begin{array}{c|c}
M & 0  \\
\hline
L& I_{m-k} 
\end{array}\right)
\]
for some $L\in \Z^{(m-k)\times k}$.
We now use Proposition~\ref{prop:kappa-max}  for $
\tilde A = A_B^{-1} A$. 
Note that the first $m$ columns of $
\tilde A$ correspond to $A_B^{-1}$. Moreover, we see that
\[
A_B^{-1}=\left(\begin{array}{c|c}
 M^{-1} & 0 \\
\hline
-LM^{-1} & I_{m-k} 
\end{array}\right)
\]
Thus, $M^{-1}$ is $1/\dot\kappa_A$-integral, with non-zero entries between $1/\Le_A$ and $\Le_A$ completing the proof.
\end{proof}
Appa and Kotnyek define $k$-regular matrices as follows: a rational matrix $A'\in\R^{m\times n}$ is $k$-regular if and only if the inverse of all nonsingular submatrices is $1/k$-integral. From the above statement, it follows that $A'$ is $k$-regular in this sense for $k=\kappa_{(I_m|A')}$. See also
Corollary~\ref{cor:HK-kappa}.

\begin{theorem}[Cederbaum, 1957]
  \label{thm:tu_iff_kappa_1}
  Let $W \subset \R^n$ be a linear subspace. Then, the following are equivalent.
  \begin{enumerate}[(i)]
  \item\label{Ced:i} $\kappa_W=\bar\kappa_W=\kd_W = 1$.
  \item\label{Ced:ii} There exists a TU matrix $A$, such that $W = \ker(A)$.
  \item\label{Ced:iii} For any matrix $A$ in basis form such that $W=\ker(A)$, $A$ is a TU-matrix.
  \end{enumerate} 
\end{theorem}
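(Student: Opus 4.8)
The plan is to prove the cycle (i)$\,\Rightarrow\,$(iii)$\,\Rightarrow\,$(ii)$\,\Rightarrow\,$(i). Since $\kd_W$ is defined only for rational subspaces and total unimodularity is a property of integer matrices, all three statements implicitly concern a rational subspace $W$; the trivial subspaces are immediate, as all imbalance measures equal $1$ by convention, $\{0\}=\ker(I_n)$ and $\R^n$ is the kernel of the empty matrix. So assume $W$ is a nontrivial rational subspace with $\rk(W)=m$, $1\le m\le n-1$.

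For (i)$\,\Rightarrow\,$(iii), let $A$ be any matrix in basis form with $W=\ker(A)$. Since ``being in basis form'' and total unimodularity are both unaffected by relabeling columns, we may assume $A=(I_m|A')$. Taking the basis $B=[m]$ in Proposition~\ref{prop:kappa-max} (so $A_B^{-1}A=A$) and using $\kappa_A=\kd_A=1$, every nonzero entry of $A$ has absolute value $1$ and is an integer, so $A$ is a $0,\pm1$ matrix. Now let $M$ be an arbitrary square submatrix of $A$. If $M$ is singular then $\det(M)=0$; if $M$ is nonsingular then Lemma~\ref{lem:sub-inverse} (for $A=(I_m|A')$, with $\kd_A=1$) shows $M^{-1}$ is $1/\kd_A$-integral, hence integral. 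Then $\det(M)$ and $\det(M^{-1})=1/\det(M)$ are both integers, forcing $\det(M)=\pm1$. Thus $A$ is TU.

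For (iii)$\,\Rightarrow\,$(ii), a basis-form representation of $W$ always exists: pick any basis $B$ of $\mathcal M(W)$ and any rational matrix $\tilde A$ with $\ker(\tilde A)=W$; then $\tilde A_B$ is nonsingular and $\tilde A_B^{-1}\tilde A$ is in basis form with kernel $W$, so (iii) directly produces a TU matrix with $W=\ker(A)$. For (ii)$\,\Rightarrow\,$(i), let $A\in\Z^{m\times n}$ be TU with $W=\ker(A)$. By definition $\Delta_A=1$, and since every nonsingular submatrix of $A$ has determinant $\pm1$ we also have $\dot\Delta_A=1$; Proposition~\ref{prop:kappa-delta} then gives $\bar\kappa_W=\bar\kappa_A\le\Delta_A=1$ and $\kd_W=\kd_A\le\dot\Delta_A=1$. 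Combined with the inequalities $1\le\kappa_W\le\bar\kappa_W\le\kd_W$ recorded after Definition~\ref{def:kappa}, all three quantities equal $1$.

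I expect no serious obstacle: the substance is already packaged in Propositions~\ref{prop:kappa-max} and~\ref{prop:kappa-delta} and Lemma~\ref{lem:sub-inverse}, and the proof only chains them. The point most worth watching is the integrality of $A$ in the step (i)$\,\Rightarrow\,$(iii): integrality of $M^{-1}$ alone does not determine $\det(M)$, so one genuinely needs $A$ itself to be integral, which is exactly where the hypothesis $\bar\kappa_W=1$ (equivalently $\kappa_W=1$) enters on top of $\kd_W=1$. The remaining care is bookkeeping: the trivial subspaces and the rationality remark above, and the permutation invariance of total unimodularity.
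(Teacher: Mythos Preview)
Your proof is correct and follows essentially the same route as the paper: both use Proposition~\ref{prop:kappa-max} to see that a basis-form representation is a $0,\pm1$ matrix, Lemma~\ref{lem:sub-inverse} to get integrality of $M^{-1}$, and then the integrality of $\det(M)$ and $\det(M^{-1})$ to conclude $|\det(M)|=1$; the implication (ii)$\Rightarrow$(i) via Proposition~\ref{prop:kappa-delta} is likewise identical. A tiny bookkeeping slip: you write $\rk(W)=m$ and then represent $W=\ker(A)$ with $A=(I_m|A')$, but in that representation $\dim(W)=n-m$, not $m$; this does not affect the argument.
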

\begin{proof}
\ref{Ced:iii} $\Rightarrow$ \ref{Ced:ii} is straightforward, and \ref{Ced:ii} $\Rightarrow$ \ref{Ced:i} follows by  Proposition~\ref{prop:kappa-delta}. It remains to show \ref{Ced:i} $\Rightarrow$ \ref{Ced:iii}. Let $\rk(W)=n-m$, and consider any $A\in \R^{m\times n}$ in basis form such that $W=\ker(A)$. For simplicity of notation, assume the basis is formed by the first $m$ columns, that is, 
 $A=(I_{m} | A')$ for some $A' \in \R^{m \times (n-m)}$.

Proposition~\ref{prop:kappa-max} implies that all entries of $A$ are $0$ and $\pm 1$. Consider any nonsingular square submatrix $M$ of $A$.
By Lemma~\ref{lem:sub-inverse}, $M^{-1}$ is also a $0$, $\pm 1$ matrix.
 Consequently, both $\det(M)$ and $\det(M^{-1})$ are nonzero integers, which implies that $|\det(M)|=1$, as required.
\end{proof}

\subsection{Fractional integrality characterization}\label{sec:HK}

Hoffman and Kruskal \cite{Hoffman56} gave the following characterization of TU matrices. A polyhedron $P\subseteq \R^n$ is \emph{integral}, if all vertices (=basic feasible solutions) are integer.
\begin{theorem}[Hoffman and Kruskal, 1956]\label{thm:HK}
An integer matrix $A\in\Z^{m\times n}$ is totally unimodular if and only if for every $b\in\Z^m$
, the polyhedron $\{x\in \R^n:\, Ax\le b,x\ge 0\}$
 is integral.
\end{theorem}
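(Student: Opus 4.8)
The plan is to reduce total unimodularity of $A$ to integrality of the basis inverses of the ``slack'' matrix $\hat A := (A \mid I_m) \in \Z^{m \times (n+m)}$, using the correspondence between vertices of $\{x : Ax \le b,\, x \ge 0\}$ and basic feasible solutions of $\hat A z = b$, $z \ge 0$. For $b \in \Z^m$ write $P(b) := \{x \in \R^n : Ax \le b,\, x \ge 0\}$ and $Q(b) := \{z \in \R^{n+m} : \hat A z = b,\, z \ge 0\}$. The affine map $x \mapsto (x,\, b - Ax)$ is an isomorphism $P(b) \to Q(b)$, and since $A$ and $b$ are integral it takes integer points to integer points in both directions; hence $P(b)$ is integral if and only if $Q(b)$ is. As $\hat A$ has full row rank $m$, the vertices of $Q(b)$ are exactly its basic feasible solutions, namely the points $z$ with $z_B = \hat A_B^{-1} b$ and $z$ vanishing outside $B$, ranging over the nonsingular $m \times m$ submatrices $\hat A_B$ with $\hat A_B^{-1} b \ge 0$. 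So the whole statement comes down to when $\hat A_B^{-1} b$ is forced to be integral.

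For the direction ``$A$ TU $\Rightarrow$ every $P(b)$ is integral'': appending unit columns preserves total unimodularity --- a $k \times k$ minor of $\hat A$ using an $I_m$-column either vanishes (if the corresponding row is absent) or expands along that column to a smaller minor of $\hat A$, so the claim follows by induction --- hence $\hat A$ is TU. Then $\det \hat A_B = \pm 1$ for every nonsingular $m \times m$ submatrix $\hat A_B$, so $\hat A_B^{-1} = \operatorname{adj}(\hat A_B)/\det(\hat A_B)$ is an integer matrix; equivalently, $\kappa_{\hat A} = \dot\kappa_{\hat A} = 1$ by Proposition~\ref{prop:kappa-delta}, so Lemma~\ref{lem:sub-inverse} (after reordering the $I_m$-columns to the front) gives integral basis inverses. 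Either way, every basic feasible solution $\hat A_B^{-1} b$ of $Q(b)$ is integral, so $Q(b)$ and hence $P(b)$ is integral.

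For the converse, suppose every $P(b)$, equivalently every $Q(b)$, is integral for $b \in \Z^m$. The first step is to show that $\hat A_B^{-1}$ is an integer matrix for every nonsingular $m \times m$ submatrix $\hat A_B$ of $\hat A$. Fix such a $B$ and an index $i \in [m]$, and let $y := \hat A_B^{-1} e_i \in \R^m$ be the $i$-th column of $\hat A_B^{-1}$. Choose $w \in \Z^m$ with $w \ge 0$ and $w + y \ge 0$ (for instance $w_j := \lceil |y_j| \rceil$), and set $b := \hat A_B w \in \Z^m$ and $b' := b + e_i \in \Z^m$. Because $w \ge 0$ and $w + y \ge 0$, the basic solution of $Q(b)$ with basis $B$ (basic part $\hat A_B^{-1} b = w$) and the basic solution of $Q(b')$ with basis $B$ (basic part $\hat A_B^{-1} b' = w + y$) are both feasible, hence vertices, hence integral by hypothesis. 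Thus $w + y \in \Z^m$, and since $w \in \Z^m$ we get $y \in \Z^m$. Letting $i$ run over $[m]$ yields $\hat A_B^{-1} \in \Z^{m \times m}$.

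To conclude that $A$ is TU, take an arbitrary nonsingular $k \times k$ submatrix $A_{R,C}$ of $A$ (rows $R$, columns $C$, $|R| = |C| = k$) and embed it into a basis of $\hat A$ by adjoining the unit columns for the rows outside $R$: with $B := C \cup \{\, n + j : j \in [m] \setminus R \,\}$, after reordering rows and columns one has
\[
\hat A_B = \begin{pmatrix} A_{R,C} & 0 \\ A_{[m] \setminus R,\, C} & I_{m-k} \end{pmatrix},
\]
so $\hat A_B$ is nonsingular with $\det \hat A_B = \pm \det A_{R,C}$. By the previous step $\hat A_B^{-1}$ is integral, so $\det(\hat A_B)$ and $\det(\hat A_B^{-1})$ are integers with product $1$, forcing $\det A_{R,C} = \pm 1$; hence $A$ is TU. I expect the main obstacle to be this converse direction: one must design the integer right-hand sides $b$ and $b'$ so that the chosen basic solution \emph{and} its shift by $e_i$ both stay feasible (and are therefore genuine integral vertices), and then translate ``all basis inverses of $(A \mid I_m)$ are integral'' back into ``all subdeterminants of $A$ are $\pm 1$'' through the block-triangular embedding; the forward direction is routine once $(A \mid I_m)$ is seen to inherit total unimodularity from $A$.
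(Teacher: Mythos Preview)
Your proof is correct, but note that the paper does not actually supply its own proof of Theorem~\ref{thm:HK}: the result is quoted as the classical Hoffman--Kruskal theorem and then used (together with Theorem~\ref{thm:tu_iff_kappa_1} and Lemma~\ref{lem:P-project}) to derive the subspace corollary. So there is no ``paper's proof'' to compare against directly.

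That said, your argument is very much in the spirit of the machinery the paper develops around this point. Your block-triangular embedding of a $k\times k$ submatrix of $A$ into an $m\times m$ basis of $(A\mid I_m)$ is precisely the construction in the proof of Lemma~\ref{lem:sub-inverse}, and your ``$\det(\hat A_B)$ and $\det(\hat A_B^{-1})$ are both nonzero integers, hence $\pm 1$'' step is the same trick used to finish Theorem~\ref{thm:tu_iff_kappa_1}. The paper's own route to (a generalization of) the converse direction is Theorem~\ref{thm:kappa-HK}: there, instead of perturbing $b$ by a unit vector to isolate one column of a basis inverse, a single integer $d$ is manufactured so that the basic solution for a chosen basis equals a shifted elementary vector, and the fractionality of that vertex is read off directly from $\dot\kappa_W$ via Proposition~\ref{prop:kappa-max}. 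Your approach is the classical Veinott--Dantzig style argument and is self-contained; the paper's approach trades the explicit $b,b'$ construction for its circuit-imbalance bookkeeping, which is what lets it state and prove the $1/\dot\kappa_W$-integrality generalization uniformly.
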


Since $\kd$ is a property of the subspace, it will be more convenient to work with the standard equality form of an LP. Here as well as in Section~\ref{sec:delta}, we use the following straightforward correspondence between the two forms. Recall that an \emph{edge} of a polyhedron is a bounded one dimensional face; every edge is incident to exactly two vertices. The following statement is standard and easy to verify.

\begin{lemma}\label{lem:P-project}
Let $ A\in \R^{m\times n}$ be of the form $A=(A'|I_m)$ for $A'=\R^{m\times (n-m)}$. For a vector $b\in\R^m$, let 
\[P_b=\{x\in\R^n:\, Ax=b, x\ge 0\}\quad\mbox{and}\quad P'_b=\{x'\in\R^{n-m}:\, A'x'\le b, x'\ge 0\}\, .\]
 Let $I=[n-m]$ denote the index set of $A'$. Then,  $P'_b=\pi_I(P_b)$, i.e., $P'_b$ is the projection of $P_b$ to the coordinates in $I$.
For every vertex $x$ of $P_b$,  $x'=x_I$ is a vertex of $P'_b$, and conversely, for every vertex $x'$ of $P'_b$, there exists a unique vertex $x$ of $P$ such that $x_I=x'$. There is a one-to-one correspondence between the edges of $P_b$ and $P'_b$.
Further, if $b\in\Z^m$, then $P_b$ is $1/k$-integral if and only if $P'_b$ is $1/k$-integral.
\end{lemma}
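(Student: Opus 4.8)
The plan is to establish the claimed correspondence coordinate-by-coordinate, using the fact that the $I_m$ block lets us solve uniquely for the slack variables. First I would observe that for any $x' \in \R^{n-m}$ with $A'x' \le b$, there is a unique $x \in \R^n$ with $x_I = x'$ and $Ax = b$, namely $x_I = x'$ and $x_{[n]\setminus I} = b - A'x'$; this $x$ is nonnegative iff $x' \ge 0$ and $A'x' \le b$, which gives the bijection $P'_b = \pi_I(P_b)$ together with a global section $s: P'_b \to P_b$ that is affine. Conversely every $x \in P_b$ arises this way, so $\pi_I$ and $s$ are mutually inverse affine bijections between $P_b$ and $P'_b$.

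Next I would transfer the face structure. Since $\pi_I$ and $s$ are inverse affine bijections of polyhedra, they map faces to faces and preserve dimension; in particular vertices go to vertices and edges (bounded $1$-dimensional faces) go to edges, which yields the one-to-one correspondence on vertices and on edges. To be fully careful one should check that $s$ does not collapse any face, but affine \emph{bijectivity} already rules that out. Alternatively, and perhaps more cleanly for a ``standard'' lemma, I would argue directly at the level of active constraints: a vertex of $P'_b$ is determined by $n-m$ linearly independent tight constraints among $A'x' \le b$ and $x' \ge 0$; the tightness pattern of these constraints corresponds exactly to setting the matching slack variables $x_{[n]\setminus I}$ (for rows where $A'x' \le b$ is tight) or original variables $x_i$ (for $i \in I$ where $x'_i = 0$) to zero, producing $n$ linearly independent tight constraints of $P_b$ and hence a vertex; and this matching is reversible.

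Finally, for the integrality statement: if $b \in \Z^m$, then for $x \in P_b$ we have $x_{[n]\setminus I} = b - A'x_I$, so $x$ is $1/k$-integral iff $x_I$ is $1/k$-integral \emph{and} $b - A'x_I$ is $1/k$-integral; but the latter is implied by the former when $b \in \Z^m$ only if $A'$ maps $1/k$-integral vectors to $1/k$-integral vectors, which need not hold in general. The correct reading is that the claim is about \emph{vertices} — $P_b$ is $1/k$-integral (meaning all its vertices are) iff $P'_b$ is — and for vertices this follows from the vertex correspondence: $x$ is a vertex of $P_b$ with $x_I = x'$ a vertex of $P'_b$, and $x$ is $1/k$-integral iff $x'$ is, since the missing coordinates $b - A'x'$ at a vertex are determined, and at a vertex the whole of $x$ is rational with the same denominators as the submatrix inverse governing $x'$. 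So I would phrase this step via the vertex bijection rather than via the section $s$ on arbitrary points.

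The main obstacle, such as it is, is bookkeeping rather than mathematics: making precise that the affine bijection $\pi_I$ restricts to a bijection of faces of each dimension (so that ``edge'' in the sense of bounded one-dimensional face is preserved, not just ``edge'' in some other sense), and getting the integrality statement phrased so it is actually true — i.e. as a statement about the vertex sets, using the vertex correspondence, rather than a pointwise statement about $1/k$-integral points of the two polyhedra, which would be false in general. Everything else is a routine unwinding of the definition of $P_b$ and $P'_b$.
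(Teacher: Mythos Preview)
The paper does not actually prove this lemma; it is introduced as ``standard and easy to verify'' and no argument is given. Your treatment of the projection identity and the vertex/edge correspondence via the mutually inverse affine maps $x \mapsto x_I$ and $x' \mapsto (x',\, b - A'x')$ is exactly the intended standard argument, and it is correct.

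Where your write-up has a genuine gap is the integrality clause. You rightly flag that, for an arbitrary point, $x'$ being $1/k$-integral need not make $b - A'x'$ $1/k$-integral. But your fix---``at a vertex the whole of $x$ is rational with the same denominators as the submatrix inverse governing $x'$''---is not a proof, and in fact the biconditional fails for general real $A'$. Take $m=2$, $n=3$, $A' = \begin{pmatrix}1\\ 1/3\end{pmatrix}$, $b=\begin{pmatrix}1\\1\end{pmatrix}$: then $P'_b=[0,1]$ has integer vertices, while the vertex of $P_b$ lying over $x'=1$ is $(1,0,2/3)$. So as literally stated, only the forward implication (every vertex of $P_b$ projects to a $1/k$-integral vertex of $P'_b$) holds in general.

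The reverse implication becomes true once $A'$ has integer entries, since then $b - A'x' \in \tfrac{1}{k}\Z^m$ whenever $b\in\Z^m$ and $x'\in\tfrac{1}{k}\Z^{n-m}$; this is the situation in the Hoffman--Kruskal application immediately following the lemma. You should either add that hypothesis when you invoke the converse, or state only the direction that holds unconditionally, rather than rely on the vague ``same denominators'' remark.
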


Using Theorem~\ref{thm:tu_iff_kappa_1} and Lemma~\ref{lem:P-project}, we can formulate Theorem~\ref{thm:HK} in subspace language.
\begin{cor}
Let $W\subseteq\R^n$ be a linear space. Then, $\kappa_W=1$ if and only if for every $d\in \Z^n$, the polyhedron $\{x\in\R^n:\, x\in W+d, x\ge 0\}$ is integral.
\end{cor}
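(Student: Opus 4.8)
The plan is to prove the two directions separately, in each case using the identity $\{x\in W+d:\,x\ge 0\}=\{x:\,Ax=Ad,\ x\ge 0\}$ for any matrix $A$ with $\ker(A)=W$. For the ``only if'' direction, suppose $\kappa_W=1$. By Theorem~\ref{thm:tu_iff_kappa_1}\ref{Ced:ii} there is a TU matrix $A$ with $\ker(A)=W$. Fix $d\in\Z^n$. Rewriting the equalities $Ax=Ad$ as two inequalities, the polyhedron in question equals $\{x:\,\binom{A}{-A}x\le\binom{Ad}{-Ad},\ x\ge 0\}$; the matrix $\binom{A}{-A}$ is again TU (a square submatrix either contains a row of $A$ together with its negation, hence is singular, or equals, up to negating rows, a square submatrix of $A$), and $\binom{Ad}{-Ad}$ is an integer vector, so Theorem~\ref{thm:HK} (Hoffman--Kruskal) shows that this polyhedron is integral. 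This also covers the trivial subspaces (the empty matrix and $I_n$ are TU).

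For the converse I would argue the contrapositive: assuming $\kappa_W>1$, I will produce a single $d\in\Z^n$ for which $\{x\in W+d:\,x\ge 0\}$ has a fractional vertex. Pick a circuit $C\in\circuits_W$ and $i,j\in C$ with $|g^C_j/g^C_i|>1$, and set $a:=i$, $b:=j$, so that $|g^C_a/g^C_b|\in(0,1)$. Extend the independent set $C\setminus\{b\}$ to a basis $\hat B$ of the linear matroid $\mathcal{M}(W)$; then $a\in\hat B$, $b\notin\hat B$, $|\hat B|=m:=n-\rk(W)$, and $C$ is the unique circuit contained in $\hat B\cup\{b\}$. Let $A\in\R^{m\times n}$ satisfy $\ker(A)=W$ and $A_{\hat B}=I_m$ (obtained from any full‑rank representation of $W$ by multiplying on the left by the inverse of its $\hat B$‑block; note $A$ need not be rational). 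By the computation in the proof of Proposition~\ref{prop:kappa-max}, the $b$‑th column of $A$ encodes an elementary vector supported in $\hat B\cup\{b\}$; since $C$ is the only circuit there, this vector is a scalar multiple of $g^C$, so $A_{ib}=-g^C_i/g^C_b$ for $i\in\hat B$, and in particular $A_{ab}=-g^C_a/g^C_b$ is nonzero and not an integer.

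Now choose a positive integer $K>\max_{i\in\hat B}|A_{ib}|$ and set $d:=e_b+K\sum_{i\in\hat B}e_i\in\Z^n$. Since $W=\ker(A)$, the polyhedron equals $\{x:\,Ax=Ad,\ x\ge 0\}$. The point $x^*$ with $x^*_{[n]\setminus\hat B}=0$ and $Ax^*=Ad$ satisfies $x^*_{\hat B}=A_{\cdot,b}+K\1_m$ (using $A_{\hat B}=I_m$, so that the $\hat B$‑columns sum to $\1_m$), which is strictly positive by the choice of $K$; hence $x^*$ is a basic feasible solution, i.e.\ a vertex, whose $a$‑th coordinate is $A_{ab}+K\notin\Z$. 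This contradicts integrality and completes the contrapositive.

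I expect the converse to be the crux. One cannot simply run the ``only if'' argument backwards through Theorem~\ref{thm:HK}, because a matrix in basis form representing $W$ need not be integral once $\kappa_W>1$, so Hoffman--Kruskal is not directly available in that direction; the explicit construction above is the way around this. A pleasant byproduct is that it requires no separate treatment of non‑rational subspaces: for such $W$ one has $\kappa_W>1$ automatically, the ratio $g^C_a/g^C_b$ may even be irrational, and the same $d$ then forces a vertex with an irrational coordinate.
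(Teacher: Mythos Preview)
Your proof is correct. Both directions are sound; in particular, the contrapositive construction for the converse is clean, and your remark that it handles non-rational $W$ without extra work is accurate.

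The paper takes a more uniform route: it fixes a basis representation $A=(A'\mid I_m)$ of $W$, uses Theorem~\ref{thm:tu_iff_kappa_1} to get the equivalence $\kappa_W=1\Leftrightarrow A'$ is TU, and then applies Lemma~\ref{lem:P-project} to identify the vertices of $\{x\in W+d:\,x\ge 0\}$ with those of $\{x':A'x'\le b,\ x'\ge 0\}$ for $b=Ad$, so that Hoffman--Kruskal (Theorem~\ref{thm:HK}) can be invoked in \emph{both} directions at once. Your argument instead splits the work: the forward direction stays close to the paper's (doubling rows rather than projecting), while your converse bypasses the ``if'' half of Hoffman--Kruskal entirely and exhibits an explicit fractional vertex. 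This is a legitimate and slightly more self-contained alternative; in fact, your converse construction is precisely the mechanism the paper later uses to prove the more general Theorem~\ref{thm:kappa-HK}, so you have effectively anticipated that argument in the special case $\kd_W>1$ (equivalently $\kappa_W>1$).
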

\begin{proof}
Let $n'=n-m=\dim(W)$.  W.l.o.g., assume the last  $m$ variables form a basis, and 
let us represent $W$ in a basis form as $W=\ker(A)$ for $A=(A'|I_m)$, where $A'\in\R^{m\times n'}$. It follows by Theorem~\ref{thm:tu_iff_kappa_1} that $\kappa_W=1$ if and only if $A$ is TU, which is further equivalent to $A'$ being TU.

Further, note that the system $\{x\in\R^n:\, x\in W+d, x\ge 0\}$  coincides with  $P_b=\{x\in\R^n:\, (A'|I_m)x=b, x\ge 0\}$, where $b=Ad$.

Note that  $b=Ad$ is integer whenever $d\in\Z^m$. Moreover, we can obtain every integer vector in $b\in\Z^m$ this way, since $A$ contains an identity matrix.
According to Lemma~\ref{lem:P-project}, $P_b$ is integral if and only if $P'_b=\{x\in \R^{n-m}:\, A'x'\le b, x'\ge 0\}$ is integral. The claim follows by Theorem~\ref{thm:HK}.
\end{proof}

We provide the following natural generalization. Related statements, although in substantially more complicated forms, were given in \cite[{Proposition 6.1 and 6.2}]{Lee89}.

\begin{theorem}\label{thm:kappa-HK}
Let $W\subseteq\R^n$ be a linear space. Then, $\dot\kappa_W$ is the smallest integer $k\in\Z$ such that for every $d\in \Z^n$,
the polyhedron $\{x\in\R^n:\, x\in W+d, x\ge 0\}$ is $1/k$-integral.
\end{theorem}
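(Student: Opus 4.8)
The plan is to prove the statement in two halves, writing $P_d:=\{x\in\R^n:\, x\in W+d,\ x\ge 0\}$. \emph{(Upper bound.)} For every $d\in\Z^n$ the polyhedron $P_d$ is $1/\kd_W$-integral. \emph{(Lower bound.)} If $k\in\Z$ is such that $P_d$ is $1/k$-integral for every $d\in\Z^n$, then $\kd_W\mid k$. These two facts prove the theorem: by the first, $\kd_W$ is one of the admissible integers $k$; by the second, every admissible $k$ is a multiple of $\kd_W$ and hence at least $\kd_W$; so $\kd_W$ is the smallest admissible integer. For the lower bound it suffices to show that for every prime $p$ with $\alpha:=\nu_p(\kd_W)\ge 1$ there is some $d\in\Z^n$ such that $P_d$ has a vertex with a coordinate of $p$-adic valuation $-\alpha$: then $1/k$-integrality of that vertex forces $\nu_p(k)\ge\alpha$, and intersecting over all primes $p\mid\kd_W$ gives $\kd_W\mid k$.

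For the upper bound, represent $W=\ker(A)$ with $A\in\Q^{m\times n}$ of full row rank, so that $P_d=\{x:\,Ax=Ad,\ x\ge 0\}$ is a pointed polyhedron whose vertices are the basic feasible solutions: for a basis $B\subseteq[n]$ with $A_B$ nonsingular, the corresponding vertex has $x_{[n]\setminus B}=0$ and $x_B=A_B^{-1}Ad=(A_B^{-1}A)d$. By Proposition~\ref{prop:kappa-max} every entry of the basis-form matrix $A_B^{-1}A$ is $1/\kd_A$-integral with $\kd_A=\kd_W$, and since $d$ is an integer vector the product $(A_B^{-1}A)d$ is $1/\kd_W$-integral; hence so is every vertex of $P_d$.

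For the lower bound, fix a prime $p$ with $\alpha=\nu_p(\kd_W)\ge 1$. From $\kd_W=\lcm\{\lcm(g^C):\,C\in\circuits_W\}$ we obtain a circuit $C$ and an index $i\in C$ with $\nu_p(g^C_i)=\alpha$; since the entries of $g^C$ have no common prime factor there is also an index $j\in C$ with $\nu_p(g^C_j)=0$, and $i\ne j$ because $\alpha\ge 1$. Pick a basis $B$ of $\mathcal M(W)$ with $C\setminus\{i\}\subseteq B$ and $i\notin B$ (so that $C$ is the fundamental circuit of $i$ with respect to $B$ and in particular $\supp(g^C)\subseteq B\cup\{i\}$). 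Writing $A^*:=A_B^{-1}A$ and using $A^*g^C=0$ together with $A^*_B=I_m$, the only non-basic coordinate on which $g^C$ is nonzero is $i$, so $g^C_B=-A^*_{\cdot i}\,g^C_i$, whence $A^*_{\ell i}=-g^C_\ell/g^C_i$ for $\ell\in B$ and $\nu_p(A^*_{ji})=\nu_p(g^C_j)-\nu_p(g^C_i)=-\alpha$. Now define $d\in\Z^n$ by $d_i:=1$, $d_\ell:=0$ for $\ell\in[n]\setminus(B\cup\{i\})$, and $d_\ell:=M$ for $\ell\in B$, where $M\in\Z$ is large enough that $M+A^*_{\ell i}>0$ for all $\ell\in B$ (possible since $|A^*_{\ell i}|\le\kd_W$ by Proposition~\ref{prop:kappa-max}, e.g.\ $M=\kd_W+1$). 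The basic solution for $B$ then has $x_{[n]\setminus B}=0$ and $x_B=d_B+A^*_{\cdot i}>0$, so it is a (non-degenerate) vertex of $P_d$, and its $j$-th coordinate is $x_j=M+A^*_{ji}$, which has $\nu_p(x_j)=\nu_p(A^*_{ji})=-\alpha$ because $\nu_p(M)\ge 0>-\alpha$. Thus if $P_d$ is $1/k$-integral then $kx_j\in\Z$ forces $\nu_p(k)\ge\alpha$, i.e.\ $p^\alpha\mid k$, completing the lower bound.

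Most of this is routine: the vertex/basic-feasible-solution dictionary and the $p$-adic valuation arithmetic. The one point that genuinely needs care is guaranteeing that the basic solution exhibited in the lower bound is actually feasible (nonnegative), which is exactly why $d$ is inflated on the basic coordinates, relying on the a priori bound $\|A_B^{-1}A\|_{\max}\le\kd_W$. The conceptual crux of the lower bound is the identification of the $i$-th column of a basis-form matrix $A_B^{-1}A$ (when $C\setminus\{i\}\subseteq B$) with a rescaling of $g^C$ — the same computation that underlies Proposition~\ref{prop:kappa-max} — since this is what injects a denominator of $p$-adic size $p^\alpha$ into a vertex while keeping that vertex feasible.
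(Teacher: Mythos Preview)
Your proof is correct and follows essentially the same approach as the paper's. Both halves match: the upper bound is identical (vertices are $(A_B^{-1}A)d$, and Proposition~\ref{prop:kappa-max} gives $1/\kd_W$-integrality), and the lower bound uses the same construction---a basic feasible solution of the form ``large integer vector plus a column of the basis-form matrix $A_B^{-1}A$''---to force a vertex with a prescribed denominator. The only cosmetic difference is that the paper argues directly by contradiction (choosing $\ell\in C$ with $g^C_\ell\nmid k$ and using $\gcd(g^C)=1$ to deduce $g^C_\ell\mid k$), whereas you factor the conclusion prime by prime via $p$-adic valuations; the underlying computation is the same.
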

\begin{proof}  Let $\dim(W)=n-m$, and let us represent $W=\ker(A)$ for $A\in\R^{m\times n}$. Then, $x\in W+d$, $x\ge 0$ can be written as $Ax=Ad$, $x\ge 0$. 
Let $x$ be a basic feasible solution (i.e. vertex) of this system. Then, $x=A_B^{-1}Ad$. By Proposition~\ref{prop:kappa-max}, $A_B^{-1}A$ is $1/\kd_W$-integral. Thus, if $d\in \Z^n$ then $x$ must be also $1/\kd_W$-integral.

Let us now show the converse direction. Assume $\{x\in\R^n:\, x\in W+d, x\ge 0\}$ is $1/k$-integral for every $d\in\Z^n$. For a contradiction, assume there exists a circuit $C\in\circuits_W$ such that the entries of the elementary vector  are not all divisors of $k$ (or that $g^C$ is not even a rational vector if $W$ is not a rational space). In particular, select an index $\ell\in C$ such that $g^C_\ell\nmid k$, or such that $(1/g^C_\ell)g^C$is not rational.

 Let us select a basis $B\subseteq [n]$ such that $C\setminus B=\{\ell\}$. For simplicity of notation, let $B=[m]$. We can represent $W=\ker(A)$ in a basis form as $A=(I_m|A')$. Let $g\in\R^n$ be defined by $g_{\ell}=1$, $g_j=-A_{j\ell}$ for $j\in B$ and $g_j=0$ otherwise; thus, $g=(1/g^C_\ell)g^C$. 

Let us pick an integer $t\in\N$, $t\ge \|g\|_\infty$, and define $d\in\Z^n$ by $d_j=t$ for $j\in B$, $d_\ell=-1$, and $d_j=0$ otherwise. Then, the basic solution of $x\in W+d$, $x\ge 0$ corresponding to the basis $B$ is obtained as $x_j=t+g_j$ for $j\in B$ and $x_j=0$ for $j\in [n]\setminus B$. The choice of $t$ guarantees  $x\ge 0$. By the assumption, $x$ is $1/k$-integer, and therefore $g$ is also $1/k$-integer. Recall that $g=(1/g^C_\ell)g^C$, where either $g^C\in\Z^n$ with $\lcm(g^C)=1$ and $g^C_\ell\nmid k$, or $g$ is not rational. Both cases give a contradiction.
\end{proof}

Using again Lemma~\ref{lem:P-project}, we can write this theorem in a form similar to the Hoffman-Kruskal theorem. 

\begin{cor}\label{cor:HK-kappa}
Let $A=(A'|I_m)\in\R^{m\times n}$. Then, $\kd_A$ is the smallest value $k$ such that for 
 every $b\in\Z^m$, the polyhedron $\{x'\in \R^{n-m}:\, A'x'\le b,x'\ge 0\}$
 is $1/k$-integral.
\end{cor}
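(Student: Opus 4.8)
The plan is to deduce this corollary from Theorem~\ref{thm:kappa-HK} using the projection dictionary of Lemma~\ref{lem:P-project}, exactly as the corollary following Theorem~\ref{thm:tu_iff_kappa_1} was deduced from the Hoffman--Kruskal theorem. Write $W=\ker(A)$ for $A=(A'|I_m)$, so that $\dim(W)=n-m$ and $\kd_A=\kd_W$. Fix $k\in\N$. First I would observe that the polyhedron $\{x'\in\R^{n-m}:\,A'x'\le b,\,x'\ge 0\}$ equals $P'_b$ in the notation of Lemma~\ref{lem:P-project}, and that $P_b=\{x\in\R^n:\,Ax=b,\,x\ge 0\}=\{x\in\R^n:\,x\in W+d,\,x\ge0\}$ whenever $Ad=b$; such a $d$ exists with $d\in\Z^n$ precisely when $b\in\Z^m$, and conversely $Ad\in\Z^m$ for every $d\in\Z^n$ since $A$ contains $I_m$. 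Thus as $b$ ranges over $\Z^m$ and $d$ ranges over $\Z^n$ the two families $\{P'_b\}$ and $\{(W+d)\cap\R^n_{\ge 0}\}$ (via projection $\pi_I$, $I=[n-m]$) parametrise the same objects up to the correspondence of Lemma~\ref{lem:P-project}.

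Next I would invoke the last sentence of Lemma~\ref{lem:P-project}: for $b\in\Z^m$, $P_b$ is $1/k$-integral if and only if $P'_b$ is $1/k$-integral. Combining this with the previous paragraph, the statement ``for every $b\in\Z^m$, $\{x'\in\R^{n-m}:\,A'x'\le b,\,x'\ge0\}$ is $1/k$-integral'' is equivalent to ``for every $d\in\Z^n$, $\{x\in\R^n:\,x\in W+d,\,x\ge0\}$ is $1/k$-integral''. By Theorem~\ref{thm:kappa-HK}, the smallest integer $k$ with the latter property is $\kd_W=\kd_A$, which is exactly the claim.

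There is essentially no obstacle here; the corollary is a pure translation. The one point worth stating carefully is that the parametrisations in the two settings match: every $b\in\Z^m$ arises as $Ad$ for some integer $d$ (because of the embedded identity block), and every integer $d$ gives an integer right-hand side, so no integral polyhedron is lost or gained in passing between the equality form and the inequality form. I would also note, for completeness, that $\kd_A=\kd_W$ by definition since $\ker(A)=W$, so the ``smallest $k$'' in Theorem~\ref{thm:kappa-HK} and in the corollary denote the same quantity.
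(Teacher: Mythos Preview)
Your approach is exactly the one the paper indicates: deduce the corollary from Theorem~\ref{thm:kappa-HK} via the projection correspondence of Lemma~\ref{lem:P-project}. However, your justification that ``$Ad\in\Z^m$ for every $d\in\Z^n$ since $A$ contains $I_m$'' is not correct. The identity block gives the inclusion $\Z^m\subseteq A\Z^n$ (take $d=(0_{n-m},b)$), and that is precisely what you need to show that the $d\in\Z^n$ condition of Theorem~\ref{thm:kappa-HK} implies the $b\in\Z^m$ condition, yielding ``the smallest $k$ for the $b$-family is at most $\kd_A$''. But the reverse inclusion $A\Z^n\subseteq\Z^m$, which you invoke to argue that the two families of polyhedra coincide and hence that $\kd_A$ is also a lower bound, holds only when $A'$ is integer-valued. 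Without it the parametrisations genuinely differ: for instance with $A=(1/3\mid 1)$ one has $\kd_A=3$, yet $P'_b=[0,3b]$ is integral for every $b\in\Z$, so $k=1$ already works for the $b$-family.

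So the point you flag as ``the one point worth stating carefully'' does require care: the matching of parametrisations needs the extra hypothesis $A'\in\Z^{m\times(n-m)}$ (equivalently $A\in\Z^{m\times n}$). Under that hypothesis your argument is complete and identical to the paper's.
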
 
Appa and Kotnyek \cite[Theorem 17]{Appa04} show that $k$-regularity of $A'$ (in the sense that the inverse of every square submatrix is $1/k$-integral) is equivalent to the property above. 

\paragraph{Subspaces with $\dot\kappa_A=2$}The case $\kd_W=2$ is a particularly interesting class. As already noted, it includes incidence matrices of undirected graphs, and according to Theorem~\ref{thm:kappa-HK}, it corresponds to half-integer polytopes. This class includes the following matrices, first studied by Edmonds and Johnson \cite{Edmonds1970}; the following result follows e.g. from \cite{Appa04,Gerards1986,Hochbaum1993}.
\begin{theorem}\label{thm:edmonds-johnson}
Let $A\in\Z^{m\times n}$ such that for each column $j\in [n]$, $\sum_{i=1}^m|A_{ij}| \leq 2$. Then $\dot{\kappa}_A\in \{1,2\}$. 
\end{theorem}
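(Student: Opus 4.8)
The plan is to show, via Theorem~\ref{thm:kappa-HK}, that the elementary vectors of $W = \ker(A)$ all have $1/2$-integral rescalings, i.e.\ that $\kd_W \mid 2$. By Proposition~\ref{prop:kappa-max} it suffices to show that for every nonsingular $m\times m$ submatrix $A_B$, the matrix $A_B^{-1}A$ is $1/2$-integral; equivalently, by Lemma~\ref{lem:sub-inverse} applied after passing to basis form, that $\det(A_B) \in \{\pm 1, \pm 2^k : k \ge 1\}$ never causes worse-than-half fractionality — more precisely, that $2 A_B^{-1}$ is an integer matrix for every nonsingular square submatrix. By Cramer's rule this reduces to a claim about minors: for every square submatrix $M$ of $A$ (built from a column-restriction of $A$, since the hypothesis $\sum_i |A_{ij}| \le 2$ is column-wise), every minor of $M$ is divisible by $\det(M)/2$ when $\det(M)$ is even, and $\det(M) \mid 2\cdot(\text{minor})$ always. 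The cleanest route is therefore to prove the self-contained statement: \emph{if $B \in \Z^{k\times k}$ has all columns with $\ell_1$-norm at most $2$, then $\det(B) \in \{0, \pm 1, \pm 2\}$ up to the structural reduction below} — this is the classical Edmonds–Johnson fact, and I would cite \cite{Appa04, Gerards1986, Hochbaum1993} for the full argument but sketch it.

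First I would set up the reduction to connectivity. Represent $W = \ker(A)$ with $A$ in basis form $A = (I_m \mid A')$; this only permutes and pivots columns, and pivoting on a $\pm 1$ entry of a column with $\ell_1$-norm $\le 2$ keeps all columns of $\ell_1$-norm $\le 2$ modulo sign — actually one must be slightly careful: pivoting can destroy the column-sum condition, so instead I would work directly with the original $A$ and use Proposition~\ref{prop:kappa-delta}, which gives $\kd_A \le \dot\Delta_A$, reducing everything to bounding $|\det(M)|$ and more precisely $\lcm$ of such determinants for square submatrices $M$ of $A$. Since $M$ is a submatrix using a subset of columns of $A$, each column of $M$ still has $\ell_1$-norm $\le 2$ (deleting rows only decreases it). So the task is purely: \emph{bound $|\det(M)|$ for integer square matrices $M$ with column $\ell_1$-norms $\le 2$, and show the lcm of all such values occurring in $A$ divides $2$}.

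The core combinatorial step: a column of $M$ with $\ell_1$-norm $\le 2$ is either zero (then $\det M = 0$), or a $\pm e_i$ (a unimodular column — expand along it, reducing size), or of the form $\pm(e_i \pm e_j)$ with $i \ne j$. After stripping off the unit columns by Laplace expansion (which multiplies the determinant by $\pm 1$ each time), we are left with a square integer matrix every column of which has exactly two nonzero entries, each $\pm 1$ — this is a signed incidence matrix of a graph $G$ (vertices = rows, edges = columns), and the classical fact is that such a determinant equals $0$ if $G$ is not a disjoint union of edges and odd cycles spanning all vertices, and equals $\pm 2^c$ where $c$ is the number of odd-cycle components otherwise (each odd cycle contributes a factor $\pm 2$, each single edge a factor $\pm 1$). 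Hence $|\det M| = 2^c$; and $\lcm\{2^c : \ldots\} $ could in principle be large. The resolution — and the main obstacle — is that for $\kd_A \le 2$ we do \emph{not} need $\dot\Delta_A \le 2$; rather we need each \emph{circuit}'s elementary vector to be $1/2$-integral, i.e.\ for each $C \in \circuits_A$ and $i \in C$, the ratio $|\det(\hat A_{-i})|/|\det(\hat A_{-i'})|$ to be $1/2$-integral (Proposition~\ref{prop:kappa-delta}'s proof). With $\hat A \in \Z^{(|C|-1)\times|C|}$ having column $\ell_1$-norms $\le 2$, both $\det(\hat A_{-i})$ and $\det(\hat A_{-i'})$ are $\pm 2^{c}, \pm 2^{c'}$ by the incidence-matrix analysis, but one shows $|c - c'| \le 1$ because $\hat A_{-i}$ and $\hat A_{-i'}$ differ in one column, changing the cycle structure by at most one odd cycle. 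Therefore every ratio $|g^C_j/g^C_i|$ is a power of $2$ differing by at most one in the exponent after clearing the common $2$-power, so the primitive integer representative $g^C$ has $\lcm(g^C) \in \{1,2\}$, giving $\kd_A \mid 2$. Combined with $\kd_A \ge 1$ and integrality, $\kd_A \in \{1,2\}$.

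The step I expect to be the genuine obstacle is the last one — controlling $|c - c'|$, i.e.\ showing that deleting one column from a graph incidence matrix and re-picking the deleted column changes the number of odd-cycle components of any spanning edge-subset-plus-odd-cycles structure by at most one. This is where the Edmonds–Johnson literature does real work (it is essentially the statement that bidirected graph incidence matrices are "$2$-regular"), and I would either cite \cite{Appa04} Theorem~17 together with Corollary~\ref{cor:HK-kappa}, or present the short direct argument: pass to basis form $A = (I_m \mid A')$ — here one \emph{can} arrange, after the pivot, that $A'$ still has columns of $\ell_1$-norm $\le 2$ by choosing the basis inside the "unit-column part" whenever possible and otherwise tracking how a single cycle-pivot acts — then invoke Lemma~\ref{lem:sub-inverse}, which already tells us $M^{-1}$ is $1/\kd_A$-integral with the needed entry bounds, and close the loop by the determinant computation showing no denominator worse than $2$ appears. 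Given the references, writing "this follows from \cite{Appa04,Gerards1986,Hochbaum1993}" as stated in the excerpt is the honest move; my added value would be the two-line reduction making clear it is exactly Proposition~\ref{prop:kappa-delta} plus the odd-cycle determinant formula.
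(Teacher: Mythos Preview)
The paper does not actually prove this theorem: it records it as a consequence of the cited references, the implicit bridge being Corollary~\ref{cor:HK-kappa} (equivalently Theorem~\ref{thm:kappa-HK}). Those references establish that for Edmonds--Johnson matrices the polyhedra $\{x : A'x \le b,\, x \ge 0\}$ are half-integral for every integer $b$ (or, in \cite{Appa04}, that $A'$ is $2$-regular), and the corollary converts this to $\kd_A \le 2$. Your proposal ultimately lands at the same citation, so in that narrow sense it matches.

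The direct combinatorial route you sketch, however, has two real gaps. First, your enumeration of integer columns with $\ell_1$-norm $\le 2$ omits $\pm 2e_i$ (a loop in the bidirected-graph picture); such columns also contribute a factor of $2$ to the determinant and must be tracked alongside the odd cycles. Second --- and this is the substantive point --- the core of your argument is the claim that $|c - c'| \le 1$ when $\hat A_{-i}$ and $\hat A_{-i'}$ differ in a single column, i.e.\ that replacing one column of a bidirected incidence matrix changes the number of unbalanced components by at most one. You correctly flag this as ``the genuine obstacle,'' but you do not prove it, and it is precisely equivalent to $\bar\kappa_A \le 2$, the statement under consideration. Your earlier appeal to Lemma~\ref{lem:sub-inverse} is circular for the same reason: that lemma outputs $1/\kd_A$-integrality, which is the conclusion, not a tool.

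So the Cramer reduction to subdeterminant ratios and the observation that every relevant subdeterminant is a power of $2$ are correct and clarifying, but the passage from ``all powers of $2$'' to ``spread at most one in the exponent'' is the entire content of the theorem and your sketch does not supply it. If you want more than the citation, the clean self-contained argument is to classify the circuits of $\ker(A)$ for bidirected graphs directly --- they are balanced closed walks and ``dumbbells'' (two unbalanced cycles joined by a path, possibly degenerate) --- and read off from that description that the normalized elementary vectors have entries in $\{0,\pm 1,\pm 2\}$.
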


Appa and Kotnyek \cite{Appa04} define \emph{binet matrices} as $A'=A_B^{-1} A$ for a matrix $A$ as in Theorem~\ref{thm:edmonds-johnson} for a basis $B$. Clearly, these matrices have $\kd_{A'}\in \{1,2\}$ since they define the same subspace. 

Deciding whether a matrix has $\kd_A=2$ (or more generally, $\kd_A=k$ for a fixed constant $k$) is an interesting open question: is it possible to extend Seymour's decomposition~\cite{Seymour1980} from TU matrices? The matrices in Theorem~\ref{thm:edmonds-johnson} could be a natural building block of such a decomposition.

\subsection{Self-duality}\label{sec:self-dual}
We next show that both $\Le_W$ and $\kd_W$ are self-dual.
These rely on the following duality property of circuits.
We introduce the following more refined quantities that will also come useful later on.

\begin{Def}[Pairwise Circuit Imbalances] \label{def:pairwise circuit_imbalances} For a space $W \subseteq \R^n$ and variables $i,j \in [n]$ we define 
\[
\begin{aligned}\KK_{ij}^W &:= \set{\left|\frac{g_j}{g_i}\right|:\,  \{i,j\} \subseteq \supp(g), g \in \EE(W)}\, ,\quad \Le_{ij}^W := \max \KK_{ij}^W\, ,\\
\dot \KK_{ij}^W &:= \left\{\lcm(p,q) : p,q \in \N, \gcd(p,q) = 1, \frac{p}{q} \in \KK_{ij}^W\right\}\, .
\end{aligned}\]
We call $\Le_{ij}^W $ the \emph{pairwise imbalance between $i$ and $j$}.
\end{Def}
Cleary, $\Le_W=\max_{i,j\in [n]} \Le_{ij}^W$ for a nontrivial linear space $W$.
We use the following simple lemma.
\begin{lemma}\label{lem:basis_circuits}
Consider a matrix $A\in\R^{m\times n}$ in basis form for $B\subseteq[n]$, i.e., $A_B=I_m$. Let $W=\ker(A)$; thus, $W^\perp=\spn(A^\top)$. The following hold.
\begin{enumerate}[(i)]
\item \label{i:basis_circuits-i} The rows of $A$ form a circuit basis of $W^\perp$, denoted as $\EE_B(W^\perp)$.
\item \label{i:basis_circuits-ii} For any two rows $A^i,A^j$, $i,j\in B$, $i\neq j$, and $k\in[n]\setminus B$, the vector $h=A_{jk} A^i-A_{ik} A^j$ fulfills $h \in\EE(W^\perp)$.
\end{enumerate}
\end{lemma}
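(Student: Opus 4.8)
The plan is to reduce to the normalized case $B=[m]$, so that $A=(I_m\mid A')$ and the $i$-th row reads $A^i=e_i+\sum_{k\in[n]\setminus B}A_{ik}e_k$; the general statement follows by relabeling coordinates. The one structural observation I will lean on throughout is that every $h\in W^\perp=\spn(A^\top)$ can be written as $h=\sum_{\ell\in B}\lambda_\ell A^\ell$ with $\lambda_\ell=h_\ell$, because reading off the $B$-coordinates of this combination and using $A_B=I_m$ recovers the coefficients. In particular, $\supp(h)\cap B=\{\ell\in B:\lambda_\ell\ne 0\}$, so confining $\supp(h)$ immediately restricts which rows may participate.

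For part (i), the rows $A^1,\dots,A^m$ are linearly independent (they contain the identity block) and span $W^\perp$ by hypothesis, so it remains only to check each $A^i$ is support-minimal. If $h\in W^\perp\setminus\{0\}$ satisfies $\supp(h)\subseteq\supp(A^i)$, then since $\supp(A^i)\cap B=\{i\}$ the observation forces $\lambda_\ell=0$ for all $\ell\in B\setminus\{i\}$, hence $h=\lambda_i A^i$ with $\lambda_i\ne 0$, so $\supp(h)=\supp(A^i)$. Thus no strictly smaller nonzero support occurs, i.e. $A^i\in\EE(W^\perp)$, and the $m=\rk(W^\perp)$ rows form a circuit basis.

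For part (ii), first note $h=A_{jk}A^i-A_{ik}A^j\in W^\perp$ as a row combination, and a direct computation of three coordinates gives $h_i=A_{jk}$, $h_j=-A_{ik}$, and $h_k=A_{jk}A_{ik}-A_{ik}A_{jk}=0$; so $h$ is precisely the vector obtained by eliminating the coordinate $k$ from the pair $A^i,A^j$. Assuming $h\ne 0$ (equivalently, at least one of $A_{ik},A_{jk}$ is nonzero, which is the only case in which the claim is meaningful), I verify support-minimality by the same mechanism: if $h'\in W^\perp\setminus\{0\}$ has $\supp(h')\subseteq\supp(h)$, then $\supp(h')\cap B\subseteq\{i,j\}$, so $h'=\mu_i A^i+\mu_j A^j$, and the forced equation $h'_k=\mu_iA_{ik}+\mu_jA_{jk}=0$ pins $(\mu_i,\mu_j)$ down to a scalar multiple of $(-A_{jk},A_{ik})$ when both $A_{ik},A_{jk}$ are nonzero, yielding $h'=-t\,h$ for some $t\ne0$ and hence $\supp(h')=\supp(h)$. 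When exactly one of $A_{ik},A_{jk}$ vanishes, $h$ is itself a nonzero scalar multiple of $A^j$ or of $A^i$, so it is elementary by part (i).

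I expect the only real subtlety to be the bookkeeping around the degenerate case $A_{ik}=A_{jk}=0$, where $h=0$ and hence is not an elementary vector: one should either read the lemma as implicitly restricting to $k\in\supp(A^i)\cup\supp(A^j)$, or make this restriction explicit. Everything else is the routine linear-algebra argument above; the single idea that must be used cleanly is that membership in $\rowspace(A)$ together with the identity block makes the $B$-coordinates of a vector equal to its expansion coefficients.
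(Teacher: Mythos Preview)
Your proof is correct and follows essentially the same approach as the paper: both exploit the key observation that for $h\in W^\perp$ the $B$-coordinates equal the expansion coefficients in the row basis, and both deduce support-minimality of $h=A_{jk}A^i-A_{ik}A^j$ by forcing any $h'$ with $\supp(h')\subseteq\supp(h)$ to satisfy $h'_k=0$ and hence be a scalar multiple of $h$. You are in fact slightly more careful than the paper about the degenerate cases (the paper simply writes ``there is nothing to prove if $A_{ik}=0$ or $A_{jk}=0$'', while you spell out that $h$ then reduces to a multiple of a single row or to $0$).
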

\begin{proof}
For part \ref{i:basis_circuits-i}, the rows are clearly linearly independent and span $W^\perp$. Therefore,  every $g\in W^\perp$ must have $\supp(g)\cap B\neq\emptyset$, and if $\supp(g)\cap B=\{i\}$ then $g=g_i A^i$. These two facts imply that each $A^i$ is support minimal in $W^\perp$, that is, $A^i\in\EE(W^\perp)$.

For part \ref{i:basis_circuits-ii}, there is nothing to prove if $A_{ik}=0$ or $A_{jk}=0$; for the rest, assume both are nonzero. Assume for a contradiction $h\notin\EE(W^\perp)$; thus, there exists a $g\in W^\perp$, $g\neq 0$ and $\supp(g)\subsetneq \supp(h)$. We have $\supp(h)\cap B=\{i,j\}$. If $\supp(g)\cap B\subsetneq \{i,j\}$, as above we get that $g=g_i A^i$ or $g=g_j A^j$, a contradiction since $h_k=0$ but $A_{ik}, A_{jk}\neq 0$. Hence, $\supp(g)\cap B=\{i,j\}$. By part \ref{i:basis_circuits-i}, we have $g=g_i A^i+g_j A^j$; and since $h_k=0$ it follows that $g_i/g_j=-A_{jk}/A_{ik}$; thus, $g$ is a scalar multiple of $h$, a contradiction.
\end{proof}

\begin{lemma}\label{lem:dual-circuit}
For any $i,j \in [n]$ we have $\KK_{ij}^W = \set{\alpha^{-1} : \alpha \in \KK_{ji}^{W^\perp}}$. Equivalently: for every elementary vector $g \in \EE(W)$ with indices $i,j \in \supp(g)$ there exists an elementary vector $h\in \EE(W^\perp)$  such that $|h_i/h_j| = |g_j/ g_i|$.
\end{lemma}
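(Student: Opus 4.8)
The plan is to prove the equivalence $\KK_{ij}^W = \set{\alpha^{-1} : \alpha \in \KK_{ji}^{W^\perp}}$ by producing, from an elementary vector $g\in\EE(W)$ with $i,j\in\supp(g)$, an explicit elementary vector $h\in\EE(W^\perp)$ with $|h_i/h_j| = |g_j/g_i|$; the reverse inclusion then follows by symmetry, using $(W^\perp)^\perp = W$. The key technical tool will be Lemma~\ref{lem:basis_circuits}, so the first step is to set up a convenient basis form.

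First I would fix an elementary vector $g\in\EE(W)$ with $i,j\in C:=\supp(g)$. Since $C$ is a circuit, $C\setminus\{i\}$ is independent in $\mathcal{M}(W)$, so it extends to a basis $B$ of $\mathcal{M}(W)$ with $i\notin B$ and $j\in B$ (note $C\setminus\{i\}\ni j$). Represent $W=\ker(A)$ with $A\in\R^{m\times n}$ in basis form for $B$, i.e.\ $A_B = I_m$. Then the column $A_i$ of $A$ (for the non-basic index $i$) encodes the elementary vector supported on $C$: up to scaling, $g_k = -A_{ki}$ for $k\in B\cap C$ and $g_i = 1$ (after normalizing $g$ so $g_i=1$). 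In particular, for the basic index $j$ we get $g_j/g_i = -A_{ji}$, so $|g_j/g_i| = |A_{ji}|$.

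Next I would exhibit the dual elementary vector. By Lemma~\ref{lem:basis_circuits}\ref{i:basis_circuits-i}, the rows $A^k$, $k\in B$, form a circuit basis of $W^\perp$, and each $A^k\in\EE(W^\perp)$. Take $h := A^j$, the $j$-th row of $A$. Then $h\in\EE(W^\perp)$. Since $A$ is in basis form, $h_j = A_{jj} = 1$ (the $j$-th entry of the $j$-th row of $I_m$ on the $B$-block), and $h_i = A_{ji}$ (the entry in row $j$, column $i$). Hence $i,j\in\supp(h)$ — we must check $h_i=A_{ji}\neq 0$, which holds precisely because $j\in\supp(g)$ forces $A_{ji}\neq 0$ — and $|h_i/h_j| = |A_{ji}| = |g_j/g_i|$. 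This shows $\KK_{ij}^W \subseteq \set{\alpha^{-1}:\alpha\in\KK_{ji}^{W^\perp}}$, reading the ratio as $|h_i/h_j|$, i.e.\ the inverse of the element $|h_j/h_i|\in\KK_{ji}^{W^\perp}$.

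The reverse inclusion follows by applying the same construction to $W^\perp$ in place of $W$ and swapping the roles of $i$ and $j$, together with $(W^\perp)^\perp = W$. The main subtlety — the step I would be most careful about — is the case where $g_j$ is basic but the chosen extension puts other elements of $C$ outside $B$: one has to confirm that a basis $B$ with $i\notin B$, $C\setminus\{i\}\subseteq B$ genuinely exists, which it does since $C\setminus\{i\}$ is independent; and then that the non-basic column $A_i$ really reproduces $g$ up to scaling (this is the standard fundamental-circuit fact, essentially the argument in the proof of Proposition~\ref{prop:kappa-max}). Everything else is bookkeeping with the identity block $A_B=I_m$. As a minor alternative, one could avoid choosing a basis and instead invoke Lemma~\ref{lem:basis_circuits}\ref{i:basis_circuits-ii} directly once a basis form is fixed, but the cleaner route is the row-of-$A$ argument above.
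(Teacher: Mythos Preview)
Your proof is correct and takes a genuinely different route from the paper. The paper chooses a basis $B$ containing \emph{both} $i$ and $j$ (so $\supp(g)\setminus B=\{k\}$ for some third index $k$), and then invokes Lemma~\ref{lem:basis_circuits}\ref{i:basis_circuits-ii} to build the dual elementary vector as the combination $h=A_{jk}A^i-A_{ik}A^j$; it also handles the degenerate case $\supp(g)=\{i,j\}$ separately. Your approach instead puts $j\in B$ and $i\notin B$, so that $g$ is the fundamental circuit of $i$ with respect to $B$, and then simply takes $h=A^j$ via Lemma~\ref{lem:basis_circuits}\ref{i:basis_circuits-i}. This is cleaner: it avoids the case split, uses only the easier part of Lemma~\ref{lem:basis_circuits}, and the ratio $|h_i/h_j|=|A_{ji}|=|g_j/g_i|$ falls out immediately from the identity block. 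The paper's route, by contrast, illustrates a use of part~\ref{i:basis_circuits-ii} and is more symmetric in $i$ and $j$, but at the cost of an extra index $k$ and a two-row combination. Your mention of part~\ref{i:basis_circuits-ii} as a possible alternative is in fact exactly what the paper does.
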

\begin{proof}
Let $g \in \EE(W)$ such that $i,j \in \supp(g)$. If $\supp(g) = \{i,j\}$ then any $h \in \EE(W^\perp)$ with $i \in \supp(h)$ fulfills $g_ih_i + g_jh_j = \pr{g}{h} = 0$, so $j \in \supp(h)$ and $|h_i/h_j| = |g_j/g_i|$.

Else, there exists $k \in \supp(g) \setminus \{i,j\}$. Let us select a basis $B$  of $\mathcal{M}(W)$ with $\supp(g) \setminus B=\{k\}$. Let $A\in\R^{m\times n}$ be a matrix in basis form for $B$ with $\ker(A)=W$, and let  $h=A_{jk} A^i-A_{ik} A^j$, an elementary vector in $\EE(W^\perp)$
by  Lemma~\ref{lem:basis_circuits}\ref{i:basis_circuits-ii}.

By the construction, $|h_i/h_j|=|A_{jk}/A_{ik}|$. On the other hand,
$\pr{g}{A^i} = 0$ and $\supp(g) \setminus B=\{k\}$ implies $g_i=-g_k A_{ik}$ and similarly $\pr{g}{A^j} = 0$ implies $g_j=-g_k A_{jk}$. The claim follows.
\end{proof}

For $\Le_W$, duality is immediate from the above:
\begin{prop}[\cite{DHNV20}]\label{prop:kappa-dual}
For any linear subspace $W\subseteq \R^n$, we have
$\Le_W=\Le_{W^\perp}$.
\end{prop}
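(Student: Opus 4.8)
The plan is to derive $\kappa_W = \kappa_{W^\perp}$ directly from Lemma~\ref{lem:dual-circuit} together with the observation, stated just before the proposition, that $\kappa_W = \max_{i,j\in[n]} \kappa_{ij}^W$ for a nontrivial linear space. First I would dispense with the trivial cases: if $W = \{0\}$ or $W = \R^n$, then $W^\perp$ is the other trivial space, and by Definition~\ref{def:kappa} both sides equal $1$. So assume $W$ is nontrivial; then $W^\perp$ is also nontrivial, and both $\kappa_W$ and $\kappa_{W^\perp}$ are well-defined via the pairwise formula.

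The core step is to show $\kappa_{ij}^W = \kappa_{ji}^{W^\perp}$ as a consequence of Lemma~\ref{lem:dual-circuit}. That lemma asserts $\KK_{ij}^W = \{\alpha^{-1} : \alpha \in \KK_{ji}^{W^\perp}\}$. Taking maxima, $\kappa_{ij}^W = \max \KK_{ij}^W = \max \{\alpha^{-1} : \alpha \in \KK_{ji}^{W^\perp}\} = (\min \KK_{ji}^{W^\perp})^{-1}$. Now here is the symmetry I want to exploit: the set $\KK_{ji}^{W^\perp}$ is closed under $\alpha \mapsto \alpha^{-1}$, because for any elementary vector $h \in \EE(W^\perp)$ with $i,j \in \supp(h)$, both $|h_i/h_j|$ and $|h_j/h_i|$ lie in the relevant ratio sets — indeed $|h_j/h_i| \in \KK_{ji}^{W^\perp}$ and $|h_i/h_j| \in \KK_{ij}^{W^\perp}$, and applying Lemma~\ref{lem:dual-circuit} once more (now from $W^\perp$ back, using $(W^\perp)^\perp = W$) shows $\KK_{ij}^{W^\perp} = \{\beta^{-1} : \beta \in \KK_{ji}^{W}\}$. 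Chaining these gives $\KK_{ji}^{W^\perp} = \{\beta^{-1} : \beta \in \KK_{ij}^{W}\}$ as well, hence $\min \KK_{ji}^{W^\perp} = (\max \KK_{ij}^W)^{-1} = (\kappa_{ij}^W)^{-1}$, and therefore $\kappa_{ij}^W = (\min \KK_{ji}^{W^\perp})^{-1}$. But by the same token $\kappa_{ji}^{W^\perp} = \max \KK_{ji}^{W^\perp} = (\min \KK_{ij}^{W})^{-1}$, and iterating gives $\min \KK_{ij}^W = (\kappa_{ji}^{W^\perp})^{-1}$, so $\kappa_{ij}^W = \kappa_{ji}^{W^\perp}$. (Alternatively, and more cleanly: since $1 \in \KK$ whenever... — actually the cleanest route is simply to note that $\kappa_{ij}^W = \max\{|g_j/g_i|\}$ and $\kappa_{ji}^W = \max\{|g_i/g_j|\}$ range over the same set of elementary vectors, and Lemma~\ref{lem:dual-circuit} converts a $W$-ratio into an equal $W^\perp$-ratio with indices swapped, so every value of $|g_j/g_i|$ over $\EE(W)$ equals some value of $|h_i/h_j|$ over $\EE(W^\perp)$, which is a value of $\kappa^{W^\perp}$-type quantity; doing this in both directions gives equality of the sets and hence of the maxima.)

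Finally I would assemble the pieces: $\kappa_W = \max_{i,j} \kappa_{ij}^W = \max_{i,j} \kappa_{ji}^{W^\perp} = \max_{i,j} \kappa_{ij}^{W^\perp} = \kappa_{W^\perp}$, where the last relabeling just swaps the roles of $i$ and $j$ in the maximization over all ordered pairs. The main obstacle — really the only subtlety — is being careful about which ratio is inverted in Lemma~\ref{lem:dual-circuit} and making sure the index swap $\kappa_{ij} \leftrightarrow \kappa_{ji}$ is handled correctly; once one observes that the outer maximum $\max_{i,j\in[n]}$ runs over all ordered pairs and is thus symmetric in $i,j$, the inversion in the lemma is exactly compensated and the identity drops out. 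No delicate estimates are needed; this is essentially a bookkeeping argument on top of the circuit-duality lemma.
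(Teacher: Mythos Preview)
Your proposal is correct and follows the same approach as the paper: both deduce the result directly from Lemma~\ref{lem:dual-circuit}. The paper simply states that duality ``is immediate from the above,'' whereas you spell out the index bookkeeping; your parenthetical ``cleaner route'' is in fact the one-line argument the paper has in mind---every ratio $|g_j/g_i|$ appearing in $\kappa_W$ equals some ratio $|h_i/h_j|$ in $W^\perp$ by the lemma, and since the outer maximum over ordered pairs $(i,j)$ is symmetric, $\kappa_W=\kappa_{W^\perp}$.
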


Let us now show duality also for $\kd_W$; this was  shown in \cite[Lemma 2.1]{Lee89} in a slightly different form.
\begin{prop}\label{prop:kappadot-dual}
For any rational linear subspace $W\subseteq \R^n$, we have $\kd_W=\kd_{W^\perp}$.
\end{prop}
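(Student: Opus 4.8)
The plan is to compare the two quantities prime by prime and to route everything through the pairwise imbalance sets $\dot\KK_{ij}$, which Lemma~\ref{lem:dual-circuit} essentially already shows to be self-dual. If $W$ is trivial then $W^\perp$ is the other trivial subspace and both sides equal $1$ by definition, so assume from now on that $W$, and hence $W^\perp$, is nontrivial; both are rational, since the orthogonal complement of a rational space is rational.

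The main step I would isolate is the identity
\[
\kd_W=\lcm\Bigl(\,\bigcup_{i,j\in[n]}\dot\KK_{ij}^W\,\Bigr),
\]
which expresses $\kd_W$ purely in terms of pairwise ratios. To prove it, fix a prime $p$ and write $\nu=\nu_p$ for the $p$-adic valuation. For a circuit $C\in\circuits_W$ the representative $g^C$ is integral with coordinate gcd $1$, so $\min_{i\in C}\nu(g^C_i)=0$ and therefore $\nu(\lcm(g^C))=\max_{i\in C}\nu(g^C_i)=\max_{i,j\in C}\bigl(\nu(g^C_i)-\nu(g^C_j)\bigr)$. On the other hand, an element of $\dot\KK_{ij}^W$ has the form $\lcm(p',q')$ with $\gcd(p',q')=1$ and $p'/q'=|g_j/g_i|$ for some $g\in\EE(W)$ with $i,j\in\supp(g)$; since one of $\nu(p'),\nu(q')$ vanishes, $\nu(\lcm(p',q'))=\bigl|\nu(p')-\nu(q')\bigr|=\bigl|\nu(g_j)-\nu(g_i)\bigr|$, and replacing $g$ by the primitive representative $g^C$ of $C=\supp(g)$ leaves the ratio $|g_j/g_i|$ unchanged. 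Taking maxima, both sides of the display have $p$-adic valuation equal to $\max_{C\in\circuits_W}\max_{i,j\in C}\bigl|\nu(g^C_i)-\nu(g^C_j)\bigr|$; as this holds for every prime $p$, the identity follows.

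Granting the identity, the proposition is immediate. Lemma~\ref{lem:dual-circuit} gives $\rho\in\KK_{ij}^W\iff\rho^{-1}\in\KK_{ji}^{W^\perp}$, and combined with $\lcm(p',q')=\lcm(q',p')$ this yields $\dot\KK_{ij}^W=\dot\KK_{ji}^{W^\perp}$ for all $i,j\in[n]$. Hence $\bigcup_{i,j\in[n]}\dot\KK_{ij}^W=\bigcup_{i,j\in[n]}\dot\KK_{ij}^{W^\perp}$ after relabelling the index pair, and applying the identity to both $W$ and $W^\perp$ gives $\kd_W=\kd_{W^\perp}$.

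The only real work sits in the valuation bookkeeping for the identity: using the gcd-$1$ normalization of $g^C$ exactly once to rewrite $\max_{i}\nu(g^C_i)$ as a maximum of pairwise differences, and checking that passing from an arbitrary elementary vector to its primitive integer representative preserves the ratios $|g_j/g_i|$. All the geometric content is supplied by the circuit duality in Lemma~\ref{lem:dual-circuit}. One could also bypass the intermediate identity and argue directly — pick the circuit $C$ and pair $i,j$ attaining $\nu_p(\kd_W)$, transfer them to a dual circuit via Lemma~\ref{lem:dual-circuit}, and read off $\nu_p(\kd_{W^\perp})\ge\nu_p(\kd_W)$, then symmetrize — but the displayed identity is the cleaner place to record the argument.
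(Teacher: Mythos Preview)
Your proof is correct and follows essentially the same approach as the paper: both compare $\nu_p(\kd_W)$ and $\nu_p(\kd_{W^\perp})$ prime by prime via the reformulation $\nu_p(\kd_W)=\max\{\nu_p(\alpha):\alpha\in\dot\KK_{ij}^W,\ i,j\in[n]\}$ and then invoke Lemma~\ref{lem:dual-circuit}. You supply more of the valuation bookkeeping (the use of $\gcd(g^C)=1$ to rewrite $\nu_p(\lcm(g^C))$ as a maximum of pairwise differences) that the paper leaves implicit, but the structure is identical.
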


\begin{proof}
Recall the $p$-adic valuation $\nu_p(n)$ defined in \eqref{eq:p-adic}.
It suffices to show that $\nu_p(\dot \kappa_W) = \nu_p(\dot \kappa_{W^\perp})$ for any prime $p \in \primes$.
We can reformulate as
\begin{eqnarray*}
\nu_p(\dot \kappa_W) &=& \nu_p(\lcm\left\{\lcm(g^C):\,C\in\circuits_W\right\})\\
&=&\max\left\{\nu_p(\lcm(g^C)):\,C\in\circuits_W\right\}\\
&=& \max \left\{ \nu_p(\alpha) :\,  i,j \in [n], \alpha \in \dot\KK_{ij}^W\right\}. \, 
\end{eqnarray*}
Lemma~\ref{lem:dual-circuit} implies that the last expression is the same for $W$ and $W^\perp$.
\end{proof}

We next show that $\kappa_W$ and $\kd_W$ are monotone under projections and restrictions of the subspace.
\begin{lemma}\label{lem:circuit_minors}
For any linear subspace $W\subseteq \R^n$, $J\subseteq[n]$ and $i,j\in J$, we have 
\[
\KK_{ij}^{\pi_J(W)} \subseteq \KK_{ij}^W\, ,\quad \KK_{ij}^{W_J} \subseteq \KK_{ij}^W\, , \quad \dot\KK_{ij}^{\pi_J(W)} \subseteq \dot\KK_{ij}^W\, ,\quad   \mbox{and}\quad \dot\KK_{ij}^{W_J} \subseteq \dot\KK_{ij}^W\, .
\]
\end{lemma}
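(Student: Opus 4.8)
The plan is to establish the four inclusions in two stages. The two inclusions involving the restriction $W_J$ come from an explicit correspondence between elementary vectors, and the two inclusions involving the projection $\pi_J(W)$ then follow from these by the self-duality established in Lemma~\ref{lem:dual-circuit} together with the identity $\pi_J(W)^\perp = (W^\perp)_J$ from the preliminaries.

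First I would handle $W_J$. The key point is that zero-extension identifies $W_J$ with $W \cap \R^n_J$, and under this identification the support-minimal nonzero vectors correspond; concretely, I would check that $\EE(W_J) = \{\pi_J(g) : g \in \EE(W),\ \supp(g) \subseteq J\}$. One direction: if $g \in \EE(W)$ with $\supp(g) \subseteq J$, then any nonzero $h \in W_J$ with $\supp(h) \subsetneq \supp(\pi_J(g))$, extended by zeros, lies in $W$ and contradicts support-minimality of $g$. The other direction is symmetric: for $g \in \EE(W_J)$ the zero-extension $\tilde g$ lies in $W \cap \R^n_J \subseteq W$ and is support-minimal there, and any smaller-support competitor in $W$ is automatically supported on $J$, so $\tilde g \in \EE(W)$. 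Granting this, for any $\alpha = |g_j/g_i| \in \KK_{ij}^{W_J}$ with $\{i,j\} \subseteq \supp(g)$, the matching $\tilde g \in \EE(W)$ has the same entries on $J$, hence $\alpha = |\tilde g_j/\tilde g_i| \in \KK_{ij}^W$, giving $\KK_{ij}^{W_J} \subseteq \KK_{ij}^W$. Since $\dot\KK_{ij}$ is obtained from $\KK_{ij}$ by the fixed monotone rule ``take $\lcm(p,q)$ over coprime $p,q\in\N$ with $p/q$ in the set'', the inclusion $\dot\KK_{ij}^{W_J} \subseteq \dot\KK_{ij}^W$ is then immediate.

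For the projection I would use Lemma~\ref{lem:dual-circuit}, which gives $\KK_{ij}^{U} = \{\alpha^{-1} : \alpha \in \KK_{ji}^{U^\perp}\}$ for every subspace $U$; matching coprime numerators and denominators under the inversion bijection $\alpha \mapsto \alpha^{-1}$ in the same way yields $\dot\KK_{ij}^{U} = \dot\KK_{ji}^{U^\perp}$. Applying this with $U = \pi_J(W)$, using $\pi_J(W)^\perp = (W^\perp)_J$, and then the restriction inclusions already proved for $W^\perp$ with the index pair $(j,i)$, we get
\[
\KK_{ij}^{\pi_J(W)} = \{\alpha^{-1} : \alpha \in \KK_{ji}^{(W^\perp)_J}\} \subseteq \{\alpha^{-1} : \alpha \in \KK_{ji}^{W^\perp}\} = \KK_{ij}^W,
\]
and identically $\dot\KK_{ij}^{\pi_J(W)} = \dot\KK_{ji}^{(W^\perp)_J} \subseteq \dot\KK_{ji}^{W^\perp} = \dot\KK_{ij}^W$.

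The only step that needs genuine care is the elementary-vector correspondence for $W_J$ — specifically, that zero-extension maps $W_J$ into $W$ and that support-minimality transfers in both directions; everything past that is bookkeeping. Degenerate cases (where $W_J$ or $\pi_J(W)$ is a trivial subspace, or where $i=j$) need no separate treatment: the same argument applies verbatim, with the relevant $\KK$-sets being empty when the subspace is trivial and the indices distinct.
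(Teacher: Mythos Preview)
Your proof is correct and follows essentially the same approach as the paper: establish the $W_J$ inclusions via zero-extension of elementary vectors, then derive the $\pi_J(W)$ inclusions by combining Lemma~\ref{lem:dual-circuit} with the identity $\pi_J(W)^\perp = (W^\perp)_J$. You prove slightly more than needed (the full equality $\EE(W_J) = \{\pi_J(g) : g \in \EE(W),\ \supp(g) \subseteq J\}$, whereas only the forward direction ``$g\in\EE(W_J)$ implies its zero-extension lies in $\EE(W)$'' is required), but this does no harm.
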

\begin{proof}
Let $g \in \EE(W_J)$. Then $(g, 0_{[n]\setminus J} )\in \EE(W)$ and so $\KK_{ij}^{W_J} \subseteq \KK_{ij}^W$. Note that $\pi_J(W) = ((W^\perp)_J)^\perp$ and so by Proposition~\ref{lem:dual-circuit},
\begin{equation}\KK_{ij}^{\pi_J(W)} = \set{\alpha^{-1} : \alpha \in \KK_{ji}^{(W^\perp)_J}} \subseteq \set{\alpha^{-1} : \alpha \in \KK_{ji}^{W^\perp}} = \KK_{ij}^{W}. 
\end{equation} 
The same arguments extend to $\dot\KK_{ij}$.
\end{proof}
\begin{prop}\label{prop:proj-fix}
For any linear subspace $W\subseteq \R^n$ and $J\subseteq[n]$, we have 
\[
\Le_{W_J}\le\Le_W\, ,\quad  \Le_{\pi_J(W)}\le\Le_W\, , \quad \kd_{W_J}\le\kd_W\, ,\quad  \mbox{and}\quad \kd_{\pi_J(W)}\le\kd_W\, .
\]
\end{prop}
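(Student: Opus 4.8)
The plan is to read the proposition off Lemma~\ref{lem:circuit_minors}, using that both $\Le$ and $\kd$ of any nontrivial subspace are recoverable from the pairwise quantities $\KK_{ij}$ and $\dot\KK_{ij}$, and that the circuits of $W_J$ and of $\pi_J(W)$ are all supported inside $J$. First I would dispose of trivial cases: if $W_J$ (resp.\ $\pi_J(W)$) is a trivial subspace, the corresponding $\Le$ and $\kd$ equal $1$ by the convention in Definition~\ref{def:kappa}, while $\Le_W\ge 1$ and $\kd_W\ge 1$ always, so the inequalities hold; assume henceforth the relevant minor is nontrivial.

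For the two $\Le$ inequalities: every circuit of $W_J$, and every circuit of $\pi_J(W)$, is a subset of $J$, so unwinding Definitions~\ref{def:kappa} and~\ref{def:pairwise circuit_imbalances} gives $\Le_{W_J}=\max\{\max\KK_{ij}^{W_J}:i,j\in J,\ \KK_{ij}^{W_J}\neq\emptyset\}$ and likewise $\Le_{\pi_J(W)}=\max\{\max\KK_{ij}^{\pi_J(W)}:i,j\in J,\ \KK_{ij}^{\pi_J(W)}\neq\emptyset\}$. By Lemma~\ref{lem:circuit_minors} we have $\KK_{ij}^{W_J}\subseteq\KK_{ij}^W$ and $\KK_{ij}^{\pi_J(W)}\subseteq\KK_{ij}^W$ for all $i,j\in J$, hence each inner maximum is at most $\max\KK_{ij}^W\le\Le_W$; taking the maximum over $i,j\in J$ yields $\Le_{W_J}\le\Le_W$ and $\Le_{\pi_J(W)}\le\Le_W$.

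For the two $\kd$ inequalities I would invoke the identity established inside the proof of Proposition~\ref{prop:kappadot-dual}, namely $\nu_p(\kd_W)=\max\{\nu_p(\alpha):i,j\in[n],\ \alpha\in\dot\KK_{ij}^W\}$ for every prime $p$. Since the circuits of $W_J$ and of $\pi_J(W)$ lie within $J$, the same identity gives $\nu_p(\kd_{W_J})=\max\{\nu_p(\alpha):i,j\in J,\ \alpha\in\dot\KK_{ij}^{W_J}\}$ and similarly for $\pi_J(W)$; applying the containments $\dot\KK_{ij}^{W_J}\subseteq\dot\KK_{ij}^W$ and $\dot\KK_{ij}^{\pi_J(W)}\subseteq\dot\KK_{ij}^W$ from Lemma~\ref{lem:circuit_minors} bounds each of these by $\nu_p(\kd_W)$. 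As this holds for every prime $p$, we conclude $\kd_{W_J}\le\kd_W$ and $\kd_{\pi_J(W)}\le\kd_W$.

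There is essentially no obstacle: the statement is a corollary of Lemma~\ref{lem:circuit_minors}, and the only care needed is the bookkeeping that $\Le$ and $\kd$ of a minor are governed by the pairwise imbalances over $J$ (immediate from its circuits being supported on $J$) together with the trivial-subspace convention. As an alternative route, for the restriction cases one can bypass the pairwise quantities entirely and note that circuits of $W_J$ are literally circuits of $W$ (zero-padded, as in the proof of Lemma~\ref{lem:circuit_minors}), so the defining maxima and lcm's can only shrink; the projection cases then follow from the restriction cases via the self-dualities $\Le_W=\Le_{W^\perp}$, $\kd_W=\kd_{W^\perp}$ (Propositions~\ref{prop:kappa-dual} and~\ref{prop:kappadot-dual}) and the identity $\pi_J(W)=((W^\perp)_J)^\perp$.
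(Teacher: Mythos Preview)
Your proposal is correct and follows exactly the route the paper intends: Proposition~\ref{prop:proj-fix} is stated in the paper immediately after Lemma~\ref{lem:circuit_minors} with no separate proof, so it is meant as a direct corollary of that lemma, which is precisely what you do. Your handling of the trivial-subspace convention and the alternative argument via zero-padding and self-duality are both sound and in the same spirit.
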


\subsection{Matrix representations}\label{sec:matrix}

Proposition~\ref{prop:kappa-max} already tells us that  any rational matrix of the form $A=(I_m|A')$ is $1/\kd_A$-integral, and according to Lemma~\ref{lem:sub-inverse}, the inverse of every non-singular square submatrix of $A$ is also $1/\kd_A$-integral. It is natural to ask whether every linear subspace $W$ can be represented as $W=\ker(A)$ for an 
\emph{integer} matrix $A$ with the same property on the inverse matrices. 

We show that this is true if the dual space is \emph{anchored} but false in general. Recall that this means that every elementary vector $g^C$, $C\in\circuits_{W^\perp}$ has a $\pm 1$ entry. In particular, $\kd_W = p^\alpha$ for some prime number $p \in \primes$ implies that both $W$ and $W^\perp$ are anchored; in this case we also have $\kappa_W=\kd_W$. 

In \cite[Section 7]{Lee89}, it is shown that if $B$ is a basis minimizing $|\det(A_B)|$ for a full rank $A\in \R^{m\times n}$, then every nonzero entry in $A_B^{-1} A$ is at least 1 in absolute value. Moreover, a simple greedy algorithm is proposed (called \textsc{1-OPT}) that finds such a basis within $m$ pivots for $k$-adic spaces. Our next statement can be seen as the variant of this for anchored-spaces, using the lcm-circuit imbalance $\kd_A$.
We note that finding a basis minimizing $|\det(A_B)|$ is computationally hard in general \cite{khachiyan1995}.

\begin{prop} \label{prop:B_inverse}
  Let $W \subseteq \R^n, \dim(W) = n - m$ be a rational subspace such that $W^\perp$ is an  anchored space. Then there exists an integer matrix $A\in \Z^{m\times n}$ such that $\ker(A) = W$, and
  \begin{enumerate}[(i)]
    \item\label{part:norm-bound-p} All entries of $A$ divide $\kd_W$.
    \item\label{part:inverse-p} For all non-singular submatrices $M$ of $A$, $M^{-1}$ is $\frac{1}{\kd_W}$-integral.
    \item\label{part:subdet-p} $\dot\Delta_A$ is an integer divisor of $(\kd_W)^m$.
  \end{enumerate}
\end{prop}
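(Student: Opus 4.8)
The plan is to deduce all three assertions from a single statement: that there is an \emph{integer} matrix $A\in\Z^{m\times n}$ with $\ker(A)=W$ which is in basis form (i.e.\ $A_B=I_m$ for some $B\subseteq[n]$, $|B|=m$; after reordering columns, $A=(I_m\mid A')$) and whose entries all divide $\kd_W$. The trivial cases $W=\{0\}$ (take $A=I_n$) and $W=\R^n$ ($A$ the empty matrix) have $\kd_W=1$ and are immediate, so assume $0<m<n$, and fix an integer matrix $A_0\in\Z^{m\times n}$ with $\ker(A_0)=W$ (possible since $W$ is rational). Given the claimed $A$, part \ref{part:norm-bound-p} is immediate. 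Part \ref{part:inverse-p} is exactly Lemma~\ref{lem:sub-inverse} applied to $A$, using $\kd_A=\kd_W$. For part \ref{part:subdet-p}, if $M$ is a nonsingular $k\times k$ submatrix of $A$ with $k\le m$, then $\kd_W M^{-1}$ is an integer matrix, hence $\det(\kd_W M^{-1})=(\kd_W)^k/\det(M)$ is a nonzero integer, so $\det(M)\mid(\kd_W)^k\mid(\kd_W)^m$; taking the lcm over all such $M$ gives $\dot\Delta_A\mid(\kd_W)^m$. Throughout I use $\kd_W=\kd_{W^\perp}$ (Proposition~\ref{prop:kappadot-dual}).

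To construct $A$: among all $B\subseteq[n]$ with $|B|=m$ and $(A_0)_B$ nonsingular, pick one minimizing $|\det(A_0)_B|$ (the minimum is attained over a finite set of nonzero integers). Set $A:=(A_0)_B^{-1}A_0$, the unique matrix in basis form for $B$ with $\ker(A)=W$; by Lemma~\ref{lem:basis_circuits} each row $A^i$ is an elementary vector of $W^\perp$. Reindex so $B=[m]$ and $A_B=I_m$; then $A^i_i=1$ and $\supp(A^i)\cap B=\{i\}$, so $A^i=g^{C_i}/g^{C_i}_i$ where $C_i:=\supp(A^i)\in\circuits_{W^\perp}$ and $g^{C_i}$ is the primitive integer elementary vector on $C_i$. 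It remains to prove $|g^{C_i}_i|=1$ for every $i$: then $A^i=\pm g^{C_i}$ is integer with entries dividing $\lcm(g^{C_i})$, which divides $\kd_{W^\perp}=\kd_W$, establishing \ref{part:norm-bound-p} (and hence the whole proposition via the reductions above).

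So fix $i$ and suppose $|g^{C_i}_i|\neq 1$. Since $W^\perp$ is anchored, some entry $g^{C_i}_\ell=\pm 1$ with $\ell\in C_i$, and $\ell\neq i$ forces $\ell\notin B$. Put $B':=(B\setminus\{i\})\cup\{\ell\}$. The columns of $A$ indexed by $B\setminus\{i\}$ are the unit vectors $e_k$, $k\neq i$, while column $\ell$ is $A_\ell$ with $(A_\ell)_i=A^i_\ell\neq 0$; hence $A_{B'}$ is nonsingular and cofactor expansion gives $|\det A_{B'}|=|A^i_\ell|=|g^{C_i}_\ell|/|g^{C_i}_i|=1/|g^{C_i}_i|$. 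From $A_0=(A_0)_B A$ we get $(A_0)_{B'}=(A_0)_B A_{B'}$, so $(A_0)_{B'}$ is a nonsingular $m\times m$ integer submatrix of $A_0$ with $|\det(A_0)_{B'}|=|\det(A_0)_B|/|g^{C_i}_i|<|\det(A_0)_B|$ (strict since $g^{C_i}_i$ is a nonzero integer of absolute value at least $2$), contradicting the choice of $B$. Hence $|g^{C_i}_i|=1$, as needed.

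The main obstacle is this last exchange step: recognizing that minimizing $|\det(A_0)_B|$ is exactly the right optimization so that the $\pm 1$ entry guaranteed by anchoredness is forced to land on the basis coordinate of each fundamental elementary vector (this adapts the greedy/\textsc{1-OPT} idea of \cite{Lee89} to the lcm-imbalance setting, and it is the one place anchoredness of $W^\perp$ is truly used). The reductions of parts \ref{part:inverse-p} and \ref{part:subdet-p} to part \ref{part:norm-bound-p}, the identification $\kd_W=\kd_{W^\perp}$, and the verification that $A_{B'}$ is a valid exchange are all routine.
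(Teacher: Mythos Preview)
Your proof is correct and rests on the same exchange idea as the paper, but you package it differently. The paper starts from an arbitrary basis form $(D\mid A')$ with $D$ diagonal and $\gcd(A^i)=1$, then iteratively pivots: whenever $D_{ii}>1$, anchoredness of $W^\perp$ supplies an index $k$ with $|A_{ik}|=1$, and swapping columns $i$ and $k$ sets the new diagonal entry to $1$ while keeping the matrix integer; after at most $m$ such pivots one reaches $(I_m\mid A')$. You instead jump straight to the global optimum by choosing $B$ minimizing $|\det(A_0)_B|$, and your contradiction step is exactly one iteration of the paper's pivot, interpreted as strictly decreasing $|\det(A_0)_B|$. The paper's approach is constructive (and the text explicitly advertises the at-most-$m$-pivot algorithm right after the proof), whereas yours is cleaner but nonconstructive, since minimizing $|\det(A_0)_B|$ over bases is hard in general. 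Your derivations of \ref{part:inverse-p} from Lemma~\ref{lem:sub-inverse} and of \ref{part:subdet-p} via $\det(\kd_W M^{-1})\in\Z$ match the paper's.
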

\begin{proof}
  Let $\bar A \in \Q^{m\times n}$ be an arbitrary matrix with $\ker(\bar A) = W$. By performing row operations we can convert $\tilde A$ into $A = (D | A') \in \Z^{m \times n}$ where $D \in \bD_m$ is positive diagonal and $A' \in \Z^{m \times (n-m)}$ (after possibly permuting the columns). If $D=I_m$, then we are already done. Property~\ref{part:norm-bound-p} follows by Proposition~\ref{prop:kappa-max}; property~\ref{part:inverse-p}
 follows by Lemma~\ref{lem:sub-inverse}, and property~\ref{part:subdet-p} holds since $\det(M)\cdot\det(M^{-1})=1$, $\det(M)\in\Z$, and $\det(M^{-1})$ is $\frac{1}{(\kd_W)^m}$-integral.

  If $D$ is not the identity matrix, then we show that $A$ can be brought to the form $(I_m | A'')$ with an integer $A''$ by performing further basis exchanges. 
  Let  us assume that $\gcd(A^i) = 1$ for all rows $A^i$, $i \in [m]$.
By Lemma~\ref{lem:dual-circuit}, $A^i\in \EE(W^\perp)$.
Assume $D_{ii}=A_{ii}> 1$ for some $i\in [m]$.
  As $A^i$ is a circuit and $W^\perp$ is anchored, there exists an index $k \in [n]$ such that $|A_{ik}| = 1$.

Let us perform a basis exchange between columns $i$ and $k$. That is, 
subtract integer multiples of row $i$ from the other rows to turn column $k$ into $e_i$. We then swap columns $i$ and $k$ and obtain the matrix again in the form $(D'|A'')$. 
Notice that the matrix remains integral,  $D'_{ii}=1$, and $D'_{jj}=D_{jj}$ for $j\in [m]$, $j\neq i$. Hence, repeating this procedure at most $n$ times, we can convert the matrix to the integer form $(I_m|A')$, completing the proof.
\end{proof}
Note that the proof gives an algorithm to find such a basis representation using a Gaussian elimination and at most $m$ additional pivot operations.
If $W^\perp$ is not anchored, we show the following weaker statement.
\begin{prop} \label{prop:B_inverse_General}
  Let $W \subseteq \R^n, \dim(W) = n - m$ be a rational subspace. Then there exists an integer matrix $A\in\Z^{m\times n}$ with $\ker(A) = W$ such that
  \begin{enumerate}[(i)]
    \item\label{part:norm-bound} All entries of $A$ divide $\kd_W$;
    \item\label{part:inverse} For all non-singular submatrices $M$ of $A$, $M^{-1}$ is $\frac{1}{(\kd_W)^2}$-integral.
    \item\label{part:subdet} $\dot\Delta_A$ is an integer divisor of $(\kd_W)^{m}$.
  \end{enumerate}
\end{prop}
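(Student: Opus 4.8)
The plan is to mirror the proof of Proposition~\ref{prop:B_inverse}, but this time to \emph{keep} the diagonal matrix $D$ instead of pivoting it away; the structural price will be one extra factor of $\kd_W$ in part~\ref{part:inverse}, plus some genuine extra work for part~\ref{part:subdet}.

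First I would construct $A$. Start from an arbitrary rational matrix with kernel $W$, choose a basis $B_0$ of its column matroid, bring it to the basis form $(I_m\mid A'')$ for $B_0$ (permuting columns so that $B_0=[m]$), and then rescale each row to a primitive integer vector. By Lemma~\ref{lem:basis_circuits}\ref{i:basis_circuits-i} the rows of a basis-form matrix form a circuit basis of $W^\perp$, so after rescaling the $i$-th row of $A$ is exactly the primitive integer elementary vector $g^{C_i}$ of $W^\perp$ for $C_i=\supp(A^i)$, and $C_i\cap[m]=\{i\}$. Thus $A=(D\mid A')$ with $D=\Diag(|g^{C_1}_1|,\dots,|g^{C_m}_m|)$ positive diagonal and $A'\in\Z^{m\times(n-m)}$. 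By Proposition~\ref{prop:kappadot-dual} every entry of $g^{C_i}$ divides $\lcm(g^{C_i})\mid\kd_{W^\perp}=\kd_W$, so every entry of $A$, and hence every $D_{ii}$, divides $\kd_W$; this is~\ref{part:norm-bound}. For~\ref{part:inverse} I would set $\hat A:=D^{-1}A=(I_m\mid D^{-1}A')$, which is in basis form with $\ker\hat A=W$ and $\kd_{\hat A}=\kd_W$. If $M=A_{I,J}$ is a nonsingular square submatrix then $M=\Diag(D_{ii}:i\in I)\,\hat M$ with $\hat M=\hat A_{I,J}$, so $M^{-1}=\hat M^{-1}\Diag(1/D_{ii}:i\in I)$; Lemma~\ref{lem:sub-inverse} applied to $\hat A$ makes $\hat M^{-1}$ $\tfrac1{\kd_W}$-integral, so each entry of $M^{-1}$ has the form $x/D_{ii}$ with $\kd_W x\in\Z$, and $(\kd_W)^2\cdot x/D_{ii}=(\kd_W x)(\kd_W/D_{ii})\in\Z$ since $D_{ii}\mid\kd_W$. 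Hence $M^{-1}$ is $\tfrac1{(\kd_W)^2}$-integral.

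For~\ref{part:subdet} I would first reduce to maximal minors. If $M=A_{I,J}$ is nonsingular then $J\cap[m]\subseteq I$ (otherwise column $j$ of $A$, which is $D_{jj}e_j$ for $j\in J\cap[m]$, vanishes on the rows $I$); hence $B:=J\cup([m]\setminus I)$ has $|B|=m$ and $A_B$ is block triangular with diagonal blocks $M$ and $\Diag(D_{jj}:j\in[m]\setminus I)$, so $|\det A_B|=|\det M|\cdot\prod_{j\in[m]\setminus I}D_{jj}$ and $|\det M|$ divides $|\det A_B|$. Therefore $\dot\Delta_A=\lcm\{|\det A_B|:B\text{ a basis of the column matroid}\}$. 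Already from~\ref{part:inverse} and $\det(M)\det(M^{-1})=1$ one gets $\dot\Delta_A\mid(\kd_W)^{2m}$; the content of~\ref{part:subdet} is to replace $2m$ by $m$. For this I would use the factorization $|\det A_B|=\prod_{j\in B\cap[m]}D_{jj}\cdot|\det A_{[m]\setminus B,\ B\setminus[m]}|$, the fact that $(\hat A_B)^{-1}$ is $\tfrac1{\kd_W}$-integral (Lemma~\ref{lem:sub-inverse}), and — crucially — that the rows of $A$ are \emph{elementary vectors} of $W^\perp$: by Cramer's rule, the ratio of two maximal minors of $A$ differing in a single column equals a ratio $g_i/g_j$ of coordinates of an elementary vector of $W$, which by Proposition~\ref{prop:kappa-max} is $\tfrac1{\kd_W}$-integral and lies in $[1/\kd_W,\kd_W]$. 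Anchoring at $B=[m]$, where $|\det A_{[m]}|=\det D\mid(\kd_W)^m$, and propagating along single-column exchanges, a prime-by-prime valuation count should show that the diagonal factors $\det D$ are fully absorbed, i.e.\ $|\det A_B|\mid(\kd_W)^m$ for every basis $B$; then $\dot\Delta_A\mid(\kd_W)^m$, and~\ref{part:subdet} follows exactly as stated.

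The step I expect to be the main obstacle is exactly this last valuation bookkeeping in~\ref{part:subdet}: tracking simultaneously the $p$-adic valuations of $\det D$ and of the denominators of $\hat A_B^{-1}$ and showing they cancel perfectly (rather than losing a factor $\kd_W$, as the naive bound via~\ref{part:inverse} does) is where one must genuinely exploit the elementary-vector structure beyond what parts~\ref{part:norm-bound}--\ref{part:inverse} require. As in Proposition~\ref{prop:B_inverse}, the construction is algorithmic: a single Gaussian elimination followed by rescaling of the rows.
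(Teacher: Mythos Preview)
Your construction and arguments for~\ref{part:norm-bound} and~\ref{part:inverse} coincide with the paper's: start from a basis form $(I_m\mid A')$, multiply row $i$ by the integer $d_i$ so that the row becomes the primitive elementary vector $g^{C_i}\in\EE(W^\perp)$, and deduce the claims from Proposition~\ref{prop:kappa-max}, Lemma~\ref{lem:sub-inverse}, and $d_i\mid\kd_W$. (The paper phrases the factorization as $\bar A=DA$ rather than $A=D\hat A$, but the content is identical.)

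The divergence is in~\ref{part:subdet}. Your valuation-tracking plan is not a proof: a single column exchange multiplies $|\det A_B|$ by a factor in $[1/\kd_W,\kd_W]$, but nothing in your outline prevents these factors from accumulating over a chain of exchanges, and ``the diagonal factors $\det D$ are fully absorbed'' is an assertion, not an argument. The paper avoids this difficulty by a different choice of the \emph{initial} basis: pick $B_0$ to \emph{maximise} $|\det(\cdot_{B_0})|$. Then in the basis form $(I_m\mid A')$ for $B_0$, \emph{every} square submatrix $M$ has $|\det M|\le 1$, because augmenting the columns of $M$ by suitable unit columns from $I_m$ yields another basis $B_M$ with $|\det((I_m\mid A')_{B_M})|=|\det M|\le |\det I_m|=1$ by maximality of $B_0$. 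Multiplying by $D$ then gives $|\det(\bar M)|\le\det D\le(\kd_W)^m$ for every nonsingular submatrix of $\bar A=DA$, with no exchange or valuation bookkeeping needed. Parts~\ref{part:norm-bound} and~\ref{part:inverse} are unaffected, since your argument for them works for \emph{any} starting basis. So the fix is simply: in your first paragraph, choose $B_0$ to be a determinant-maximising basis rather than an arbitrary one, and replace your entire third paragraph by the two-line observation above.
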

\begin{proof}
The proof is an easy consequence of Proposition~\ref{prop:kappa-max} and Lemma~\ref{lem:sub-inverse}. Consider any basis form $A=(I_m|A')$ with  $\ker(A)=W$ (after possibly permuting the columns). According to Proposition~\ref{prop:kappa-max},  all entries of $A$ are $1/\dot\kappa_W$ integral.
By Lemma~\ref{lem:dual-circuit}, the rows $A^i\in\EE(W^\perp)$ for $i\in [m]$. We can write $A^i=g^i/d_i$ for some  $g^i\in \EE(W^\perp)\cap \Z^m$ and $d_i\in \Q$ such that $\gcd(g^i)=1$ for each $i\in [m]$. By the definition of $\kd_A$, the entries of each $g^i$ are divisors of $\kd_A$. Since $A_{ii}=1$ it follows that $d_i\in \Z$ and $d_i\divides \kd_A$. Let $D\in\bD_m$ be the diagonal matrix with entries $D_{ii}=d_i$. Then, $\bar A=DA$ is an integer matrix where all entries divide $\kd_A$, proving \ref{part:norm-bound}.
Part~\ref{part:inverse} follows  by Lemma~\ref{lem:sub-inverse} and noting that the subdeterminants get multiplied by a submatrix $D^{-1}$.

For part~\ref{part:subdet}, let us start use a basis $B$ such that $|\det(A_B)|$ is \emph{maximal}; w.l.o.g. assume $B=[m]$. 
Then, in the basis form $(I_m|A')$ for $B$, all subdeterminants are $\le 1$. This holds as for any submatrix $M \in \Q^{k \times k}$ of $A'$ with $\det(M) \neq 0$ we have that augmenting the columns of $M$ by the columns $i \in B$ such that $i$ is not a row of $M$ results in a basis $B_M$ with $|\det(M)| = |\det\big((I_m | A')_{B_M}\big)| \le \det(I_m) = 1$ by assumption on $B$. After multiplying by $D$ as above, $\bar A=DA$, all subdeterminants will be $\le \det(D)\le (\kd_A)^m$.
\end{proof}
Note that parts~\ref{part:norm-bound} and \ref{part:inverse} are true for any choice of the basis form, whereas \ref{part:subdet} requires one to select $A_B$ with maximum determinant. The maximum subdeterminant is NP-hard even to approximate better than $c^m$ for some $c>1$ \cite{DiSumma2014}. However,
it is easy to see that even if we start with an arbitrary basis, then $\dot\Delta_A\divides(\kd_W)^{2m}$, since every subdeterminant of $A_B^{-1} A$ is at most $(\kd_W)^m$ follows by Lemma~\ref{lem:sub-inverse}.

We now give an example to illustrate why Proposition~\ref{prop:B_inverse}\ref{part:inverse-p} cannot hold for arbitrary values of $\kd_W$. The proof is given in the Appendix.
\begin{restatable}{prop}{counterex}\label{prop:counterexample}
Consider the matrix 
$$A = \begin{bmatrix} 1 & 3 & 4 & 3 \\ 0 & 13 & 9 & 10 \end{bmatrix}\, .$$
For this matrix
$\dot{\kappa}_A = 5850 = 2 \times 3^2 \times 5^2 \times 13$ holds, and 
there exists no $\tilde A\in\Z^{2\times 4}$ such that $\ker(\tilde A)=\ker(A)$ and the inverse of every nonsingular $2\times 2$ submatrix of $\tilde A$ is $1/5850$-integral.
\end{restatable}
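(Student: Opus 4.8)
\emph{Step 1: a normal form for $\tilde A$.} I would argue by contradiction. Since $\ker(A)=W$, we have $W^{\perp}=\rowspace(A)=\spn_{\R}\{(1,3,4,3),(0,13,9,10)\}$, and these two vectors form a $\Z$-basis of the lattice $W^{\perp}\cap\Z^{4}$: a vector $s(1,3,4,3)+t(0,13,9,10)$ is integral iff $s\in\Z$ (look at the first coordinate) and then $13t,9t\in\Z$, which forces $t\in\Z$ as $\gcd(13,9)=1$. Hence any integral $\tilde A$ with $\ker(\tilde A)=W$ has rank $2$ and both of its rows lie in $W^{\perp}\cap\Z^{4}$, so $\tilde A=UA$ for some $U\in\Z^{2\times 2}$ with $\delta:=\det U\neq 0$. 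Dividing all entries of $\tilde A$ by their common gcd preserves the hypothesis (each entry still divides $5850$, and the inverse of a $2\times 2$ submatrix only gets multiplied by an integer), so I may assume $\tilde A$ is primitive, i.e.\ the gcd of its entries is $1$.

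\emph{Step 2: reformulating the hypothesis.} A nonsingular $1\times 1$ submatrix is a nonzero entry, so the hypothesis forces every entry of $\tilde A$ to divide $5850$; set $g_{i}:=\gcd(\text{column }i\text{ of }\tilde A)$, so $g_{i}\mid 5850$ and $\gcd(g_{1},g_{2},g_{3},g_{4})=1$. For a nonsingular $2\times 2$ submatrix $M$ one has $M^{-1}=\frac{1}{\det M}\operatorname{adj}(M)$ with $\operatorname{adj}(M)$ having the same entries as $M$ up to sign, so $M^{-1}$ is $\tfrac{1}{5850}$-integral iff $\det M$ divides $5850\cdot\gcd(\text{entries of }M)$. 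The $2\times 2$ submatrix of $\tilde A=UA$ on columns $\{i,j\}$ has determinant $\pm\delta\,\ell_{ij}$, where $\ell_{ij}=|\det A_{\{i,j\}}|$ equals $13,9,10,25,9,13$ for the pairs $\{1,2\},\{1,3\},\{1,4\},\{2,3\},\{2,4\},\{3,4\}$, and its entries have gcd $\gcd(g_{i},g_{j})$; so the hypothesis is equivalent to all entries dividing $5850$ together with $\delta\,\ell_{ij}\mid 5850\,\gcd(g_{i},g_{j})$ for every pair $\{i,j\}$.

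\emph{Step 3: $\delta$ is restricted to finitely many values.} Comparing $p$-adic valuations in these divisibilities: for a prime $p\notin\{2,3,5,13\}$ the disjoint pairs $\{1,2\},\{3,4\}$ give $\nu_{p}(\delta)\le\min_{k}\nu_{p}(g_{k})=0$; for $p=3$ the pairs $\{1,3\},\{2,4\}$ (with $\nu_{3}(\ell)=2=\nu_{3}(5850)$) give $\nu_{3}(\delta)=0$; for $p=13$ the pairs $\{1,2\},\{3,4\}$ (with $\nu_{13}(\ell)=1=\nu_{13}(5850)$) give $\nu_{13}(\delta)=0$; for $p=2$ the pair $\{1,4\}$ gives $\nu_{2}(\delta)\le\nu_{2}(g_{1})\le 1$; and for $p=5$, if $\nu_{5}(\delta)=2$ then $\{2,3\}$ forces $\nu_{5}(g_{2}),\nu_{5}(g_{3})\ge 2$ and $\{1,4\}$ forces $\min(\nu_{5}(g_{1}),\nu_{5}(g_{4}))\ge 1$, so all $\nu_{5}(g_{k})\ge 1$, contradicting $\gcd(g_{1},\dots,g_{4})=1$; hence $\nu_{5}(\delta)\le 1$. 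Therefore $|\delta|\in\{1,2,5,10\}$.

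\emph{Step 4: finishing by a finite check.} Each row of $\tilde A$ is a vector $(\alpha,\,3\alpha+13\beta,\,4\alpha+9\beta,\,3\alpha+10\beta)$ all of whose nonzero coordinates divide $5850$. A direct enumeration over $(\alpha,\beta)$ shows there are only finitely many such vectors --- the multiples of $(0,13,9,10)$ and of $(10,-9,13,0)$ by a scalar in $\{\pm1,\pm5\}$, the multiples of $(13,0,25,9)$ and of $(9,-25,0,-13)$ by a scalar in $\{\pm1,\pm2\}$, and a short explicit list of fully supported vectors (ruled in or out by a small parity argument); in particular \emph{no} such vector has first coordinate $\pm1$, since the only candidate with $\alpha=1$ is $(1,16,13,13)$ and $16\nmid 5850$. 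Running over all pairs of vectors from this list, one computes $\delta=\alpha_{1}\beta_{2}-\alpha_{2}\beta_{1}$ and checks that in every case either $|\delta|\notin\{1,2,5,10\}$ or one of the divisibilities $\delta\,\ell_{ij}\mid 5850\,\gcd(g_{i},g_{j})$ fails --- for instance the row pair $(10,-9,13,0),(0,13,9,10)$ has $|\delta|=10$, but its $\{2,3\}$-submatrix has determinant $\pm 250$ with entry-gcd $1$ and $250\nmid 5850$. This contradiction proves the proposition. The main obstacle is Step 4: completing the enumeration of lattice vectors with all coordinates dividing $5850$ (especially the fully supported ones) and organizing the resulting pairwise verification; the $p=5$ case of Step 3 also needs to be written out carefully, though it is routine.
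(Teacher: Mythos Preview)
Your approach matches the paper's: write $\tilde A=UA$ with $U\in\Z^{2\times 2}$ invertible, enumerate the finitely many integer $v$ for which all entries of $v^{\top}A$ divide $5850$, and check every pair of such rows. The paper simply reports (``verified by computer'') that up to sign the only primitive such $v$ are $(9,-4),(10,-3),(13,-3),(0,1)$, and then exhibits a bad $2\times 2$ submatrix for each of the six resulting pairs; your Step~3 bound $|\det U|\in\{1,2,5,10\}$ via $p$-adic valuations is correct and pleasant, but unnecessary once that short enumeration is in hand. Your remark that scalar multiples of these rows also satisfy the entry-divisibility condition is accurate --- the paper's list tacitly restricts to primitive $v$. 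That restriction is legitimate: scaling row $k$ by $\lambda_k$ replaces each $2\times 2$ submatrix $M'$ by $\operatorname{diag}(\lambda_1,\lambda_2)M'$, so column $k$ of $M^{-1}$ is column $k$ of $(M')^{-1}$ divided by $\lambda_k$, and a non-$1/5850$-integral entry remains non-$1/5850$-integral.

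One slip: in Step~2 you invoke ``nonsingular $1\times 1$ submatrices'' to conclude that every entry of $\tilde A$ divides $5850$, but the hypothesis as stated speaks only of $2\times 2$ submatrices. The paper's proof makes the same tacit assumption --- the proposition is offered as a counterexample to Proposition~\ref{prop:B_inverse}\ref{part:inverse-p}, which concerns \emph{all} nonsingular submatrices --- so this is evidently the intended reading; still, your sentence misquotes the literal hypothesis. And as you yourself flag, Step~4 is where the work lies and you have not actually carried it out: the ``short explicit list of fully supported vectors'' and the pairwise verification are asserted but not done. The paper shows this check is small (four primitive rows, six pairs), so completing it is routine.
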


\subsection{The triangle inequality}\label{sec:triangle}
An interesting additional fact about circuit imbalances is that the logarithm of the weights satisfy the triangle inequality; this was shown in \cite{DHNV20}. Here, we formulate a stronger version and give a simpler proof. Throughout, we assume that ${\cal M}(W)$ is non-separable. Thus, according to Proposition~\ref{prop:conn}, for any $i,j\in [n]$ there is a circuit $C\in \circuits_W$ with $i,j\in C$.
\begin{theorem}\label{thm:imbalance_triangle_inequality}
Let $W\subseteq \R^n$ be a linear space, and assume ${\cal M}(W)$ is non-separable. Then, 
  \begin{enumerate}[(i)]
  \item\label{i:set_triangle_inequality} for any distinct $i,j,k\in [n]$,
  $\KK_{ij}^W \subseteq \KK_{ik}^W \cdot\KK_{kj}^W$; and 
  \item \label{i:triangle} for any distinct $i,j,k\in [n]$,
  $\Le_{ij} \leq \Le_{ik}\cdot \Le_{kj}$.
  \end{enumerate}
  \end{theorem}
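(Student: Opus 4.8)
Since $\mathcal M(W)$ is non-separable, Proposition~\ref{prop:conn} guarantees $\KK_{ik}^W$ and $\KK_{kj}^W$ are non-empty, so $\Le_{ik}$ and $\Le_{kj}$ are defined. Pick $\alpha\in\KK_{ij}^W$ with $\alpha=\Le_{ij}$; by (i) we may write $\alpha=\beta\C$ with $\beta\in\KK_{ik}^W$, $\C\in\KK_{kj}^W$, whence $\Le_{ij}=\alpha=\beta\C\le\Le_{ik}\Le_{kj}$. So the work is in (i). Fix $\alpha\in\KK_{ij}^W$, realised by $g\in\EE(W)$ with $i,j\in\supp(g)=:C$ and $\alpha=|g_j/g_i|$. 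If $k\in C$ there is nothing to prove, since $|g_k/g_i|\in\KK_{ik}^W$, $|g_j/g_k|\in\KK_{kj}^W$ and $\alpha=|g_j/g_k|\,|g_k/g_i|$; so from now on $k\notin C$.

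\textbf{The engine.} The heart of the argument is the following elementary identity. Suppose we can find $h\in\EE(W)$ with $i,k\in\supp(h)$, $j\notin\supp(h)$, and $f\in\EE(W)$ with $k,j\in\supp(f)$, $i\notin\supp(f)$, together with scalars $\lambda,\mu$ such that $g=\lambda h+\mu f$. Then $g_i=\lambda h_i$, $g_j=\mu f_j$, and $0=g_k=\lambda h_k+\mu f_k$ gives $|\mu/\lambda|=|h_k/f_k|$, so that
\[
\left|\frac{g_j}{g_i}\right|=\left|\frac{\mu}{\lambda}\right|\cdot\left|\frac{f_j}{h_i}\right|=\left|\frac{h_k}{f_k}\right|\cdot\left|\frac{f_j}{h_i}\right|=\left|\frac{h_k}{h_i}\right|\cdot\left|\frac{f_j}{f_k}\right|\in\KK_{ik}^W\cdot\KK_{kj}^W\,,
\]
which is exactly what (i) asks. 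By Lemma~\ref{lem:circuit_minors}, $\KK_{pq}^{W_J}\subseteq\KK_{pq}^W$ whenever $i,j,k\in J$, and $g$ restricts to an elementary vector of $W_J$ as soon as $C\subseteq J$ (as already noted in the proof of Lemma~\ref{lem:circuit_minors}); so it suffices to produce $h,f$ inside a conveniently small restriction $W_J$, where the circuits through $k$ can be written down explicitly.

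\textbf{The model case $\dim W_J=2$.} Here $W_J=\spn(g,h_0)$ for any $h_0$ with $(h_0)_k\ne0$; since $g_x\ne0$ for $x\in C$, the vector $h_0+tg$ loses coordinate $x\in C$ exactly at $t=-(h_0)_x/g_x$, so the sets $\{k\}\cup(C\setminus\{j\})$ and $\{k\}\cup(C\setminus\{i\})$ are circuits of $W_J$ and cancelling coordinates $j$, $i$ produces the desired $h$, $f$; a $2\times2$ determinant then shows $g\in\spn(h,f)$ — all provided $(h_0)_i/g_i\ne(h_0)_j/g_j$. If that inequality fails, $\{i,j\}$ is a cocircuit of $\mathcal M(W_J)$, every circuit through $i$ also contains $j$, all vectors of $W_J$ share the ratio $|w_j/w_i|=\alpha$, and applying the trivial case to a circuit through $i,j,k$ (which exists because $\mathcal M(W_J)$ is non-separable) closes this case. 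One checks that $W_{C\cup\{k\}}$ has dimension $2$ precisely when $k\in\cl(C)$, and that $\mathcal M(W_{C\cup\{k\}})$ is then non-separable since every circuit through $k$ meets $C$.

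\textbf{The obstacle.} The only real difficulty is the case $k\notin\cl(C)$: then no rank-$2$ restriction simultaneously keeps $g$ elementary and connects $k$ to $i$ and $j$. The plan is to pass to $J:=C\cup D\cup E$, where $D\ni i,k$ and $E\ni j,k$ are circuits furnished by Proposition~\ref{prop:conn}: then $\mathcal M(W_J)$ is non-separable (the component of $i$ contains $j$ via $C$ and $k$ via $D$, hence all of $J$), $\KK_{pq}^{W_J}\subseteq\KK_{pq}^W$, and $\alpha\in\KK_{ij}^{W_J}$, so the statement for $W_J$ implies it for $W$. Moreover, whenever $J\subsetneq[n]$ non-separability of $\mathcal M(W)$ forces $\dim W_J<\dim W$ (a proper restriction can only keep the dimension by turning the discarded coordinates into coloops, impossible for a non-separable matroid on $\ge2$ elements), which sets up an induction on $\dim W$ with the rank-$\le2$ computation above as base case. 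The points that need care are exactly: confirming this strict drop in dimension and that $\mathcal M(W_J)$ stays non-separable for the chosen $J$; handling the (sparse) configurations in which $C\cup D\cup E$ is forced to be everything, where one falls back on the explicit rank-$2$ bookkeeping; and the degenerate ``parallel coordinate'' situations, which however always impose the ratio equalities that make the product identity go through.
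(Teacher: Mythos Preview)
Your engine and rank-$2$ model case are correct (with one cosmetic slip: when $J\neq C\cup\{k\}$ the vectors $h,f$ you build need not have support exactly $\{k\}\cup(C\setminus\{j\})$ and $\{k\}\cup(C\setminus\{i\})$, since $h_0$ may carry coordinates outside $C\cup\{k\}$; but elementarity of $h,f$ follows anyway from $\dim W_J=2$, and that is all the engine needs). The derivation of (ii) from (i) and the treatment of $k\in C$ and of the ``parallel coordinate'' cocircuit situation are fine.

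The real gap is the one you flag yourself. Your induction only uses \emph{restriction} (matroid deletion) to $J=C\cup D\cup E$, and when $J=[n]$ you get no drop in $\dim W$. Your fallback ``explicit rank-$2$ bookkeeping'' does not apply here: if $\dim W\ge 3$ and $J=[n]$, the space is not $2$-dimensional, and there is no reason the vectors obtained by cancelling one coordinate of $g$ from some $h_0$ are elementary. Concretely, already for $\dim W=3$ one can arrange that every circuit through $i,k$ together with $C$ covers $[n]$, so no choice of $D,E$ helps. Nothing in your outline produces the required $h,f\in\EE(W)$ in this situation.

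The paper closes exactly this gap by using \emph{projection} (matroid contraction) in addition to restriction. It picks a single circuit $C'\ni i,k$ minimising $|C\cup C'|$, restricts to $C\cup C'$, and then projects out $C'\setminus(C\cup\{k\})$, landing on the ground set $C\cup\{k\}$. The minimality of $C'$ is used to show that $g$ stays elementary after the projection; since $|\supp(g)|=|C\cup\{k\}|-1$ in this minor, Lemma~\ref{lem:helper_triangle} (every nonzero vector missing a coordinate is elementary) applies and yields the product identity directly, with no induction. In short: restriction alone cannot always reach a thin enough space, but restriction followed by contraction always can --- this is the idea your sketch is missing.
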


The proof relies on the following technical lemma that analyzes the scenario when almost all vectors in $W$ are elementary.
\begin{lemma}
  \label{lem:helper_triangle}
   Let $W \subseteq \R^n$ be a subspace s.t.\ $\mathcal{M}(W)$ is non-separable. 
  \begin{enumerate}[(i)]
  \item \label{it:prop_helper_second} If $\EE(W) = \set{g \in W\setminus\{0\} : \supp(g)\neq [n]}$, then $\KK_{ij}^W \subseteq \KK_{ik}^W\cdot \KK_{kj}^W$. 
    \item \label{it:prop_helper_first} If there exists $g \in \EE(W)$ such that $|\supp(g)| = n - 1$, then 
$$\EE(W) = \set{g \in W\setminus\{0\} : \supp(g)\neq [n]}\, .$$
\end{enumerate}
\end{lemma}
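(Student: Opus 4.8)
\textbf{Proof plan for Lemma~\ref{lem:helper_triangle}.}

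For part~\ref{it:prop_helper_first}, the plan is to argue by contradiction: suppose some $h \in W \setminus \{0\}$ with $\supp(h) \neq [n]$ is \emph{not} elementary, so there is $f \in W \setminus \{0\}$ with $\supp(f) \subsetneq \supp(h)$, and among all such $h$ choose one with $|\supp(h)|$ minimal; then $h$ itself contains a proper nonzero subspace of supports, so in fact we may assume $h$ is chosen so that $\supp(h)$ is inclusion-minimal among non-circuit supports. Let $g \in \EE(W)$ with $|\supp(g)| = n-1$, say $\supp(g) = [n] \setminus \{p\}$. Since $h$ is not elementary, $\supp(h)$ strictly contains a circuit $C$, and $C \neq \supp(g)$ would force $|\supp(h)| = n$ unless $p \notin \supp(h)$; the key case to handle is when $p \in \supp(h)$. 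In that case, for any $q \in \supp(h) \setminus \{p\}$ I would take a suitable linear combination $h - \lambda g$ with $\lambda = h_q/g_q$ to eliminate coordinate $q$; this produces a vector in $W$ still supported on a subset of $[n]$ that contains $p$ but is now strictly smaller (we killed $q$ and cannot have created new nonzeros since $\supp(g) \cup \supp(h)$ had only $p$ outside $\supp(g)$), and it is nonzero because $p$ survives — provided $g_p = 0$, which holds by choice of $g$. Iterating, we reach a nonzero vector supported on a set of size at most $2$ containing $p$; every such support is a circuit (a $1$- or $2$-element support of a nonzero vector is always support-minimal), and more generally one should check the reduction terminates at an elementary vector, contradicting minimality. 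I would then separately dispatch the easier case $p \notin \supp(h)$: there $\supp(h) \subseteq \supp(g)$, but then restricting to these coordinates and using that $g$ is elementary should force $h$ elementary too, or one peels off a circuit as above. The main obstacle here is bookkeeping the support sizes carefully so the induction/minimality argument is airtight; the geometric idea (use the near-full-support elementary vector $g$, which has $g_p = 0$, as a ``pivot'' to shrink any other support through $p$) is the crux.

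For part~\ref{it:prop_helper_second}, assume $\EE(W) = \{g \in W \setminus \{0\} : \supp(g) \neq [n]\}$ and fix distinct $i,j,k \in [n]$; I must show $\KK_{ij}^W \subseteq \KK_{ik}^W \cdot \KK_{kj}^W$. Take any $g \in \EE(W)$ with $i, j \in \supp(g)$, so $|g_j/g_i| \in \KK_{ij}^W$. Since $\mathcal{M}(W)$ is non-separable, by Proposition~\ref{prop:conn} there is a circuit through $i$ and $k$, hence (using the hypothesis) an elementary vector $g'$ with $i, k \in \supp(g')$ and $\supp(g') \neq [n]$. The natural move is to combine $g$ and $g'$: form $h = g - \lambda g'$ with $\lambda = g_i/g'_i$ so that $h_i = 0$. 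Now $h \in W$; if $h = 0$ then $g$ is a scalar multiple of $g'$, so $|g_j/g_i| = |g'_j/g'_i|$ and I would need $k \in \supp(g') = \supp(g)$, giving a three-term decomposition directly, with a small extra argument if $j \notin \supp(g')$. If $h \neq 0$, then $\supp(h) \subseteq (\supp(g) \cup \supp(g')) \setminus \{i\} \neq [n]$ as long as one can ensure at least one coordinate outside; here the hypothesis that \emph{every} proper-support nonzero vector is elementary is exactly what saves us — $h$ is automatically an elementary vector. The delicate point is that $h$ need not contain both $j$ and $k$, so I expect to need to choose $g'$ more cleverly (e.g., an elementary vector through $i,k$ whose support also meets $j$, or iterate the construction), and to split into cases according to which of $j, k$ lie in $\supp(h)$; tracking $k \in \supp(g')$ and $j \in \supp(h)$ through the cancellation, and reading off $g_j/g_i = (g_j/g_k')\cdot\text{(something)}$-type identities, is where the real work sits. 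I anticipate part~\ref{it:prop_helper_first} is the genuinely tricky half, since part~\ref{it:prop_helper_second} is then a fairly mechanical ``cancel coordinate $i$ between two elementary vectors and invoke the hypothesis'' argument once the case analysis is organized.
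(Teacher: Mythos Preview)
Your plan for part~\ref{it:prop_helper_first} has a genuine gap. In the case $p \in \supp(h)$, you propose forming $h - (h_q/g_q)\,g$ to kill coordinate $q$; but $g$ has support $[n]\setminus\{p\}$, so subtracting a multiple of $g$ will typically \emph{create} nonzeros at every coordinate of $[n]\setminus\{p\}$ where $h$ vanished. The resulting vector has support contained in $[n]\setminus\{q\}$, not in $\supp(h)\setminus\{q\}$ --- your sentence ``cannot have created new nonzeros since $\supp(g)\cup\supp(h)$ had only $p$ outside $\supp(g)$'' confuses ``outside $\supp(g)$'' with ``outside $\supp(h)$''. So the support does not shrink and no induction gets off the ground. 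The paper makes the opposite move: it takes an elementary vector $\ell$ with $\supp(\ell)\subsetneq\supp(h)$ (which exists since $h$ is not elementary), observes that $p\in\supp(\ell)$ (else $\supp(\ell)\subsetneq\supp(g)$, impossible), and forms $h_p\ell-\ell_p h$. This stays inside $\supp(h)$, kills $p$, and hence has support strictly inside $\supp(g)=[n]\setminus\{p\}$, contradicting that $g$ is elementary. One line, no induction. The easy case $p\notin\supp(h)$ is immediate: then $\supp(h)\subseteq\supp(g)$, so either $\supp(h)=\supp(g)$ and $h$ is a scalar multiple of $g$, or $\supp(h)\subsetneq\supp(g)$, again contradicting $g\in\EE(W)$.

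For part~\ref{it:prop_helper_second}, your instinct (cancel a coordinate between two elementary vectors and invoke the hypothesis) is right, but eliminating $i$ first does not produce the needed ratio identity: with $h=g-(g_i/g'_i)g'$ one gets $|g'_k/g'_i|\cdot|h_j/h_k|=|g_j/g_i-g'_j/g'_i|$, not $|g_j/g_i|$. The paper instead performs a \emph{two-step} elimination in the case $k\notin\supp(g)$: first eliminate $j$ via $\tilde h:=h_j g-g_j h$ (where $h$ is a circuit through $i,k$), so $\tilde h_j=0$, $\tilde h_k\neq 0$, hence $\tilde h\in\EE(W)$ by hypothesis. If $\tilde h_i=0$ one checks $h$ itself passes through $i,j,k$ with $|h_j/h_i|=|g_j/g_i|$; otherwise eliminate $i$ via $h':=\tilde h_i g-g_i\tilde h$, which is again elementary and contains $j,k$. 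Using $g_k=0$ and $\tilde h_j=0$ the product $|\tilde h_k/\tilde h_i|\cdot|h'_j/h'_k|$ telescopes exactly to $|g_j/g_i|$. So contrary to your expectation, part~\ref{it:prop_helper_second} is the half requiring the careful construction, while part~\ref{it:prop_helper_first} is the short one --- once you combine $h$ with $\ell$ rather than with $g$.
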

\begin{proof}
  For part \ref{it:prop_helper_second}, 
let $\delta \in \KK_{ij}^W$ and let $g \in \EE(W)$ such that $\{i,j\} \subset \supp(g)$ and $|g_j/g_i| = \delta$. If $k \in \supp(g)$, then $|g_j/g_i| = |g_k/g_i| \cdot |g_j/g_k|$ shows the claim.

Assume $k\notin \supp(g)$, and pick $h \in \EE(W)$ such that $\{i,k\} \subset \supp(h)$ and let $\tilde h = h_jg - g_jh$; such a $h$ exists by Proposition~\ref{prop:conn}. Then $\tilde h_j = 0$ and $\tilde h_k \neq 0$, so $\tilde h \in \EE(W)$ by the assumption. If $\tilde h_i = 0$ then $h_jg_i = g_jh_i$ and so $\{i,j,k\} \subset \supp(h)$ with $h_j/h_i = g_j/g_i$, therefore $h$ certifies the statement as $|h_j/h_i| = |h_k/h_i| \cdot |h_j/h_k|$.
Otherwise, $\tilde h_i \neq 0$ and $h':= \tilde h_ig - g_i\tilde h$ fulfills $h' \in \EE(W)$ as $h'_i = 0$, $\{j, k\} \subset \supp(h')$.
Now, using that $\tilde h_j = 0$ and $g_k = 0$ it is easy to see that
\begin{equation}
  \left|\frac{\tilde h_k}{\tilde h_i}\cdot \frac{h'_j}{h'_k} \right|
  = \left| \frac{\tilde h_k}{\tilde h_i}\cdot \frac{\tilde h_i g_j - g_i \tilde h_j}{\tilde h_i g_k - g_i\tilde h_k} \right|
  =  \left| \frac{\tilde h_k}{\tilde h_i}\cdot \frac{\tilde h_i g_j}{g_i\tilde h_k} \right|
   = \left| \frac{g_j}{g_i} \right| \, .
\end{equation}
We now turn to part \ref{it:prop_helper_first}. Since there exists $g \in \EE(W)$ with $\supp(g) \neq n$, we cannot have $[n] \in \circuits(W)$. 

Let $g \in \EE(W)$ and $i \in [n]$ such that $\supp(g) = [n] \setminus \set{i}$. Consider any $h \in W$, $\supp(h) \neq \supp(g)$ such that $\supp(h) \neq [n]$. If $h \notin \EE(W)$ there exists $\ell \in \EE(W)$ such that $\supp(\ell) \subsetneq \supp(h)$. We must have $i \in \supp(\ell)$, since $\supp(\ell)\setminus \supp(g)\neq \emptyset$. Then $\tilde h: = h_i\ell - \ell_ih$ fulfills $\tilde h\neq 0$, $\tilde h_i = 0$ and $\supp(\tilde h) \subsetneq [n]\setminus \set{i}$, a contradiction to $g \in \EE(W)$.
\end{proof}

\begin{proof}[Proof of Theorem~\ref{thm:imbalance_triangle_inequality}]
  \label{proof:lem:imbalance_triangle_inequality}
Part \ref{i:triangle} immediately follows from part \ref{i:set_triangle_inequality}, when taking
 $C \in \circuits_W$ such that $|g_j^C/g_i^C| = \Le_{ij}$. We now prove part \ref{i:set_triangle_inequality}.
 
 Let $\delta \in \KK_{ij}^W$ and $C\in \circuits_W$ such that $i,j\in C$ and for $g=g^C$, $|g_j/g_i| = \delta$. If $k \in C$ then clearly $|g_j/g_i| = |g_k/g_i| \cdot |g_j/g_k| \in \KK_{ik}^W\cdot \KK_{kj}^W$. Otherwise,
 let us select $C'\in \circuits_W$ such that $i,k\in C'$, and $|C\cup C'|$ is minimal. Let $h=g^{C'}$ and $J=C'\setminus (C\cup\{k\})$.
 
\begin{claim}
Let $G = (C \cup C') \setminus J$. Then for the space
$\hat W := \pi_{G}(W_{C \cup C'})$ we have that $g_G, h_G \in \EE(\hat W)$. 
\end{claim}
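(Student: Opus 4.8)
The plan is to argue combinatorially, using that $W_{C\cup C'}$ is a restriction of $W$ and $\pi_G$ is a contraction, together with the fact that elementary vectors are exactly the support-minimal nonzero vectors of a subspace.

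First I fix notation. Since $k\in C'$, $k\notin C$, and $J=C'\setminus(C\cup\{k\})$, the sets $C$, $J$, $\{k\}$ are pairwise disjoint, so $C\cup C'=C\sqcup J\sqcup\{k\}$ and $G=(C\cup C')\setminus J=C\cup\{k\}$. Consequently $\supp(g_G)=C$ and $\supp(h_G)=(C'\cap C)\cup\{k\}=C'\setminus J$; both are nonzero, with $i,j\in\supp(g_G)$ and $i,k\in\supp(h_G)$. Membership in $\hat W$ is immediate: $\supp(g),\supp(h)\subseteq C\cup C'$ puts the restrictions of $g,h$ in $W_{C\cup C'}$, so $g_G,h_G\in\pi_G(W_{C\cup C'})=\hat W$. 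For $h_G\in\EE(\hat W)$ no minimality is needed: if $v\in\hat W\setminus\{0\}$ had $\supp(v)\subsetneq\supp(h_G)=C'\setminus J$, then any preimage of $v$ in $W_{C\cup C'}$ extends to $\hat v\in W$ with $\supp(\hat v)\subseteq C\cup C'$ and $\hat v_G=v$; since $(C\cup C')\setminus G=J$, this gives $\supp(\hat v)\subseteq\supp(v)\cup J\subsetneq C'$, contradicting that $C'$ is a circuit.

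The substantive step is $g_G\in\EE(\hat W)$, and here the minimality of $|C\cup C'|$ enters. Suppose $v\in\hat W\setminus\{0\}$ with $\supp(v)\subsetneq C=\supp(g_G)$, and lift it as above to $\hat v\in W$ with $\supp(\hat v)\subseteq\supp(v)\cup J$. Since $C$ is a circuit and $\supp(v)\subsetneq C$, we cannot have $\supp(\hat v)\subseteq C$, so $\supp(\hat v)\cap J\neq\emptyset$; pick a circuit $D$ of $W$ with $D\subseteq\supp(\hat v)\subseteq C\cup J$. Then $k\notin D$, $D\neq C$, hence $\emptyset\neq D\cap C\subsetneq C$ and $D\cap J\neq\emptyset$. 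Applying strong circuit elimination to the circuits $C'$ and $D$ at some $e\in D\cap J\subseteq C'\cap D$, while keeping the element $k\in C'\setminus D$, produces a circuit $C''$ with $k\in C''\subseteq(C'\cup D)\setminus\{e\}\subseteq(C\cup C')\setminus\{e\}$; as $e\in J$ this already forces $C\cup C''\subseteq(C\cup C')\setminus\{e\}$, so $|C\cup C''|<|C\cup C'|$. To finish one must also arrange $i\in C''$: then $C''$ is a circuit through $i$ and $k$ with $|C\cup C''|<|C\cup C'|$, contradicting the choice of $C'$, so no such $v$ exists and $g_G$ is elementary.

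The main obstacle I anticipate is exactly this last point—forcing $i\in C''$—since one application of strong circuit elimination controls only a single prescribed element. I expect this to be overcome either by first arranging $i\notin D$ (choosing the lift $\hat v$ with vanishing $i$-coordinate, so that its conformal circuit decomposition, and hence $D$, avoids $i$; one may then keep $i$ rather than $k$ in the exchange and recurse on the number $|C'\setminus(C\cup\{k\})|$ of ``extra'' coordinates), or by iterating the exchange—alternately eliminating shared coordinates lying in $C$ and in $C'$ while staying inside $(C\cup C')\setminus\{e\}$ and tracking a monotone quantity. The remaining items (identifying $G$ and the supports, membership, and the $h_G$ case) are routine. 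Once the claim is established, the theorem's proof finishes by observing that $\hat W\subseteq\R^G$ has $\dim\hat W=2$, that $\mathcal M(\hat W)$ is non-separable (the size-$\geq 2$ circuit $\supp(h_G)\ni k$ prevents $\{k\}$ from splitting off), and that $|\supp(g_G)|=|G|-1$, so Lemma~\ref{lem:helper_triangle} applies to $\hat W$ and, together with Lemma~\ref{lem:circuit_minors}, yields $\KK_{ij}^{\hat W}\subseteq\KK_{ik}^{\hat W}\cdot\KK_{kj}^{\hat W}\subseteq\KK_{ik}^W\cdot\KK_{kj}^W$.
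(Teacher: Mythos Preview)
Your setup is correct—identifying $G=C\cup\{k\}$, the supports of $g_G$ and $h_G$, membership in $\hat W$, and the argument for $h_G\in\EE(\hat W)$ are all fine and match the paper's reasoning.

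The proof is genuinely incomplete at the point you yourself flag: producing a circuit through \emph{both} $i$ and $k$ inside $(C\cup C')\setminus\{e\}$. Your suggested fix of ``choosing the lift $\hat v$ with vanishing $i$-coordinate'' does not work as stated, because the $i$-coordinate of any lift equals $v_i$ (since $i\in G$), and there is no reason a putative counterexample $v$ must satisfy $v_i=0$. Even granting $i\notin D$, a single application of strong circuit elimination lets you keep either $i$ or $k$, not both; the vague recursion you sketch does not come with a well-defined invariant that forces termination with a circuit through $i$ and $k$.

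The paper closes this gap not by iterated exchange but by a non-separability argument. After lifting to $\tilde g\in\EE(W_{C\cup C'})$ with $\supp(\tilde g)\cap J\ni\ell$ and $\tilde g_k=0$, it forms the linear combination $\hat h=h_\ell\tilde g-\tilde g_\ell h$, which satisfies $\hat h_\ell=0$ and $\hat h_k\neq 0$, and then picks a circuit $\tilde h$ with $k\in\supp(\tilde h)\subseteq\supp(\hat h)$. The crucial observation is that $J\cup\{k\}\subseteq C'\setminus\{i\}$ is independent, so $\supp(\tilde h)$ must meet $C$. Hence $T:=C\cup\supp(\tilde h)$ carries a non-separable matroid (two intersecting circuits), and by Proposition~\ref{prop:conn} there is a circuit of $W_T$ through $i$ and $k$. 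Since $T\subseteq(C\cup C')\setminus\{\ell\}$, this contradicts the minimality of $|C\cup C'|$. This non-separability step is exactly the missing idea in your argument.
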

\begin{proof}
The statement that $h_G \in \EE(\hat W)$ is clear as $h_{C \cup C'} \in \EE (W_{C \cup C'})$ and the variables we project out $J$ fulfill $J \subseteq \supp(h)$.
For the statement on $g_G$ assume that there exists $\hat g \in \EE(\hat W)$ such that $\supp(\hat g) \subsetneq \supp(g_G)$. Then there exists a lift  $\tilde g \in \EE(W_{C \cup C'})$ of $\hat g$ and some $\ell \in J$ such that $\ell \in \supp(\tilde g)$; note also that $\tilde g_k = g_k = 0$. The vector $\hat h := h_\ell \tilde g - \tilde g_\ell h$ fulfills $\ell \notin \supp(\hat h)$ and $k \in \supp(\hat h)$.

Now pick any circuit $\tilde h \in \EE(\hat W)$ such that $k \in \supp(\tilde h)$ and $\supp(\tilde h) \subseteq \supp(\hat h)$. 
Note that $J \cup \{k\}$ is independent, as $J \cup \{k\} \subseteq C' \setminus \{i\} \subsetneq C'$. Therefore, $\supp(\tilde h) \cap \supp(g) \neq \emptyset$.
Hence, for $T := C \cup \supp(\tilde h)$ we have that $\mathcal M(W_T)$ is non-separable. In particular there exists a circuit $h' \in \EE(W_T)$ such that $i,k \in \supp(h')$. As $T \subseteq (C \cup C')\setminus \{\ell\}$, this is a contradiction to the minimal choice of $C'$.  
\end{proof}
As $\supp(h_D) \cup \supp(g_D) = D$ and $\supp(h_D) \cap \supp(g_D) \neq \emptyset$ we have that $\EE(W')$ is non-separable. Further $|\supp(g_D)| = |D| - 1$, so we can apply Lemma~\ref{lem:helper_triangle} to learn $\delta \in \KK_{ij}^{W'} \subseteq \KK_{ik}^{W'} \cdot \KK_{kj}^{W'}$. We can conclude   $\delta \in \KK_{ik}^W \cdot \KK_{kj}^W$ from Lemma~\ref{lem:circuit_minors}.
\end{proof}

If $\Le_W=1$, then the reverse inclusion 
$ \KK_{ik}^W \cdot \KK_{kj}^W \subseteq \KK_{ij}^W$ trivially holds, since 1 is the only element in these sets. In Proposition~\ref{prop:reverse-kappa-1}, we give a necessary and sufficient condition for $\KK_{ij}^W= \KK_{ik}^W \cdot \KK_{kj}^W$.

One may ask under which circumstances an element $\alpha \in \KK_{ik}^W\cdot \KK_{kj}^W$ is also contained in $\KK_{ij}^W$. We give a partial answer by stating a sufficient condition in a  restrictive setting. For a basis $B$ of ${\cal M}(W)$, recall $\EE_B(W^\perp)$ from Lemma~\ref{lem:basis_circuits}. Then, Lemmas~\ref{lem:basis_circuits} and \ref{lem:dual-circuit} together imply:

\begin{lemma}
Given a basis $B \subseteq [n]$ in $\mathcal{M}(W)$ and $g,h \in \EE_B \subseteq \EE(W^\perp)$ such that $i \in \supp(g) \cap B$, $j \in \supp(h) \cap B$ and $k \in \supp(g) \cap \supp(h)$.
Then $|h_j/h_k|\cdot |g_k/g_i| \in \KK_{ij}^W$.
\end{lemma}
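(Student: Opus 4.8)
The plan is to make $W^\perp$ fully explicit. By Lemma~\ref{lem:basis_circuits}\ref{i:basis_circuits-i}, $\EE_B$ is exactly the set of rows of a matrix $A\in\R^{m\times n}$ in basis form for $B$ (so $A_B=I_m$); for such a matrix the restriction of the $p$-th row to the columns in $B$ is the unit vector $e_p$, so $\supp(A^p)\cap B=\{p\}$. Hence the hypotheses $i\in\supp(g)\cap B$ and $j\in\supp(h)\cap B$ pin down $g=A^i$ and $h=A^j$, and in particular $g_i=A_{ii}=1$ and $h_j=A_{jj}=1$. This already simplifies the target quantity to
\[
\left|\frac{h_j}{h_k}\right|\cdot\left|\frac{g_k}{g_i}\right|=\frac{|g_k|}{|h_k|}\, ,
\]
and, since $k\in\supp(g)\cap\supp(h)$, both $g_k=A_{ik}$ and $h_k=A_{jk}$ are nonzero. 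I will treat the generic case $g\ne h$, equivalently $i\ne j$; then $\supp(g)\cap B=\{i\}$ and $\supp(h)\cap B=\{j\}$ force the common index $k$ to lie outside $B$.

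The key step will be to invoke Lemma~\ref{lem:basis_circuits}\ref{i:basis_circuits-ii} with the two rows $A^i,A^j$ and the column $k\in[n]\setminus B$: it produces the elementary vector
\[
\ell:=A_{jk}A^i-A_{ik}A^j=h_k\,g-g_k\,h\in\EE(W^\perp)\, .
\]
Using $A_B=I_m$ (so $h_i=A_{ji}=0$ and $g_j=A_{ij}=0$) a one-line computation gives $\ell_k=0$, $\ell_i=h_k$ and $\ell_j=-g_k$. Since $g_k,h_k\ne0$, this shows $\{i,j\}\subseteq\supp(\ell)$ and $|\ell_i/\ell_j|=|h_k|/|g_k|$, i.e.\ $|h_k|/|g_k|\in\KK_{ji}^{W^\perp}$.

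To finish, I would apply the duality $\KK_{ij}^W=\{\alpha^{-1}:\alpha\in\KK_{ji}^{W^\perp}\}$ from Lemma~\ref{lem:dual-circuit}, which turns the membership just obtained into $|g_k|/|h_k|\in\KK_{ij}^W$; combined with the simplification above this is exactly the claim. I do not expect a real obstacle: the only points to watch are that the hypotheses of Lemma~\ref{lem:basis_circuits}\ref{i:basis_circuits-ii} are met (which needs $i\ne j$ and $k\notin B$, both automatic once $g\ne h$, the degenerate case $g=h$ being uninteresting) and that one keeps the correct index in the numerator when passing through Lemma~\ref{lem:dual-circuit}, since $\KK_{ij}$ and $\KK_{ji}$ are not symmetric.
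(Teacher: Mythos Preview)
Your proof is correct and is precisely the approach the paper has in mind: the paper states the lemma as an immediate consequence of Lemmas~\ref{lem:basis_circuits} and~\ref{lem:dual-circuit}, and you unpack exactly that combination---using \ref{lem:basis_circuits}\ref{i:basis_circuits-ii} to produce the elementary vector $\ell=A_{jk}A^i-A_{ik}A^j\in\EE(W^\perp)$ with $|\ell_i/\ell_j|=|h_k/g_k|$, and then applying the duality of Lemma~\ref{lem:dual-circuit} to land in $\KK_{ij}^W$.
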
 

\section{Connections to other condition numbers}\label{sec:connections}
\subsection{The condition number \texorpdfstring{$\bar\chi$}{chi bar} and the lifting operator}
\label{subsec:lifting_operator}

For a full row rank matrix $A\in \R^{m\times n}$, the
 condition number $\bar\chi_A$ can be defined in the following two equivalent ways:
\begin{equation}
\begin{aligned}
\bar\chi_A&=\sup\left\{\left\|A^\top \left(A D A^\top\right)^{-1}AD\right\|\, : D\in
    \bD_n\right\}\\
&=\sup\left\{\frac{\norm{A^\top y}}{\norm{p}} :
\text{$y$ minimizes $\norm{D^{1/2}(A^\top y - p)}$ for some $0 \neq p \in \R^n$ and $D \in \bD_n$}\right\}.
\end{aligned}
\end{equation}
This condition number was first studied by Dikin \cite{dikin}, Stewart \cite{stewart}, and
Todd \cite{todd-90}. There is an
extensive literature on the properties and applications of $\bar\chi_A$,
as well as its
relations to other condition numbers. In particular, it plays 
a key role in layered-least-squares  interior point methods, see Section~\ref{sec:LLS}.  We refer the reader to
the papers \cite{ho2002,Monteiro2003,Vavasis1996} for further results
and references.

It is important to note that---similarly to $\kappa_A$ and $\dot\kappa_A$---$\bar\chi_A$ only depends on the subspace
$W=\ker(A)$. Hence, we can also write $\bar\chi_W$ for a subspace
$W\subseteq \R^n$, defined to be equal to $\bar\chi_A$ for some matrix
$A\in \R^{k\times n}$ with $W=\ker(A)$. We will use the notations $\bar\chi_A$ and
$\bar\chi_W$ interchangeably. The following characterization reveals the connection between $\kappa_A$ and $\bar\chi_A$.

\begin{prop}[\cite{Todd2001}]\label{prop:chibar}~
For a full row rank matrix $A\in \R^{m\times n}$, 
\[\bar\chi_A = \max\{ \|A_B^{-1} A\| : A_B \text{ is a non-singular $m \times m$-submatrix of } A\} \, .\]
\end{prop}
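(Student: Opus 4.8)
The plan is to work with the weighted-projection form of $\bar\chi_A$. Put $L=\rowspace(A)=W^\perp$ with $W=\kernel(A)$, so $\dim L=m$, and for $D\in\bD_n$ let $\Pi^D:=A^\top(ADA^\top)^{-1}AD$. One checks that $\Pi^D$ is idempotent with image $L$ and kernel $D^{-1}W$, and that $\Pi^D p$ is the minimizer of $\norm{D^{1/2}(\ell-p)}$ over $\ell\in L$; in particular $\bar\chi_A=\sup_{D\in\bD_n}\norm{\Pi^D}$ is exactly the first displayed definition. So it suffices to prove $\sup_{D\in\bD_n}\norm{\Pi^D}=\max_B\norm{A_B^{-1}A}$, the maximum being over bases $B$ of $\mathcal M(A)$, i.e.\ over nonsingular $m\times m$ submatrices $A_B$.

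For the lower bound, fix a basis $B$ and let $D_\varepsilon\in\bD_n$ have $(D_\varepsilon)_{ii}=1$ for $i\in B$ and $(D_\varepsilon)_{ii}=\varepsilon$ for $i\notin B$. As $\varepsilon\to 0$ we have $AD_\varepsilon A^\top\to A_BA_B^\top$, which is nonsingular; the columns of $\Pi^{D_\varepsilon}$ indexed by $[n]\setminus B$ carry a factor $\varepsilon$ and vanish, while the columns indexed by $B$ tend to $A^\top(A_BA_B^\top)^{-1}A_B=A^\top A_B^{-\top}=(A_B^{-1}A)^\top$. Hence $\norm{\Pi^{D_\varepsilon}}\to\norm{(A_B^{-1}A)^\top}=\norm{A_B^{-1}A}$, and since $\bar\chi_A\ge\norm{\Pi^{D_\varepsilon}}$ for all $\varepsilon>0$, continuity gives $\bar\chi_A\ge\norm{A_B^{-1}A}$ for every basis $B$.

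For the upper bound I would express $\Pi^D$ as a convex combination, over bases, of coordinate-interpolation maps. Given a basis $B$, let $S_B\in\R^{n\times n}$ be the diagonal $0/1$ matrix supported on $B$ and set $Q_B:=(A_B^{-1}A)^\top S_B$; then $Q_Bp$ is the unique vector of $L$ agreeing with $p$ on the coordinates in $B$, and $\norm{Q_B}\le\norm{(A_B^{-1}A)^\top}\,\norm{S_B}=\norm{A_B^{-1}A}$. The identity to establish is
\[
\Pi^D=\sum_{B}\mu_B(D)\,Q_B\, ,\qquad \mu_B(D):=\frac{(\det A_B)^2\prod_{i\in B}D_{ii}}{\det(ADA^\top)}\, ,
\]
summed over bases $B$, where the $\mu_B(D)$ are nonnegative and sum to $1$ by the Cauchy--Binet identity $\det(ADA^\top)=\sum_{|B|=m}(\det A_B)^2\prod_{i\in B}D_{ii}$. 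Granting this, for every $D\in\bD_n$ one gets $\norm{\Pi^D}\le\max_B\norm{Q_B}\le\max_B\norm{A_B^{-1}A}$, which together with the lower bound proves the proposition.

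The technical heart, and the main obstacle, is the displayed identity. I would obtain it from a Cramer-type expansion of the weighted least-squares solution $y^\ast=(ADA^\top)^{-1}ADp$, where $\Pi^D p=A^\top y^\ast$: Cramer's rule writes $y^\ast_k$ as $\det(G^{(k)})/\det(ADA^\top)$ with $G^{(k)}$ the Gram matrix $ADA^\top$ with its $k$-th column replaced by $ADp$; factoring out $A$ on the left and $D$ in the middle rewrites the numerator as $\det\!\big(A\,D\,\tilde A\big)$, where $\tilde A\in\R^{n\times m}$ is $A^\top$ with its $k$-th column replaced by $p$, and Cauchy--Binet together with one further Cramer step inside each resulting $m\times m$ minor turns it into $\sum_B(\det A_B)^2\prod_{i\in B}D_{ii}\,(A_B^{-\top}p_B)_k$. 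Dividing by the Cauchy--Binet expansion of $\det(ADA^\top)$ gives $y^\ast=\sum_B\mu_B(D)A_B^{-\top}p_B$, and applying $A^\top$ and using $A^\top A_B^{-\top}=(A_B^{-1}A)^\top$ yields $\Pi^D p=\sum_B\mu_B(D)Q_Bp$. The only delicate point in this step is bookkeeping the signs in the double determinant expansion; a reader preferring a citation may instead invoke the classical fact that a weighted hat matrix lies in the convex hull of the coordinate-interpolation projectors $Q_B$.
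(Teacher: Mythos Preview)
The paper does not actually prove this proposition; it is stated with a citation to Todd (2001) and immediately used. Your argument is correct and is essentially the classical one from that literature: the lower bound by letting the weights outside a basis $B$ tend to zero, and the upper bound via the Cauchy--Binet/Cramer identity expressing the weighted projector $A^\top(ADA^\top)^{-1}AD$ as the convex combination $\sum_B \mu_B(D)\,(A_B^{-1}A)^\top S_B$. The determinant bookkeeping you flag as delicate in fact works exactly as you sketch: after Cauchy--Binet on $A\cdot(D\tilde A)$, the inner $m\times m$ minor is $(A_B)^\top$ with its $k$-th column replaced by $p_B$, and Cramer's rule for $A_B^\top z=p_B$ gives $\det=\det(A_B)\,(A_B^{-\top}p_B)_k$, so no extra sign appears. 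One minor remark: your limit argument for the lower bound only shows $\bar\chi_A\ge\sup_B\|A_B^{-1}A\|$ via a \emph{limit} of admissible $D$'s, but since the right-hand side is achieved and the supremum defining $\bar\chi_A$ is in fact a maximum by your convex-combination formula, this is harmless.
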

Together with Proposition~\ref{prop:kappa-max}, this shows that the difference between  $\bar\chi_A$ and $\kappa_A$ is in using $\ell_2$ instead of $\ell_\infty$ norm.
This immediately implies the upper bound and a slightly weaker lower bound in the next theorem.

\begin{theorem}[\cite{DHNV20,DadushNV20}]\label{theorem:kappa_chi_bar_approx}
For a matrix $A \in \R^{m \times n}$ we have $\sqrt{1 + \kappa_A^2} \le \bar\chi_A \le n\kappa_A$.
\end{theorem}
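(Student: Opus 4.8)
The plan is to prove the two bounds $\sqrt{1+\kappa_A^2}\le\bar\chi_A$ and $\bar\chi_A\le n\kappa_A$ separately, in both cases exploiting the twin characterizations of $\bar\chi_A$ (Proposition~\ref{prop:chibar}) and $\kappa_A$ (Proposition~\ref{prop:kappa-max}) as maxima over non-singular $m\times m$ submatrices $A_B$ of $\|A_B^{-1}A\|$ and $\|A_B^{-1}A\|_{\max}$ respectively. Fix such a basis $B$ and write $M:=A_B^{-1}A$, a matrix in basis form for $B$ with $\ker(M)=W$. Both inequalities then reduce to comparing the operator norm and the max-norm of $M$, using the fact that $M$ contains an identity submatrix $M_B=I_m$.

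For the lower bound $\sqrt{1+\kappa_A^2}\le\bar\chi_A$: choose, via Proposition~\ref{prop:kappa-max}, a basis $B$ and indices $i\in B$, $j\in[n]\setminus B$ such that the $(i,j)$ entry of $M=A_B^{-1}A$ has absolute value exactly $\kappa_A$. Now estimate $\|M\|=\|A_B^{-1}A\|$ from below by testing it on the standard unit vector $e_j$: since $M_B=I_m$, column $j$ of $M$ has a $1$ in... wait — more carefully, column $j$ of $M$ is the elementary vector $g$ with $g_j=1$ and $g_i=-M_{ij}$, so $\|Me_j\|=\|g\|\ge\sqrt{g_j^2+g_i^2}=\sqrt{1+\kappa_A^2}$, while $\|e_j\|=1$. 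Hence $\bar\chi_A\ge\|M\|\ge\|Me_j\|/\|e_j\|\ge\sqrt{1+\kappa_A^2}$. This is the clean argument; the text's remark that Proposition~\ref{prop:chibar} together with Proposition~\ref{prop:kappa-max} ``immediately'' gives a slightly weaker lower bound refers to the trivial $\kappa_A=\|M\|_{\max}\le\|M\|\le\bar\chi_A$, and the extra $\sqrt{1+(\cdot)^2}$ needs exactly this one-line refinement.

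For the upper bound $\bar\chi_A\le n\kappa_A$: fix the basis $B$ attaining $\bar\chi_A=\|M\|$ with $M=A_B^{-1}A$. By Proposition~\ref{prop:kappa-max}, every entry of $M$ has absolute value at most $\kappa_A$ (entries are either $0$ or lie in $[1/\kappa_A,\kappa_A]$, in particular bounded by $\kappa_A$). A matrix in $\R^{m\times n}$ with all entries bounded by $\kappa_A$ in absolute value has operator norm at most $\sqrt{mn}\,\kappa_A$, or more crudely at most $n\kappa_A$ using $\|M\|\le\|M\|_F\le\sqrt{mn}\,\|M\|_{\max}\le n\kappa_A$ since $m\le n$. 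Thus $\bar\chi_A=\|M\|\le n\kappa_A$, as claimed.

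I do not expect a serious obstacle here: both directions are short consequences of the two submatrix characterizations already proved in the excerpt, and the only mild subtlety is remembering to use $M_B=I_m$ so that column $j$ of $M$ genuinely realizes the elementary vector $g^C$ with a unit entry, which is what upgrades the trivial bound $\kappa_A\le\bar\chi_A$ to $\sqrt{1+\kappa_A^2}\le\bar\chi_A$. If one wants the sharper $\sqrt{mn}$ constant in the upper bound, one simply keeps the Frobenius estimate without coarsening $m$ to $n$; the paper states $n\kappa_A$, so the coarser bound suffices.
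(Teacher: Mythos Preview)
Your upper bound argument is correct and matches the paper's approach: for any basis $B$, Proposition~\ref{prop:kappa-max} gives $\|A_B^{-1}A\|_{\max}\le\kappa_A$, hence $\|A_B^{-1}A\|\le\|A_B^{-1}A\|_F\le\sqrt{mn}\,\kappa_A\le n\kappa_A$, and taking the maximum over bases yields $\bar\chi_A\le n\kappa_A$.

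Your lower bound argument contains a genuine error. You write that ``column $j$ of $M$ is the elementary vector $g$'' and conclude $\|Me_j\|=\|g\|\ge\sqrt{1+\kappa_A^2}$. But $Me_j\in\R^m$ is the $j$-th column of $M$, with entries $(M_{1j},\ldots,M_{mj})$, whereas the elementary vector $g\in\R^n$ has those same values (negated) on the coordinates in $B$ \emph{plus} the entry $g_j=1$ on coordinate $j\notin B$. That extra $1$ is not present in $Me_j$, so in fact $\|Me_j\|^2=\|g\|^2-1$. Your test vector only certifies $\bar\chi_A\ge\kappa_A$, which is precisely the ``slightly weaker lower bound'' the paper says follows immediately. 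Concretely, for $A=(1,\kappa)\in\R^{1\times 2}$ you get $M=(1,\kappa)$ and $\|Me_2\|=\kappa$, not $\sqrt{1+\kappa^2}$.

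The fix is simple: test on the \emph{row} rather than the column. Row $i$ of $M=A_B^{-1}A$ contains both the entry $M_{ij}$ with $|M_{ij}|=\kappa_A$ and the entry $1$ coming from the identity block $M_B=I_m$. Hence $\|M\|=\|M^\top\|\ge\|M^\top e_i\|\ge\sqrt{1+\kappa_A^2}$, which gives $\bar\chi_A\ge\sqrt{1+\kappa_A^2}$. Equivalently, one can use the lifting characterization (Proposition~\ref{prop:subspace-chi}): with $I=([n]\setminus C)\cup\{i\}$ and $p=g_i^C e_i$ as in the proof of Proposition~\ref{lem:kappa-lift}, one has $L_I^W(p)=g^C$ and $\|g^C\|/\|p\|\ge\sqrt{1+\kappa_A^2}$.
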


Approximating the condition number $\bar\chi_A$ is known to be hard; by the same token, $\bar\kappa_A$ also cannot be approximated by any polynomial factor. The proof relies on the hardness of approximating the minimum subdeterminant by Khachiyan \cite{khachiyan1995}.
\begin{theorem}[{Tun{\c{c}}el \cite{Tuncel1999}}]\label{thm:chi-inapprox}
  Approximating $\bar\chi_A$ up to a factor of $2^{\operatorname{poly}(n)}$ is NP-hard.
\end{theorem}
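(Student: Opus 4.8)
The statement to prove is Theorem~\ref{thm:chi-inapprox}: approximating $\bar\chi_A$ within a factor $2^{\poly(n)}$ is NP-hard. Since the theorem is attributed to Tun\c{c}el and the excerpt explicitly says the proof ``relies on the hardness of approximating the minimum subdeterminant by Khachiyan \cite{khachiyan1995}'', the natural route is a reduction from that hardness result. The plan is to recall Khachiyan's theorem in the form we need: given an integer matrix $B$, it is NP-hard to approximate $\Delta_B$ (the largest absolute value of a subdeterminant, or equivalently the largest $|\det|$ over square nonsingular submatrices) within a factor $2^{\poly(\text{size})}$ — in fact already within $c^n$ for some fixed $c>1$, as mentioned around Proposition~\ref{prop:B_inverse_General}. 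The target is then to exhibit, from $B$, a matrix $A$ of polynomially related size such that $\bar\chi_A$ is sandwiched between two quantities that are both polynomially-bounded functions of $\Delta_B$, so that a $2^{\poly(n)}$-approximation to $\bar\chi_A$ would yield a $2^{\poly}$-approximation to $\Delta_B$, a contradiction.

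First I would make the connection between $\bar\chi_A$ and subdeterminants precise. By Proposition~\ref{prop:chibar}, $\bar\chi_A=\max_B\|A_B^{-1}A\|$, and by Proposition~\ref{prop:kappa-max}, $\kappa_A=\max_B\|A_B^{-1}A\|_{\max}$; combined with Theorem~\ref{theorem:kappa_chi_bar_approx} ($\sqrt{1+\kappa_A^2}\le\bar\chi_A\le n\kappa_A$), it suffices to control $\kappa_A$, hence $\bar\kappa_A$, in terms of $\Delta_B$. The entries of $A_B^{-1}A$ are ratios of subdeterminants of the integer matrix (Cramer's rule, cf.\ the proof of Proposition~\ref{prop:kappa-delta}): each nonzero entry equals $\pm\det(\cdot)/\det(A_B)$. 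So $\kappa_A$ is the maximum over pairs of such ratios, and $\bar\kappa_A\le\Delta_A$ by Proposition~\ref{prop:kappa-delta}. The idea is to choose $A$ of the form $A=[\,I_m\mid B'\,]$ (a basis form), where $B'$ is obtained from the instance matrix $B$ of Khachiyan's problem, possibly after scaling rows so that a convenient basis has determinant $\pm1$. In such a basis form, Lemma~\ref{lem:sub-inverse} tells us that every nonsingular submatrix $M$ of $A$ has $M^{-1}$ with entries of absolute value at most $\bar\kappa_A$ and at least $1/\bar\kappa_A$, and since $\det(M)\det(M^{-1})=1$ with $\det(M)$ an integer, one gets $|\det(M)|$ controlled by powers of $\bar\kappa_A$ — giving the lower bound $\bar\kappa_A\ge \Delta_A^{1/m}$, say, hence $\bar\chi_A\ge\Delta_A^{1/m}$. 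Conversely $\bar\chi_A\le n\kappa_A\le n\bar\kappa_A\le n\Delta_A$. Thus $\Delta_A^{1/m}\le\bar\chi_A\le n\Delta_A$, so a $2^{\poly(n)}$-factor approximation of $\bar\chi_A$ determines $\Delta_A$ up to a $2^{\poly(n)}$ factor (raising to the $m$-th power blows the ratio up by an $m$-th power, still $2^{\poly}$), and the reduction $B\mapsto A$ preserves the value $\Delta_B=\Delta_A$ (or a polynomially-controlled variant of it) up to the normalizing row scalings.

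The step I expect to be the main obstacle is the precise packaging of Khachiyan's hardness result into a form whose ``output'' condition number is exactly (a power of) $\Delta$, rather than some more delicate quantity: Khachiyan's NP-hardness is for approximating the maximum absolute subdeterminant of a matrix derived from a set-system / SUBSET-SUM-like instance, and care is needed (i) that the matrix can be put into basis form $[I_m\mid B']$ with a distinguished all-$\pm1$ basis, so that Lemma~\ref{lem:sub-inverse} applies and no spurious scaling factors sneak in, and (ii) that all entries and the dimension $m,n$ stay polynomial in the original instance size, so that the loss from the exponent $m$ and from the $n\Delta_A$ upper bound is absorbed into the allowed $2^{\poly(n)}$ slack. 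A secondary subtlety is the gap direction: Khachiyan gives a gap instance where $\Delta_B$ is either ``small'' (polynomial) or ``large'' ($2^{\poly}$); I would verify the sandwich $\Delta_A^{1/m}\le\bar\chi_A\le n\Delta_A$ separates these two cases by more than any fixed $2^{\poly(n)}$ factor, which it does because $\log\bar\chi_A$ lies within a $[\,1/m,\,1\,]$-multiplicative window of $\log\Delta_A$ while the $\log\Delta_A$ gap itself is $\Omega(\poly(n))$ in one case versus $O(\log n)$ in the other. Assembling these pieces — invoke Khachiyan, build $A=[I_m\mid B']$, apply Propositions~\ref{prop:chibar}, \ref{prop:kappa-max}, \ref{prop:kappa-delta}, Lemma~\ref{lem:sub-inverse} and Theorem~\ref{theorem:kappa_chi_bar_approx} for the sandwich, conclude hardness — gives the proof.
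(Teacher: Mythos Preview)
The paper does not actually prove this theorem: it is stated as a result of Tun\c{c}el with only the one-line remark that the argument relies on Khachiyan's hardness result for subdeterminants. So there is no ``paper's own proof'' to compare against beyond that hint.

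That said, your sketch has two issues worth flagging. First, you invoke Khachiyan for the hardness of approximating the \emph{maximum} subdeterminant $\Delta_B$, but the paper explicitly says the proof uses the hardness of the \emph{minimum} subdeterminant (see also the reference to \cite{khachiyan1995} in Section~\ref{sec:matrix}). This is the natural direction: by Proposition~\ref{prop:chibar}, $\bar\chi_A=\max_B\|A_B^{-1}A\|$, and the entries of $A_B^{-1}A$ are ratios with $\det(A_B)$ in the denominator, so $\bar\chi_A$ blows up precisely when some basis has an anomalously small determinant. A reduction from minimum-determinant hardness is therefore much more direct than the sandwich you propose.

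Second, your lower bound $\bar\chi_A\ge\Delta_A^{1/m}$ via Lemma~\ref{lem:sub-inverse} does not quite work as stated: bounding the entries of $M^{-1}$ by $\kappa_A$ yields $|\det(M^{-1})|\le k!\,\kappa_A^k$, hence a \emph{lower} bound on $|\det(M)|$, not the upper bound you need. You can instead bound $|\det(M)|$ directly from the entry bound on $A$ in basis form (Proposition~\ref{prop:kappa-max}), but then the delicate point is exactly the one you flag as the ``main obstacle'': getting the Khachiyan instance into the form $[I_m\mid B']$ with integer $B'$ without destroying the quantity you are trying to approximate. Your plan is in the right spirit, but the cleaner route---and the one Tun\c{c}el takes---goes through the minimum-determinant problem.
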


\medskip

In connection with $\bar\chi_A$, it is worth mentioning the  {\em lifting map}, a key concept in the algorithms presented in Section~\ref{sec:only-A}. The map 
$L_I^W : \pi_{I}(W) \to W$ lifts back a vector from a coordinate projection of $W$ to a minimum-norm vector in $W$:
\[
L_I^W(p) = \arg\min\left\{\|z\| : z_I = p, z \in W\right\}.
\]
Note that $L_I^W$ is the unique linear map from $\pi_{I}(W)$ to $W$ such that
$L_I^W(p)_I = p$ and $L_I^W(p)$ is orthogonal to $W \cap \R^n_{[n] \setminus
I}$. 
The condition number $\bar\chi_W$ can be equivalently defined as the maximum norm of any lifting map for an index subset.  
\begin{prop}[\cite{DHNV20,OLeary1990,stewart}]\label{prop:subspace-chi}
  For a linear subspace $W  \subseteq \R^n$,
  \[
  \bar{\chi}_W =\max\left\{ \|L_I^W\|\, : {I\subseteq [n]}, I\neq\emptyset\right\}\, .
  \]
\end{prop}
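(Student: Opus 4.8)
The plan is to prove Proposition~\ref{prop:subspace-chi} by relating the lifting map $L_I^W$ to an explicit matrix of the form $A_B^{-1}A$ and then invoking Proposition~\ref{prop:chibar}. First I would fix a nonempty $I\subseteq[n]$ and compute $\|L_I^W\|$ concretely. Choose a matrix representation $W=\ker(A)$ for $A\in\R^{m\times n}$ of full row rank. The map $L_I^W(p)=\arg\min\{\|z\|:z_I=p,\,z\in W\}$ is a linear map; write $J=[n]\setminus I$. Since $L_I^W(p)$ is characterized by $z\in W$, $z_I=p$, and $z\perp W\cap\R^n_J$, I would derive a closed form: the lifting is obtained by projecting out the coordinates in $J$ optimally, which connects to the Schur-complement structure of $A$ restricted to the $J$-columns. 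Concretely, after a suitable basis change one can assume $A$ is in basis form with the basis chosen inside $J$ (assuming $|J|\geq m$; the degenerate small-$J$ cases must be handled separately), and then $L_I^W$ is exactly the linear map whose matrix is a submatrix/rearrangement of $A_B^{-1}A$. Hence $\|L_I^W\|\le \|A_B^{-1}A\|\le\bar\chi_W$ by Proposition~\ref{prop:chibar}, giving the ``$\le$'' direction of the claimed max.

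For the reverse inequality, given a nonsingular $m\times m$ submatrix $A_B$ achieving $\bar\chi_W=\|A_B^{-1}A\|$, I would exhibit an index set $I$ and a vector $p\in\pi_I(W)$ with $\|L_I^W(p)\|/\|p\|$ close to (in fact equal to) $\|A_B^{-1}A\|$. The natural choice is $I=[n]\setminus B$: then $\pi_I(W)$ is all of $\R^I$ because the $B$-columns span, and for $p\in\R^I$ the minimum-norm lift $z$ has $z_I=p$ with $z$ orthogonal to $W\cap\R^n_B=\ker(A_B)$'s complement — but since $A_B$ is nonsingular, $W\cap\R^n_B=\{0\}$, so there is a unique $z\in W$ with $z_I=p$, namely the one with $z_B=-A_B^{-1}A_I p$. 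Thus $L_I^W$ is exactly (the rearrangement of) the operator $\begin{pmatrix}-A_B^{-1}A_I\\ I\end{pmatrix}$, whose norm is at least $\|A_B^{-1}A_I\|=\|A_B^{-1}A\|$ (the latter equality because the $B$-columns of $A_B^{-1}A$ form $I_m$, which does not increase the operator norm beyond that of the $I$-block — more carefully, $\|A_B^{-1}A\|\le\sqrt{1+\|A_B^{-1}A_I\|^2}$ and $\|L_I^W\|^2=\|A_B^{-1}A_I\|^2+1$ relative to the $\ell_2$ norm on $\R^n$, so one gets $\|L_I^W\|\ge\|A_B^{-1}A\|$). This yields $\bar\chi_W\le\max_I\|L_I^W\|$.

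The main obstacle, and the place requiring care, is the matching of norms between $L_I^W$ and $A_B^{-1}A$: the operator $A_B^{-1}A$ includes the identity block on the $B$-coordinates, whereas $L_I^W$ as a map $\pi_I(W)\to W$ also ``includes the identity on $I$'' in the sense that $L_I^W(p)_I=p$. One must be careful that $\pi_I(W)$ inherits the $\ell_2$ norm from $\R^I$ and that the ranges sit inside $\R^n$ with its $\ell_2$ norm, so that $\|L_I^W\|^2 = 1 + \|(\text{off-}I\text{ part of }L_I^W)\|^2$ only when $I$ is chosen complementary to a basis; for general $I$ one instead bounds $\|L_I^W\|\le\|A_{B}^{-1}A\|$ for an appropriate $B$ by observing that lifting is a restriction of the column-operator. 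I would also need the standard fact (citable as the characterization in the statement, attributed to O'Leary and Stewart) that $L_I^W$ is well-defined and linear, and that $\pi_I(W)=(( W^\perp)_I)^\perp$ from the preliminaries to ensure $p$ ranges over the right space. Finally I would note the edge cases: $I$ such that $W\cap\R^n_{[n]\setminus I}\neq\{0\}$ are fine since the $\arg\min$ is still unique on the affine subspace $\{z_I=p\}\cap W$, and trivial subspaces give $\bar\chi_W=1$ matching $\|L_I^W\|=1$.
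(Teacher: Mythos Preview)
The paper does not actually prove this proposition; it is stated with citations to \cite{DHNV20,OLeary1990,stewart} and no proof environment follows. So there is no ``paper's proof'' to compare against, and I evaluate your attempt on its own merits.

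Your ``$\ge$'' direction is correct and clean. Taking $I=[n]\setminus B$ for a nonsingular $A_B$, the lift is unique (since $W\cap\R^n_B=\{0\}$) and given by $z_B=-A_B^{-1}A_I p$, $z_I=p$. Then indeed
\[
\|L_I^W\|^2=1+\|A_B^{-1}A_I\|^2=\lambda_{\max}\big(I_m+(A_B^{-1}A_I)(A_B^{-1}A_I)^\top\big)=\|A_B^{-1}A\|^2,
\]
so $\max_I\|L_I^W\|\ge\max_B\|A_B^{-1}A\|=\bar\chi_W$ by Proposition~\ref{prop:chibar}.

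Your ``$\le$'' direction, however, has a genuine gap. The claim that for general $I$ one can pick a basis $B\subseteq J=[n]\setminus I$ and then ``$L_I^W$ is exactly a submatrix/rearrangement of $A_B^{-1}A$'' is false. When $B\subsetneq J$, the subspace $W\cap\R^n_J$ is nontrivial (of dimension $|J|-m$), so there are many $z\in W$ with $z_I=p$; the minimum-norm one requires an orthogonal projection step onto $(W\cap\R^n_J)^\perp$ that is not encoded by $A_B^{-1}A$. And when $A_J$ does not have full row rank (in particular when $|J|<m$), no such $B$ exists at all; your ``handled separately'' does not indicate how. The fallback sentence ``lifting is a restriction of the column-operator'' is not a proof.

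The standard route for $\|L_I^W\|\le\bar\chi_W$ does \emph{not} go through Proposition~\ref{prop:chibar} but through the weighted-projection definition of $\bar\chi_A$. For $E\in\bD_n$, the oblique projection onto $W$ in the $E^{-1}$-norm is $P_E=I-EA^\top(AEA^\top)^{-1}A$; one has $\|P_E\|=\|I-R_E\|=\|R_E\|\le\bar\chi_A$ where $R_E=A^\top(AEA^\top)^{-1}AE$ (using that $\|P\|=\|I-P\|$ for nontrivial idempotents). Taking $E$ with entries $1$ on $J$ and $\varepsilon\to 0$ on $I$, and applying $P_E$ to $\hat p=(p,0_J)$, one gets $P_E\hat p\to L_I^W(p)$ while $\|P_E\hat p\|\le\bar\chi_A\|\hat p\|=\bar\chi_A\|p\|$. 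That limiting argument is what you are missing.
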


Even though $L_I^W$ is defined with respect to the $\ell_2$-norm, it can also be used to characterize $\kappa_W$.
\begin{prop}[\cite{DadushNV20}]\label{lem:kappa-lift}
For a linear subspace $W  \subseteq \R^n$,
\[
\kappa_W =\max\left\{ \frac{\|L_I^W(p)\|_\infty}{\|p\|_1}\, : {I\subseteq [n]}, I\neq\emptyset, p\in \pi_I(W)\setminus\{0\}\right\}\, .
\]
\end{prop}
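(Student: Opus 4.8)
The plan is to prove the two inequalities separately: the lower bound $\kappa_W\le\text{RHS}$ comes from a single well-chosen index set, while the upper bound $\text{RHS}\le\kappa_W$ follows from the conformal circuit decomposition (Lemma~\ref{lem:sign-cons}) combined with the variational characterization of the lifting map stated just before Proposition~\ref{prop:subspace-chi} (namely that $L_I^W(p)$ is the unique vector of $W$ agreeing with $p$ on $I$ and orthogonal to $W\cap\R^n_{[n]\setminus I}$).

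For the lower bound, pick a circuit $C\in\circuits_W$ and indices $i,j\in C$ attaining $|g^C_j/g^C_i|=\kappa_W$, and set $w:=g^C/g^C_i\in W$, $I:=[n]\setminus(C\setminus\{i\})$, and $p:=w_I$. Since $C\cap I=\{i\}$, the vector $p$ is just the $i$-th unit vector in $\R^I$, so $p\in\pi_I(W)\setminus\{0\}$ with $\|p\|_1=1$. Any two vectors of $W$ agreeing on $I$ differ by an element of $W\cap\R^n_{C\setminus\{i\}}$, which is $\{0\}$ because $C\setminus\{i\}$ is independent in $\mathcal M(W)$; hence $\{z\in W:z_I=p\}=\{w\}$ and $L_I^W(p)=w$. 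Consequently $\|L_I^W(p)\|_\infty/\|p\|_1=\|w\|_\infty=\max_{\ell\in C}|g^C_\ell/g^C_i|$, and this equals $\kappa_W$ since it is $\ge|g^C_j/g^C_i|=\kappa_W$ and $\le\kappa_W$ by the definition of $\kappa_W$.

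For the upper bound, fix $I\neq\emptyset$ and $p\in\pi_I(W)\setminus\{0\}$; write $z:=L_I^W(p)$, $J:=[n]\setminus I$, and take a conformal circuit decomposition $z=\sum_{k=1}^h g^k$. The crucial observation is that $\supp(g^k)\cap I\neq\emptyset$ for every $k$: otherwise $g^k\in W\cap\R^n_J$, and since $L_I^W(p)$ is orthogonal to $W\cap\R^n_J$ we would have $\langle z,g^k\rangle=0$, contradicting conformality, which forces $\langle z,g^k\rangle=\sum_{\ell\in\supp(g^k)}z_\ell g^k_\ell>0$. Let $j$ attain $\|z\|_\infty$. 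Conformality also yields $|z_\ell|=\sum_k|g^k_\ell|$ for all $\ell$. For each $k$ with $g^k_j\neq 0$ choose $i_k\in\supp(g^k)\cap I$; since $g^k$ is a scalar multiple of $g^{\supp(g^k)}$ and $j,i_k\in\supp(g^k)$, we get $|g^k_j|\le\kappa_W|g^k_{i_k}|$. Summing over $k$ and then bounding $\sum_k|g^k_{i_k}|\le\sum_{i\in I}\sum_k|g^k_i|$ gives
\[
\|L_I^W(p)\|_\infty=|z_j|=\sum_{k}|g^k_j|\le\kappa_W\sum_{i\in I}\sum_k|g^k_i|=\kappa_W\sum_{i\in I}|z_i|=\kappa_W\|p\|_1 .
\]

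The only genuinely delicate point is the claim that the conformal decomposition of the minimum-norm lift uses only elementary vectors whose support meets $I$; this is where the variational definition of $L_I^W$ (its orthogonality to $W\cap\R^n_{J}$) interacts with the sign-consistency of the decomposition to rule out cancellation. Everything else is bookkeeping with pairwise circuit imbalances and the no-cancellation identity $|z_\ell|=\sum_k|g^k_\ell|$.
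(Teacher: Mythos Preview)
Your proof is correct and follows essentially the same two-inequality structure as the paper's own argument: the same choice of $I=([n]\setminus C)\cup\{i\}$ for the lower bound, and the same conformal decomposition of $z=L_I^W(p)$ together with ``every $g^k$ meets $I$'' for the upper bound. The one cosmetic difference is in justifying that last claim: the paper argues directly from $\|\cdot\|_2$-minimality (dropping a $g^k$ supported in $J$ would yield a strictly shorter lift of $p$), whereas you invoke the equivalent orthogonality characterization $z\perp W\cap\R^n_J$ and derive a contradiction from $\langle z,g^k\rangle>0$ by conformality---same content, different packaging.
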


\begin{proof}
We first show that for any $I\neq\emptyset$, and $p\in \pi_I(W)\setminus\{0\}$, $\|L_I^W(p)\|_\infty\le \kappa_W \|p\|_1$ holds. 
Let $z=L_I^W(p)$, and take a conformal decomposition $z=\sum_{k=1}^h g^k$ as in Lemma~\ref{lem:sign-cons}. For each $k\in [h]$, let $C_k=\supp(g^k)$. We claim that all these circuits must intersect $I$. Indeed, assume for a contradiction that one of them, say $C_1$ is disjoint from $I$, and let $z'=\sum_{k=2}^h g^k$. Then, $z'\in W$ and $z'_I=z_I=p$. Thus, $z'$ also lifts $p$ to $W$, but $\|z'\|_2<\|z\|_2$, contradicting the definition of $z=L_I^W(p)$ as the minimum-norm lift of $p$.

By the definition of $\kappa_W$, $\|g^k\|_\infty\le \kappa_W \|g^k_I\|_1$ for each $k\in [h]$. The claim follows since $p=z_I=\sum_{k=1}^h g^k_I$, moreover, conformity guarantees that $\|p\|_1=\sum_{k=1}^h \|g^k_I\|_1$. Therefore,
\[
\|z\|_\infty\le \sum_{k=1}^h \|g^k\|_\infty\le \kappa_W \sum_{k=1}^h \|g^k_I\|_1=\kappa_W\|p\|_1\, .\]
We have thus shown that the maximum value in the statement is at most $\kappa_W$. To show that equality holds, let $C\in\circuits_W$ be the circuit and $g^C\in W$ the corresponding elementary vector and $i,j\in C$ such that $\kappa_W=|g^C_j/g^C_i|$.

Let us set $I=([n]\setminus C)\cup \{i\}$, and define $p_k=0$ if $k\in [n]\setminus C$ and $p_i=g^C_i$. Then  $p\in \pi_I(W)$,  and the unique extension to $W$ is $g^C$; thus, $L_I^W(p)=g^C$. We have $\|L_I^W(p)\|_\infty=|g^C_j|$. Noting that $\|p\|_1=|g^C_i|$, it follows that 
$\kappa_W=\|L_I^W(p)\|_\infty/\|p\|_1$.
\end{proof}

\subsection{The condition number $\delta$ and bounds on diameters of polyhedra}\label{sec:delta}

Another related condition number is $\delta$, defined as follows:
\begin{Def}\label{def:delta}
Let $V\subseteq \R^n$ be a set of vectors. Then
$\delta_V$ is the largest value such that for any set of linearly independent vectors $\{v_i:\,  i\in I\}\subseteq V$ and $\lambda\in \R^I$,
\[
\left\|\sum_{i\in I}\lambda_i v_i\right\|\ge \delta_V \max_{i \in I} |\lambda_i| \cdot\|v_i\|\, .
\]
For a matrix $M\in\R^{m\times n}$, we let $\delta_M$ denote the value associated with the rows $M^1,M^2,\ldots,M^m$ of $M$.
\end{Def}
This can be equivalently characterized as follows: for a subset  $\{v_i:\,  i\in I\}\subseteq V$ and $v_j \in V$,  $v_j\notin W=\spn(\{v_i:\,  i\in I\})$,   the sine of the angle between the vector  $v_j$ and the subspace $W$ is at least $\delta_V$ (see e.g. for the equivalence \cite{Dadush-oracle}).

A line of work studied this condition number in the context of the simplex algorithm and diameter bounds. The \emph{diameter} of a polyhedron $P$ is the diameter of the vertex-edge graph associated with $P$; Hirsch's famous conjecture from 1957 asserted that the diameter of a polytope (a bounded polyhedron) in $n$ dimensions with $m$ facets is at most $m-n$. This was disproved by Santos in 2012 \cite{santos2012}, but the \emph{polynomial Hirsch conjecture}, i.e., a poly$(n,m)$ diameter bound remains wide open.

Consider the LP in standard inequality form with $n$ variables and $m$ constraints as 
\begin{equation}\label{LP:ineq}
\max \pr{c}{x}\, \st\,  x\in P\, ,\quad P=\{x\in \R^n:\, Ax\le b\}\, ,
\end{equation}
for $A\in \R^{m\times n}$, $b\in \R^m$.
Using a randomized dual simplex algorithm, Dyer and Frieze \cite{Dyer1994} showed the polynomial Hirsch conjecture for TU matrices. Bonifas et al.~\cite{Bonifas2014} strengthened and extended this to the bounded subdeterminant case, showing a diameter bound of $O(n^4\Delta_A^2 \log(n\Delta_A))$ for integer constraint matrices $A\in\Z^{m\times n}$. Note that this is independent of the number of constraints $m$.

Brunsch and R\"oglin \cite{brunsch2013} analyzed the shadow vertex simplex algorithm in terms of the condition number $\delta_A$, noting that for integer matrices $\delta_A\ge 1/(n\Delta_A^2)$. They gave a diameter bound $O(mn^2/\delta^2_A)$.
 Eisenbrand and Vempala \cite{eisenbrand2017} used a different approach to derive a bound poly$(n,1/\delta_A)$ that is independent of $m$.  Dadush and H\"ahnle \cite{dadush2016shadow} further improved these bounds to $O(n^3\log(n/\delta_A)/\delta_A)$.

 In recent work, Dadush et al.~\cite{Dadush-oracle} considered \eqref{LP:ineq} in the oracle model, where for each point $x\in \R^n$, the oracle returns $x\in P$ or a violated inequality $\pr{a_i}{x}\le b_i$ from the system $Ax\le b$. Their algorithm finds exact primal and dual solutions using $O(n^2\log(n/\delta_M))$ oracle calls, where 
 $M=\begin{pmatrix}\mathbf{0}& 1\\
A & b
\end{pmatrix}$; the running time is independent of the cost function $c$. They also show the following relation between $\kappa$ and $\delta$:

\begin{lemma}[{\cite{Dadush-oracle}}]\label{lem:delta-kappa}
\begin{enumerate}[(i)]
\item \label{i:delta-kappa-part-i} Let $A\in\R^{m\times n}$ be a matrix with full row rank and $m<n$, 
with $\|a_i\|=1$ for all columns $i\in[n]$. Then, $\kappa_A\le{1}/{\delta_{A^\top}}$. 
\item \label{i:delta-kappa-part-ii}Let $A\in\R^{m\times n}$ be in basis form  $A=(I_m|A')$. Then,
${1}/{\delta_{A^\top}}\le m\kappa_A^2$.
\item\label{i:m-kappa} If $B$ is the basis maximizing $|\det(A_B)|$, then  for $\bar A=A_B^{-1}A$, it holds that ${1}/{\delta_{\bar A^\top}}\le m\kappa_A$.
\end{enumerate}
\end{lemma}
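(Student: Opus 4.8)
The plan is to prove all three parts directly from the definition of $\delta$, using that the circuits of $W=\ker(A)$ are precisely the minimal linearly dependent subsets of the columns $a_1,\dots,a_n$ of $A$ (equivalently, of the rows of $A^\top$). For part~\ref{i:delta-kappa-part-i}, I would pick a circuit $C\in\circuits_A$ and indices $i,j\in C$ with $\kappa_A=|g^C_j/g^C_i|$. Since $C$ is a circuit, $\{a_k:k\in C\setminus\{i\}\}$ is linearly independent, and $\sum_{k\in C}g^C_k a_k=0$ rearranges to $a_i=\sum_{k\in C\setminus\{i\}}(-g^C_k/g^C_i)\,a_k$. Feeding this combination into the defining inequality for $\delta_{A^\top}$ and using $\|a_k\|=1$ for all $k$ gives $1=\|a_i\|\ge\delta_{A^\top}\max_{k\in C\setminus\{i\}}|g^C_k/g^C_i|\ge\delta_{A^\top}\kappa_A$, i.e.\ $\kappa_A\le1/\delta_{A^\top}$.

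For part~\ref{i:delta-kappa-part-ii}, fix linearly independent columns $\{a_i:i\in I\}$ and $\lambda\in\R^I$, put $v=\sum_{i\in I}\lambda_i a_i$, and let $j\in I$ attain $\max_{i\in I}|\lambda_i|\|a_i\|$ (the case where this maximum is $0$ is trivial). Since $A$ has full row rank, $I$ extends to a basis $B$ of the column matroid; then $v=A_B\lambda'$ with $\lambda'$ the zero-padding of $\lambda$, so $\lambda'=A_B^{-1}v$ and $\lambda_j=\lambda'_j=\pr{r}{v}$, where $r$ is the $j$-th row of $A_B^{-1}$, whence $\|v\|\ge|\lambda_j|/\|r\|$. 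Now $A_B$ is a nonsingular square submatrix of the basis-form matrix $A=(I_m|A')$, so Lemma~\ref{lem:sub-inverse} bounds every entry of $A_B^{-1}$ by $\kappa_A$ in absolute value, hence $\|r\|\le\sqrt m\,\kappa_A$; and by Proposition~\ref{prop:kappa-max} (taking the basis $[m]$) every entry of $A$ has absolute value at most $\kappa_A$, so $\|a_j\|\le\sqrt m\,\kappa_A$. Combining, $\|v\|\ge|\lambda_j|/(\sqrt m\,\kappa_A)\ge|\lambda_j|\|a_j\|/(m\kappa_A^2)=\max_{i\in I}|\lambda_i|\|a_i\|/(m\kappa_A^2)$, which gives $1/\delta_{A^\top}\le m\kappa_A^2$.

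For part~\ref{i:m-kappa}, I would rerun the argument of part~\ref{i:delta-kappa-part-ii} with $\bar A=A_B^{-1}A$ in place of $A$: $\bar A$ is again in basis form (for $B$, up to a column permutation) and $\ker(\bar A)=\ker(A)$, so $\kappa_{\bar A}=\kappa_A$ and Lemma~\ref{lem:sub-inverse} still yields $\|r\|\le\sqrt m\,\kappa_A$ for the relevant inverted basis submatrix. The improvement comes entirely from the bound on $\|\bar a_j\|$: since $B$ maximizes $|\det(A_B)|$, for every basis $B'$ we have $|\det(\bar A_{B'})|=|\det(A_{B'})|/|\det(A_B)|\le1$, and specializing to the bases obtained from $B$ by one column exchange forces every entry of $\bar A$ to have absolute value at most $1$ (this is the $k=1$ case of the observation in the proof of Proposition~\ref{prop:B_inverse_General}\ref{part:subdet}). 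Hence $\|\bar a_j\|\le\sqrt m$ rather than $\sqrt m\,\kappa_A$, and the same chain of inequalities now yields $\|v\|\ge|\lambda_j|\|\bar a_j\|/(m\kappa_A)$, i.e.\ $1/\delta_{\bar A^\top}\le m\kappa_A$.

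The norm estimates are routine once the setup is in place; the one substantive idea, shared by parts~\ref{i:delta-kappa-part-ii} and~\ref{i:m-kappa}, is to enlarge the given independent set to a basis and express the coefficient of the heaviest column as an inner product $\pr{r}{v}$ with a single row $r$ of the inverted basis submatrix --- this is exactly what lets the circuit-imbalance control on submatrix inverses (Lemma~\ref{lem:sub-inverse}, Proposition~\ref{prop:kappa-max}) bound $\delta_{A^\top}$ from below. I expect the only slightly delicate point to be the bookkeeping in part~\ref{i:m-kappa} that converts the determinant-maximality of $B$ into the entrywise bound $\|\bar A\|_{\max}\le1$, which is precisely what saves the extra factor of $\kappa_A$.
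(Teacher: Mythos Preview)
Your proof is correct and follows essentially the same approach as the paper's: part~\ref{i:delta-kappa-part-i} uses the circuit relation among columns exactly as the paper does, and parts~\ref{i:delta-kappa-part-ii}--\ref{i:m-kappa} both extend the independent set to a basis, invert via Lemma~\ref{lem:sub-inverse}, and then plug in the column-norm bound (either $\sqrt m\,\kappa_A$ from Proposition~\ref{prop:kappa-max} or $\sqrt m$ from determinant maximality). The paper organizes \ref{i:delta-kappa-part-ii} and \ref{i:m-kappa} via a single intermediate bound $1/\delta_{A^\top}\le \sqrt m\,\alpha\,\kappa_A$ with $\alpha=\max_i\|A_i\|$, but this is only a cosmetic difference from your presentation.
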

\begin{proof}
{\bf Part \ref{i:delta-kappa-part-i}:} Let  $g\in\EE(A)$ be an elementary vector. Select an arbitrary $i\in \supp(g)$, and let $J=\supp(g)\setminus \{i\}$. Then, the columns $\{g_i:\,  i\in J\}$ are linearly independent, and $-g_i a_i=\sum_{j\in J} g_j a_j$. Thus,
\[
|g_i|\cdot\|a_i\|=\left\|\sum_{j\in J} g_j a_j\right\|\ge \delta_{A^\top} \max_{j\in J} |g_j|\cdot\|a_j\|\, ,
\]
and using that all columns have unit norm, we get $|g_j/g_i|\le 1/\delta_{A^\top}$ for all $j\in J$. This shows that $\kappa_{A}\le 1/\delta_{A^\top}$.

\paragraph{Parts \ref{i:delta-kappa-part-ii} and \ref{i:m-kappa}:} Let $A=(I_m|A')$ in basis form, and let $\alpha=\max_{i\in [n]} \|A_i\|$. Let us first show 
\begin{equation}\label{eq:delta-kappa-alpha}
{1}/{\delta_{A^\top}}\le \sqrt{m}\alpha \kappa_A\, .\end{equation}
 Take any set $\{A_i:\, i\in I\}$ of linearly independent columns of $A$, along with coefficients $\lambda\in \R^I$. Without loss of generality, assume $|I|=m$, i.e., $I$ is a basis, by allowing $\lambda_i=0$ for some coefficients. Let $z=\sum_{i\in I}\lambda_i A_i$. Then, $\lambda=A_I^{-1} z$. Lemma~\ref{lem:sub-inverse} implies that every column of $A_I^{-1}$ has 2-norm at most $\sqrt{m}\kappa_A$. Hence, $|\lambda_i|\le \sqrt{m}\kappa_A \|z\|$ holds for all $i\in I$, implying \eqref{eq:delta-kappa-alpha}.

 Then, part \ref{i:delta-kappa-part-ii} follows since $\|A_i\|\le \sqrt{m}\kappa_A$ by Proposition~\ref{prop:kappa-max}. For part \ref{i:m-kappa}, let  $B$ be a basis maximizing $|\det(A_B)|$. Then, $\|A_B^{-1} A\|_\infty\le 1$. Indeed, if there is an entry $|A_{ij}|>1$, then we can obtain a larger determinant by exchanging $i$ for $j$. This implies $\alpha\le \sqrt{m}$.
\end{proof}

Using this correspondence between $\delta$ and $\Le$, we can derive the following bound on the diameter of polyhedra in standard form from \cite{dadush2016shadow}. This verifies the polynomial Hirsch-conjecture whenever $\kappa_A$ is polynomially bounded.

\begin{theorem}\label{thm:kappa-diameter}
Consider a polyhedron in the standard equality form 
\[P=\{x\in\R^n:\, Ax=b, x\ge 0\}\]
for $A\in\R^{m\times n}$ and $b\in \R^m$. Then, the diameter of $P$ is at most  $O((n-m)^{3} m \kappa_{A}$ $\log(\kappa_{A}+n))$.
\end{theorem}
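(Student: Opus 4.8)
The plan is to reduce the standard-equality-form polyhedron $P=\{x:Ax=b,\ x\ge 0\}$ to the standard inequality form \eqref{LP:ineq} on which the shadow-simplex diameter bound of Dadush and H\"ahnle \cite{dadush2016shadow} applies, and then translate the $\delta$-dependence of that bound into a $\kappa_A$-dependence using Lemma~\ref{lem:delta-kappa}. First I would observe that we may assume $A$ has full row rank $m$ (otherwise delete redundant rows, which does not change $P$), and that $P$ is pointed, so the vertex-edge graph is well defined. Writing $\dim(P)=n-m=:n'$, one passes from the equality description to an inequality description in the $n'$-dimensional affine hull of $P$: pick a basis $B$ of $\mathcal M(A)$, use $A_B^{-1}A$ to eliminate the basic variables, and express $P$ inside $\aff(P)$ as $P'=\{x'\in\R^{n'}:\ \bar A x'\le \bar b\}$ where the rows of the constraint matrix are, up to the identity block, the columns of $A_B^{-1}A$ restricted to the non-basic coordinates (this is exactly the content of Lemma~\ref{lem:P-project}, applied coordinate-subset by coordinate-subset; the nonnegativity constraints $x\ge 0$ become the $n$ facet inequalities). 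The diameter of $P$ equals the diameter of $P'$ since Lemma~\ref{lem:P-project} gives a bijection of vertices and of edges.

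Next I would invoke the Dadush--H\"ahnle bound: for a polytope (or pointed polyhedron) in $\R^{n'}$ defined by $m'$ inequalities with condition number $\delta$, the diameter is $O(n'^3\log(n'/\delta)/\delta)$, independent of $m'$. Here $n'=n-m$ and the relevant condition number is $\delta_{\bar A^\top}$ for the constraint matrix $\bar A$ of $P'$. Crucially, I would choose the basis $B$ to be the one \emph{maximizing} $|\det(A_B)|$, so that Lemma~\ref{lem:delta-kappa}\ref{i:m-kappa} applies and gives $1/\delta_{\bar A^\top}\le m\kappa_A$ (note $\kappa_{\bar A}=\kappa_A$ since $\bar A$ and $A$ define dual subspaces of the same subspace, and the normalization $\|A_B^{-1}A\|_\infty\le 1$ used in that part is exactly what the maximizing basis guarantees). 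Substituting $\delta = 1/(m\kappa_A)$ and $n'=n-m$ into the Dadush--H\"ahnle bound yields diameter $O\big((n-m)^3\, m\kappa_A\,\log((n-m)m\kappa_A)\big)=O\big((n-m)^3 m\kappa_A\log(\kappa_A+n)\big)$, which is the claimed estimate.

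I would be slightly careful about two points. One is the reduction to full row rank and pointedness: if $P$ is non-pointed the statement should be read for the quotient by the lineality space, and one should note that $\ker A$ already being a subspace on which $\kappa_A$ is defined means $\kappa_A$ is unchanged under these cleanups. The second is making sure the Dadush--H\"ahnle theorem is quoted in a form valid for unbounded pointed polyhedra, not only polytopes — this is standard (their analysis handles the general pointed case), but it is the step where I would double-check the literature statement. The main obstacle, such as it is, is purely bookkeeping: correctly identifying that the condition number governing $P'$ is $\delta_{\bar A^\top}$ with $\bar A=A_B^{-1}A$ for the determinant-maximizing basis, so that the $m\kappa_A$ (rather than a weaker $m\kappa_A^2$ from part~\ref{i:delta-kappa-part-ii}) bound on $1/\delta$ kicks in; everything else is substitution. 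There is no deep difficulty here — the theorem is a packaging of the $\delta$-to-$\kappa$ dictionary of Lemma~\ref{lem:delta-kappa} together with the form-conversion of Lemma~\ref{lem:P-project} and an off-the-shelf diameter bound.
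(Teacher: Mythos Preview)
Your proposal is correct and follows essentially the same approach as the paper: convert to inequality form via Lemma~\ref{lem:P-project}, choose the basis $B$ maximizing $|\det(A_B)|$, apply the Dadush--H\"ahnle diameter bound, translate $1/\delta$ to $\kappa$ via Lemma~\ref{lem:delta-kappa}\ref{i:m-kappa}, and finish using self-duality $\kappa_{C^\top}=\kappa_A$ (Proposition~\ref{prop:kappa-dual}). The only imprecision is notational: you use $\bar A$ both for the constraint matrix of $P'$ (the paper's $C=\binom{-I_{n-m}}{A'}$) and implicitly for $A_B^{-1}A$ when invoking Lemma~\ref{lem:delta-kappa}\ref{i:m-kappa}; the paper keeps these separate and applies part~\ref{i:m-kappa} to $C^\top=(-I_{n-m}\mid (A')^\top)$, which is where the max-det property of $B$ transfers and where the duality $\kappa_{C^\top}=\kappa_A$ enters cleanly.
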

\begin{proof}
Without loss of generality, we can assume that $A$ has full row rank. Changing to a standard basis representation does neither change the geometry (in particular, the diameter) of $P$, nor the value of $\kappa_A$.
Let $B$ be the basis maximizing $\det(A_B)$, and let us replace $A$ by $A_B^{-1} A$; w.l.o.g. assume that $B$ is the set of the last $m$ columns.
 Hence, $A=(A'|I_m)$ for $A'\in \R^{m\times (n-m)}$. 
According to Lemma~\ref{lem:P-project}, $P$ has the same diameter as $P'$ defined as
\[P'=\{x'\in\R^{n-m}:\, A'x'\le b, x'\ge 0\}\, ,\]
in other words, $P'=\{x'\in\R^{n-m}:\, Cx'\le d\}$, where $C={-I_{n-m}\choose A'}$ and $d={0 \choose b}$. There is a one-to-one correspondence between the vertices and edges of $P$ and $P'$, and hence, the two polyhedra have the same diameter.
Thus, \cite{dadush2016shadow} gives a bound $O((n-m)^3\log(n/\delta_C)/\delta_C)$ on the diameter of $P'$.
By the choice of $B$, from Lemma~\ref{lem:delta-kappa}\ref{i:m-kappa}, we obtain the diameter bound
 $O((n-m)^{3}m\kappa_{C^\top}$ $\log(\kappa_{C^\top}+n))$.
We claim that $\kappa_{C^\top}=\kappa_A$. Indeed, the kernels of $A=(A'|I_m)$ and $C^\top=(-I_{n-m}|(A')^\top)$ represent orthogonal complements, thus $\kappa_{C^\top}=\kappa_A$ by Proposition~\ref{prop:kappa-dual}. This completes the proof.
\end{proof}

The diameter bound in \cite{dadush2016shadow} is proved constructively, using the shadow simplex method. However, in the proof we choose $B$ maximizing $|\det(A_B)|$, a hard computational problem to solve even approximately \cite{DiSumma2014}. 
However, we do not actually require a (near) maximizing subdeterminant. For the argument, we only need to find a basis $B\subseteq [n]$ such that for $\bar A=A_B^{-1}A$, $\|\bar A\|_\infty\le \mu$ for some constant $\mu>1$. 
Then,  \eqref{eq:delta-kappa-alpha}, gives $1/\delta_{\bar A^\top}\le m\mu\kappa_A$.

Such a basis $B$ corresponds to  approximate local subdeterminant maximization, and 
 can be found using the following simple algorithm proposed by Knuth \cite{Knuth1985}. As long as there is an entry $|A_{ij}|>\mu$, then swapping $i$ for $j$ increases $|\det(A_B)|$ by a factor $|A_{ij}|>\mu$. 
Using that $|\det(A_B)|\le (\kd_W)^m$ by Proposition~\ref{prop:B_inverse_General}, the algorithm terminates in $O(m\log(\kd_W/\mu))$ iterations.

We also note that $\delta_A$ was also studied  for lattice basis reduction by Seysen \cite{seysen1993}. A related quantity has been used to characterize Hoffman constants (introduced in Section~\ref{sec:hoffman}), see \cite{Guler1995,Klatte1995,Pena2020}.

\section{Optimizing circuit imbalances}
\label{subsec:def_chi_bar_star}
Recall that $\bD_n$ is the set of  $n\times n$ positive definite diagonal matrices. For  every $D\in \bD_n$, $AD$ represents a column rescaling. This is a natural symmetry in linear programming, and particularly relevant in the context of interior point algorithms, as discussed in Section~\ref{sec:LLS}.

 The condition number $\Le_{AD}$ may vastly differ from $\Le_{A}$. In terms of the subspace $W=\ker(A)$, this amounts to rescaling the subspace by $D^{-1}$; we denote this by $D^{-1}W$. It is natural to ask for the best possible value that can be achieved by rescaling:
\[
\Le_W^* = \inf\left\{ \Le_{DW}:\,  D \in \bD_n\right\}\, .
\]
In most algorithmic and polyhedral results in this paper, the $\Le_W$ dependence can be replaced by $\Le^*_W$ dependence. For example, the diameter bound in Theorem~\ref{thm:kappa-diameter} is true in the stronger form with $\Le^*_W$, since the diagonal rescaling maintains the geometry of the polyhedron.

Even the value $\Le^*_W$ can be arbitrarily large.
As an example, let $W\subseteq \R^4$ be defined as $W=\spn\{(0,1,1,M),(1,0,M,1)\}$. Both generating vectors are elementary, and we see that $\Le_W=M$. Rescaling the third and fourth coordinates have opposite effect on the two elementary vectors, therefore we also have $\Le_{W}^*=M$, i.e. the original subspace is already optimally rescaled.

A key result in  \cite{DHNV20} shows that an approximately optimal rescaling can be found:
\begin{theorem}[\cite{DHNV20}]\label{thm:bar-chi-star}
There is an $O(n^2m^2 + n^3)$ time algorithm that for any matrix $A\in\R^{m\times n}$, computes an estimate $\xi$ of $\kappa_A$ such that
\[
\xi \leq \kappa_A \leq  (\kappa_A^*)^2 \xi
\]
and a $D\in \bD_n$ such that
\[
\kappa^*_A\le \kappa_{AD}\le (\kappa_A^*)^3\, .
\]
\end{theorem}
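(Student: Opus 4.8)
The plan is to reduce the problem to the \emph{circuit ratio digraph} of $W=\ker(A)$: the vertex set is $[n]$, there is an arc $i\to j$ whenever $\KK_{ij}^W\neq\emptyset$ (equivalently, $i,j$ lie in a common circuit of $\mathcal M(A)$), and arc $i\to j$ carries weight $w_{ij}:=\log\kappa_{ij}^W$. Since rescaling $W$ by $\Diag(\epsilon)$ multiplies every $\kappa_{ij}^W$ by $\epsilon_j/\epsilon_i$, minimizing $\kappa_{DW}$ over $D\in\bD_n$ becomes, after taking logarithms, the node-potential problem $\min_{x\in\R^n}\max_{i\to j}\bigl(x_j-x_i+w_{ij}\bigr)$. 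By the standard potential / shortest-path duality (using Proposition~\ref{prop:conn} to see that the digraph is strongly connected on each non-separable component, so the relevant potentials exist), this optimum equals the largest mean weight of a directed cycle:
\[
\log\kappa_W^* \;=\; \max_{\text{cycles }H}\;\frac1{|H|}\sum_{(i,j)\in H}\log\kappa_{ij}^W\, .
\]
This is the min--max characterization of \cite{DHNV20}; it also shows the infimum defining $\kappa_W^*$ is attained, by a potential $x^\star$, so that $W^\star:=\Diag(e^{x^\star})W$ satisfies $\kappa_{W^\star}=\kappa_W^*$. Separable matroids are treated component by component, and we may assume throughout that $A$ has full row rank.

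The weights $\kappa_{ij}^W$ themselves are not computable (in line with the inapproximability in Theorem~\ref{thm:chi-inapprox}), so the second ingredient is a family of efficiently computable \emph{lower} estimates $\hat\kappa_{ij}\le\kappa_{ij}^W$. Fix one basis $B$ of $\mathcal M(A)$ (any basis produced while row-reducing $A$) and compute $M:=A_B^{-1}A$ by a single $O(n^3)$ computation. By Lemma~\ref{lem:basis_circuits}, appropriate $2\times2$ combinations of the rows and columns of $M$ are genuine elementary vectors of $W^\perp$, and by the duality Lemma~\ref{lem:dual-circuit} the ratios of their entries are values actually attained in some $\KK^W_{ij}$; taking, for each ordered pair, the best such ratio over $M$ and over the single-pivot neighbours of $B$ produces $\hat\kappa_{ij}\le\kappa_{ij}^W$ for every pair in a common circuit. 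Processing each pair in $O(m^2)$ time on top of the inversion gives the claimed running time $O(n^2m^2+n^3)$, and we set $\hat\kappa_W:=\max_{i,j}\hat\kappa_{ij}\le\kappa_W$.

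Granting the central \emph{approximation lemma} $\kappa_{ij}^W\le(\kappa_W^*)^2\,\hat\kappa_{ij}$ for every pair in a common circuit, the theorem falls out. For the estimate, output $\xi:=\hat\kappa_W$; then $\xi\le\kappa_W$, and picking $i^\star,j^\star$ with $\kappa_{i^\star j^\star}^W=\kappa_W$ gives $\kappa_W=\kappa_{i^\star j^\star}^W\le(\kappa_W^*)^2\hat\kappa_{i^\star j^\star}\le(\kappa_W^*)^2\xi$. For the rescaling, run a minimum-mean-cycle / potential computation on the \emph{proxy} weights $\hat w_{ij}:=\log\hat\kappa_{ij}$ to get an optimal potential $x^\star$, and output $D:=\Diag(e^{-x^\star})$. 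Since $\hat\kappa_{ij}\le\kappa_{ij}^W$ (hence the proxy digraph has mean cycle weights bounded by those of the true one), the optimal proxy value obeys $\max_{i\to j}\bigl(x^\star_j-x^\star_i+\hat w_{ij}\bigr)\le\log\kappa_W^*$; combining with $w_{ij}\le\hat w_{ij}+2\log\kappa_W^*$ yields $\log\kappa_{AD}=\max_{i\to j}\bigl(x^\star_j-x^\star_i+w_{ij}\bigr)\le 3\log\kappa_W^*$, while $\kappa_{AD}\ge\kappa_W^*$ holds for any $D\in\bD_n$ by the definition of $\kappa_W^*$.

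The main obstacle is the approximation lemma $\kappa_{ij}^W\le(\kappa_W^*)^2\hat\kappa_{ij}$. The natural route is to pass to the optimally rescaled space $W^\star$, where by Proposition~\ref{prop:kappa-max} every nonzero entry of every basis-form matrix lies in $[1/\kappa_W^*,\kappa_W^*]$; unscaling back to $W$ multiplies the entry relating coordinates $i$ and $j$ by $e^{x^\star_i-x^\star_j}$, and one must relate this rescaling factor to the circuit realizing $\kappa_{ij}^W$ through the (at most two) basis exchanges used to define $\hat\kappa_{ij}$, each exchange costing one factor of $\kappa_W^*$. The multiplicative triangle inequality for the pairwise imbalances (Theorem~\ref{thm:imbalance_triangle_inequality}) is the natural tool for handling pairs not realized by a single fundamental circuit, and one must take care that all pivots stay within circuits so that the $\hat\kappa_{ij}$ are honest lower bounds. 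Everything else --- the min--max characterization and the two cycle computations --- is standard once the digraph has been set up.
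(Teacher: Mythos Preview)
Your overall architecture matches the paper's: set up the circuit ratio digraph, identify $\log\kappa_W^*$ with the maximum mean cycle weight (the paper's Theorem~\ref{thm:circuit-min-max} and \eqref{eq:min-mean-cycle}), replace the true $\kappa_{ij}$ by computable surrogates $\hat\kappa_{ij}$ coming from actual circuits, solve the proxy minimum-mean-cycle problem, and read off $\xi$ and $D$. Your deductions of $\xi\le\kappa_A\le(\kappa_A^*)^2\xi$ and $\kappa_{AD}\le(\kappa_A^*)^3$ from the approximation lemma are also exactly what the paper does.

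Where you diverge is in treating the approximation lemma $\kappa_{ij}^W\le(\kappa_W^*)^2\hat\kappa_{ij}$ as the ``main obstacle'' and sketching a route through the optimally rescaled space, basis exchanges, and the triangle inequality that you never quite close. The paper's argument is two lines and uses none of this. Since $\hat\kappa_{ij}=|g^C_j/g^C_i|$ for \emph{some} circuit $C\ni i,j$, and $\kappa_{ji}$ is the maximum of $|g^{C'}_i/g^{C'}_j|$ over all such circuits, one has $\hat\kappa_{ij}\ge 1/\kappa_{ji}$. On the other hand, the product $\kappa_{ij}\kappa_{ji}$ is $\kappa_W(H)$ for the length-$2$ cycle $H=(i,j,i)$, hence rescaling-invariant, hence $\le(\kappa_W^*)^2$. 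Chaining gives $\hat\kappa_{ij}\ge 1/\kappa_{ji}\ge\kappa_{ij}/(\kappa_W^*)^2$; this is exactly \eqref{eq:hat-sandwich}. Your own route can in fact be rescued by the same invariance: the ratio $\kappa_{ij}/\hat\kappa_{ij}$ is itself scaling-invariant, so it equals its value in $W^\star$, which is at most $\kappa_W^*\cdot\kappa_W^*$. The talk of basis exchanges ``each costing one factor of $\kappa_W^*$'' and the appeal to Theorem~\ref{thm:imbalance_triangle_inequality} are red herrings.

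A minor point: your construction of $\hat\kappa_{ij}$ via $2\times2$ row combinations and single-pivot neighbours of $B$ is more machinery than the paper invokes. By the argument above, \emph{any} single circuit through $i$ and $j$ already yields an adequate $\hat\kappa_{ij}$; the paper simply says such circuits ``can be obtained using standard techniques from linear algebra and matroid theory'' without detailing how to hit the $O(n^2m^2+n^3)$ bound.
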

This is in surprising contrast with the inapproximability result Theorem~\ref{thm:chi-inapprox}. 
Note that there is no contradiction since the approximation factor $(\kappa_A^*)^2$ is not bounded as $2^{\mathrm{poly}(n)}$ in general.

The key idea of the proof of Theorem~\ref{thm:bar-chi-star} is to analyze the pairwise imbalances $\Le_{ij}=\Le^W_{ij}$ introduced in Section~\ref{sec:self-dual}.
In the 4-dimensional example above, we have $\kappa_{34}=\kappa_{43}=M$. Let $D\in {\mathbf D}$ and let $d\in\R^n$ denote the diagonal elements; i.e. the rescaling multiplies the $i$-th coordinate of every $w\in W$ by $d_i$.
Then, we can see that $\Le^{DW}_{ij}=\Le_{ij}d_j/d_i$.
In particular, for any pair of variables $i$ and $j$, $\Le^{DW}_{ij}\Le^{DW}_{ji}=\Le_{ij}\Le_{ji}$. Consequently, we get a lower bound $\Le_{ij}\Le_{ji}\le(\Le^*_W)^2$.

Theorem~\ref{thm:bar-chi-star} is based on a 
combinatorial min-max characterization that extends this idea. For the rest of this section, let us assume that the matroid ${\cal M}(W)$ is non-separable. In case it is separable, we can obtain $\kappa^*_W$ by taking a maximum over the non-separable components. 

Let  $G=([n],E)$ be the complete directed graph on $n$ vertices with 
edge weights $\kappa_{ij}$. Since ${\cal M}(W)$ is assumed to be non-separable, Proposition~\ref{prop:conn} implies that $\kappa_{ij}>0$ for any $i,j\in [n]$.
We will refer to this weighted digraph as  the \emph{circuit ratio digraph}.

Let $H$ be a \emph{cycle} in $G$, that is, a sequence of indices
$i_1,i_2,\dots,i_k, i_{k+1} = i_1$. We use $|H|=k$ to denote the
length of the cycle. (In this terminology, \emph{cycles}  refer to objects in $G$, whereas
\emph{circuits} to objects in $\circuits_W$.)

We use the notation $\Le(H)=\Le_W(H)=\prod_{j=1}^k \Le^W_{i_j
  i_{j+1}}$. The observation for length-2 cycles remains valid in general: $\Le(H)$ is invariant under any rescaling. This leads to the lower bound $(\Le(H))^{1/|H|}\le \kappa^*_W$. The
 best of these bounds turns out to be tight:
\begin{theorem}[\cite{DHNV20}]\label{thm:circuit-min-max}
For a subspace $W\subseteq \R^n$, we have
\[
\Le_W^* = \max\left\{\Le_W(H)^{1/|H|}:\ \mbox{$H$ is a cycle in $G$}\right\}\, .
\]
\end{theorem}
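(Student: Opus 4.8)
The lower bound is already established by the discussion preceding the theorem: for any cycle $H=(i_1,\dots,i_k,i_{k+1}=i_1)$ and any $D\in\bD_n$ with diagonal $d$, we have $\Le^{DW}_{i_j i_{j+1}}=\Le_{i_j i_{j+1}}d_{i_{j+1}}/d_{i_j}$, so the telescoping product gives $\Le_{DW}(H)=\Le_W(H)$; since $\Le_{DW}\ge \Le^{DW}_{i_j i_{j+1}}$ for every edge, taking the geometric mean over the $k$ edges of $H$ yields $\Le_{DW}\ge \Le_W(H)^{1/|H|}$, and taking the infimum over $D$ gives $\Le^*_W\ge\max_H \Le_W(H)^{1/|H|}$. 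So the work is entirely in the reverse inequality: given that the best cycle mean is some value $t$, I must exhibit a rescaling $D$ with $\Le_{DW}\le t$.

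The plan is to phrase this as a feasibility/potential problem on the circuit ratio digraph $G$. Pass to logarithms: set $w_{ij}=\log\Le_{ij}$ (finite and $\ge 0$ by non-separability and Proposition~\ref{prop:conn}), and look for a potential $y\in\R^n$ (to be interpreted as $d_i=e^{-y_i}$, so that $\log\Le^{DW}_{ij}=w_{ij}-y_i+y_j$) such that $w_{ij}-y_i+y_j\le \log t$ for all ordered pairs $i\ne j$, where $\log t=\max_H \frac1{|H|}\sum_{\text{edges of }H} w$ is the maximum mean weight of a directed cycle in $G$. Subtracting $\log t$ from every edge weight, this is exactly the statement that the digraph with weights $w_{ij}-\log t$ has \emph{no positive-mean directed cycle} (by the maximality of $\log t$), hence by the standard shortest-path / Gallai potential argument (the classical characterization behind the minimum-mean-cycle problem) admits a feasible potential $y$ making every reduced edge weight $\le 0$, i.e. $w_{ij}-\log t-y_i+y_j\le 0$. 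Exponentiating, the rescaling $D=\Diag(e^{-y})$ achieves $\Le^{DW}_{ij}\le t$ for every pair, and since $\Le_{DW}=\max_{i,j}\Le^{DW}_{ij}$ (the matroid $\mathcal M(DW)=\mathcal M(W)$ is still non-separable), we get $\Le_{DW}\le t$, proving $\Le^*_W\le t$.

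One subtlety to handle carefully: the potential argument above gives a rescaling with $\Le_{DW}\le t$, so strictly speaking it shows $\Le^*_W$ is \emph{attained} by this $D$ — which is stronger than the infimum statement and causes no problem — but I should double-check that $D$ so constructed is genuinely positive definite diagonal (it is, since $e^{-y_i}>0$), and that the rescaling acts on $W$ in the way claimed. Recall the conventions from the start of Section~\ref{subsec:def_chi_bar_star}: $\Le_{AD}$ corresponds to the subspace $D^{-1}W$, and $\Le^W_{ij}$ scales as $\Le_{ij}d_j/d_i$ under multiplying the $i$-th coordinate by $d_i$; I just need to be consistent with which direction of scaling $y$ encodes, which is a matter of bookkeeping (flip the sign of $y$ if needed). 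The genuinely important point is that the quantities $\Le^{DW}_{ij}$ depend \emph{only} on pairwise imbalances and transform by the simple coboundary formula $w_{ij}\mapsto w_{ij}-y_i+y_j$; everything else is the textbook fact that a weighted digraph has a node potential dominating all edge weights iff it has no positive-weight cycle, specialized to the shifted weights $w_{ij}-\log t$.

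\textbf{Main obstacle.} There is no deep obstacle; the heart of the argument is recognizing that optimal rescaling is a min–max over cycle means and invoking the minimum-mean-cycle duality. The one place to be attentive is the reduction to a \emph{finite} combinatorial problem: a priori $D$ ranges over an unbounded noncompact set, so one cannot directly argue the infimum is attained by compactness; instead the potential construction bypasses this, producing an explicit optimizer from the no-positive-cycle condition. I would also make sure the non-separability hypothesis is invoked exactly where needed — namely to guarantee $\Le_{ij}>0$ (equivalently $w_{ij}$ finite) for every ordered pair via Proposition~\ref{prop:conn}, so that $G$ is a complete digraph with finite weights and the maximum-mean-cycle value is well defined and finite; the separable case is then reduced to the components as the paper already notes.
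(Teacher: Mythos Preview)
Your proposal is correct and follows essentially the same route as the paper: both reformulate $\Le_W^*$ via the constraints $\Le_{ij}d_j/d_i\le t$, take logarithms to obtain $\log\Le_{ij}+z_j-z_i\le s$, and then invoke the minimum/maximum-mean-cycle LP duality (your Gallai/potential argument is exactly the standard proof of that duality, which the paper simply cites as \cite[Theorem 5.8]{amo}). Your bookkeeping caveat about the sign of $y$ is well placed---with $d_i=e^{-y_i}$ the coboundary term actually comes out as $+y_i-y_j$ rather than $-y_i+y_j$---but as you note this is harmless.
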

The proof relies on the following formulation:
\begin{align}\label{eq:min-mean-cycle-mult}
\begin{aligned}
    \Le^*_W=&&\min \; & t \\
            &&\Le_{ij}d_j/d_i & \leq t \quad \forall (i,j) \in E, \\
            &&d &> 0.
\end{aligned}
\end{align}
Taking logarithms and substituting $z_i=\log d_i$, we can rewrite this problem equivalently as
\begin{equation}\label{eq:min-mean-cycle}
\begin{aligned}
\min \; & s \\
\log \Le_{ij} + z_j - z_i & \leq s \quad \forall (i,j) \in E, \\
z &\in \R^n.
\end{aligned}
\end{equation}
This is the dual of the minimum-mean cycle problem with weights $\log\Le_{ij}$, and can be solved in polynomial time (see e.g. \cite[Theorem 5.8]{amo}).

Whereas this formulation verifies Theorem~\ref{thm:circuit-min-max}, it does not give a polynomial-time algorithm to compute $\Le^*_W$. In fact, the values $\Le_{ij}$ are already NP-hard to approximate due to Theorem~\ref{thm:chi-inapprox}. Nevertheless, the bound $\Le_{ij}\Le_{ji}\le(\Le^*_W)^2$ implies that for any elementary vector $g^C$ with support $i,j\in C$, we have 
\begin{equation}\label{eq:hat-sandwich}
\frac{\Le_{ij}}{(\Le^*_W)^2}\le \frac{1}{\Le_{ji}} \le \left|\frac{g_j}{g_i}\right|\le \Le_{ij}\, .
\end{equation}

To find an efficient algorithm as in Theorem~\ref{thm:bar-chi-star},
we  replace the exact values $\Le_{ij}$ by estimates $\hat\Le_{ij}$ obtained as $|g^C_j/g^C_i|$ for an arbitrary circuit $C\in\circuits_W$ with $i,j\in C$; these can be obtained using standard techniques from linear algebra and matroid theory. 
Thus, we can return $\xi=\max_{(i,j)\in E}\hat\Le_{ij}$ as the estimate on the value of $\Le_A$. To estimate $\kappa^*_A$, we solve \eqref{eq:min-mean-cycle} with the estimates $\hat\Le_{ij}$ in place of the $\Le_{ij}$'s. 

\subsection{Perfect balancing: $\Le^*_W=1$}
Let us now show that  $\kappa^*_A=1$ can be efficiently checked.
\begin{theorem}\label{thm:kappa-star-1}
There exists a strongly polynomial algorithm, that given a matrix $A\in\R^{m\times n}$, returns one of the following outcomes:
\begin{enumerate}[(a)]
\item \label{i:outcome_TU} 
A diagonal matrix $D\in{\bD}_n$ such that $\kappa_{AD}=1$ showing that $\kappa^*_A=1$. The algorithm also returns the exact value of $\kappa_A$.
Further, if $\ker(A)$ is a rational linear space, then we can select $D$ with integer diagonal entries that divide $\kd_A$.
\item \label{i:outcome_not_TU} The answer $\kappa^*_A>1$, along with a cycle of circuits $H$ such that $\kappa_A(H) > 1$.
\end{enumerate}
\end{theorem}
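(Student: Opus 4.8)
The plan is to build a rescaling $D$ from a small number of sampled circuit ratios, check whether the rescaled matrix is totally unimodular, and—if not—bootstrap the failure into a short cycle of circuits. Throughout we may assume $\mathcal M(A)$ is non-separable: one decomposes $\mathcal M(A)$ into its non-separable components in strongly polynomial time by Gaussian elimination, recalling that $\kappa_{AD}$ is the maximum of the component values, that $\kappa^*_A=1$ holds iff it holds on every component, and that a valid $D$ can be assembled component-wise. Fix a reference index, say $1$. For each $i\in[n]\setminus\{1\}$, Proposition~\ref{prop:conn} guarantees a circuit $C_i$ with $1,i\in C_i$; find one, together with its elementary vector $g^{(i)}$ (taken primitive integral when $\ker(A)$ is rational), in strongly polynomial time. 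Set $\hat\kappa_{1i}:=|g^{(i)}_i/g^{(i)}_1|$, $d_1:=1$, $d_i:=1/\hat\kappa_{1i}$, and $D:=\Diag(d)$.

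The first key point is that this single "star" of estimates already produces the optimal rescaling whenever $\kappa^*_A=1$. Indeed, writing $W=\ker(A)$, if $\kappa^*_W=1$ then the sandwich \eqref{eq:hat-sandwich} forces $|g_j/g_i|=\kappa^W_{ij}$ for \emph{every} elementary vector $g$ with $i,j\in\supp(g)$; hence every sampled estimate is exact, $\kappa^W_{1i}=\hat\kappa_{1i}$ and $\kappa^W_{i1}=1/\hat\kappa_{1i}$, and the triangle inequality of Theorem~\ref{thm:imbalance_triangle_inequality} then forces equality $\kappa^W_{ij}=\kappa^W_{i1}\kappa^W_{1j}=\hat\kappa_{1j}/\hat\kappa_{1i}$ for all $i,j$. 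Since $\kappa^{DW}_{ij}=\kappa^W_{ij}\,d_j/d_i$, we obtain $\kappa^{DW}_{ij}=1$ for all $i,j$, i.e.\ $\kappa_{AD}=1$. Consequently, after computing a basis form $M=(I_m|M')$ of $AD$ (Gaussian elimination; w.l.o.g.\ with basis $\{1,\dots,m\}$), if $\kappa^*_A=1$ then $M$ is a $0,\pm1$ matrix by Proposition~\ref{prop:kappa-max}, and in fact totally unimodular by Theorem~\ref{thm:tu_iff_kappa_1}. So the algorithm checks whether $M$ is a $0,\pm1$ matrix and, if so, whether it is TU, using a strongly polynomial TU-recognition routine (Seymour's decomposition~\cite{Seymour1980}, made algorithmic by Truemper; or, more directly for $0,\pm1$ matrices, Camion's cycle-parity criterion~\cite{camion1964}). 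If $M$ is TU we are in case~\ref{i:outcome_TU}: output $D$, together with the exact value $\kappa_A=\max_j\hat\kappa_{1j}/\min_i\hat\kappa_{1i}$ (with the convention $\hat\kappa_{11}:=1$), which is correct by the equality $\kappa^W_{ij}=\hat\kappa_{1j}/\hat\kappa_{1i}$ derived above.

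It remains to produce a certifying cycle when the TU test fails; by the contrapositive of the previous paragraph, this can happen only when $\kappa^*_A>1$. The crucial auxiliary fact is that in $DW$ each star circuit has imbalance \emph{exactly} $1$ between $1$ and $i$, since rescaling $g^{(i)}$ by $D$ multiplies its ratio by $d_i/d_1=1/\hat\kappa_{1i}$. Now a witness of non-TU-ness yields, after passing to an appropriate basis form—a fundamental-circuit entry $|M'_{ik}|\notin\{0,1\}$ if the $0,\pm1$ test already fails, otherwise a basis $B'$ of $\mathcal M(W)$ with $\|M_{B'}^{-1}M\|_{\max}>1$ extracted from the TU-recognition certificate (a submatrix of $M$ with $|\det|\ge 2$)—a circuit $C$ of $\mathcal M(W)$ and indices $a,b\in\supp(g^C)$ with imbalance $\rho:=|g^{C}_a/g^{C}_b|_{DW}>1$ measured in $DW$. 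Splicing $C$ with the star circuits $C_a$ and $C_b$ (dropping the redundant one if $a$ or $b$ equals $1$) gives a cycle of circuits $H$ of length at most $3$; since cycle products are invariant under diagonal rescaling (as used in Theorem~\ref{thm:circuit-min-max}) and the star edges contribute exactly $1$ each in $DW$, the product of the explicit circuit ratios along $H$ telescopes to $\rho$, so $\kappa_A(H)=\kappa_{AD}(H)\ge\rho>1$. Output $H$: this is case~\ref{i:outcome_not_TU}, and $\kappa^*_A>1$ follows from Theorem~\ref{thm:circuit-min-max}. Finally, in case~\ref{i:outcome_TU} with $\ker(A)$ rational, one replaces $D$ by an integer matrix with entries dividing $\kd_A$ by a routine normalization: taking the $g^{(i)}$ primitive integral and rescaling the (already unique up to a global scalar) $D$ so that each $d_i$ becomes a divisor of $\kd_A$, using $|g^{(i)}_i|\mid\kd_A$.

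The main obstacle is the TU test and its failure mode: reducing the question "$\kappa_{AD}=1$?" to total unimodularity of a $0,\pm1$ matrix is immediate from Theorem~\ref{thm:tu_iff_kappa_1}, but the test itself requires a genuine polynomial-time TU-recognition algorithm (local subdeterminant search à la Knuth does \emph{not} suffice, since it may terminate at a $0,\pm1$ basis form even for non-TU matrices), and—when it reports "not TU"—one must translate the combinatorial certificate back into a basis $B'$ with $\|M_{B'}^{-1}M\|_{\max}>1$, i.e.\ into an explicit circuit with imbalance exceeding $1$ in $DW$. This bookkeeping with basis forms and the sign pattern of a minimally non-TU submatrix is the delicate part; everything else reduces to linear algebra together with the two structural inputs \eqref{eq:hat-sandwich} and the triangle inequality that make a single star of sampled ratios sufficient.
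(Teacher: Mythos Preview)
Your approach is essentially the paper's: sample circuit ratios, build a candidate rescaling $D$, reduce the test ``$\kappa_{AD}=1$?'' to total unimodularity of a basis form via Theorem~\ref{thm:tu_iff_kappa_1}, run Seymour's algorithm, and convert a TU-failure certificate back into a short cycle in the circuit ratio digraph. The one genuine variation is that you sample only a \emph{star} of $n-1$ ratios $\hat\kappa_{1i}$ rather than all $O(n^2)$ pairwise ratios $\hat\kappa_{ij}$ as the paper does; this buys a small economy and forces your certifying cycle to have length at most $3$ (through the hub $1$) instead of the paper's length-$2$ cycle $(i,j,i)$ built from the circuit $C$ together with the circuit used for $\hat\kappa_{ij}$. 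Both constructions are correct, and the paper's two-circuit certificate is arguably cleaner.

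One small gap to flag: your integer-divisibility claim in the rational case (``using $|g^{(i)}_i|\mid\kd_A$'') does not follow from the star data alone. The paper's argument requires that for \emph{every} pair $i,j$ one can write $d_j/d_i=r/q$ with integers $r,q\mid\kd_A$; this comes from a (primitive integral) circuit through $i$ and $j$, whereas your star supplies such data only for the pairs $(1,i)$. Composing two star ratios gives $d_j/d_i$ as a quotient of integers dividing $\kd_A^2$, not $\kd_A$. The conclusion is nevertheless correct: once $\kappa_{AD}=1$ is established, every $\KK_{ij}^W$ is a singleton and the required circuits through arbitrary pairs \emph{exist} (you need not compute them to assert the divisibility, since $d$ is already determined). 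But your one-line justification should be replaced by this existence argument, which is exactly what the paper does.
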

\begin{proof}
As noted above, we can assume without loss of generality that the matroid ${\cal M}(W)$ is non-separable, as we can reduce the problem to solving on all connected components separately.

We obtain estimates $\hat\Le_{ij}$ for every edge $(i,j)$ of the circuit ratio graph  using a circuit $C\in\circuits_W$ with $i,j\in C$. Assuming that $\Le^*_W=1$, \eqref{eq:hat-sandwich} implies that $\hat\kappa_{ij}=\kappa_{ij}$ holds and the rescaling factors $d_i$ must satisfy 
\begin{equation}\label{eq:kappa-d}
\hat\kappa_{ij}d_j=d_i\quad \forall i,j\in [n]\, . 
\end{equation}
If this system is infeasible, then using the circuits that provided the estimates $\hat\kappa_{ij}$, we can 
obtain a cycle  $H$ such that $\kappa_A(H)>1$, that is, outcome \ref{i:outcome_not_TU}. Let us now assume that \eqref{eq:kappa-d} is feasible; then it has a unique solution $d$ up to scalar multiplication.
We define $D\in\bD_n$ with diagonal entries $D_{ii}=d_i$. 

Since ${\cal M}(W)$ is non-separable, we can conclude that $\kappa^*_A=1$ if and only if $\kappa_{AD}=1$. By Theorem~\ref{thm:tu_iff_kappa_1}, this holds if and only if $A'=A_B^{-1} AD$ is a TU-matrix for any basis $B$.

We run Seymour's algorithm \cite{Seymour1980} for $A'$. If it confirms that $A$ is TU (certified by a construction sequence), then we return outcome \ref{i:outcome_TU}. In this case, $|g^C_j/g^C_i|$ is the same for any circuit $C$ with $i,j\in C$; therefore $\Le_{ij}=\hat\Le_{ij}$, and we can return $\Le_A=\max_{(i,j)\in E}\hat\Le_{ij}$.

Otherwise, Seymour's algorithm finds a $k\times k$ submatrix $T$ of $A'$ with $\det(T)\notin\{0,\pm1\}$. As in the proof of Proposition~\ref{prop:kappa-delta}, we can recover a circuit $C$ in $\circuits_{A'}=\circuits_{AD}$ with two entries $i,j\in C$ such that $|\bar g_j|\neq |\bar g_i|$ for the corresponding elementary vector $\bar g\in {\EE}(AD)$. Note that $\hat \Le_{ij} d_j=d_i$ for the rescaled estimates. Hence, the circuit $C'$ with $i,j\in C'$ used to obtain the estimate $\Le_{ij}$, together with $C$ certifies that $\Le^*_A>1$ as required for outcome \ref{i:outcome_not_TU}.

Finally, if $\ker(A)$ is a rational linear space and we concluded $\Le^*_A=1$, then let us select the solution $d_i$ to \eqref{eq:kappa-d} such that $d\in \Z^n$ and $\gcd(d)=1$. We claim that $d_i\divides\dot\kappa_W$ for all $i$. Indeed, let $k=\lcm(d)$. For each pair $i,j\in [n]$, $d_j/d_i = r/q$ for two integers $r,q\divides \kd_A$.
Hence, for any prime $p\in\primes$, $\nu_p(k)\le \nu_p(\kd_A)$, implying $k\divides\kd_A$. 
\end{proof}

Let $W\subseteq \R^n$ be a linear space such that ${\cal M}(W)$ is non-separable.
Recall from Theorem~\ref{thm:imbalance_triangle_inequality} that 
$\KK_{ij}^W\subseteq \KK_{ik}^W \cdot \KK_{kj}^W$ for all $i,j,k\in [n]$. We now characterize when equality holds for all triples.
\begin{prop}
\label{prop:reverse-kappa-1}
Let $W\subseteq \R^n$ be a linear space such that ${\cal M}(W)$ is non-separable. Then, the following are equivalent:
\begin{enumerate}[(i)]
\item\label{i:TU_equiv_i} $\Le^*_W=1$,
\item\label{i:TU_equiv_ii} $|\KK_{ij}^W|=1$ for all $i,j\in [n]$, 
\item\label{i:TU_equiv_iii}  $\KK_{ij}^W= \KK_{ik}^W \cdot \KK_{kj}^W$ holds for all distinct $i,j,k\in [n]$.
\end{enumerate}
\end{prop}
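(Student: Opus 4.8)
The plan is to prove the cyclic chain of implications \ref{i:TU_equiv_i} $\Rightarrow$ \ref{i:TU_equiv_iii} $\Rightarrow$ \ref{i:TU_equiv_ii} $\Rightarrow$ \ref{i:TU_equiv_i}. The implication \ref{i:TU_equiv_ii} $\Rightarrow$ \ref{i:TU_equiv_i} is the one that does real work, so I would do the easy two first.

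For \ref{i:TU_equiv_iii} $\Rightarrow$ \ref{i:TU_equiv_ii}: fix $i,j$, and pick any $k \notin \{i,j\}$ (possible since ${\cal M}(W)$ is non-separable, so $n\ge 3$ unless everything is trivial). Since non-separability gives a circuit through $i$ and $k$ and a circuit through $k$ and $j$, both $\KK_{ik}^W$ and $\KK_{kj}^W$ are nonempty; moreover $\KK_{ik}^W$ and $\KK_{ki}^W$ are related by reciprocals via Lemma~\ref{lem:dual-circuit}, and the pairwise triangle inequality of Theorem~\ref{thm:imbalance_triangle_inequality} applied in both directions forces $|\KK_{ik}^W|\cdot|\KK_{ki}^W| \le |\KK_{ik}^W\cdot\KK_{ki}^W| \le |\KK_{ij}^W|$-type bounds; the cleanest route is: from \ref{i:TU_equiv_iii} with the triple $(i,k,i)$ we get $\KK_{ii}^W = \{1\} = \KK_{ik}^W\cdot\KK_{ki}^W$, and since these sets are closed under reciprocal (again Lemma~\ref{lem:dual-circuit}) and consist of positive reals, a product of two such sets equals $\{1\}$ only if each is a singleton $\{1/\text{something}\}$ paired with its reciprocal — so in particular $|\KK_{ik}^W|=1$ for every pair, giving \ref{i:TU_equiv_ii}.

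For \ref{i:TU_equiv_ii} $\Rightarrow$ \ref{i:TU_equiv_i}: this is exactly the content of the algorithmic Theorem~\ref{thm:kappa-star-1}. The estimates $\hat\kappa_{ij}$ there equal the true $\kappa_{ij}$ precisely because $|\KK_{ij}^W|=1$, so every circuit through $i,j$ gives the same ratio. Then the system \eqref{eq:kappa-d}, $\hat\kappa_{ij}d_j = d_i$, is consistent: going around any cycle $H = (i_1,\dots,i_k,i_1)$, consistency requires $\prod_j \kappa_{i_j i_{j+1}} = 1$, i.e. $\kappa_W(H)=1$; but if some cycle had $\kappa_W(H)>1$ this would be a product of singleton sets and would contradict, via the triangle inequality and reciprocal-closure, the fact that each $\KK_{i_j i_{j+1}}^W\cdot\KK_{i_{j+1}i_j}^W = \{1\}$ telescopes the cycle to $1$. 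So the rescaling $D$ with $D_{ii}=d_i$ exists and gives $\kappa_{AD}=1$, hence $\kappa_W^* = 1$.

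For \ref{i:TU_equiv_i} $\Rightarrow$ \ref{i:TU_equiv_iii}: if $\kappa_W^*=1$, fix $D$ with $\kappa_{DW}=1$; then by Theorem~\ref{thm:tu_iff_kappa_1}, $DW$ is regular, so $|\KK_{ij}^{DW}|=1$ for all $i,j$, and the reverse inclusion $\KK_{ik}^{DW}\cdot\KK_{kj}^{DW}\subseteq\KK_{ij}^{DW}$ is trivial since all sets are $\{1\}$. It remains to transfer this back to $W$: rescaling acts on pairwise imbalance sets by $\KK_{ij}^{DW} = (d_j/d_i)\,\KK_{ij}^W$ (each elementary vector of $W$ maps to one of $DW$ with the $i$-th coordinate scaled by $d_i$), so the identity $\KK_{ij}^{DW} = \KK_{ik}^{DW}\cdot\KK_{kj}^{DW}$ rewrites as $(d_j/d_i)\KK_{ij}^W = (d_k/d_i)(d_j/d_k)\,\KK_{ik}^W\cdot\KK_{kj}^W = (d_j/d_i)\,\KK_{ik}^W\cdot\KK_{kj}^W$, and cancelling the scalar gives \ref{i:TU_equiv_iii}.

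The main obstacle I anticipate is the bookkeeping in \ref{i:TU_equiv_ii} $\Rightarrow$ \ref{i:TU_equiv_i}: one must argue carefully that $|\KK_{ij}^W|=1$ on \emph{edges} forces the cycle products to be $1$ (so \eqref{eq:kappa-d} is globally consistent, not just locally), rather than merely constant; this is where non-separability and the self-duality/reciprocal relation of Lemma~\ref{lem:dual-circuit} combined with the pairwise triangle inequality of Theorem~\ref{thm:imbalance_triangle_inequality} must be used in tandem. Alternatively, one could invoke Theorem~\ref{thm:kappa-star-1} almost as a black box — if its algorithm, run under the hypothesis $|\KK_{ij}^W|=1$, can only terminate in outcome \ref{i:outcome_TU} — and that is the shorter route I would take in the write-up.
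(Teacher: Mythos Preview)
Your step \ref{i:TU_equiv_iii} $\Rightarrow$ \ref{i:TU_equiv_ii} contains a genuine gap. You propose to apply the hypothesis \ref{i:TU_equiv_iii} to ``the triple $(i,k,i)$'' in order to obtain $\KK_{ii}^W=\KK_{ik}^W\cdot\KK_{ki}^W$, but the hypothesis is stated only for \emph{distinct} $i,j,k$, so this instantiation is illegitimate. The gap is repairable without changing the cycle of implications: for distinct $i,j,k$, two legitimate applications of \ref{i:TU_equiv_iii} give $\KK_{ij}^W=\KK_{ik}^W\cdot\KK_{kj}^W$ and $\KK_{ik}^W=\KK_{ij}^W\cdot\KK_{jk}^W$, hence $\KK_{ij}^W=(\KK_{jk}^W\cdot\KK_{kj}^W)\cdot\KK_{ij}^W$. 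Since $\KK_{ij}^W$ is a finite nonempty set of positive reals, taking its maximum element forces $\KK_{jk}^W\cdot\KK_{kj}^W\subseteq\{1\}$; together with $\KK_{kj}^W=\{1/\alpha:\alpha\in\KK_{jk}^W\}$ this yields $|\KK_{jk}^W|=1$. (The case $n\le 2$ is trivial, and non-separability alone does not force $n\ge 3$.)

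The paper organizes the cycle differently: it proves \ref{i:TU_equiv_i} $\Rightarrow$ \ref{i:TU_equiv_ii} $\Rightarrow$ \ref{i:TU_equiv_iii} $\Rightarrow$ \ref{i:TU_equiv_i}, never attempting \ref{i:TU_equiv_iii} $\Rightarrow$ \ref{i:TU_equiv_ii} directly. The nontrivial link \ref{i:TU_equiv_iii} $\Rightarrow$ \ref{i:TU_equiv_i} is handled by a finiteness trick: the set of cycle products $\kappa(H)$ over all closed walks through a fixed edge is either $\{1\}$ or infinite, but under \ref{i:TU_equiv_iii} it telescopes into the finite set $\KK_{ij}^W\cdot\KK_{ji}^W$, so every cycle has $\kappa(H)=1$ and Theorem~\ref{thm:circuit-min-max} gives $\kappa_W^*=1$. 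Your \ref{i:TU_equiv_i} $\Rightarrow$ \ref{i:TU_equiv_iii} via rescaling and your \ref{i:TU_equiv_ii} $\Rightarrow$ \ref{i:TU_equiv_i} via consistency of \eqref{eq:kappa-d} are both sound (and the latter is arguably cleaner than routing through Theorem~\ref{thm:kappa-star-1}: once $|\KK_{ij}^W|=1$, the triangle inequality with singletons gives $\kappa_{ij}=\kappa_{ik}\kappa_{kj}$ outright, so one can set $d_j=\kappa_{j1}$ and check $\KK_{ij}^{DW}=\{1\}$ directly).
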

\begin{proof} 
\ref{i:TU_equiv_i} $\Leftrightarrow$ \ref{i:TU_equiv_ii}:
Consider any  rescaling  $D\in\bD_n$ with diagonal entries $d_i=D_{ii}$.
Then,  $\KK_{ij}^{DW}=\{1\}$, and $\KK_{ij}^{DW}=\frac{d_j}{d_i}\KK_{ij}^{W}$. Hence, if $\Le_{DW}=1$ for some $D\in\bD_n$, then  $\KK_{ij}^{DW}=\{1\}$ implying $|\KK_{ij}^W|=1$  for every $i,j\in [n]$. If $|\KK_{ij}^{W}|>1$ for some $i,j$, it follows that $\Le_{DW}\neq 1$ for any diagonal rescaling. 

\noindent \ref{i:TU_equiv_ii} $\Rightarrow$ \ref{i:TU_equiv_iii}: 
 We have $\KK_{ij}^W\subseteq \KK_{ik}^W \cdot \KK_{kj}^W$ by Theorem~\ref{thm:imbalance_triangle_inequality}. If all three sets are of size one, then equality must hold.

\noindent \ref{i:TU_equiv_iii} $\Rightarrow$ \ref{i:TU_equiv_i}: 
Let $i,j \in [n]$ arbitrary but distinct and let us define
 $$\Gamma_{ij} := \{\kappa(H) |\, \text{$H$ closed walk in $G$, $(i,j) \in E(H)$}\}\, .$$
 Note that either $\Gamma_{ij} = \{1\}$ or $\Gamma_{ij}$ is infinite as any cycle $H$ can be traversed multiple times to form a closed walk. Note that by (iii) we have for any $i,j \in [n]$ that
\begin{equation}
  \Gamma_{ij} \subseteq \bigcup  \Big\{\Pi_{(k,\ell) \in E(H)} \mathcal K_{k,\ell} |\, \text{$H$ closed walk in $G$, $(i,j) \in E(H)$}\Big\}
  = \mathcal K_{ij} \cdot\mathcal K_{ji}. 
\end{equation}
The set $\mathcal K_{ij} \cdot\mathcal K_{ji}$ is finite, implying that $\Gamma_{ij} = \{1\}$. This, together with Theorem~\ref{thm:circuit-min-max} gives \ref{i:TU_equiv_i}.
\end{proof}

A surprising finding by Lee \cite{Lee89,Lee90} is that if $\dot\kappa_W$ is an odd prime power, then $\kappa^*_W=1$ holds.\footnote{The statement in the paper is slightly more general, for $k$-adic subspaces with $k>2$; the proof is essentially the same.}
We first present a proof sketch following the lines of the one in \cite{Lee89,Lee90}.
We also present a second, almost self-contained proof,  relying only on basic results on TU matrices. \begin{theorem}[{Lee~\cite{Lee89,Lee90}}]\label{thm:kappa-star}
Each $W$ for which $\dot{\kappa}_W = p^\alpha$ where $p\in\primes$, $p>2$,  $\alpha\in \N$, then  $\kappa^*_W = 1$.
\end{theorem}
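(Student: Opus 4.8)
\emph{Overall strategy.} The plan is to prove $\Le^*_W=1$ directly by reducing it to a purely combinatorial statement about the circuit ratio structure, and then exploiting the rigid arithmetic of a prime power together with the oddness of $p$. First I would assume $\mathcal M(W)$ is non-separable: since $\Le^*_W$ is the maximum of $\Le^*$ over the non-separable components, this is without loss of generality, and by Proposition~\ref{prop:conn} there is a circuit through any two coordinates. By Proposition~\ref{prop:reverse-kappa-1} it then suffices to show $|\KK_{ij}^W|=1$ for all $i,j\in[n]$, equivalently (Theorem~\ref{thm:circuit-min-max}) that every cycle $H$ in the circuit ratio digraph satisfies $\Le_W(H)=1$. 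Because $\kd_W=p^\alpha$ is a prime power, $W$ is anchored and $\Le_W=\bar\Le_W=\kd_W=p^\alpha$; every primitive integral elementary vector has all nonzero entries of the form $\pm p^e$ with $0\le e\le\alpha$, so each $\KK_{ij}^W$ consists of powers $p^k$ with $|k|\le\alpha$ and each $\Le_W(H)$ is a power of $p$. Thus the task becomes: produce $z\in\Z^n$ with $|g^C_j/g^C_i|=p^{\,z_i-z_j}$ for every $C\in\circuits_W$ and all $i,j\in C$; then $D=\Diag(p^{z_1},\dots,p^{z_n})$ has $\Le_{DW}=1$, and, arguing with $p$-adic valuations as in the proof of Theorem~\ref{thm:kappa-star-1}, $D$ may be taken with entries dividing $\kd_W$.

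\emph{Building the potential.} To construct $z$, I would first simplify the representation: starting from any basis-form $A=(I_m\mid A')$ with $\ker(A)=W$, apply a local subdeterminant-minimization step (the greedy ``1-OPT'' pivoting that swaps $i$ for $j$ whenever $|A_{ij}|>1$; it terminates since $|\det(A_B)|$ is bounded by $(\kd_W)^m$ via Proposition~\ref{prop:B_inverse}), reaching a basis form in which every nonzero entry equals $\pm p^e$ with $e\ge0$. Then I would root the circuit ratio digraph and tentatively define $z$ by summing exponents $\log_p\Le_{uv}$ along the edges of a spanning tree, each edge certified by one circuit. The remaining and central task is to prove this assignment is globally consistent, i.e.\ that $z_i+\nu_p(g^C_i)$ is constant over $i\in C$ for \emph{every} circuit $C$, not just the tree-certifying ones.

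\emph{The arithmetic heart.} This is where $p>2$ is used, via an induction on $\alpha$. Take an integral representation $A$ of $W$ as in Proposition~\ref{prop:B_inverse}, with all entries dividing $p^\alpha$, and reduce modulo $p$: entries of high $p$-valuation vanish, and the reduction of the ``$\pm1$-layer'' of each elementary vector lands in $\ker_{\mathbb F_p}(\bar A)$. Since $p$ is odd these reductions retain their signs ($\pm1$ remain distinct in $\mathbb F_p$), whereas for $p=2$ they would collapse --- precisely the mechanism behind the failure of the statement for incidence matrices of non-bipartite graphs. One then shows the top layer carries a consistent sign/exponent pattern (here a basic TU sign-consistency criterion, e.g.\ Camion's signing theorem or Ghouila--Houri, applied to the associated $0,\pm1$ matrix), pins down the values $z_i$ on the coordinates of that layer, and passes to the residual subspace obtained by ``dividing out'' the top power of $p$, which is again anchored with $\kd$ a strictly smaller power of $p$; recursing on $\alpha$ (with trivial base case $\alpha=0$) yields the full potential $z$. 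The self-contained variant differs only here: it translates the appropriately scaled matrix into a $0,\pm1$ matrix and appeals directly to standard TU characterizations.

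\emph{Main obstacle.} The one genuinely delicate point is this global consistency argument: proving that the exponents forced on one $p$-adic layer are compatible with \emph{all} circuits simultaneously, and arranging the induction so that the residual space is again an anchored (prime-power) space of strictly smaller order. All of the arithmetic rigidity, and the single essential use of $p\ne2$ --- namely that signs survive reduction modulo $p$ --- is concentrated in that step.
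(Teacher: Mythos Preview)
Your reduction is correct and closely parallels the paper's second proof: both reduce to showing that every cycle in the circuit-ratio digraph has product $1$ (equivalently, that a consistent potential $z$ exists), and both then translate this into a $0,\pm1$ rescaling to which TU machinery applies. The matrix preparation via Proposition~\ref{prop:B_inverse} and the spanning-tree potential are also fine as setup.

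The gap is in your ``arithmetic heart''. The induction on $\alpha$ is not well-defined: you never specify what the ``residual subspace obtained by dividing out the top power of $p$'' is, nor why its $\kd$ is a \emph{strictly} smaller power of $p$. The natural attempt --- rescale coordinate $i$ by $p^{-z_i}$ for $z_i=\min_{C\ni i}\nu_p(g^C_i)$ --- does not in general reduce $\kd$, since a coordinate can have valuation $0$ in one circuit and $\alpha$ in another. Your appeal to Camion/Ghouila--Houri on the mod-$p$ reduction is likewise unclear: the reduced matrix lives over $\mathbb F_p$, and its kernel need not reflect the circuit structure of $W$ over $\R$.

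More importantly, you misidentify where $p>2$ enters. It is not that ``signs survive mod $p$''; the decisive arithmetic fact, used in both of the paper's proofs, is that a nonzero quantity of the form $\pm p^{a}\pm p^{b}$ can never be a divisor of a power of $p$ when $p>2$ (whereas $2^{a}-2^{a-1}=2^{a-1}$). The paper exploits this directly. The first proof invokes Tutte's theorem to reduce to a $U_2^4$ minor, where a $2\times2$ determinant $p^{\gamma_1+\gamma_4}-p^{\gamma_2+\gamma_3}$ must be a nonzero divisor of $p^{\beta}$, a contradiction. The second proof argues that a shortest cycle $C$ with $\gamma(C)\ne1$ is chordless, so the associated square submatrix has the support pattern of a cycle incidence matrix; its Leibniz expansion has exactly two surviving terms, giving $\det=\pm p^{a}\pm p^{b}$, which again contradicts Proposition~\ref{prop:B_inverse}\ref{part:subdet-p}. (The second proof then still needs a further step --- Gomory's theorem that a non-TU $\{0,\pm1\}$ matrix has a subdeterminant $\pm2$ --- to finish.) Your proposed induction never isolates a two-term determinant of this kind, and that is precisely the mechanism that makes the consistency argument go through.
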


\begin{proof}
A theorem by Tutte \cite{Tutte1965} asserts that $W$ can be represented as the kernel of a unimodular matrix, i.e.\ $\kappa^*_W = 1$ or $W$ has a minor $W'$ such that $\circuits(W') \cong \circuits(U_2^4)$ where $U_2^4$ is the uniform matroid on four elements such that the independent sets are the sets of cardinality at most two. Here, a matroid minor corresponds to iteratively either deleting variables or projecting variables out. In the first case we are done, so let us consider the second case. Note that $W' \subset \R^4$ and by Lemma~\ref{lem:circuit_minors} we have that for all $i,j \in [4]$ we have that $\mathcal K_{ij}^{W'} \subseteq \mathcal K_{ij}^W$ and so in particular $\dot \kappa_{W'} = p^\beta$ for some $\beta \le \alpha$. An easy consequence of the proof of Proposition~\ref{prop:B_inverse} and the congruence $\circuits(W') \cong \circuits(U_2^4)$ is that $W'$ can be represented by $A'$, i.e.\ $\ker(A') = W'$ such that 
\begin{equation}
  A' = 
  \begin{bmatrix}
    1 & 0 & p^{\gamma_1} & p^{\gamma_2} \\
    0 & 1 & p^{\gamma_3} & p^{\gamma_4}
  \end{bmatrix}
\end{equation}  
for $\gamma_i \in \N \cup \{0\}$ and $i \in [4]$. Further, by $\circuits(W') \cong \circuits(U_2^4)$ and $\Delta_{A'} \divides \dot \kappa_{W'}$ (Proposition~\ref{prop:B_inverse}) we have that 
\begin{equation}
  \label{eq:p_alpha_contradiction}
  0 \neq \det\begin{pmatrix}
     p^{\gamma_1} & p^{\gamma_2} \\
     p^{\gamma_3} & p^{\gamma_4}
  \end{pmatrix} = p^{\gamma_1 + \gamma_4} - p^{\gamma_2 + \gamma_3} \divides p^\beta.
\end{equation}
It is immediate that \eqref{eq:p_alpha_contradiction} cannot be fulfilled for $p > 2$. 
\end{proof}

\begin{proof}[Alternative Proof of Theorem~\ref{thm:kappa-star}]
  Let $A \in \R^{m \times n}$ be such that $A = \ker(W)$ satisfying the properties in Proposition~\ref{prop:B_inverse} in basis form $A=(I_m|A')$; for simplicity, assume the identity matrix is in the first $m$ columns.
Let $G = \big([n], E(G)\big)$ be a directed multigraph associated with $A$ with edge set $E(G) = \bigcup_{k \in [m]} E_k(G)$ where $E_k(G) = \{(i,j) : A_{ki}A_{kj} \neq 0\}$. Further, define $\gamma\colon E(G) \to \R_+$ where for $e \in E_k$ we let $\gamma(e) = |A_{kj}/A_{ki}|$. For a directed cycle $C$ in $G$ we define $\gamma(C) := \prod_{e \in E(C)} \gamma(e)$.
\begin{claim}
All cycles $C$ in $G$ fulfill $\gamma(C) = 1$.
\end{claim}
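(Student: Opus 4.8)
The claim asserts that for the multigraph $G$ built from the basis-form matrix $A=(I_m|A')$ of an anchored rational subspace with $\dot\kappa_W=p^\alpha$, $p$ an odd prime, every directed cycle $C$ satisfies $\gamma(C)=1$. First I would record the structure we are handed by Proposition~\ref{prop:B_inverse}: all entries of $A$ divide $\dot\kappa_W=p^\alpha$, so every nonzero entry $A_{ki}$ equals $\pm p^{e}$ for some $0\le e\le\alpha$; in particular each edge weight $\gamma(e)=|A_{kj}/A_{ki}|$ is a (possibly negative-exponent) power of $p$. Writing $\log_p$ of the weights turns $\gamma$ into an additive function on edges with values in $\Z$, and $\gamma(C)=1$ becomes the statement that the $\log_p$-weights sum to zero around every directed cycle — i.e. there is a potential $z\in\R^n$ (in fact we will want $z\in\Z^n$) with $\log_p\gamma(i,j)=z_i-z_j$ for every edge. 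The plan is to show such a potential exists; then $D=\mathrm{Diag}(p^{z_1},\dots,p^{z_n})$ rescales every entry of $AD$ into $\{0,\pm1\}$, whence $\kappa_{AD}=1$ and $\kappa^*_W=1$.

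The heart of the argument is therefore: \emph{why can the $\log_p$-weights be made into a potential?} Here I would use that $\dot\kappa_W=p^\alpha$ is a prime power together with Lemma~\ref{lem:dual-circuit} and Proposition~\ref{prop:kappa-dual}, which give $\kappa_W=\bar\kappa_W=\dot\kappa_W$ and that $W$ (and $W^\perp$) are anchored. The rows $A^k$ of $A$ are elementary vectors of $W^\perp$ (Lemma~\ref{lem:basis_circuits}(i)), and being anchored with $\dot\kappa_W$ a power of $p$, each $A^k$ has all entries of the form $\pm p^{e}$; the ratios along the edges in $E_k(G)$ are thus forced. The key obstruction to a global potential is a cycle $C$ with $\gamma(C)\ne1$; such a cycle, by the min-max Theorem~\ref{thm:circuit-min-max}, would force $\kappa^*_W\ge\gamma(C)^{1/|C|}>1$, and one also produces a genuine $2\times2$ minor exhibiting this. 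Concretely, a bad cycle yields (via taking a $U_2^4$-type minor and the representation in \eqref{eq:p_alpha_contradiction} from the first proof) two entries $p^{\gamma_1+\gamma_4}\ne p^{\gamma_2+\gamma_3}$ whose difference must divide $p^\beta$ for some $\beta\le\alpha$; for an odd prime this is impossible since $p^a-p^b$ with $a\ne b$ has a factor coprime to $p$ (namely $p^{|a-b|}-1\ge p-1\ge 2$, and $p-1$ is coprime to $p$), contradicting that it divides a power of $p$. Hence no bad cycle exists, and the $\log_p$-weights admit a potential.

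The one genuinely delicate point — the main obstacle — is reducing an \emph{arbitrary} directed cycle $C$ in $G$ to the clean $2\times2$ situation of \eqref{eq:p_alpha_contradiction}. A cycle in $G$ visits columns $i_1\to i_2\to\cdots\to i_\ell\to i_1$, consecutive pairs sharing a row, and the product of the ratios telescopes through the rows involved. I would argue that if $\gamma(C)\ne1$ for some cycle, then after restricting to the columns and rows touched by $C$ (a minor of $W$, so $\dot\kappa$ only drops to $p^\beta$, $\beta\le\alpha$, by Proposition~\ref{prop:proj-fix}/Lemma~\ref{lem:circuit_minors}) we may, by repeatedly performing pivots as in the proof of Proposition~\ref{prop:B_inverse} (each pivot preserving integrality and the prime-power structure), collapse down to a $2$-row, $4$-column matrix of the displayed anchored form $\begin{bmatrix}1&0&p^{\gamma_1}&p^{\gamma_2}\\0&1&p^{\gamma_3}&p^{\gamma_4}\end{bmatrix}$ in which the relevant $2\times2$ subdeterminant is $p^{\gamma_1+\gamma_4}-p^{\gamma_2+\gamma_3}\ne0$ precisely because $\gamma(C)\ne1$; divisibility of this nonzero subdeterminant by $p^\beta$ (Proposition~\ref{prop:kappa-delta}, since $\bar\kappa\le\Delta$ and both are powers of $p$) then yields the contradiction for $p>2$ exactly as in \eqref{eq:p_alpha_contradiction}. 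Once the claim is established, $\kappa^*_W=1$ follows immediately as explained above, completing the alternative proof of Theorem~\ref{thm:kappa-star}.
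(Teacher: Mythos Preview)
Your proposal has a genuine gap at the very step you flag as ``delicate'': the reduction of an arbitrary bad cycle in $G$ to a $2\times 4$ anchored matrix via ``repeatedly performing pivots.'' You never carry this out, and it is not clear why pivoting would preserve both the cycle structure and the prime-power pattern in a way that lands you on precisely the display~\eqref{eq:p_alpha_contradiction}. Worse, this reduction is essentially re-importing the $U_2^4$-minor machinery from the \emph{first} proof, which the alternative proof is designed to avoid. Your invocation of Theorem~\ref{thm:circuit-min-max} is also misplaced: that theorem concerns the circuit-ratio digraph weighted by the maxima $\kappa_{ij}$, not the graph $G$ here (whose edge weights are specific ratios $|A_{kj}/A_{ki}|$ coming from a single row, hence elements of $\mathcal K_{ij}^{W^\perp}$ but not necessarily the maxima); and even if you push that through, concluding $\kappa^*_W>1$ is not a contradiction yet, since $\kappa^*_W=1$ is exactly what the alternative proof is trying to establish.

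The paper's argument is much shorter and avoids all of this. Take a \emph{shortest} cycle $C$ with $\gamma(C)\neq 1$; show it has no chord (a chord would split $C$ into two shorter cycles whose $\gamma$-values multiply to $\gamma(C)$, so one of them is bad). Chord-freeness means the submatrix $A_{I,J}$ indexed by the rows $I$ used along $C$ and the vertices $J=V(C)$ has support exactly that of a cycle incidence matrix. Now apply the Leibniz determinant formula: only two permutations contribute (forward and backward traversal of the cycle), and since every nonzero entry is $\pm p^e$, you get $\det(A_{I,J})=\pm p^\alpha\pm p^\beta$ with $\alpha\neq\beta$ (because $\gamma(C)\neq 1$). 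This nonzero determinant must divide a power of $p$ by Proposition~\ref{prop:B_inverse}\ref{part:subdet-p}, which is impossible for odd $p$ since $p^\alpha\pm p^\beta$ with $\alpha\neq\beta$ has a factor coprime to $p$. No pivoting, no minor-taking, no appeal to Theorem~\ref{thm:circuit-min-max}: the shortest-bad-cycle trick plus Leibniz does everything.
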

\begin{proof}
For a contradiction, assume that there exists a cycle $C$ such that $\gamma(C) \neq 1$ and let $C$ be a shortest cycle with this property. Then $C$ has no chord $f \in E(G)$, as otherwise $C \cup \{f\}$ contains two shorter cycles $C_1, C_2$ such that $\gamma(C_1) \gamma(C_2) = \gamma(C) \neq 1$ and so in particular $\gamma(C_1) \neq 1$ or $\gamma(C_2) \neq 1$. This also means that the support of the corresponding submatrix $A_{I,J}$ of $A$ where $I := \{i \in [m] : E_i(G) \cap E(C) \neq 0\}$ and $J := V(C)$ is exactly the set of non-zeros of an incidence matrix of a cycle. We have that $\det(A_{I,J}) \neq 0$ as the corresponding cycle $C$ has $\gamma(C) \neq 1$. 
Recall the Leibniz determinant formula. As $A_{I,J}$ is supported on the incidence matrix of a cycle there exist only two bijective maps $\phi, \psi: I \to J$, $\phi \neq \psi$ such that $\prod_{i\in I}A_{i,\phi(i)} \neq 0 \neq \prod_{i \in I} A_{i, \psi(i)}$ is non-vanishing. One of the maps corresponds to traversing the cycle forward, the other corresponds to traversing it backwards. As all the entries of $A$ are powers of $p$ we therefore have that $0 \neq \det(A_{I,J}) = \pm p^\alpha \pm p^\beta $ for some $\alpha, \beta \in \N$. This contradicts Proposition~\ref{prop:B_inverse}\ref{part:subdet-p} for $p > 2$.
\end{proof}

The above claim implies the existence of a rescaling of rows and columns $\tilde A:= LAR$ where $L\in \bD_n$, $R \in \bD_m$ such that $\tilde A \in \{-1,0,1\}^{m \times n}$. If $\tilde A$ is TU, then we are done by Proposition~\ref{prop:kappa-delta} as now $\kappa_W^* =1$. Otherwise, we use a result by Gomory (see \cite{Camion1965} and \cite[Theorem 19.3]{SchrijverLPIP}) that states that any matrix $B$ with entries in $\{-1,0,1\}$ that is not totally unimodular has a submatrix $B'$ with $|\det(B')| = 2$. 
Let $I \subseteq [m]$ and $J \subseteq [n]$ such that $|\det(\tilde A_{I,J})| = 2$. Note that w.l.o.g.\ the diagonal entries of $L$ and $R$ are of the form $p^\alpha$ for some $\alpha \in \Z$. Therefore, $|\det(A_{I,J})| = \prod_{i \in I}L_{ii} \prod_{j \in J} R_{jj} |\det (\tilde A_{I,J})| = 2p^\beta$ for some $\beta \in \Z$. As $|\det(A_{I,J})| \in \N$ we must have $\beta \ge 0$ and  $2 \divides |\det(A_{I,J})|$. This again contradicts Proposition~\ref{prop:B_inverse}\ref{part:subdet-p} for $p > 2$.
\end{proof} 
\section{Hoffman proximity theorems}
\label{sec:hoffman}

Hoffman's seminal work \cite{Hoffman52} has analyzed proximity of LP solutions. Given $P=\{x\in \R^n:\, Ax\le b\}$, $x_0\in \R^n$, and norms $\|.\|_\alpha$ and $\|.\|_\beta$, we are interested in the minimum of 
$\|x-x_0\|_\alpha$ over $x\in P$.
Hoffman showed that this can be bounded as $H_{\alpha,\beta}(A)\|(Ax_0-b)^+\|_\beta$, where the Lipschitz-bound $H_{\alpha,\beta}(A)$ is a constant that only depends on $A$ and the norms. Such results are known as \emph{Hoffman proximity bounds} in the literature and have been extensively studied;  we refer the reader to \cite{Guler1995,Klatte1995,Pena2020} for references. In particular, they are related to  $\delta$ studied in Section~\ref{sec:delta}, see e.g. 
\cite{Guler1995}. 

In this section, we show a Hoffman-bound $H_{\infty,1}=\kappa_W$ for the system $x\in W+d,\, x\ge 0$, namely. Related bounds using $\bar\chi_A$ have been shown in \cite{ho2002}. We then extend it to proximity results on optimal LP solutions. These will be used in the black-box LP algorithms in Section~\ref{sec:black-box}, as well as for the improved analysis of the steepest descent circuit augmentation algorithm in Section~\ref{sec:steepest-improved}.

A central tool in this section are conformal decompositions into circuits as in Lemma~\ref{lem:sign-cons}. The next proof is similar to that of Proposition~\ref{lem:kappa-lift}.

\begin{lemma}\label{lem:feas}
 If the system $x\in W+d,\, x\ge 0$ 
   is feasible, then the system
  \begin{align*} 
  x&\in W+d\\
  \|x-d\|_\infty&\le \kappa_W \|d^-\|_1\\
  x&\ge 0,
  \end{align*}
  is also feasible.
  \end{lemma}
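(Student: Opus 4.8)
The statement asks: if $x \in W + d$, $x \ge 0$ is feasible, then there is a feasible point that is also close to $d$ in $\ell_\infty$-norm, the proximity being $\kappa_W \|d^-\|_1$. The natural approach is to take \emph{any} feasible point $x^*$, look at the displacement $z = x^* - d \in W$, take a conformal circuit decomposition $z = \sum_{k=1}^h g^k$ as in Lemma~\ref{lem:sign-cons}, and then discard the "useless" pieces of this decomposition --- the elementary vectors that we do not need in order to stay feasible --- so that the remaining sum has small $\ell_\infty$-norm.

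First I would set up the discarding argument. Consider $N = \{i \in [n] : d_i < 0\}$, so $\|d^-\|_1 = \sum_{i \in N} (-d_i)$. I claim we only need the elementary vectors $g^k$ whose support meets $N$: let $S \subseteq [h]$ be the set of indices $k$ with $\supp(g^k) \cap N \ne \emptyset$, and set $x := d + \sum_{k \in S} g^k$. Then $x \in W + d$. For feasibility $x \ge 0$: on a coordinate $i \notin N$ we have $d_i \ge 0$, and by conformality every $g^k$ with $i \in \supp(g^k)$ has $g^k_i$ of the same sign as $z_i = x^*_i - d_i \ge -d_i$; in fact I would argue that since $d_i \ge 0$, dropping pieces can only help --- more carefully, $\sum_{k \in S} g^k_i$ and $\sum_{k \notin S} g^k_i$ are both conformal to $z_i$ (same sign), so $x_i = d_i + \sum_{k\in S} g^k_i$ has the same sign behaviour as $x^*_i = d_i + z_i$ and, using $d_i \ge 0$ together with the sign consistency, $x_i \ge 0$. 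On a coordinate $i \in N$, every elementary vector $g^k$ with $i \in \supp(g^k)$ satisfies $k \in S$ by definition, so $\sum_{k\in S} g^k_i = z_i$ on that coordinate, hence $x_i = x^*_i \ge 0$. This shows $x$ is feasible. (This is the step I expect to need the most care: making the sign bookkeeping on the $i \notin N$ coordinates airtight, since one must check that truncating a conformal sum preserves nonnegativity when the base point $d_i$ is already nonnegative.)

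Next I would bound $\|x - d\|_\infty = \|\sum_{k \in S} g^k\|_\infty$. Fix a coordinate $j$. By conformality, $|\sum_{k\in S} g^k_j| = \sum_{k \in S} |g^k_j|$. For each $k \in S$ pick $i_k \in \supp(g^k) \cap N$; then $|g^k_j| \le \kappa_W |g^k_{i_k}|$ by the definition of $\kappa_W$ (and trivially $|g^k_j| = 0 \le \kappa_W|g^k_{i_k}|$ if $j \notin \supp(g^k)$). Hence
\[
\Big|\sum_{k \in S} g^k_j\Big| \le \kappa_W \sum_{k \in S} |g^k_{i_k}| \le \kappa_W \sum_{i \in N} \sum_{k : i_k = i} |g^k_i| \le \kappa_W \sum_{i \in N} |z_i|,
\]
where the last inequality uses conformality again: $\sum_{k : i_k = i} |g^k_i| \le \sum_{k : i \in \supp(g^k)} |g^k_i| = |z_i|$. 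Finally, for $i \in N$ we have $d_i < 0$ and $x^*_i \ge 0$, so $z_i = x^*_i - d_i \ge -d_i = |d_i| > 0$; but I also need $|z_i| \le -d_i$, which is \emph{not} automatic --- so instead I should choose $x^*$ more cleverly, or bound differently. The clean fix: rather than using an arbitrary $x^*$, run the same discarding argument but additionally cancel overshoot; alternatively, observe that it suffices to bound $\sum_{i \in N}|z_i|$ only for the coordinates that actually matter. Actually the right move is to note $\sum_{k \in S}|g^k_{i_k}|$ where $i_k \in N$ can be bounded by $\sum_{i\in N}$ of the contribution to $z_i$ \emph{restricted to the pieces in $S$ that we actually keep}, and since each kept piece has $g^k_{i} $ conformal to $z_i$ with $i\in N$, and $d_i + z_i = x^*_i \ge 0$ forces... hmm. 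The cleanest route is: replace $x^*$ at the outset by the minimizer of $\sum_{i\in N}x_i$ over the feasible set (or by $L$-lifting style argument), guaranteeing $x^*_i = 0$ for $i \in N$ whenever possible and in general $\sum_{i\in N}|z_i| \le \|d^-\|_1 + \sum_{i\in N}x^*_i$; choosing $x^*$ to minimize $\sum_{i \in N} x_i$ makes this $\le \|d^-\|_1$. Then $\|x-d\|_\infty \le \kappa_W\|d^-\|_1$, as desired.

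To summarize the plan: (1) take $x^*$ feasible minimizing $\sum_{i\in N} x^*_i$ where $N = \supp(d^-)$, set $z = x^* - d$; (2) conformal circuit decomposition $z = \sum g^k$; (3) keep only the pieces $S$ meeting $N$, define $x = d + \sum_{k\in S} g^k$, and verify $x \in W+d$, $x\ge 0$ using conformality plus $d_i \ge 0$ off $N$; (4) bound $\|x - d\|_\infty$ coordinatewise by $\kappa_W$ times the $\ell_1$-mass of $z$ on $N$, and bound that mass by $\|d^-\|_1$ using the minimality of $x^*$. The main obstacle is step (3)/(4) bookkeeping --- in particular pinning down that the mass of $z$ on $N$ is controlled by $\|d^-\|_1$, for which the minimal-choice of $x^*$ is the key idea; the feasibility verification in step (3) is routine once the conformality signs are tracked carefully.
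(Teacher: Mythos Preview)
Your overall strategy is right, and steps (1)--(3) are essentially correct (the feasibility check in (3) goes through exactly as you sketch: on coordinates $i\notin N$ conformality gives $x_i\ge\min\{d_i,x^*_i\}\ge 0$, and on $i\in N$ you recover $x_i=x^*_i$). The gap is in step (4). You assert that choosing $x^*$ to minimize $\sum_{i\in N} x^*_i$ makes $\sum_{i\in N}|z_i|\le\|d^-\|_1$, which would require that minimum to be zero. That is false in general: take $W=\spn\{(1,1)\}\subset\R^2$ and $d=(-2,-1)$. Then $N=\{1,2\}$, the unique minimizer is $x^*=(0,1)$, and $\sum_{i\in N}|z_i|=4>3=\|d^-\|_1$. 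So your bounding chain, as written, does not close.

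The fix is small but essential, and it is exactly what the paper does differently. With your minimal choice of $x^*$, every retained circuit $g^k$ (i.e.\ $k\in S$) must in fact meet the smaller set $D:=\{i\in N:\,x^*_i=0\}$, not just $N$: if some $g^k$ met $N$ but avoided $D$, then $x^*-\varepsilon g^k$ would be feasible for small $\varepsilon>0$ (conformality handles the sign at tight coordinates, and no $i\in D$ is touched) with strictly smaller $\sum_{i\in N}x_i$, contradicting minimality. Now pick $i_k\in D$ instead of $i_k\in N$; then $\sum_{i\in D}|z_i|=\sum_{i\in D}|d_i|\le\|d^-\|_1$ because $x^*_i=0$ on $D$, and your bound goes through. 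The paper reaches the same endpoint more directly: it picks $x$ minimizing $\|x-d\|_\infty$ and then $\|x-d\|_1$, so that \emph{no} circuit in the conformal decomposition of $x-d$ can avoid $D=\{i:\,d_i<0=x_i\}$ (otherwise subtracting $\varepsilon g^k$ would contradict minimality), and then bounds exactly as you do. This spares the explicit discarding step and the separate feasibility check for the truncated sum.
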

\begin{proof} 
Let $x$ be a solution to \ref{LP-subspace-f} such that $\|x-d\|_\infty$ is minimal, and subject to that, $\|x-d\|_1$ is minimal. Let $D=\{i\in [n]:\, d_i<0=x_i\}$.

  Take a conformal circuit decomposition of the  vector $x - d \in W$ as in Lemma~\ref{lem:sign-cons} in the form $x-d = \sum_{k = 1}^t g^k$ for some $t \in [n]$. We claim that $\supp(g^k)\cap D\neq \emptyset$ for all $k\in [t]$. Indeed, if $\supp(g^k)\cap D=\emptyset$, then $x'=x-\varepsilon g^k$ for some $\varepsilon>0$ is another solution with $\|x'-d\|_\infty\le \|x-d\|_\infty$ and $\|x'-d\|_1< \|x-d\|_1$.

Consider any index $i\notin D$. For every elementary vector $g^k$ with $i\in \supp(g^k)$, there exists an index $j\in D$ such that $|g^k_i|\le \Le_W |g_j^k|$. By conformity, 
\[
|x_i-d_i|=\left|\sum_{k=1}^t g^k_i\right|=\sum_{k=1}^t \left|g^k_i\right|\le\Le_W \sum_{j\in D} \sum_{k=1}^t\left|g^k_j\right|=\Le_W\sum_{j\in D} |d_j|\le \Le_W\|d^-\|_1\, ,
\]
completing the proof.
\end{proof}

We note that  \cite{DadushNV20} also provides a strongly polynomial algorithm that, for a given $z\in W+d$, and an estimate $\hat \Le$ on $\Le_W$, either finds a solution as in Lemma~\ref{lem:feas}, or finds an elementary vector that reveals $\hat \Le>\Le_W$.

We next provide proximity results for optimization.
For vectors $d,c\in \R^n$, let us define the set
\begin{equation}\label{theta-def}
\Lambda(d,c):=\supp(d^-)\cup \supp(c^+)\, .
\end{equation}
 Note that for $c\ge 0$,
$\left\|d_{\Lambda(d,c)}\right\|_1=\left\|d^-\right\|_1 +\left\|d_{\supp(c)}^+\right\|_1$.
Consequently, if $c\ge 0$ and $d_{\Lambda(d,c)}=0$, then $x=d$ and $s=c$ are optimal primal and dual solutions to \ref{LP-subspace-f}.

\begin{lemma}\label{lem:opt}
  If the system \ref{LP-subspace-f} is feasible, bounded and $c\ge 0$, then there is an optimal solution such that 
  \[
  \left\|x-d\right\|_\infty\le \Le_W \left\|d_{\Lambda(d,c)}\right\|_1. \]
  \end{lemma}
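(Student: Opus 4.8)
My plan is to reduce the statement to a feasibility statement to which Lemma~\ref{lem:feas} can be applied, working in an appropriate product space that simultaneously encodes the primal and dual problems. First, I would take an optimal primal solution $x^*$ and an optimal dual solution $s^*$, which exist because the system is feasible and bounded (strong LP duality). By complementary slackness, $\langle x^*, s^*\rangle = 0$, so $x^*$ and $s^*$ have disjoint supports. The vector $(x^*-d, s^*-c)$ lies in the subspace $W \times W^\perp \subseteq \R^{2n}$, which is a linear subspace of $\R^{2n}$; call it $\hat W$. The key observation is that the circuit imbalance $\kappa_{\hat W}$ of this product subspace equals $\max\{\kappa_W, \kappa_{W^\perp}\} = \kappa_W$, using Proposition~\ref{prop:kappa-dual} together with the fact that circuits of a direct sum are exactly the circuits of the summands (separability, as discussed after Proposition~\ref{prop:conn}).

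Next I would apply Lemma~\ref{lem:feas} to the subspace $\hat W \subseteq \R^{2n}$ with the shift vector $\hat d = (d, c)$: the point $(x^*, s^*)$ certifies that the system $(x,s) \in \hat W + \hat d$, $(x,s) \ge 0$ is feasible, so Lemma~\ref{lem:feas} produces $(x,s)$ with $(x,s) \in \hat W + \hat d$, $(x,s) \ge 0$, and $\|(x,s) - \hat d\|_\infty \le \kappa_{\hat W}\, \|\hat d^{\,-}\|_1 = \kappa_W \left(\|d^-\|_1 + \|c^-\|_1\right)$. Since $c \ge 0$, we have $\|c^-\|_1 = 0$, so this gives $\|x - d\|_\infty \le \kappa_W \|d^-\|_1$ and $\|s - c\|_\infty \le \kappa_W \|d^-\|_1$; moreover $x \in W+d$, $x \ge 0$ and $s \in W^\perp + c$, $s \ge 0$, so $x$ is primal feasible and $s$ is dual feasible. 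The remaining point is that this $(x,s)$ pair is actually \emph{optimal}: I would need $\langle x, s \rangle = 0$. This does not follow automatically from Lemma~\ref{lem:feas} as stated, so the argument needs an additional idea.

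To force complementarity — which I expect to be the main obstacle — I would refine the choice in Lemma~\ref{lem:feas} by restricting attention to the face of optimal solutions. Concretely, let $N = \supp(x^*)$ be the support of a fixed optimal primal vertex; then every optimal dual solution vanishes on $N$, and every optimal primal solution is supported on the set of columns that can be positive at optimality. Rather than applying Lemma~\ref{lem:feas} to all of $\hat W$, I would apply it to the subspace $\hat W' = \hat W \cap (\R^N \times \R^{[n]\setminus N})$ — i.e.\ force $x$ to be supported on $N$ and $s$ on its complement — which is again a subspace of $\R^{2n}$ whose circuit imbalance is bounded by $\kappa_{\hat W}$ by Proposition~\ref{prop:proj-fix} (restriction to a coordinate subset does not increase $\kappa$). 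The optimal pair $(x^*, s^*)$ still lies in $\hat W' + \hat d$ restricted appropriately, so feasibility of the restricted system holds; and any $(x,s)$ in $\hat W' + \hat d$ with $x,s \ge 0$ automatically satisfies $\langle x,s\rangle = 0$ by the support disjointness built into $\hat W'$, hence is a pair of optimal solutions. Then the bound from Lemma~\ref{lem:feas} reads $\|x - d\|_\infty \le \kappa_W \|\hat d^{\,-}_{\text{restricted}}\|_1$, and here the negative part of the restricted shift, measured in the coordinates where the system can differ from $\hat d$, is exactly supported on $\supp(d^-) \cup \supp(c^+) = \Lambda(d,c)$ — the $c^+$ part enters because on the dual side the relevant negative shift of $s - c$ occurs precisely at coordinates where $c > 0$ but $s$ is forced to $0$. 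This yields $\|x-d\|_\infty \le \kappa_W \|d_{\Lambda(d,c)}\|_1$, as claimed. The delicate bookkeeping is in verifying that the $\ell_1$-norm of the negative part of the shift in the restricted space is controlled by $\|d_{\Lambda(d,c)}\|_1$ and not something larger; I would carry out that accounting carefully using the identity $\|d_{\Lambda(d,c)}\|_1 = \|d^-\|_1 + \|d^+_{\supp(c)}\|_1$ noted just before the lemma.
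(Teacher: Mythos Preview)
Your reduction has a structural gap that cannot be patched by bookkeeping. The subspace $\hat W = W\times W^\perp\subseteq\R^{2n}$ is \emph{separable}: every circuit of $\hat W$ lives entirely in the first $n$ coordinates (a circuit of $W$) or entirely in the last $n$ (a circuit of $W^\perp$). Consequently, applying Lemma~\ref{lem:feas} to $\hat W$ with shift $(d,c)$ is exactly the same as applying it independently to $W$ with shift $d$ and to $W^\perp$ with shift $c$. The primal conclusion you obtain is therefore only $\|x-d\|_\infty\le \kappa_W\|d^-\|_1$ --- i.e.\ Lemma~\ref{lem:feas} itself, not the stronger bound of Lemma~\ref{lem:opt}. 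The dual factor contributes bounds on $\|s-c\|_\infty$ in terms of $\|c^-\|_1=0$, never a term of the form $\|d^+_{\supp(c)}\|_1$. Your restriction $\hat W'=\hat W\cap(\R^N\times\R^{[n]\setminus N})$ is again a direct sum $W_N\times (W^\perp)_{[n]\setminus N}$, so the same decoupling persists; no choice of $N$ can make a $d$-term appear from the dual block.

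There is also a concrete error in how the restriction acts: intersecting $\hat W$ with $\R^N\times\R^{[n]\setminus N}$ forces $(x-d)_{[n]\setminus N}=0$ and $(s-c)_N=0$, i.e.\ $x_{[n]\setminus N}=d_{[n]\setminus N}$ and $s_N=c_N$, \emph{not} $x_{[n]\setminus N}=0$ and $s_N=0$. Forcing the latter is an affine, not linear, constraint on $(x-d,s-c)$, so it cannot be encoded by restricting the subspace. The paper's proof avoids all of this by working directly on the primal side: take an optimal $x$ minimizing $\|x-d\|_\infty$ (then $\|x-d\|_1$), write a conformal circuit decomposition of $x-d$, and argue that each elementary piece $g^k$ must either meet the set $D=\{i:d_i<0=x_i\}$ (as in Lemma~\ref{lem:feas}) \emph{or} satisfy $\pr{c}{g^k}<0$; in the second case, since $c\ge 0$ there is an index $i$ with $c_i>0$ and $g^k_i<0$, and conformality gives $d_i>x_i\ge 0$, so $i\in\supp(d^+)\cap\supp(c^+)$. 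This is precisely where the extra $\|d^+_{\supp(c)}\|_1$ contribution comes from --- through the \emph{optimality} of $x$, which your decoupled product space cannot see.
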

  \begin{proof}
Similarly to the proof of Lemma~\ref{lem:feas}, let $x$ be an optimal solution to \ref{LP-subspace-f} chosen such  that $\|x-d\|_\infty$ is minimal, and subject to that, $\|x-d\|_1$ is minimal; let $D=\{i\in [n]:\, d_i<0=x_i\}$. Take a conformal circuit decomposition $x-d = \sum_{i = 1}^t g^k$ for some $t \in [n]$. 
    With a similar argument as in the proof of Lemma~\ref{lem:feas}, we can show that for each $g^k$, either $\supp(g^k)\cap D\neq\emptyset$, or $g^k$ is an objective-reducing circuit, i.e. $\pr{g^k}{c} < 0$. Since $c\ge 0$, the latter requires that for some $i\in\supp(g^k)$, $g^k_i<0$ and $c_i>0$, implying $i\in d_{\supp(c)}^+$. A similar bound as in the proof of Lemma~\ref{lem:feas} completes the proof.
      \end{proof}

\begin{lemma}\label{lem:primal_prox}
  Let $W\subseteq \R^n$ be a subspace and $c, d \in \R^n$.
  Let $(\tilde x, s)$ be an optimal solution to
   LP$(W,\tilde x,c)$. Then there exists an optimal solution $(x^*, s^*)$ to LP$(W,d,c)$ such that
  \[
      \|x^* - \tilde x\|_\infty \le (\kappa_W + 1)\left\|\proj_{W^\perp}(d-\tilde x)\right\|_1 \, .
   \]
  \end{lemma}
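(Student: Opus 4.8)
The plan is to reduce the claim to Lemma~\ref{lem:opt} by means of two reductions: shifting the affine subspace so that its offset becomes controlled by $\proj_{W^\perp}(d-\tilde x)$, and swapping the cost vector $c$ for the dual slack $s$, which has the advantage of being nonnegative.

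Concretely, first I would put $r:=\proj_{W^\perp}(d-\tilde x)$. Then $(d-\tilde x)-r\in W$, so $W+d=W+\tilde x+r$, and the primal of \ref{LP-subspace-f} with offset $d$ is literally the same feasibility set as that of LP$(W,\tilde x+r,c)$. Next, since $(\tilde x,s)$ is optimal for LP$(W,\tilde x,c)$, strong duality gives $s\ge 0$, $s\in W^\perp+c$ and $\langle s,\tilde x\rangle=0$. Because $c-s\in W^\perp$, on the affine set $W+d$ the two linear functionals $\langle c,\cdot\rangle$ and $\langle s,\cdot\rangle$ differ only by the constant $\langle c-s,d\rangle$; hence LP$(W,d,c)$ and LP$(W,d,s)$ have the same feasible region and the same set of optimal solutions. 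After these two steps we are looking at LP$(W,\tilde x+r,s)$ with $s\ge 0$ — exactly the hypotheses of Lemma~\ref{lem:opt}.

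I would then invoke Lemma~\ref{lem:opt} on LP$(W,\tilde x+r,s)$. (We may assume this LP is feasible, as otherwise there is no optimal solution to produce; boundedness is automatic, since its dual is feasible, e.g.\ $\sigma=s$.) The lemma supplies an optimal $x^*$ with $\|x^*-(\tilde x+r)\|_\infty\le\kappa_W\|(\tilde x+r)_{\Lambda(\tilde x+r,s)}\|_1$, where $\Lambda(\tilde x+r,s)=\supp((\tilde x+r)^-)\cup\supp(s)$. The key estimate is that $|(\tilde x+r)_i|\le|r_i|$ for every $i\in\Lambda(\tilde x+r,s)$: if $(\tilde x+r)_i<0$ this follows from $\tilde x_i\ge 0$, and if $s_i>0$ then complementary slackness $\langle s,\tilde x\rangle=0$ forces $\tilde x_i=0$, so $(\tilde x+r)_i=r_i$. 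Hence $\|(\tilde x+r)_{\Lambda(\tilde x+r,s)}\|_1\le\|r\|_1$, and the triangle inequality yields $\|x^*-\tilde x\|_\infty\le\|x^*-\tilde x-r\|_\infty+\|r\|_\infty\le\kappa_W\|r\|_1+\|r\|_1=(\kappa_W+1)\|\proj_{W^\perp}(d-\tilde x)\|_1$. Since $x^*$ is optimal for LP$(W,d,s)$ it is optimal for LP$(W,d,c)$; pairing it with any dual-optimal $s^*$ (which exists by LP duality) completes the argument.

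The main obstacle is spotting the cost-swap $c\rightsquigarrow s$, which is what makes the nonnegativity hypothesis of Lemma~\ref{lem:opt} available in the first place; after that, the only real work is the bookkeeping showing that $\Lambda(\tilde x+r,s)$ only meets coordinates where $|\tilde x_i+r_i|\le|r_i|$, which uses $\tilde x\ge 0$ on the negative part and complementary slackness on $\supp(s)$.
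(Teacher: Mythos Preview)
Your proof is correct and follows essentially the same route as the paper's: shift the offset to $\tilde x + r$ with $r=\proj_{W^\perp}(d-\tilde x)$, replace $c$ by $s$ (which differs from $c$ by an element of $W^\perp$, hence leaves the optimal set unchanged), apply Lemma~\ref{lem:opt}, and then use $\tilde x\ge 0$ together with complementary slackness to bound $\|(\tilde x+r)_{\Lambda(\tilde x+r,s)}\|_1\le\|r\|_1$ before invoking the triangle inequality. Your write-up is in fact a bit more explicit than the paper's about why the cost-swap is legitimate and about the feasibility/boundedness hypotheses needed for Lemma~\ref{lem:opt}.
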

  \begin{proof}
  Let $x=\tilde x + \proj_{W^\perp}(d - \tilde x)$. Note that $W+x=W+d$, and also $W^\perp+s=W^\perp+c$. Thus, the systems
    LP$(W,d,c)$ and LP$(W,x,s)$ define the same problem.
  
    We apply Lemma~\ref{lem:opt} to $(W,x,s)$. This guarantees the existence of an optimal $(x^*,s^*)$
   to  LP$(W,x,s)$  such that 
   \[
   \|x^*-x\|_\infty\le \kappa_W\|x_{\Lambda(x,s)}\|_1=\kappa_W\left( \left\|x^-\right\|_1+\left\|x_{\supp(s)}^+\right\|_1\right)\ .
   \]
   
   Since $\tilde x\ge 0$, we get that $\|x^-\|_1\le \|x_{\supp(x^-)}-\tilde x_{\supp(x^-)}\|_1$. Second, by the optimality of $(\tilde x, s)$, we have $\tilde x_{\supp(s^+)}=0$, and thus $x_{\supp(s^+)} = x_{\supp(s^+)}-\tilde x_{\supp(s^+)}$. These together imply that
   \[
   \begin{aligned}
   \|x^* - \tilde x\|_\infty &\le \|x^* - x\|_\infty + \|x - \tilde x\|_\infty \le (\kappa_W+1) \|x - \tilde x\|_1\\
   &= (\kappa_W + 1)\|\proj_{W^\perp}(d-\tilde x)\|_1\, . 
   \end{aligned}
   \]
  \end{proof}

    We can immediately use Lemma~\ref{lem:primal_prox} to derive a conclusion on the support of the optimal dual solutions to LP$(W,d,c)$, using  the optimal solution to LP$(W,\tilde d,c)$. 
    \begin{theorem}\label{cor:dual-fix-weak}
        Let $W\subseteq \R^n$ be a subspace and $c, d \in \R^n$.
         Let $(\tilde x, s)$ be an optimal solution to
         LP$(W,\tilde x, c)$ and 
        \[
        R:=\{i\in [n]:\,  \tilde x_i> (\kappa_W + 1)\|\proj_{W^\perp}(\tilde x-d)\|_1\}\, .
        \]
         Then for every dual optimal solution $s^*$ to \ref{LP-subspace-f}, we have $s^*_R=0$.
    \end{theorem}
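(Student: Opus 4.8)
The plan is to extract, via Lemma~\ref{lem:primal_prox}, one particular primal optimal solution of LP$(W,d,c)$ that is $\ell_\infty$-close to $\tilde x$, deduce that this solution is strictly positive on $R$, and then push this to \emph{all} dual optimal solutions through complementary slackness in its strong (cross-pair) form.

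Concretely, I would first apply Lemma~\ref{lem:primal_prox} to the subspace $W$ and the vectors $c,d$, together with the given optimal pair $(\tilde x, s)$ of LP$(W,\tilde x,c)$. This yields an optimal solution $(x^*,s^*)$ of LP$(W,d,c)$ with $\|x^*-\tilde x\|_\infty \le (\kappa_W+1)\|\proj_{W^\perp}(d-\tilde x)\|_1$; note $\|\proj_{W^\perp}(d-\tilde x)\|_1 = \|\proj_{W^\perp}(\tilde x-d)\|_1$, so the right-hand side is exactly the threshold defining $R$. For any $i\in R$ I would then chain $\tilde x_i - x^*_i \le |x^*_i-\tilde x_i| \le \|x^*-\tilde x\|_\infty \le (\kappa_W+1)\|\proj_{W^\perp}(\tilde x-d)\|_1 < \tilde x_i$, which gives $x^*_i>0$ for every $i\in R$.

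The second half is the key point: I want to conclude $\hat s_R=0$ for an \emph{arbitrary} dual optimal $\hat s$, not just for the companion $s^*$. For this I would use that LP$(W,d,c)$ has zero duality gap (which is legitimate because Lemma~\ref{lem:primal_prox} already produced an optimal pair, so the optimum is finite). Since $x^*-d\in W$ and $\hat s - c\in W^\perp$ we have $\pr{x^*-d}{\hat s-c}=0$, which rearranges to $\pr{x^*}{\hat s}=\pr{c}{x^*}-\pr{c-\hat s}{d}=0$ by strong duality. As $x^*\ge 0$ and $\hat s\ge 0$, this forces the coordinatewise relation $x^*_i\hat s_i=0$ for all $i$, and combined with $x^*_i>0$ on $R$ we get $\hat s_i=0$ for $i\in R$.

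I do not expect a serious obstacle here; the only subtlety worth flagging is precisely the step where complementary slackness is invoked across two possibly distinct optimal pairs $(x^*,s^*)$ and $(x^*,\hat s)$ — this is legitimate only because the duality gap is zero, which must be justified from the existence of the optimal pair rather than assumed. Everything else (the triangle inequality, the sign flip inside the projection norm, nonnegativity) is routine.
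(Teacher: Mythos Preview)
Your proposal is correct and follows essentially the same route as the paper: apply Lemma~\ref{lem:primal_prox} to obtain a primal optimum $x^*$ of LP$(W,d,c)$ within $(\kappa_W+1)\|\proj_{W^\perp}(\tilde x-d)\|_1$ of $\tilde x$, conclude $x^*_R>0$, and invoke complementary slackness. The paper simply says ``by complementary slackness'' for the last step, whereas you spell out the cross-pair argument via $\pr{x^*}{\hat s}=0$; this extra care is appropriate and the justification you give is sound.
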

    \begin{proof}
        By Lemma~\ref{lem:primal_prox} there exists an optimal solution $(x', s')$ to LP$(W,d,c)$ such that $\|x' - \tilde x\|_\infty \le (\kappa_W + 1)\|\proj_{W^\perp}(d - \tilde x)\|_1$.
        Consequently, $x'_R>0$, implying $s^*_R=0$ for every dual optimal $s^*$ by complementary slackness.
    \end{proof}

In Section~\ref{sec:steepest-improved}, we use a dual version of this theorem, also including upper bound constraints in the primal side. We now adapt the required proximity result to the following primal and dual LPs, and formulate it in matrix language to conform to the algorithm in Section~\ref{sec:steepest-improved}.
\begin{equation}
  \label{LP_upper}
  \begin{aligned}
\min \; & \pr{c}{x} \\ 
Ax&=b\,, \\
0\leq x&\leq u\,.\\
  \end{aligned}
  \quad\quad\quad
  \begin{aligned}
  \max \;  \pr{y}{b}-&\pr{u}{t} \\
  A^\top y + s-t &= c\,, \\
  s,t & \geq 0\,. \\
  \end{aligned}
\end{equation}
Note that any $y\in\R^m$ induces a feasible dual solution with $s_i=(c_i-\pr{a_i}{y})^+$ and $t_i=(\pr{a_i}{y}-c_i)^+$ for $i\in [n]$.
A primal feasible solution  $x$  and $y\in\R^m$ are optimal solutions if and only if $\pr{a_i}{y}\le c_i$ if $x_i<u_i$ and $\pr{a_i}{y}\ge c_i$ if $x_i>0$.

        \begin{theorem}\label{thm:primal-fixing-upper}
        Let $(x',y')$ be optimal primal and dual solutions to \eqref{LP_upper} for input $(b,u,c')$, and $(x'',y'')$ for input $(b,u,c'')$. Let
        \[
        \begin{aligned}
                R_0&:=\{i\in [n]:\,  \pr{a_i}{y'}< c'_i- (\kappa_W + 1)\|c'-c''\|_1\}\, ,\\
        R_u&:=\{i\in [n]:\,  \pr{a_i}{y'}> c'_i+ (\kappa_W + 1)\|c'-c''\|_1\}\, .
        \end{aligned}
        \]
         Then $x''_i=0$ for every $i\in R_0$ and $x''_i=u_i$ for every $i\in R_u$.
    \end{theorem}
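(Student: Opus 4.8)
The plan is to derive the statement as the ``dual companion'' of Theorem~\ref{cor:dual-fix-weak}, after a standard reduction of \eqref{LP_upper} to equality-constrained form. First I would introduce slack variables $z=u-x$ for the upper bounds, rewriting \eqref{LP_upper} with constraint matrix $\hat A=\begin{pmatrix} A & 0\\ I_n & I_n\end{pmatrix}$, variables $(x,z)\ge 0$ and cost $(c,0)$, whose kernel is $\hat W:=\{(v,-v):v\in W\}\subseteq\R^{2n}$ with $W=\ker A$. The elementary vectors of $\hat W$ are exactly the $(g,-g)$ with $g\in\EE(W)$, so every pairwise circuit ratio of $\hat W$ is either $1$ or a pairwise circuit ratio of $W$; hence $\kappa_{\hat W}=\kappa_W$. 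Passing to the subspace formulation, the dual of this equality LP is $\mathrm{LP}(\hat W^\perp,\hat c,\hat d)$ with $\hat d=(d,u-d)$ for any $d$ with $Ad=b$ and $\hat c=(c,0)$; its feasible dual slacks are the pairs $(s,t)$ with $A^\top y+s-t=c$, $s,t\ge 0$, and for a given $y$ the optimal choice (as noted before the theorem) is $s_i=(c_i-\pr{a_i}{y})^+$, $t_i=(\pr{a_i}{y}-c_i)^+$. Changing the primal cost from $c'$ to $c''$ is therefore exactly changing the \emph{shift} of this dual LP from $\hat c'$ to $\hat c''$, i.e.\ the scenario of Lemma~\ref{lem:primal_prox}/Theorem~\ref{cor:dual-fix-weak} applied to the subspace $\hat W^\perp$.

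The one point that needs care is the $\ell_1$ error term. Applying Lemma~\ref{lem:primal_prox} verbatim would give the bound $(\kappa_W+1)\|\proj_{\hat W}(\hat c'-\hat c'')\|_1$, and $\proj_{\hat W}(\hat c'-\hat c'')$ has $\ell_1$-norm $\|\proj_W(c'-c'')\|_1$, which may exceed $\|c'-c''\|_1$ since orthogonal projection is not $\ell_1$-nonexpansive. The fix is to observe that the proof of Lemma~\ref{lem:primal_prox} uses the orthogonal projection only to produce \emph{some} point $x$ in the target affine space; in fact it works with \emph{any} representative $\delta:=x-\tilde x$ of the coset $(d-\tilde x)+W$, giving $\|x^*-\tilde x\|_\infty\le(\kappa_W+1)\|\delta\|_1$ by running Lemma~\ref{lem:opt} on $(W,x,s)$ and bounding $\|x_{\Lambda(x,s)}\|_1\le\|\delta\|_1$ exactly as there (using $\tilde x\ge 0$ and complementary slackness of the source pair). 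In our instance the coset is $(\hat c''-\hat c')+\hat W^\perp$ — note $\hat s'-\hat c'\in\hat W^\perp$, where $\hat s'=(s',t')$ denotes the optimal dual slack of the $c'$-problem induced by $y'$ — so we may take $\delta=\hat c''-\hat c'=(c''-c',0)$, with $\|\delta\|_1=\|c'-c''\|_1$. This yields an optimal dual slack $\hat s^*$ of the $c''$-problem, together with an optimal primal partner, such that $\|\hat s^*-\hat s'\|_\infty\le(\kappa_W+1)\|c'-c''\|_1$.

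To finish, set $R:=\{i\in[2n]:\hat s'_i>(\kappa_W+1)\|c'-c''\|_1\}$. For $i\in R$ we get $\hat s^*_i>0$, and complementary slackness of $\hat s^*$ with the reformulated optimal primal $(x'',u-x'')$ forces $(x'',u-x'')_i=0$ for all $i\in R$. It remains to identify $R$: in the first block $\hat s'_i=s'_i=(c'_i-\pr{a_i}{y'})^+$, so $\hat s'_i>(\kappa_W+1)\|c'-c''\|_1$ is equivalent to $\pr{a_i}{y'}<c'_i-(\kappa_W+1)\|c'-c''\|_1$, i.e.\ to $i\in R_0$; in the second block $\hat s'_{n+i}=t'_i=(\pr{a_i}{y'}-c'_i)^+$, so the condition becomes $\pr{a_i}{y'}>c'_i+(\kappa_W+1)\|c'-c''\|_1$, i.e.\ $i\in R_u$. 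Hence $x''_i=0$ for $i\in R_0$ and $(u-x'')_i=0$, that is $x''_i=u_i$, for $i\in R_u$.

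I expect the only real obstacle to be the projection/error-term subtlety in the second paragraph: writing it cleanly requires either restating Lemma~\ref{lem:primal_prox} with a flexible correction vector $\delta$ in place of $\proj_{W^\perp}(d-\tilde x)$, or reproving that short proximity estimate inline via Lemma~\ref{lem:opt}. The slack-variable reduction, the identity $\kappa_{\hat W}=\kappa_W$, the fact that the reformulated dual is $\mathrm{LP}(\hat W^\perp,\hat c,\hat d)$, and the translation of complementary slackness are all routine bookkeeping.
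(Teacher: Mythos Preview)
Your proposal is correct and follows the same route as the paper: introduce slacks $z=u-x$, pass to the equality form with constraint matrix $\hat A=\begin{pmatrix}A&0\\ I_n&I_n\end{pmatrix}$, note $\kappa_{\hat A}=\kappa_A$, and apply Theorem~\ref{cor:dual-fix-weak} with $W$ replaced by $(\ker\hat A)^\perp$. The paper's proof is the one-liner ``the statement follows by Theorem~\ref{cor:dual-fix-weak} applied for $W=(\ker\bar A)^\perp$'', whereas you spell out the bookkeeping.

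You are in fact more careful than the paper on one point. Applying Theorem~\ref{cor:dual-fix-weak} verbatim yields the error term $\|\proj_{\hat W}(\hat s'-\hat c'')\|_1=\|\proj_W(c'-c'')\|_1$, and orthogonal projection is not $\ell_1$-nonexpansive (e.g.\ $W=\{x:x_1+x_2+x_3=0\}$, $x=e_1$), so this need not be bounded by $\|c'-c''\|_1$. Your fix---observing that the proof of Lemma~\ref{lem:primal_prox} goes through with any representative $\delta$ of the coset in place of the orthogonal projection, and choosing $\delta=\hat c''-\hat c'=(c''-c',0)$---is exactly the right way to close this. The paper's brief derivation implicitly relies on the same flexibility but does not make it explicit.
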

    \begin{proof}
    Let $\bar A=\begin{pmatrix}A & 0\\ I_n & I_n\end{pmatrix}$. It is easy to see that $\kappa_{\bar A}=\kappa_A$. 
Let $\bar d\in \R^{2n}$ such that $\bar A\bar d=\begin{pmatrix} b\\ u\end{pmatrix}$.
    With $\bar c=(c,0_n)$,
    the primal system can be equivalently written as 
    $\min \pr{\bar c}{\bar x}$, $\bar x\in \ker(\bar A)+\bar d$, $\bar x\ge 0$. The statement follows by Theorem~\ref{cor:dual-fix-weak} applied for $W=(\ker(\bar A))^\perp=\operatorname{im}(\bar A^\top)$.
    \end{proof}

\section{Linear programming with dependence on the constraint matrix only}\label{sec:only-A}

Recent years have seen tremendous progress in the development of more efficient 
LP algorithms using interior point methods, see e.g. \cite{CLS19,LS19,Brand2020,Brand2021} and references therein. These algorithms are \emph{weakly} polynomial, i.e., their running time depends on the encoding length of the input $(A,b,c)$ of \ref{LP_primal_dual}.

A fundamental open problem is the existence of a \emph{strongly} polynomial LP algorithm; this was listed by Smale as one of the key open problems  in mathematics for the 21st century \cite{Smale98}. The number of arithmetic operations of such an algorithm would be polynomial in the number $n$ of variables  and $m$ of constraints, but independent of the input length.

Towards this end, there is a line of work on developing algorithms with running time depending only on the constraint matrix $A$, while removing the  dependence on $b$ and $c$. This direction was pioneered by Tardos's 1985 paper \cite{Tardos86}, giving an algorithm for \ref{LP_primal_dual} with integral $A$ that has runtime $\poly(m,n,\log \Delta_A)$.

A breakthrough work by  Vavasis and Ye \cite{Vavasis1996} introduced a \emph{Layered Least Squares Interior-Point Method} that solves \ref{LP_primal_dual} within $O(n^{3.5} \log (\bar \chi_A+n))$ iterations, each requiring to solve a linear system. Recall from Theorem~\ref{theorem:kappa_chi_bar_approx} that $\log (\bar \chi_A+n)=\Theta(\log(\kappa_A+n))$; also recall from  Proposition~\ref{prop:kappa-delta}  that $\kappa_A\le\Delta_A$.

Recently, \cite{DHNV20} improved the Vavasis--Ye bound to  $O(n^{2.5} \log(n) \log (\bar \chi_A^*+n))$ linear system solves, where $\bar \chi_A^*$ is the optimized version of $\bar\chi_A$, analogous to $\kappa^*_A$ defined in Section~\ref{subsec:def_chi_bar_star}. The key insight of this work is using  the circuit imbalance measure $\kappa_A$ as a proxy to $\bar\chi_A$. These results are discussed in Section~\ref{sec:LLS}.

Section~\ref{sec:black-box} exhibits another recent paper \cite{DadushNV20} that extends Tardos's black-box framework  to solve LP in runtime $\poly(m,n,\log (\bar \Le_A+n))$, based on the proximity results in Section~\ref{sec:hoffman}. We note that using an initial rescaling as in Theorem~\ref{thm:bar-chi-star}, we can obtain $\poly(m,n,\log (\bar \Le^*_A+n))$ runtimes from these algorithms.

\subsection{A black box algorithm}\label{sec:black-box}
The LP feasibility and optimization algorithms in \cite{DadushNV20} rely on a black-box subroutine for approximate LP solutions, and use their outputs to find exact primal (and dual) optimal solutions in time $\poly(m,n,\log (\bar \Le_A+n))$. For the black-box, one can use the fast interior-point algorithms cited above.

More precisely, we require  the following approximately feasible and optimal solution $\tilde x$ to \ref{LP-subspace-f} in time $\mathrm{poly}(n,m)\log((\bar\kappa_A + n)/\varepsilon)\big)$.
Here $\operatorname{OPT}_{\mathrm{LP}}$ denotes the objective value of \ref{LP-subspace-f}.
\begin{equation}
  \label{sys:near_feas_near_opt}
  \tag{\texttt{APX-LP}}
  \begin{aligned}
  \pr{c}{\tilde x} &\le \operatorname{OPT}_{\mathrm{LP}}+ \varepsilon \|c\|\cdot\|d\| \\ 
  \tilde x &\in W + d \\
  \|\tilde x^-\| &\le \varepsilon \|d\|\,\\
  \tilde x &\in \R^n 
  \end{aligned}
\end{equation}
The feasibility algorithm makes $O(m)$ calls, and the optimization algorithm makes $O(nm)$ calls to such a subroutine for $\varepsilon=1/(\bar\kappa_A + n)^{O(1)}$.

We now give a high-level outline of the feasibility algorithm in \cite{DadushNV20} for the system $x\in W+d$, $x\ge 0$. For the description, let us assume this system is feasible; in case of infeasibility, the algorithm recovers a Farkas-certificate.
The main progress step in the algorithm is reducing the dimension $m$ of the linear space $W$ by one,
based on information obtained form  an approximate solution  $\tilde x$  to \eqref{sys:near_feas_near_opt}. We can make such an inference using the proximity result Lemma~\ref{lem:feas}. 

If $\tilde x\ge 0$ for the solution returned by the solver, we can terminate with $x=\tilde x$. 
Otherwise, let $I \subseteq [n]$ denote the set of `large' coordinates of $\tilde x$, i.e., where $\tilde x_i>\kappa_W\|\tilde x^-\|_1$. By Lemma~\ref{lem:feas}, there must exist a feasible solution $x\in W+d$, $x\ge 0$ such that $x_i$ is still sufficiently large for $i\in I$.
Therefore,
 one can drop the sign-constraint on $I$, as non-negativity can be enforced automatically. We recurse on $\pi_J(W)$ for $J=[n]\setminus I$, i.e. project out the variables in $I$.

Each recursive call decreases the dimension of the subspace, until a feasible vector is found. The feasible solution on the remaining variables now has to be \emph{lifted} to the variables we projected out, to get a feasible solution to the original problem $x\in W + d$, $x\ge 0$. We use the \emph{lifting operator} $L_J^W(p)$ introduced in Section~\ref{subsec:lifting_operator}: for $p\in \pi_J(W)$, $z=L_J^W(p)$ is the minimum-norm vector in $W$ such that $z_J=p$.
According to Proposition~\ref{lem:kappa-lift}, $\|z\|_\infty\le \kappa_W\|p\|_1$; this bound can be used to guarantee that the lifted solution is nonnegative on $I$.

Algorithm~\ref{alg:feas} gives a simplified description of the 
feasibility algorithm of \cite{DadushNV20}. For simplicity, we ignore the infeasibility case and the details of the $\texttt{Adjust}(d)$ that may replace $d$ by it projection to $W^\perp$ in certain cases. This is needed to ensure $I\neq\emptyset$. Further, we omit an additional proximity condition from the approximate system \ref{sys:near_feas_near_opt}.

\begin{algorithm}[htb!]
  \caption{\texttt{Feasibility-Simplified}}
  \label{alg:feas}
  \SetKwInOut{Input}{Input}
  \SetKwInOut{Output}{Output}
  \SetKwComment{Comment}{$\triangleright$\ }{}
  \SetKw{And}{\textbf{and}}
  \Input{Instance of \ref{LP-subspace-f}, $\eps > 0$, with $c = 0_n$.}
  \Output{Feasible solution to the system in Lemma~\ref{lem:feas}}
   $d \gets \texttt{Adjust}(d)$ \Comment*{Occasionally applies a projection}
    \lIf{$d \in W$}{
      \Return{ $0_n$}
    }
    $\tilde x \gets \ref{sys:near_feas_near_opt}(W,d,0_n, \eps)$  \Comment*{Provided by Black box solver}
    $I \gets \{i : \tilde x_i \ge \kappa_W \|\tilde x^-\| \}, J \gets [n] \setminus I$,\;
    $z \gets \texttt{Feasibility-Simplified}(\pi_J(W), d_J, \eps)$\;
    \Return{$\begin{bmatrix} \tilde x_I + [L_J^W(z - \tilde x_J)]_I \\ z \end{bmatrix}$} \label{line:return}
\end{algorithm}

As stated here, the computation complexity is dominated by the at most $m$ recursive calls to the solver for the system \eqref{sys:near_feas_near_opt}.

In  \cite{DadushNV20}, these techniques are also extended to solve the minimum-cost problem \ref{LP-subspace-f} for arbitrary cost vector $c \in \R^n$. The  optimization algorithm  makes $O(m)$ recursive calls to the approximate solver to identify a variable $x_i$ that must be 0 in every optimal solution; this is deduced 
 using Theorem~\ref{cor:dual-fix-weak}.

\subsection{Layered least squares interior point methods}\label{sec:LLS}

In this section, we briefly review layered least squares (LLS) interior-point methods, and highlight the role of the circuit imbalance measure $\Le_W$ in this context.
The \emph{central path} for the standard log-barrier function is the  parametrized curve given by the solutions to the system following system for $\mu\in \R_{++}$
\begin{equation}
  \label{sys:central_path}
  \begin{aligned}
  Ax &= b \\
  A^\top y + s & = c\\
  x_is_i &= \mu \;\quad \forall i \in [n]\\
  x, s &>0
  \end{aligned}
\end{equation}
 A unique solution for each $\mu>0$ exist whenever \ref{LP_primal_dual} possesses strictly feasible primal and dual solutions, i.e.\ primal resp.~dual solutions with $x > 0$ resp.~$s > 0$; the duality gap between these solutions is $n\mu$. The limit point at $\mu\to 0$ gives a pair of primal and dual optimal solutions.
At a high level,  interior point methods require an initial solution close to the central path for some large $\mu$ and proceed by following the central path in some proximity towards smaller and smaller $\mu$, which corresponds to converging to an optimal solution. A standard variant is the Mizuno--Todd--Ye \cite{MTY} \emph{predictor-corrector} method. This alternates between \emph{predictor} and \emph{corrector} steps. Each predictor step decreases the parameter $\mu$ at least by a factor $(1-\beta/\sqrt{n})$, but moves further away from the central path. Corrector steps maintain the same $\mu$ but restore better centrality. 

Let us now focus on the predictor step at a given point $(x,s)$; we use the subspace notation $W=\ker(A)$, $W^\top=\operatorname{im}(A^\top)$ as in \ref{LP-subspace-f}. The augmentation direction is computed by the \emph{affine scaling (AS) step}, that can be written as 
weighted least squares problems on the primal and dual sides:
\begin{equation}
  \label{sys:affine_scaling}
  \begin{aligned}
\Delta x &:= \argmin \Big\{\sum_{i \in [n]} \Big(\frac{x_i + \Delta x_i}{x_i}\Big)^2 : \Delta x \in W\Big\} \\
\Delta s &:= \argmin \Big\{\sum_{i \in [n]} \Big(\frac{s_i + \Delta s_i}{s_i}\Big)^2 : \Delta s \in W^\perp\Big\} \\
  \end{aligned}
\end{equation}
The update is then performed by setting $x \gets x + \alpha \Delta x$, $s \gets s + \alpha \Delta s$
for some $\alpha \in [0,1]$. As such, this algorithm can find $\varepsilon$-approximate solutions in weakly polynomial time. However,  it does not even terminate in finitely many iterations,
because using the weighted $\ell_2$-regressions problems \eqref{sys:affine_scaling} will never set variables exactly to 0
as required by complementary slackness. For standard interior point methods, a final rounding step is required.

The layered least squares interior-point method by Vavasis and Ye \cite{Vavasis1996} not only  terminates finitely, but has an iteration bound
of $O(n^{3.5}\log(\bar \chi_W + n))$, depending only on $A$, but independent of $b$ and $c$. We will refer to this as the VY algorithm.

Recall from Theorem~\ref{theorem:kappa_chi_bar_approx} that $\log(\bar \chi_A + n)=\Theta(\log(\kappa_A + n))$.
 For certain predictor iterations, they use a
 \emph{layered least squared (LLS)} step instead of affine scaling.
Variables are split into layers according to the $x_i$ values: we order the variables as $x_1\ge x_2\ge\ldots\ge x_n$, and start a new layer whenever there is a big gap between consecutive variables, i.e. $x_i>O(n^2)\bar\chi_A x_{i+1}$. For a point $(x,s)$ on the central path, the ordering on the $s_i$'s will be approximately reverse.

We illustrate their step based on a partition of the variable set $[n]$ into two layers $B \cup N = [n]$; the general step may use an arbitrary number of layers.
The layered least squared step is given in a 2-stage approach via
\begin{equation}
  \label{sys:lls}
  \begin{aligned}
\Delta x_N^\lal &:= \argmin \Big\{\sum_{i \in N} \Big(\frac{x_i + \Delta x_i}{x_i}\Big)^2 : \Delta x_N \in \pi_N(W)\Big\}, \\
\Delta x_B^\lal &:= \argmin \Big\{\sum_{i \in B} \Big(\frac{x_i + \Delta x_i}{x_i}\Big)^2 : (\Delta x_B, \Delta x_N)\in W\Big\}, \\[10pt]
\Delta s_B^\lal &:= \argmin \Big\{\sum_{i \in B} \Big(\frac{s_i + \Delta s_i}{s_i}\Big)^2 : \Delta s_B \in \pi_B(W^\perp)\Big\}, \\
\Delta s_N^\lal &:= \argmin \Big\{\sum_{i \in N} \Big(\frac{s_i + \Delta s_i}{s_i}\Big)^2 : (\Delta s_B, \Delta s_N)\in W^\perp\Big\}. \\
  \end{aligned}
\end{equation}
Whereas the  predictor-corrector algorithm---as most standard interior point variants---is invariant under rescaling the columns of the constraint matrix, the VY algorithm is \emph{not}: the layers are chosen by comparing the $x_i$ values. For this reason, it was long sought to find a scaling-invariant version of \cite{Vavasis1996}, that would automatically improve the running time dependence from $\bar\chi_W$ to the best possible value $\bar\chi^*_W$ achievable under column rescaling. In this line of work fall the results of \cite{Monteiro2008,Monteiro2003,MonteiroT05}, but none of them achieving dependence on the constraint matrix only while being scaling-invariant. 

This question was finally settled in \cite{DHNV20} in the affirmative. A key ingredient is revealing the connection between $\bar\chi_W$ and $\Le_W$. Preprocessing the instance via  the algorithm in Theorem~\ref{thm:bar-chi-star} to find a nearly optimal rescaling for $\Le_W$ (and thus for $\bar\chi_W$), and then using the VY algorithm already achieves $O(n^{3.5}\log(\bar \chi^*_W + n))$.
Beyond this, \cite{DHNV20} also presents a new LLS interior point method based on the pairwise circuit imbalances $\Le_{ij}$ that is inherently scaling invariant, as well as an improved analysis of $O(n^{2.5}\log(n)\log(\bar \chi^*_W + n))$ iterations.
 We give an outline next. Details are omitted here, a self-contained 
 overview  can be found in \cite{DHNV20}.

\paragraph{What determines a good layering?}
We illustrate the LLS step in the following hypothetical situation. Assume that the partition $B \cup N$ is such that $x_N^* = 0$ and $s_B^*=0$ for  optimal primal and dual solution $(x^*,s^*)$ to \ref{LP-subspace-f}. 
In particular, the LLS direction in \eqref{sys:lls} will set $\Delta x_N^\lal = -x_N$. Note that this does not hold for the AS direction $\Delta x_N$ that solves \eqref{sys:affine_scaling}. This benefit of the LLS direction over the AS direction will result in us being able to choose $\alpha$ larger for the LLS step compared to the AS step. 

To terminate with an optimal solution in a single step, we need to be able to select the step size $\alpha=1$, which requires that $x_B+\Delta_B^\lal\ge 0$.
But as in the computation of $\Delta x_N^\lal$ the components in $B$ are ignored we need to ensure the choice of $\Delta x_N^\lal$ does not impact $\Delta x_B^\lal$ by too much. By that we mean that there is a vector $z \in \R^B$ such that $|z_i/x_i| \ll 1$ for all $i \in B$ and $(z, \Delta x_N) \in W$. The norm of this $z$ is exactly governed by the lifting operator we introduced in Section~\ref{subsec:lifting_operator}. Let $W^x=\operatorname{diag}(x)^{-1}W= \{(w_i/x_i)_{i \in [n]} : w \in W\}$ denote the space $W$ rescaled by the $1/x_i$ values. Then,
\begin{equation}
\Big\|\Big(\frac{z_i}{x_i}\Big)_{i \in B}\Big\| = \Big\|L_N^{W^x}\Big(\Big(\frac{\Delta x_i^\lal}{x_i}\Big)_{i \in N}\Big)\Big\| \le \|L_N^{W^x}\| \cdot \Big\|\Big(\frac{\Delta x_i^\lal}{x_i}\Big)_{i \in N}\Big\|\, .
\end{equation}
By Lemma~\ref{lem:kappa-lift}, note that
\begin{equation}
  \Big\|\Big(\frac{z_i}{x_i}\Big)_{i \in B}\Big\| \le n\kappa_{W^x} \Big\|\Big(\frac{\Delta x_i}{x_i}\Big)_{i \in N}\Big\|\, .
\end{equation}
Further, notice that the lifting cost imposed on variables in $B$ by $\Delta x_N^\lal$ are given by the circuit imbalances in the rescaled space $W^x$: For $i \in B$ and $j \in N$ we are interested in $\kappa^{W^x}_{ji} = \kappa_{ji}{x_j}/{x_i}$.
In particular, if these quantities are small for all $i\in B$ and $j\in N$, then the \emph{low lifting cost} discussed above is achieved and we can select stepsize $\alpha=1$.

\paragraph{The choice of layers}
The Vavasis-Ye algorithm defines the layering based on the magnitude of the elements $x_i$. This guarantees that  $\kappa_{ji}{x_j}/{x_i}$ is small since 
$x_i>O(n^2)\kappa_A x_{j}$ if $i$ is on a higher layer than $j$. However, this choice is inherently not scaling-invariant.

The LLS algorithm in \cite{DHNV20} directly uses the scaling invariant quantities $\kappa_{ji}{x_j}/{x_i}$ to define the layering. In the ideal version of the algorithm, the layers are selected as the strongly connected components of the directed graph formed by the edges where this value is large. Hence, $\kappa_{ji}{x_j}/{x_i}$ is small whenever $i$ is on a higher layer than $j$.

This ideal version cannot be implemented since the pairwise imbalances $\kappa_{ji}$ are hard to compute or even approximate. The actual algorithm instead works with lower estimates $\hat\kappa_{ji}$. Thus, we may miss some edges from the directed graph, in which case the lifting may fail. Such failure will be detected in the algorithm, and in turn reveals better estimates for some pairs $(i,j)$.

\subsection{The curvature of the central path}
The condition number $\bar\chi_A^*$ also has an interesting connection to the geometry of the central path. In this context, Sonnevend, Stoer, and Zhao~\cite{Sonnevend1991} introduced a primal-dual curvature notion. 
Monteiro and
Tsuchiya~\cite{Monteiro2008} reveal strong connections between the curvature integral, the Mizuno-Todd-Ye predictor-corrector algorithm, and the Vavasis-Ye algorithm. In particular, they prove a bound 
$O(n^{3.5} \log(\bar{\chi}^*_A+n))$ on the curvature integral.

Besides the above primal-dual curvature, one can also study the \emph{total curvature} of the central path, a standard notion in algebraic geometry. De Loera, Sturmfels, and Vinzant \cite{Loera2012} studied the \emph{central curve} defined as the solution of the polynomial equations
\begin{equation}\label{eq:central-curve}
Ax=b\, ,\quad A^\top y-s=c\, \quad x_i s_i=\lambda\, \quad \forall i\in[n],\quad 
\end{equation}
This includes the usual central path in the region $x,s>0$, but also includes the central path of all other LPs with objective $c$ in the hyperplane arrangement in $\{x\in\R^n:\, Ax=b\}$ defined by the hyperplanes $x_i=0$; i.e., all LPs where some nonnegativity constraints $x_i\ge 0$ are flipped to $x_i\le 0$.
 In fact, \cite{Loera2012} shows thato \eqref{eq:central-curve} defines the smallest algebraic variety containing the central path.
 
 They consider the average curvature taken over the bounded regions in the hyperplane arrangement, and show a bound $2\pi(n-m-1)$ for the primal central path (i.e., the projection of \eqref{eq:central-curve} to the $x$ space), and $2\pi(m-1)$ for the dual central path (the projection to the $s$ space). Their argument crucially relies on circuit polynomials defined via elementary vectors.
  See \cite{Loera2012} for further pointers to the literature on the total curvature of the central path. 

\section{Circuit diameter bounds and circuit augmentation algorithms}\label{sec:augment}
Consider an LP in standard equality form with upper  bounds, where $A\in \R^{m\times n}$, $b\in \R^m$, $u\in \R^n$:
\begin{equation}
\label{LP-augment}\tag{LP$(A,b,c,u)$}
\begin{aligned}
\min \; & \pr{c}{x} \\ 
Ax&=b \\
0\leq x&\leq u\,\\
\end{aligned}
\end{equation}

In Section~\ref{sec:delta} we briefly mentioned the Hirsch conj\-ecture and some progress towards the polynomial Hirsch conjecture; in Theorem~\ref{thm:kappa-diameter} shows a bound $O((n-m)^3m\Le_A\log(\Le_A+n))$ on the diameter of $\{x\in \R^n:\, Ax=b,x\ge 0\}$ for $A\in\R^{m\times n}$.

\emph{Circuit diameter bounds} were introduced by Borgwardt, Finhold, and Hemmecke~\cite{Borgwardt2015} as a relaxation of diameter bounds. Let $P$ denote the feasible region of \ref{LP-augment}.
A \emph{circuit walk} is a set of consecutive feasible points $x^{(1)},x^{(2)},\ldots,x^{(k+1)}\in P$ such that for each $t=1,\ldots,k$, $x^{(t+1)}=x^{(t)}+g^{(t)}$ for $g^{(t)}\in \EE(A)$, and further, 
$x^{(t)}+(1+\varepsilon) g^{(t)}\notin P$ for any $\varepsilon>0$, i.e., each consecutive circuit step is \emph{maximal}. The \emph{circuit diameter} of $P$ is the minimum length of a circuit walk between any two vertices $x,y\in P$. 

In contrast to walks in the vertex-edge graph, circuit walks are non-reversible and the minimum length from $x$ to $y$ may be different from the one from $y$ to $x$; this is due to the maximality requirement. The circuit-analogue of the Hirsch conjecture, formulated in \cite{Borgwardt2015}, asserts that the circuit diameter of a polytope in $d$ dimensions with $n$ facets is at most $n-d$; this  may be true even for unbounded polyhedra, see \cite{Borgwardt2018circuit}.

In this section we begin by showing a recent, improved bound on the circuit diameter with $\log \kappa_A$ dependence. Section~\ref{subsec:circ-diameter-algorithms} gives an overview of  circuit augmentations algorithms. We review existing algorithms for different augmentation rules (Theorem~\ref{thm:DLHL} and Theorem~\ref{thm:opt-augment}), and also show a new bound for the steepest-descent direction (Theorem~\ref{thm:steepest-main}). The bounds in these three theorems also translate directly to circuit diameter bounds, since they all consider algorithms with maximal augmentation sequences.

\subsection{An improved circuit diameter bound}

In a recent paper Dadush et al. \cite{dadush2021circuit} gave the following bound on the circuit diameter:

\begin{theorem}[\cite{dadush2021circuit}]\label{thm:diam-augment} An LP of the form \ref{LP-augment} has circuit diameter bound
$O(m^2 \log(m + \kappa_A) + n \log n)$.
\end{theorem}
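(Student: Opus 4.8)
The plan is to prove the circuit diameter bound by designing a circuit augmentation walk between two vertices $x$ and $y$ of the polyhedron $P$ defined by \ref{LP-augment}, and carefully controlling the number of steps. The starting point is the difference vector $y - x \in \ker(A)$, which by Lemma~\ref{lem:sign-cons} admits a conformal circuit decomposition $y - x = \sum_{k=1}^h g^k$ with $h \le n$ and all $g^k \in \EE(A)$ conformal to $y-x$. Naively, greedily taking maximal steps in the directions $g^k$ would reach $y$ in polynomially many steps, but the issue is that a maximal step along $g^k$ may ``overshoot'' and leave the cone of directions still needed, or may not actually be one of the $g^k$'s (maximality forces us off the intended path). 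So the real work is to reduce the problem to a sequence of sub-instances on fewer coordinates.

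The key idea I would use, following the structure suggested by the surrounding sections, is a \emph{variable-fixing} argument driven by the Hoffman-type proximity bounds of Section~\ref{sec:hoffman}. First, normalize: reduce to the case where the target $y$ is a vertex and, after a potential translation and using Lemma~\ref{lem:P-project}-style reductions, assume $A$ has full row rank. Then, at a generic stage we have a current point and a target; we want to argue that after $O(\log(m+\kappa_A))$ \emph{maximal} circuit steps we can guarantee that some coordinate that is at its bound in $y$ becomes (and can be kept) at that bound — effectively fixing a variable and reducing $n$ (or $m$). Concretely, one takes maximal augmenting steps along circuits drawn from a conformal decomposition of (current $-$ target); each maximal step sets at least one coordinate to a bound. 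The proximity bound (Lemma~\ref{lem:feas}, Lemma~\ref{lem:opt}, or the primal-fixing Theorem~\ref{thm:primal-fixing-upper}) controls how far the current iterate can be from a target that respects the already-fixed structure, in terms of $\kappa_A$ times the $\ell_1$-norm of the ``violation.'' A geometric-decrease argument on a suitable potential (e.g., the number of not-yet-small coordinates weighted appropriately, or the magnitude of the residual) then yields the $\log(m+\kappa_A)$ factor: each block of $O(\log(m+\kappa_A))$ steps shrinks the relevant residual by a constant factor until a coordinate can be permanently fixed to its bound, costing the additive $n\log n$ over all $n$ fixings. Summing over the at most $m$ ``rounds'' in which the effective dimension drops gives the $O(m^2 \log(m+\kappa_A))$ term; the $n\log n$ term comes from the bookkeeping of which of the $n$ variables get fixed and in what order.

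The main obstacle — and the part requiring the most care — is the interaction between \emph{maximality} of circuit steps and the proximity argument: a maximal step is forced, so one does not get to choose to stop ``close'' to the intended target, and one must show that overshooting still makes measurable progress toward fixing a variable rather than undoing earlier fixings. This is exactly where one needs a monovariant that is provably non-increasing under \emph{any} maximal circuit step conformal to the residual direction, combined with the $\kappa_A$-proximity estimate to show it actually decreases geometrically. Handling the upper-bound constraints $x \le u$ symmetrically with the lower bounds $x \ge 0$ (via the doubled matrix $\bar A$ with $\kappa_{\bar A} = \kappa_A$, as in the proof of Theorem~\ref{thm:primal-fixing-upper}) is routine but must be done explicitly. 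Once the monovariant and the ``one variable gets fixed per $O(\log(m+\kappa_A))$ steps, over at most $m$ effective-dimension drops'' claims are established, the bound $O(m^2\log(m+\kappa_A) + n\log n)$ follows by summation; I expect the technical heart to be roughly $2$–$3$ pages and to mirror, in the circuit-walk setting, the dimension-reduction template of the black-box algorithm of Section~\ref{sec:black-box}.
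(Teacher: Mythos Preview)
Your proposal has the right overall shape---choose circuits from a conformal decomposition of the residual $x^*-x^{(t)}$ and argue a potential decreases geometrically---but several of the key ingredients are missing or misidentified, and your accounting of the two terms in the bound does not match what actually happens.

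First, you never specify \emph{which} circuit to take from the conformal decomposition; this choice is the crux. The paper fixes the target basic solution $x^*$ with basis $B$, sets $N=[n]\setminus B$, and at each step picks the circuit $h^i$ in the decomposition that \emph{maximizes} $\|h^i_N\|_1$. With this rule one can show the maximal step length satisfies $\alpha\in[1,n]$ (so overshoot is bounded, answering your worry directly) and that the scalar potential $\|x^{(t)}_N\|_1$ decreases geometrically. Your proposal waves at ``a suitable potential'' but never names it; without tying the potential to the non-basic coordinates of the \emph{target} vertex, the geometric-decrease argument does not go through, and the Hoffman-type bounds you cite do not by themselves control successive circuit-walk iterates.

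Second, your explanation of where the $m^2\log(m+\kappa_A)$ and $n\log n$ terms come from is incorrect. In the paper, the $m^2\log(m+\kappa_A)$ term arises entirely from the standard-form case (no upper bounds): one tracks two monotone index sets $L_t=\{i:\,x^*_i>n\kappa_A\|x^{(t)}_N\|_1\}$ and $R_t=\{i:\,x^{(t)}_i\le nx^*_i\}$, shows they only grow, and that a new index enters one of them every $O(m\log(m+\kappa_A))$ iterations; since there are $O(m)$ relevant indices, this yields $O(m^2\log(m+\kappa_A))$. The $n\log n$ term is \emph{not} bookkeeping within this phase; it comes from a separate preprocessing sequence of $O(n\log n)$ circuit augmentations (for the bounded case \ref{LP-augment}) that reduces the number of ``active'' variables---those with $x_i\neq x_i^*$---to at most $2m$, after which the standard-form analysis applies. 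Your ``dimension drops'' story conflates these two distinct phases and does not explain why the main phase is governed by $m$ rather than $n$.
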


Let us highlight the main ideas to prove Theorem~\ref{thm:diam-augment}. The argument is constructive but non algorithmic in the sense that the augmentation steps are defined using the optimal solution.
We first show the bound in Theorem~\ref{thm:diam-augment} for \ref{LP_primal_dual} (i.e., without upper bounds $u$) and then extend the argument to systems of form \ref{LP-augment}. Let $x^*$ be a basic optimal solution to \ref{LP_primal_dual} corresponding to basis $B$, and let $N = [n]\setminus B$. Thus, $x^*$ is the unique optimal solution with respect to the cost vector $c=(0_B,\1_N)$.

For the current iterate $x^{(t)}$, consider a conformal circuit decomposition $h^1, \ldots, h^k$ of $x^* - x^{(t)}$, and select a circuit $h^i, i \in [k]$ such that $\|h_N^i\|_1$ is maximized.
We find the next iterate $x^{(t+1)}=x^{(t)}+\alpha h^i$ for the maximal stepsize $\alpha>0$.
 Note that the existence of such a decomposition \emph{does not} yield a circuit diameter bound $n$ due to the maximality requirement in the definition of circuit walks. 
Nonetheless, it can be shown that we will not overshoot $h^i$ by too much. More precisely, one can show that the step length will be $\alpha\in [1,n]$. Further, the choice of $h^i$ guarantees that $\|x^{(t)}_N\|_1$ decreases geometrically.

The analysis focuses on the index sets $L_t=\{i\in [n]:\, x^*_i>n\kappa_A\|x^{(t)}_N\|_1\}$ and $R_t=\{i\in [n]:\, x^{(t)}_i\le nx^*_i\}$. For every $i\in L_t$, $x_i$ must already be `large' and cannot be set to zero later in the algorithm; $R_t$ is the set of indices that have essentially `converged' to the final value $x^*_i$. Since $\|x^{(t)}_N\|_1$ is decreasing, once an index enters $L_t$, it can never leave again. The same property can be shown for $R_t$. Moreover, a new index is added to either set $R_t$ or $L_t$ every $O(m\log(m+\kappa_A))$ iterations, leading to the
 overall bound $O(m^2 \log(m + \kappa_A))$. 

For a system \ref{LP-augment} with upper bounds $u$, the above argument yields a bound $O(n^2\log(n + \kappa))$ using a simple reduction. To achieve a better bound, \cite{dadush2021circuit} gives a preprocessing sequence of $O(n \log n)$  circuit augmentations that reduces the number of variables to $\le 2m$.
This preprocessing terminates once the set of columns in $A_D$ are linearly independent for $D=\{i \in [n] : x_i^* \neq x_i \text{ and } x_i^* \in \{0, u_i\} \}$. Since a basic solution $x^*$ may have $\le m$ entries not equal to the lower or upper bound, at this point there are $\le 2m$ variables $x_i\neq x_i^*$. This leads to a circuit diameter bound of $O(m^2 \log(m + \kappa_A) + n \log n)$.

\subsection{Circuit augmentation algorithms}

\label{subsec:circ-diameter-algorithms}
The generic \emph{circuit augmentation algorithm} is a circuit walk  $x^{(1)},x^{(2)},\ldots,x^{(k+1)}\in P$ as defined above, such that an initial  feasible $x^{(0)}$ is given, 
and $\pr{c}{x^{(t+1)}}<\pr{c}{x^{(t)}}$, i.e., the objective value decreases in every iteration.
The elementary vector $g$ is an \emph{augmenting direction} for the solution $x^{(t)}$
 if and only if $\pr{c}{g}<0$, $g_i\ge 0$ for every $x^{(t)}_i=0$ and $g_i\le 0$ for every  $x^{(t)}_i=u_i$. By LP duality, $x^{(t)}$ is optimal if and only if no augmenting direction exists. Otherwise, the algorithm proceeds to the next iterate  $x^{(t+1)}$ by a maximal augmentation in an augmenting direction.

The simplex algorithm can be seen as a circuit augmentation algorithm that is restricted to using special elementary vectors corresponding to edges of the polyhedron.\footnote{Simplex may contain degenerate pivots when the basic solution remains the same; we do not count these as augmentation steps.} For the general framework, the iterates $x^{(k)}$ may not be vertices. However, in case of maximal augmentations, they must all lie on the boundary of the polyhedron.

\medskip

In unpublished work, Bland~\cite{Bland76} extended the Edmonds--Karp--Dinic algorithm \cite{dinic1970algorithm,Edmonds72} algorithm for general LP, see also \cite[Proposition 3.1]{Lee89}. Circuit augmentation algorithms were revisited by De Loera, Hemmecke, and Lee in 2015 \cite{DHL15}, analyzing different augmentation rules and extending them to integer programming. We give an overview of their results first for linear programming. In particular, they studied three augmentation rules that use maximal augmentation. Let  $x^{(t)}$ be the current feasible solution, and we aim to select an augmenting direction $g$ as follows.
\begin{itemize}
	\item \emph{Dantzig-descent direction:} Select $g$ such that $-\pr{c}{g}$ is maximized, where $g=g^C$ is the elementary vector with $\lcm(g^C)=1$ for a circuit $C\in \circuits_W$.
	\item \emph{Deepest-descent direction:} Select $g$ such that $-\alpha\pr{c}{g}$ is maximized, where $\alpha$ is the maximal stepsize for $x^{(t)}$ and $g$.
	\item \emph{Steepest-descent direction:} Select $g$ such that $-\pr{c}{g}/\|g\|_1$ is maximized.
\end{itemize}

Computing Dantzig- and deepest-descent directions is in general NP-hard, see \cite{DKS19} and  as detailed below. The steepest-descent direction can be formulated by an LP; but without any restrictions on the input problem, this may not be simpler than the original one. However, it could be easier to solve in practice; Borgwardt and Viss \cite{BorgwardtV2020} exhibits an implementation of a steepest-descent circuit augmentation algorithm with encouraging computational results.

\subsubsection{Augmenting directions for flow problems} 

 It is instructive to consider these algorithms  for the special case of \emph{minimum-cost flows}. Given a directed graph $D=(V,E)$ with capacities $u\in \R^E$, costs $c\in \R^E$, and node demands $b\in \R^V$ with $b(V)= \sum_{i \in V} b_i = 0$. The objective is to find the minimum cost flow $x$ that satisfies the capacity constraints: $0\le x\le u$, and the node demands: for each node $i\in V$, the total incoming minus the total outgoing flow equals $b_i$. This can be written in the form \ref{LP-augment} with $A$ as the node-arc incidence matrix of $D$, a TU matrix. 
Let us define the \emph{residual graph} $D_x=(V,E_x)$, where 
for $(i,j)\in E_x$ we let $(i,j)\in E$ if $x_{ij}<u_{ij}$ and $(j,i)\in E$ if $x_{ij}>0$. The cost of a reverse arc will be defined as $c_{ji}=-c_{ij}$. We will also refer to the \emph{residual capacities} of arcs; these are $u_{ij}-x_{ij}$ in the first case and $x_{ij}$ in the second. 

Let us observe that the augmenting directions correspond to directed cycles in the residual graph. Circuit augmentation algorithms for the primal and dual problems yield the rich classes of cycle cancelling and cut cancelling algorithms, see the survey \cite{Shigeno2000}.

The \emph{maximum flow problem} between a source $s$ and sink $t$ can be formulated as a special case as follows. We add a new arc $(t,s)$ with capacity $\infty$, set the demands $b\equiv 0$, and costs as $c_{ts}=-1$ and $c_{ij}=0$ otherwise. 
Bland's \cite{Bland76} observation was that the steepest-descent direction for this problem corresponds to finding a shortest residual $s$-$t$ path, as chosen in the Edmonds--Karp--Dinic algorithm. 

More generally, a steepest-descent direction amounts to finding a residual cycle $C\subseteq E_x$ that minimizes the mean cycle cost $c(C)/|C|$. Thus, the steepest descent algorithm for minimum-cost flows corresponds to the classical Goldberg--Tarjan algorithm \cite{Goldberg89} that is strongly polynomial with running  time $O(|V|\cdot |E|^2)$ \cite{Radzik-Goldberg}.

Let us now consider the other two variants. A Dantzig-descent direction in this context asks for the most negative cycle, i.e., a cycle maximizing $-c(C)$. A deepest-descent direction asks for a cycle $C$ of arcs that maximizes $-\alpha c(C)$, where $\alpha$ is the residual capacity of $C$. Computing both these directions exactly is NP-complete, since they generalize the  Hamiltonian-cycle problem: for every  directed graph, we can set up a flow problem where $E_x$ coincides with the input graph, all residual capacities are equal to 1, and all costs are $-1$. We note that De Loera, Kafer, and Sanit\`a~\cite{DKS19} showed that computing the Dantzig- and deepest-descent directions is also NP-hard for the fractional matching polytope.

Nevertheless, the deepest-descent direction can be suitably approximated.
Wallacher \cite{Wallacher} proposed selecting a \emph{minimum ratio cycle} in the residual graph. This is a cycle in $E_x$ that minimizes $c(C)/d(C)$, where $d_e=1/u_e$ for every residual arc $e\in E_x$; such a cycle can be found in strongly polynomial time. It is easy to show that this cycle approximates the deepest descent direction within a factor $|E_x|$.
Wallacher's  algorithm can be naturally extended to  linear programming \cite{mccormick-shioura-not-strongly}, and has found several combinatorial applications, e.g. \cite{Wallacher1999,Wayne02}, and has also been used in the context of integer programming \cite{Schulz1999}. We discuss an improved new variant in Section~\ref{sec:log-kappa}.
A different relaxation of the deepest-descent algorithm was given by Barahona and Tardos \cite{Barahona1989}, based on Weintraub's algorithm \cite{Weintraub1974}.

\subsubsection{Convergence bounds}
We now state the convergence bounds from \cite{DHL15}. The original statement refers to subdeterminant bounds; we paraphrase them in terms of finding approximately optimal solutions.

\begin{theorem}[{De Loera, Hemmecke, Lee \cite{DHL15}}]\label{thm:DLHL}
Consider a linear program in the form \ref{LP-augment}. 
Assume we are given an initial feasible solution $x^{(0)}$, and let $\OPT$ denote the optimum value. By an $\varepsilon$-
 optimal solution we mean an iterate $x^{(t)}$ such that $\pr{c}{x^{(t)}}\le \OPT+\varepsilon$.
\begin{enumerate}[(a)]
	\item \label{i:deepest}For given $\varepsilon>0$, one can find an $\varepsilon$-optimal solution in
	$2n\log_2\left(\frac{\pr{c}{x^{(0)}}-\OPT}{\varepsilon}\right)$ deepest-descent augmentations.
	\item\label{i:dantzig} For given $\varepsilon>0$, one can find an $\varepsilon$-optimal solution in
	$\frac{2n^2\gamma}{\varepsilon}\log_2\left(\frac{\pr{c}{x^{(0)}}-\OPT}{\varepsilon}\right)$  Dantzig-descent augmentations, where $\gamma$ is an upper bound on the maximum entry in any feasible solution.
	\item\label{i:steepest} One can find an exact optimal solution in $\min\{n|\circuits_A|,\ell_A\}$ steepest-descent augmentations, 
	 where $\ell_A$ denotes the number of distinct values of $\pr{c}{g}/\|g\|_1$ over $g\in \EE(A)$.
\end{enumerate}
\end{theorem}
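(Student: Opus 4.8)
The plan is to handle all three rules with one common device. At a non-optimal feasible iterate $x^{(t)}$, fix an optimal vertex $x^*$ and take a conformal circuit decomposition (Lemma~\ref{lem:sign-cons}) $x^* - x^{(t)} = \sum_{k=1}^{h} g^k$ with $h \le n$ and each $g^k \in \EE(A)$ conformal to $x^* - x^{(t)}$. Conformality together with feasibility of $x^{(t)}$ and $x^*$ forces every coordinate of $x^{(t)} + g^k$ to lie between $x^{(t)}_i$ and $x^*_i$, so each $g^k$ is a feasible direction at $x^{(t)}$, is an augmenting direction whenever $\pr{c}{g^k} < 0$, and admits maximal step length at least $1$; moreover $\sum_k \pr{c}{g^k} = \pr{c}{x^*} - \pr{c}{x^{(t)}} = \OPT - \pr{c}{x^{(t)}}$. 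This is the engine that produces a ``cheap'' augmenting direction at every step; the three parts then differ only in how one converts it into per-iteration progress.

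For part \ref{i:deepest}: the identity above yields some $g^{k}$ with $\pr{c}{g^{k}} \le \tfrac1n(\OPT - \pr{c}{x^{(t)}})$, and since its maximal step is $\ge 1$, the deepest-descent value at $x^{(t)}$ (the maximum of $-\alpha\pr{c}{g}$ over augmenting $g$ with $\alpha$ the maximal step) is at least $-\pr{c}{g^{k}} \ge \tfrac1n\Phi_t$, where $\Phi_t := \pr{c}{x^{(t)}} - \OPT$. Hence $\Phi_{t+1} \le (1-\tfrac1n)\Phi_t$; using $(1-1/n)^n < 1/2$ and unrolling gives the claimed $2n\log_2(\Phi_0/\varepsilon)$ bound. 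This part is essentially routine once the ``step $\ge 1$'' observation is recorded.

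For part \ref{i:dantzig}: normalize the same decomposition as $g^k = \lambda_k g^{C_k}$ with $g^{C_k}$ the coprime integer representative of its circuit and $\lambda_k > 0$. Since $|g^k_i| \le |x^*_i - x^{(t)}_i| \le \gamma$ (with $\gamma$ the given bound on feasible entries) and $\|g^{C_k}\|_\infty \ge 1$, we get $\lambda_k \le \gamma$, so $\sum_k \lambda_k \le n\gamma$ and consequently the best Dantzig direction satisfies $-\pr{c}{g^{(t)}} \ge \tfrac{1}{n\gamma}\Phi_t$. The complication relative to \ref{i:deepest} is that the integer normalization yields no lower bound on the maximal step, so the clean geometric decrease is unavailable; one instead shows that while $\Phi_t > \varepsilon$ the objective drops by at least $\Omega(\varepsilon/(n^2\gamma))$ per step and feeds this into the same halving structure to reach $\tfrac{2n^2\gamma}{\varepsilon}\log_2(\Phi_0/\varepsilon)$. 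I expect extracting this per-step absolute-progress estimate to be the fiddly step of \ref{i:dantzig}.

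For part \ref{i:steepest}: the plan is to show the steepest ratio $\rho(x) := \max\{-\pr{c}{g}/\|g\|_1\}$ over augmenting $g \in \EE(A)$ is non-increasing along the algorithm, then count. For monotonicity, suppose at $x^{(t+1)} = x^{(t)} + \alpha g^{(t)}$ (maximal $\alpha$) there is an augmenting $g'$ with ratio $\ge \rho(x^{(t)})$; test the direction $z = \alpha g^{(t)} + \eta g' \in \ker(A)$ at $x^{(t)}$ for small $\eta > 0$. A short conformality check shows $z$ is a feasible direction at $x^{(t)}$, so decomposing it conformally into augmenting elementary vectors of $x^{(t)}$ yields $-\pr{c}{z}/\|z\|_1 \le \rho(x^{(t)})$; since $\|z\|_1 \le \alpha\|g^{(t)}\|_1 + \eta\|g'\|_1$ with equality iff $g^{(t)}$ and $g'$ are conformal, either $g'$ is strictly less steep (the non-conformal case) or $g'$ is already augmenting at $x^{(t)}$, hence exactly as steep, and then $g'$ must vanish on the coordinate that $g^{(t)}$ drove to its bound. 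Iterating the latter case, a maximal block of iterations of constant ratio uses only mutually conformal directions and fixes a fresh coordinate at its bound at every step, so no circuit recurs and the block is short; combining ``no circuit recurs'' with ``the ratio takes one of the $\ell_A$ possible values'' delivers the $\min\{n|\circuits_A|, \ell_A\}$ bound, and exactness is automatic since the algorithm halts only when no augmenting direction remains. Throughout \ref{i:steepest} the monotonicity lemma — the conformal-perturbation argument and the book-keeping that caps a constant-ratio block — is the part I expect to take real work.
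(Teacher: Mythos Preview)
The paper does not give its own proof of this theorem; it is quoted from \cite{DHL15}. The only part the paper touches independently is the monotonicity of the steepest-descent ratio, which it establishes later (Lemma~\ref{lem:epsilon-mon}) via the LP dual \eqref{eq:minratio-dual}: one shows that the optimal dual pair $(y,\varepsilon)$ at iterate $t$ remains dual feasible at iterate $t+1$. Your perturbation argument for the same monotonicity (form $z=\alpha g^{(t)}+\eta g'$, decompose conformally, use the mediant inequality) is a genuinely different route and is essentially sound for establishing $\rho(x^{(t+1)})\le \rho(x^{(t)})$; it has the advantage of staying primal and the disadvantage of needing the case analysis you flag.

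Where your sketch has a real gap is in the counting for part~\ref{i:steepest}. To get $\ell_A$ as an iteration bound you would need the steepest ratio to \emph{strictly} decrease at every step, but your argument (like the paper's LP-dual argument) only yields non-increase; in the equality case you extract that $g^{(t+1)}$ is conformal to $g^{(t)}$ and vanishes on the newly tight coordinate. That is not enough: conformality is not transitive, so your claim that a constant-ratio block consists of ``mutually conformal directions'' does not follow, and even if it did, the inference ``no circuit recurs, hence the block is short, hence the total is $\min\{n|\circuits_A|,\ell_A\}$'' is not a valid way to obtain a \emph{minimum}---you would at best get a product of the two quantities. (A simple min-cost-flow instance with two vertex-disjoint negative cycles of identical mean already shows two consecutive iterations with the same ratio, so strict decrease cannot be derived from monotonicity alone.) Whatever \cite{DHL15} actually proves here, your outline does not yet supply the missing combinatorial ingredient that turns non-increase plus the equality-case conformality into the stated bound.

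Parts~\ref{i:deepest} and~\ref{i:dantzig}: your treatment of \ref{i:deepest} is the standard one and is correct. For \ref{i:dantzig} you have correctly identified that the integer normalisation kills the step-length lower bound and that one must instead extract an absolute per-step decrease; you have not actually done this, and your proposed estimate ``$\Omega(\varepsilon/(n^2\gamma))$ while $\Phi_t>\varepsilon$'' does not follow from the inequality $-\pr{c}{g^{(t)}}\ge \Phi_t/(n\gamma)$ alone, since the maximal step along $g^{(t)}$ can be arbitrarily small. This is the substantive missing piece in \ref{i:dantzig}.
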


In general, circuit augmentation algorithms may not even finitely terminate; see \cite{mccormick-shioura-not-strongly} for an example on Wallacher's rule for minimum cost flows.
In parts \ref{i:deepest} and \ref{i:dantzig}, assume that all basic solutions are $1/k$-integral for some $k\in \mathbb{Z}$ and cost function is $c\in \Z^n$. If $x^{(t)}$ is a $\varepsilon$-optimal solution for $\varepsilon<1/k$, then we can identify an optimal vertex of the face containing $x^{(t)}$ using 
a Carath\'eodory decomposition argument,
this can be implemented by a sequence of $\le n$ circuit augmentations (see \cite[Lemma 5]{DHL15}).

According to part \ref{i:steepest}, steepest descent terminates with  an optimal solution in a finite number of iterations; moreover,  the bound only depends on the linear space $\ker(A)$ and $c$, and not on the parameters $b$ and $u$. However, the bound can be exponentially large.

Bland's original observation was that $\ell_A$ is strongly polynomially bounded for the maximum flow problem. Recall that all elementary vectors $g$ correspond to cycles in the auxiliary graph. Normalizing such that $g_i\in \{0,\pm 1\}$,  $-\pr{c}{g}=1$ for every augmenting cycle (as these must use the $(t,s)$ arc), and $\|g\|_1$ is between 1 and $|E|$. In fact, the crucial argument by Edmonds and Karp \cite{Edmonds72} and Dinic \cite{dinic1970algorithm}  is showing that the length of the shortest augmenting path is non-decreasing, and must strictly increase within $|E|$ consecutive iterations.

For an integer cost function $c\in \Z^n$, Lee~\cite[Proposition 3.2]{Lee89}  gave the following upper bound on $\ell_A$:
\begin{prop}\label{prop:l-a} If $\|c\|_1\le (n-m+1)\|c\|_\infty$, then 
\[
\ell_A\le\frac12 {\|c\|_\infty} (n-m+1)\bar\Le_A((n-m+1)\bar\Le_A +1)\, .
\]
\end{prop}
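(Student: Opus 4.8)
The plan is a direct counting argument over elementary vectors; no deep ingredient is needed. Since $\pr{c}{g}/\|g\|_1$ is invariant under positive scaling of $g$ and only changes sign when $g$ is replaced by $-g$, I would fix for every circuit $C\in\circuits_A$ the primitive integer representative $g^C\in\Z^n$ (entries with greatest common divisor $1$) and count the distinct values of $|\pr{c}{g^C}|/\|g^C\|_1$ over circuits $C$ with $\pr{c}{g^C}\neq 0$. The two inputs are: every elementary vector satisfies $|\supp(g^C)|\le n-m+1$; and $\|g^C\|_\infty\le\bar\Le_A$ by the definition of $\bar\Le_A$. Note also that $c\in\Z^n$ forces $\pr{c}{g^C}\in\Z$, while $\|g^C\|_1$ is a positive integer.

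Put $K:=(n-m+1)\bar\Le_A$. For a circuit $C$ the denominator $q_C:=\|g^C\|_1$ is a positive integer with
\[
q_C\ \le\ |\supp(g^C)|\cdot\|g^C\|_\infty\ \le\ (n-m+1)\bar\Le_A\ =\ K ,
\]
and the numerator $p_C:=|\pr{c}{g^C}|$ is a nonnegative integer with $p_C\le\|c\|_\infty\|g^C\|_1=\|c\|_\infty q_C$; the hypothesis $\|c\|_1\le(n-m+1)\|c\|_\infty$ yields the complementary bound $p_C\le\|c\|_1\|g^C\|_\infty\le\|c\|_\infty K$, so in particular $p_C/q_C\le\|c\|_\infty$. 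Consequently every value $|\pr{c}{g^C}|/\|g^C\|_1$ arising from a circuit with $\pr{c}{g^C}\neq 0$ lies in the finite set
\[
\Big\{\,\tfrac{p}{q}\ :\ q\in\{1,\dots,K\},\ p\in\{1,\dots,\|c\|_\infty q\}\,\Big\},
\]
whose cardinality is at most
\[
\sum_{q=1}^{K}\|c\|_\infty q\ =\ \|c\|_\infty\,\frac{K(K+1)}{2}\ =\ \tfrac12\,\|c\|_\infty\,(n-m+1)\bar\Le_A\big((n-m+1)\bar\Le_A+1\big),
\]
which is exactly the asserted bound on $\ell_A$.

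I do not expect a genuine obstacle: the whole argument is the two elementary estimates above plus an arithmetic-series sum. The two points I would be careful about are (i) the normalization, so that the constant $\tfrac12$ comes out right — I am counting distinct ratios over augmenting directions only, which is all the steepest-descent rule ever uses; allowing both signs and the value $0$ changes the count by at most a factor $2$ and an additive constant and does not affect Theorem~\ref{thm:DLHL}(c) — and (ii) the role of the circuit-size bound $|\supp(g^C)|\le n-m+1$, which is precisely what makes the denominator bounded and hence the set of attainable ratios finite; without it the ratios could accumulate. The cost hypothesis $\|c\|_1\le(n-m+1)\|c\|_\infty$ enters only to bound the numerator uniformly (equivalently, to keep $p_C/q_C\le\|c\|_\infty$); it is a mild sparsity-type condition, satisfied for instance whenever $c$ has at most $n-m+1$ nonzero coordinates.
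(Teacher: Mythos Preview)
The paper does not give its own proof of this proposition; it is quoted from Lee~\cite[Proposition~3.2]{Lee89} without argument. Your counting argument is precisely the natural one and is essentially what Lee does.

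Two points are worth flagging. First, you assert $|\supp(g^C)|\le n-m+1$, but in the paper's conventions $A\in\R^{m\times n}$ has full row rank and $\EE(A)=\EE(\ker A)$; circuits of the column matroid then have size at most $m+1$, not $n-m+1$ (any $m+1$ columns of $A$ are dependent). Your argument with the corrected bound yields $\tfrac12\|c\|_\infty(m+1)\bar\kappa_A((m+1)\bar\kappa_A+1)$. The appearance of $n-m+1$ in the statement here is almost certainly an artifact of transcribing Lee's dimension conventions; you should note the discrepancy rather than adopt the incorrect support bound.

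Second, once the denominator bound $q_C\le K$ is in place, H\"older already gives $p_C\le\|c\|_\infty q_C$, which is exactly the per-$q$ bound used in your sum $\sum_{q=1}^K\|c\|_\infty q$; the hypothesis $\|c\|_1\le(n-m+1)\|c\|_\infty$ is never invoked in your count. Your closing remark that the hypothesis is what keeps $p_C/q_C\le\|c\|_\infty$ is therefore misplaced --- that inequality is unconditional. The hypothesis only enters if one bounds the numerator via $p_C\le\|c\|_1\|g^C\|_\infty$ instead, which is the route Lee takes under his conventions.
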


 In order to bound the circuit distance between vertices $x$ and $y$ let us use the following cost function. For the basis $B$ defining $y$, let 
\begin{equation}
\label{eq:c-u}
c_i=\begin{cases}0&\mbox{if }i\in B\, ,\\
1& \mbox{if }i\in [n]\setminus B, y_i=0\, ,\\
-1&\mbox{if }i\in [n]\setminus B, y_i=u_i\, .
\end{cases}
\end{equation}
With this cost function, Theorem~\ref{thm:DLHL}\ref{i:steepest} and Proposition~\ref{prop:l-a} yield  a bound $O((n-m)^2 \bar\Le_A^2)$ on the circuit diameter using the steepest descent algorithm.

Extending the analysis of the Goldberg-Tarjan algorithm \cite{Goldberg89}, we present a new bound that only depends on the fractional circuit imbalance $\kappa_A$, and is independent of $c$.
The same bound was independently obtained by Gauthier and Derosiers \cite{Gauthier2021}. The proof is given in Section~\ref{sec:steepest-improved}.

\begin{theorem}\label{thm:steepest-main} For the problem \ref{LP-augment} with constraint matrix $A\in\R^{m\times n}$, the steepest-descent algorithm terminates within $O(n^2m\kappa_A \log(\kappa_A+n))$ augmentations starting from any feasible solution $x^{(0)}$.
\end{theorem}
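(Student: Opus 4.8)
The strategy is to imitate the analysis of the Goldberg--Tarjan minimum-mean-cycle-cancelling algorithm, replacing the combinatorial reasoning about residual cycles in a network with reasoning about elementary vectors and the parameter $\kappa_A$. Fix an optimal solution $x^*$ to \ref{LP-augment}. The steepest-descent rule picks the elementary vector $g$ minimizing $\pr{c}{g}/\|g\|_1$ among augmenting directions, i.e. it chooses the most negative \emph{mean-cost} circuit; call this value $-\mu^{(t)}$ at iteration $t$, so $\mu^{(t)}\ge 0$ and $\mu^{(t)}=0$ iff $x^{(t)}$ is optimal. The plan has three ingredients: (i) show $\mu^{(t)}$ is monotone non-increasing along the run; (ii) show that $\mu^{(t)}$ controls the optimality gap $\pr{c}{x^{(t)}}-\OPT$, so that geometric decrease of $\mu^{(t)}$ gives $O(\text{something}\cdot\log(\kappa_A+n))$ iterations to reach near-optimality; and (iii) handle finite termination / the "near-optimal $\Rightarrow$ optimal" step.

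\textbf{Step 1: $\mu^{(t)}$ bounds the gap.} Take a conformal circuit decomposition of $x^{(t)}-x^*=\sum_{k=1}^h g^k$ as in Lemma~\ref{lem:sign-cons}; each $g^k$ is (the negative of) an augmenting direction for $x^{(t)}$ because it is conformal to $x^{(t)}-x^*$ and $x^*$ is feasible. Hence $-\pr{c}{-g^k}/\|g^k\|_1\ge -\mu^{(t)}$, i.e. $\pr{c}{g^k}\le \mu^{(t)}\|g^k\|_1$, and summing, using conformality to get $\sum_k\|g^k\|_1=\|x^{(t)}-x^*\|_1$,
\[
\pr{c}{x^{(t)}}-\OPT=\pr{c}{x^{(t)}-x^*}=\sum_{k=1}^h\pr{c}{g^k}\le \mu^{(t)}\|x^{(t)}-x^*\|_1 .
\]
The quantity $\|x^{(t)}-x^*\|_1$ is at most roughly $n$ times the largest coordinate range, which I will bound via a Hoffman-type proximity estimate (Lemma~\ref{lem:feas} / the upper-bounded variant behind Theorem~\ref{thm:primal-fixing-upper}); this introduces a factor $\poly(n)\kappa_A$ but keeps the dependence on $b,u$ only through $x^{(0)}$, which is acceptable since the final statement counts augmentations, not bit-complexity. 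Conversely a single steepest step starting at $x^{(t)}$ decreases the objective by at least $\alpha_t\mu^{(t)}\|g^{(t)}\|_1$ for the maximal step $\alpha_t$, which is $\ge$ some fixed fraction of $\mu^{(t)}$ after a bounded number of steps.

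\textbf{Step 2: geometric decrease of $\mu^{(t)}$ in phases.} Following Goldberg--Tarjan, partition the run into phases within which $\mu^{(t)}$ stays within a factor $2$ of its value at the start of the phase. Using monotonicity (Step~1's gap bound combined with the per-step decrease), I argue that after $O(nm\kappa_A)$ — alternatively $O(n\cdot(\text{number of circuits that can be "fixed"}))$ — augmentations the value $\mu^{(t)}$ drops by a constant factor. The mechanism, exactly as in min-mean-cycle cancelling: once $\mu^{(t)}$ is small, a coordinate that is "$\mu$-tight" with the wrong sign cannot change back, so a new coordinate gets permanently pinned to its bound every $O(n)$ steps, and there are at most $O(m)$ directions worth of slack governed by $\kappa_A$ via Proposition~\ref{prop:kappa-max}. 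Since the total decrease of $\mu$ needed is from its initial value down to below $1/(\text{final precision})$, and both are controlled by $\kappa_A$ and $n$, we get $O(\log(\kappa_A+n))$ phases, each of $O(n^2m\kappa_A)$ (resp.\ $O(n m\kappa_A)$) augmentations; a careful accounting gives the stated $O(n^2m\kappa_A\log(\kappa_A+n))$.

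\textbf{Step 3: exact termination.} By Theorem~\ref{thm:DLHL}\ref{i:steepest}, steepest descent terminates finitely; alternatively, once $\mu^{(t)}$ is smaller than the reciprocal of $\dot\kappa_A\cdot\Delta$ for the relevant denominator $\Delta$, the face reached is the optimal face, and $\le n$ further augmentations (a Carath\'eodory-type argument as in \cite{DHL15}) reach a vertex. The \textbf{main obstacle} I anticipate is Step~2: transplanting the "a coordinate gets permanently fixed every $O(n)$ iterations" argument from the network setting — where it rests on monotonicity of shortest residual path lengths — to the general LP setting. The right substitute is a potential/reduced-cost argument: define node potentials implicitly via the tight elementary vectors, show the induced "reduced costs" of coordinates are eventually sign-stable, and use $\kappa_A$ (through the key basis-form Proposition~\ref{prop:kappa-max} and the proximity Lemma~\ref{lem:feas}) to quantify how many coordinates can still flip. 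Getting the exponent of $n$ and the single factor of $m$ and $\kappa_A$ right — rather than $\kappa_A^2$ as in the older Proposition~\ref{prop:l-a} bound — is where the real work lies.
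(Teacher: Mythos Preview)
Your high-level skeleton (Goldberg--Tarjan analogy, a monotone ``mean cost'' parameter $\mu^{(t)}$, geometric decrease in phases, variables getting permanently pinned) matches the paper's route, and you correctly anticipate that the crux is a reduced-cost/potential argument. But two of your concrete steps point in the wrong direction and would not yield the stated bound.

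\textbf{Step 1 is a red herring.} Bounding the optimality gap by $\mu^{(t)}\|x^{(t)}-x^*\|_1$ and then controlling $\|x^{(t)}-x^*\|_1$ via Hoffman proximity necessarily drags in $b,u$ (and $x^{(0)}$); the theorem's bound is independent of all of these, so this route cannot close. The paper never touches the primal optimality gap. Instead it writes the steepest-descent direction as a tiny LP (minimize $\pr{\bar c}{z}$ over $\bar Az=0$, $\pr{\1}{z}=1$, $z\ge 0$ on the residual indices) and works with its \emph{dual} value $\varepsilon(x)=\mu^{(t)}$ together with a dual optimal $y$. Monotonicity of $\varepsilon$ follows because the old dual $(y,\varepsilon)$ stays feasible after the step (your ``potentials'' intuition is exactly this $y$). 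The geometric decrease is then purely structural: after at most $n$ iterations the chosen elementary vector $z$ must hit an index $j$ outside the ``tight set'' $T=\{i:\pr{\bar a_i}{y}>\bar c_i\}$, and since $z$ is elementary with $|\supp(z)|\le m$ and entry ratios bounded by $\kappa_A$, one gets $z_j\ge 1/(1+(m-1)\kappa_A)$, hence $\varepsilon$ drops by that factor every $n$ steps. This is where the single $m\kappa_A$ factor comes from --- not from Proposition~\ref{prop:kappa-max} applied to basis forms.

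\textbf{The freezing argument uses proximity on the \emph{dual}, not on $x^{(t)}$ vs.\ $x^*$.} Once $\varepsilon$ has shrunk by a factor $\sim 2n(\kappa_A+1)$ between iterations $t$ and $k$, the paper compares the two dual solutions $y',y''$: it manufactures perturbed costs $c',c''$ (with $\|c'-c\|_\infty\le\varepsilon'$, $\|c''-c\|_\infty\le\varepsilon''$) for which $(x^{(k)},y')$ and $(x^{(k'')},y'')$ are exactly optimal, and then applies the cost-proximity Theorem~\ref{thm:primal-fixing-upper} with $\|c'-c''\|_1\le 2n\varepsilon'$. A pigeonhole on $\supp(z^{(t)})$ produces an index whose reduced cost at $y'$ exceeds $(\kappa_A+1)\|c'-c''\|_1$, so that index is frozen in every later primal iterate. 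This gives one frozen index per $O(nm\kappa_A\log(\kappa_A+n))$ steps, and $2n$ indices total --- hence the $n^2$ in the final bound. Your Step~2 blurs the two mechanisms (geometric decrease vs.\ freezing) and places the Hoffman estimate on the wrong side; disentangling them as above is what gets you $\kappa_A$ rather than $\kappa_A^2$ and removes all $b,u,c$ dependence.
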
 
This improves on the above bound $O((n-m)^2 \bar\Le_A^2)$ for most values of the parameters (recall that $\kappa_A\le\bar\kappa_A^2$). Moreover, this bounds the running time for steepest descent for an arbitrary cost function $c$, not necessarily of the form \eqref{eq:c-u}.

Both these bounds are independent of $b$, however, $\kappa_A$ and $\bar\kappa_A$ may be exponentially large in the encoding length $L_A$ of the matrix $A$. In contrast, Theorem~\ref{thm:DLHL}\ref{i:deepest} yields a polynomial bound $O(n L_{A,b})$ on the number of deepest-descent iterations, where $L_{A,b}$ is the encoding length of $(A,b)$. In what follows, we review a new circuit augmentation algorithm from \cite{dadush2021circuit} that achieves a $\log\kappa_A$ dependence; the running time is bounded as $O(n^3 L_A)$, independently from $b$.

\subsection{A circuit augmentation algorithm with $\log\kappa_A$ dependence}\label{sec:log-kappa}
Recall that the diameter bound Theorem~\ref{thm:diam-augment} is non-algorithmic in the sense that the augmentation steps rely on knowing the optimal solution $x^*$.
Dadush et al.~\cite{dadush2021circuit} complemented this with an efficient circuit augmentation algorithm, assuming oracles are provided for certain circuit directions.

\begin{theorem}[\cite{dadush2021circuit}]\label{thm:opt-augment}
    Consider the primal of \ref{LP_primal_dual}. Given a feasible solution, there exists a circuit augmentation algorithm that finds an optimal solution or concludes unboundedness using $O(n^3\log(n+\kappa_A))$ circuit augmentations.
\end{theorem}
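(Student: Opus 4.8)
The plan is to turn the \emph{non-algorithmic} argument behind Theorem~\ref{thm:diam-augment} into an algorithm by removing the two places where that proof uses the unknown optimum $x^*$: the conformal circuit decomposition of $x^*-x^{(t)}$, and the choice of the piece carrying the most $\ell_1$-mass on the coordinates outside the optimal basis. Both are replaced by a \emph{circuit oracle} — the ``certain circuit directions'' of the statement — that at the current feasible point $x^{(t)}$ returns an augmenting circuit $g\in\EE(A)$ approximately optimizing a minimum-ratio (Wallacher-type) objective, e.g.\ minimizing $\pr{c}{g}/\sum_{i}|g_i|/x^{(t)}_i$, after which one takes the maximal augmentation along $g$. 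The point of this rule is that it can be analyzed against the unknown optimum without looking at it: writing a conformal circuit decomposition $x^*-x^{(t)}=\sum_k h^k$ (Lemma~\ref{lem:sign-cons}), the ratio attained by the oracle is no larger than that of the best piece $h^k$, and by conformality this is controlled by the current duality gap $\pr{c}{x^{(t)}}-\OPT$ together with how close $x^{(t)}$ already is to $x^*$. Combined with a lower bound on the progress forced by \emph{maximality} of the step (the step overshoots the ideal length by at most a factor $n$, exactly as in the diameter proof), this should yield that a scale-invariant potential — the gap, or $\|x^{(t)}_N\|_1$ measured relative to $x^*$ — drops by a factor $1-1/\poly(n)$ per iteration, with no factor of $\kappa_A$ hidden in the rate.

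With this inner loop in hand I would run the bookkeeping of Theorem~\ref{thm:diam-augment}: fix a basic optimal $x^*$ with basis $B$, set $N=[n]\setminus B$, and track the monotone index sets $L_t=\{i:\,x^*_i>n\kappa_A\|x^{(t)}_N\|_1\}$ and $R_t=\{i:\,x^{(t)}_i\le n x^*_i\}$. The geometric decrease above shows that, as long as the algorithm has not terminated, a new index joins $L_t\cup R_t$ within $O(\poly(n)\log(n+\kappa_A))$ iterations; since $|L_t|+|R_t|$ is bounded, after $O(\poly(n)\log(n+\kappa_A))$ iterations in total the Hoffman-proximity bounds of Section~\ref{sec:hoffman} — concretely Lemma~\ref{lem:primal_prox} and Theorem~\ref{cor:dual-fix-weak} — applied to the current (near-optimal) iterate certify a coordinate $i$ that is $0$ in \emph{every} primal optimum, or whose dual slack is $0$ in every dual optimum. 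One then deletes that column (resp.\ keeps it in the basis), recurses on the smaller instance — where $\kappa_A$ does not grow, by Proposition~\ref{prop:proj-fix} — and after at most $n$ such rounds the problem is solved. Summing $O(n)$ rounds of $O(n^2\log(n+\kappa_A))$ augmentations each gives the stated $O(n^3\log(n+\kappa_A))$ bound, and unboundedness is reported as soon as the oracle returns an augmenting circuit admitting an unbounded step size. Since the statement concerns the primal of \ref{LP_primal_dual}, there are no upper-bound constraints to handle.

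I expect the crux to be the convergence analysis of the inner loop: showing that the cheap, $x^*$-oblivious min-ratio oracle, which is only a $1/\poly(n)$-approximation to the idealized ``largest conformal piece'' direction, nonetheless drives the relevant potential down at a rate bounded away from $1$ by $1/\poly(n)$, \emph{without} the rate degrading by a factor of $\kappa_A$ — precisely what separates this result from the plain steepest-descent bound of Theorem~\ref{thm:steepest-main}. This is where the choice of the weights $1/x^{(t)}_i$ is essential and where one must carefully bound the overshoot of the maximal step. The remaining points — translating between the subspace/equality form in which the proximity lemmas are phrased and the standard primal form, and verifying that the per-round iteration count and the proximity threshold mesh so that at least one new variable is fixed per round — are routine by comparison.
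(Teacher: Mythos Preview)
Your high-level plan matches the paper's: the core step is indeed the Wallacher-type \textsc{Ratio-Circuit} direction with weights $w_i=1/x_i^{(t)}$, and the paper confirms the $(1-1/n)$ gap decrease you are banking on (this is stated as a separate lemma in the write-up). The outer loop — fix one variable via proximity, recurse at most $n$ times — is also the right picture.

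However, there is a genuine gap in your inner loop. The paper explicitly notes, citing \cite{mccormick-shioura-not-strongly}, that repeated \textsc{Ratio-Circuit} alone \emph{does not even finitely terminate} already for minimum-cost flows. So a pure min-ratio inner loop cannot drive the iterate to a point where the proximity theorems fire; geometric decay of the optimality gap is not enough. The obstruction is illustrated in the paper: take $c\equiv \1$ and a starting point with $\|x\|\gg \kappa_A\|y\|$ for some basic feasible $y$. Then \textsc{Ratio-Circuit} can keep shrinking $\|x\|$ — the gap drops by $(1-1/n)$ each time — for an unbounded number of steps without singling out any coordinate to freeze. Your $L_t$/$R_t$ bookkeeping does not rescue this: those sets are defined via $\|x^{(t)}_N\|_1$, which for general $c$ is \emph{not} the potential your oracle decreases (in the diameter proof the two coincide only because the cost was chosen to be $(0_B,\1_N)$).

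The paper's fix involves two ingredients you are missing. First, a \textsc{Support-Circuit} step is interleaved with \textsc{Ratio-Circuit}: whenever the current iterate is non-basic, at most $n$ support-reducing circuit moves bring it to a vertex, after which Proposition~\ref{prop:kappa-max} bounds how far any future iterate can drift (within a $\mathrm{poly}(n)\kappa_A$ factor in norm). Second, and more subtly, the \textsc{Ratio-Circuit} calls are made with a \emph{perturbed} objective $c'$, not $c$; the perturbation is updated each time a new ``large'' variable is identified. This is precisely what lets the proximity statement fire: Theorem~\ref{cor:dual-fix-weak} needs an \emph{optimal} pair for some nearby cost, not merely a near-optimal primal point for the original cost, and the perturbation manufactures such a pair. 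Without these two pieces your proposed analysis breaks at the sentence ``this should yield that a scale-invariant potential \ldots\ drops by a factor $1-1/\poly(n)$ per iteration'' — the gap does, but that is the wrong potential for the variable-fixing argument.
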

	The main circuit augmentation direction used in the paper for optimization is a step defined as \textsc{Ratio-Circuit}, a generalisation of the previously mentioned augmentation step by Wallacher \cite{Wallacher} for minimum cost flows.
It finds a circuit that is a basic optimal solution to the following linear system:
\begin{equation}
    \label{sys:minratio-i}
    \min \; \pr{c}{z}\, \quad \mathrm{s.t.}\quad
    Az =0\, , \, 
    \pr{w}{z^-}  \leq 1\, .
\end{equation}
Equivalently, the goal is to minimize the cost to weight ratio of a circuit, where the unit weight of increasing a variable $x_i$ is $0$ and decreasing it is $w_i$. This can be seen as an efficiently implementable relaxation of the  deepest-descent direction: for suitable weights, it achieves a geometric decrease in the objective value. For a vector $x\in\R^n_+$, we let $1/x\in(\R\cup\{\infty\})^n$ denote the vector $w$ with $w_i=1/x_i$ (in particular, $w_i=\infty$ if $x_i=0$).

\begin{lemma}[{\cite{mccormick-shioura-not-strongly}}]
    Let $\OPT$ denote the optimum value of \ref{LP_primal_dual}.
    Given a feasible solution $x$ to \ref{LP_primal_dual}, let 
        $g$ be the elementary vector returned by {\sc Ratio-Circuit}$(A,c,1/x)$, and $x'$ the next iterate. Then,
    \[\pr{c}{x'}-\OPT\le \left(1-\frac{1}{n}\right)\left(\pr{c}{x}-\OPT\right)\, .\]
\end{lemma}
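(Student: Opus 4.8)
The plan is to show that the {\sc Ratio-Circuit} step, when invoked with weights $w=1/x$ at a feasible point $x$, produces an augmenting direction whose deepest-descent value is at least a $1/n$ fraction of the current optimality gap; the geometric convergence then follows immediately. Write $g$ for the elementary vector returned by {\sc Ratio-Circuit}$(A,c,1/x)$, so $g$ is a basic optimal solution of \eqref{sys:minratio-i} for $w=1/x$, and let $\alpha>0$ be the maximal step length with $x+\alpha g\ge 0$ (and $x+\alpha g\le u$ if upper bounds are present; here the statement is for \ref{LP_primal_dual}, so only $x+\alpha g\ge 0$ matters). The next iterate is $x'=x+\alpha g$, and $\pr{c}{x'}-\pr{c}{x}=\alpha\pr{c}{g}$, so it suffices to prove $\alpha\pr{c}{g}\le -\tfrac1n(\pr{c}{x}-\OPT)$.

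The key step is to exhibit a single \emph{feasible} competitor for \eqref{sys:minratio-i} built from an optimal solution $x^*$. Set $z:=x^*-x$; then $Az=b-b=0$. For each $i$, the negative part satisfies $z_i^-=(x_i-x^*_i)^+\le x_i$ since $x^*_i\ge 0$, hence $\pr{w}{z^-}=\sum_i z_i^-/x_i\le \sum_i 1=n$; thus $z/n$ is feasible for \eqref{sys:minratio-i}. Its objective is $\pr{c}{z/n}=\tfrac1n(\pr{c}{x^*}-\pr{c}{x})=-\tfrac1n(\pr{c}{x}-\OPT)$. By optimality of $g$ we get $\pr{c}{g}\le -\tfrac1n(\pr{c}{x}-\OPT)$, which is already slightly stronger than what we need on the ``direction'' side; the real work is to convert this into a bound on $\alpha\pr{c}{g}$.

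For that I would argue about the step length. The maximality of $\alpha$ means some coordinate hits $0$: there is $i$ with $g_i<0$ and $x_i+\alpha g_i=0$, i.e.\ $\alpha=x_i/(-g_i)$. Since $g$ is feasible for \eqref{sys:minratio-i}, $\sum_j g_j^-/x_j\le 1$, and in particular $(-g_i)/x_i\le 1$, so $\alpha=x_i/(-g_i)\ge 1$. Combining $\alpha\ge 1$ with $\pr{c}{g}\le 0$ (which holds because $z/n$ has nonpositive objective, so the optimum $g$ does too — if the gap is positive the objective is strictly negative) yields $\alpha\pr{c}{g}\le \pr{c}{g}\le -\tfrac1n(\pr{c}{x}-\OPT)$. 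Therefore $\pr{c}{x'}-\OPT=\pr{c}{x}-\OPT+\alpha\pr{c}{g}\le (1-\tfrac1n)(\pr{c}{x}-\OPT)$, which is the claim.

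The main obstacle is the $\alpha\ge 1$ argument: one has to be careful that {\sc Ratio-Circuit} returns a vector honestly normalized so that $\pr{w}{g^-}\le 1$ (a \emph{basic optimal} solution of \eqref{sys:minratio-i} automatically satisfies the constraint, and when the optimum objective is negative the constraint $\pr{w}{z^-}\le 1$ is tight at optimality, giving $\pr{w}{g^-}=1$), and that degenerate situations — $g_i=0$ at the blocking index, or $x_i=0$ for some $i\in\supp(g^-)$, which would force $w_i=\infty$ and hence $g_i\ge 0$ — do not break the bound $(-g_i)/x_i\le 1$. A secondary point to handle cleanly is the boundary case $\pr{c}{x}=\OPT$, where the gap is zero, the optimum of \eqref{sys:minratio-i} is $0$, and the inequality holds trivially. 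I would also remark that the same proof goes through verbatim for \ref{LP-augment} with upper bounds, since the extra blocking constraints $x_i+\alpha g_i\le u_i$ only decrease $\alpha$ relative to the $\ge 1$ lower bound derived from the $x\ge 0$ side when weights are $1/x$; for the version with upper bounds one uses weights adapted to both bounds, but for the statement as written this is not needed.
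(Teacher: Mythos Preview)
Your proof is correct and follows the same line as the paper's: construct the competitor $z=(x^*-x)/n$ feasible for \eqref{sys:minratio-i}, use optimality of $g$ to get $\pr{c}{g}\le -\tfrac1n(\pr{c}{x}-\OPT)$, and use the constraint $\pr{1/x}{g^-}\le 1$ to conclude that the step length is at least $1$. The paper phrases the last step simply as ``$x+g$ is feasible'', while you unpack it into the explicit $\alpha\ge 1$ bound and also treat the degenerate cases; the arguments are the same in substance.
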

\begin{proof}
Let $x^*$ be an optimal solution to \ref{LP_primal_dual}, and let $z=(x^*-x)/n$. Then, $z$ is feasible to 
\eqref{sys:minratio-i} for $w=1/x$. The claim easily follows by noting that $\pr{c}{g}\le \pr{c}{z}=(\OPT-\pr{c}{x})/n$, and noting that $x+g$ is feasible since  $\pr{1/x}{g^-}\le 1$.
\end{proof}

Repeated application of a \textsc{Ratio-Circuit} step thus provide an iterate whose optimality gap decreases by a constant factor within $O(n)$ iterations. However, as noted in \cite{mccormick-shioura-not-strongly}, using only this rule does not even finitely terminate already for minimum cost flows.

\paragraph{Support Circuits}
For this reason,
\cite{dadush2021circuit} occasionally uses a second
circuit augmentation step called {\sc Support-Circuit}. Roughly speaking, when given a non-basic feasible point in the system \ref{LP_primal_dual}, one can efficiently augment around a circuit $g$ such that $\pr{c}{g} \le 0$ and thereby reduce its support while maintaining the objective. 

On a high level, the need for such an operation becomes clear when considering following example. Assume $c \equiv 1$ and further assume that we are given an iterate $x$ with $\|x\| \gg \kappa_A \|y\|$ for some basic solution $y$. Then geometric progress in the objective $\pr{c}{x}$ can be achieved by just reducing the norm of $x$, but just geometric progress would not give the desired bound in the number of circuit augmentations. Note that the norm of all basic solutions lies within a factor of $\operatorname{poly}(n) \kappa_A\|y\|$ by Proposition~\ref{prop:kappa-max}. Therefore, it is helpful to reduce the support through at most $n$ \textsc{Support-Circuit} operations until a basic solution is reached instead of applying \textsc{Ratio-Circuit}. Subsequent applications of \textsc{Ratio-Circuit} will now, again due to Proposition~\ref{prop:kappa-max}, not be able to reduce the norm of $x$ by more than a factor of $\operatorname{poly}(n)\kappa_A$, a fact that will be exploited in the proximity arguments. 

\medskip
The main progress in the algorithm in Theorem~\ref{thm:opt-augment} is identifying a new index $i$  such that  $x_i=0$ in the current solution and $x_i^* = 0$ in any optimal solution $x^*$. Such a conclusion derives using a variant of the proximity theorem Theorem~\ref{thm:primal-fixing-upper}.
To implement the main subroutine that fixes a variable to $x_i=0$, a careful combination of \textsc{Ratio-Circuit}  and \textsc{Support-Circuit}  iterations is used.
Interestingly, the \textsc{Ratio-Circuit} iterations do not use the original cost function $c$, but a perturbed objective function $c'$. The main progress in the subroutine is identifying new `large' variables, similarly to the proof of Theorem~\ref{thm:diam-augment}. Perturbations are performed whenever a new large variable $x_j$ is identified.

\subsection{An improved bound for steepest-descent augmentation}
\label{sec:steepest-improved}
We now prove Theorem~\ref{thm:steepest-main}.
The proof follows the same lines as that of the Goldberg--Tarjan algorithm; see also \cite[Section 10.5]{amo} for the analysis. A factor $\log n$ improvement over the original bound was given in \cite{Radzik-Goldberg}. A key property in the original analysis is that for a flow around a cycle (i.e., an elementary vector), every edge carries at least $1/|V|$ fraction of the $\ell_1$-norm of the flow. This can be naturally replaced by the argument that for every elementary flow $g$, the minimum nonzero value of $|g_i|$ is at least $\|g\|_1/(1+(m-1)\kappa_A)$. 

The Goldberg--Tarjan algorithm has been generalized to separable convex minimization with linear constraints by Karzanov and McCormick \cite{Karzanov97}. Instead of $\Le_W$, they use the maximum entry in a Graver basis (see Section~\ref{sec:graver} below). Lemma 10.1 in their paper proves a weakly polynomial bound similar to Lemma~\ref{lem:weakly-poly} for the separable convex setting. However, no strongly polynomial analysis is given (which is in general not possible for the nonlinear setting).

\medskip

Our arguments will be based on the dual of \ref{LP-augment}:
\begin{equation}
\label{LP-augment-dual}
  \begin{aligned}
  \max \;  \pr{y}{b}-&\pr{u}{t} \\
  A^\top y + s-t &= c \\
  s,t & \geq 0\,. \\
  \end{aligned}
\end{equation}
Recall the primal-dual slackness conditions from Section~\ref{sec:hoffman}: if  $x$ is feasible to \ref{LP-augment} and $y\in\R^m$, they  are primal and dual optimal solutions if and only if $\pr{a_i}{y}\le c_i$ if $x_i<u_i$ and $\pr{a_i}{y}\ge c_i$ if $x_i>0$.

Let us start by formulating the steepest-descent direction as an LP. 
Let ${\bar A}=(A|-A)\in \R^{m\times (2n)}$ and $\bar c=\binom{c}{-c}\in \R^{2n}$. Clearly, $\Le_{\bar A}=\Le_A$.
For a feasible solution $x=x^{(t)}$ to \ref{LP-augment}, we define 
\emph{residual variable set} 
\[
N(x)=\{i\in [n]: x_i<u_i\}\cup \{n+j: j\in [n]: x_j>0\}\subseteq [2n]\, ,
\]
and consider  the system
\begin{equation}
\label{eq:minratio-primal}
\begin{aligned}
\min \; & \pr{\bar c}{z} \\ 
\bar A z&=0 \\
\pr{\1_{2n}}{z}&=1\\
z_{[2n] \setminus N(x)} &= 0\\
z&\ge 0\,.\\
\end{aligned}
\end{equation}
We can map a solution $z\in \R^{2n}$ to $g\in \R^n$ by setting $g_i=z_i-z_{n+i}$.
We will assume that $z$ is chosen as a basic optimal solution.
Observe that every basic feasible solution to this program maps to an elementary vector in $\ker(\bar A)$.
The dual program can be equivalently written as 
\begin{equation}
\label{eq:minratio-dual}
\begin{aligned}
\min \; & \varepsilon \\ 
\pr{\bar a_i}{y}&\le \bar c_i +\varepsilon \quad\forall i\in N(x)\,.
\end{aligned}
\end{equation}

For the solution $x$, we let $\varepsilon(x)$ denote the optimal solution to this dual problem; thus, the optimal solution to the primal is $-\varepsilon(x)$. 
If $\varepsilon=0$, then $x$ and $y$ are complementary primal and dual optimal solutions to \ref{LP-augment}. We first show that this quantity is monotone (a key step also in the analysis in \cite{DHL15}).

\begin{lemma}\label{lem:epsilon-mon} At every iteration of the circuit augmentation algorithm, $\varepsilon(x^{(t+1)})\le \varepsilon(x^{(t)})$.
\end{lemma}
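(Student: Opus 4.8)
The plan is to show that the optimal value of the dual program \eqref{eq:minratio-dual} cannot increase after one circuit augmentation step. Fix an iteration with current iterate $x = x^{(t)}$ and next iterate $x' = x^{(t+1)} = x + \alpha g$, where $g$ is the steepest-descent direction and $\alpha > 0$ is the maximal feasible step size. Let $y$ be an optimal dual solution for \eqref{eq:minratio-dual} at $x$, achieving value $\varepsilon = \varepsilon(x^{(t)})$. The key observation is that $N(x') \subseteq N(x) \cup (\text{the indices that ``opened up''})$, but more importantly, the \emph{new} residual indices appearing in $N(x')$ but not in $N(x)$ are exactly the coordinates $i \in [n]$ where the step moved $x_i$ strictly into the interior of $[0, u_i]$ from a boundary. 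For such a coordinate, $g_i$ has a definite sign dictated by the augmentation (e.g., if $x_i = 0$ and $x_i' > 0$ then $g_i > 0$, so $i \in \supp(g^+)$; symmetrically on the upper-bound side). The claim is that for precisely these newly-opened indices, the dual constraint in \eqref{eq:minratio-dual} at $y$ is already satisfied with slack at most $\varepsilon$ — in fact with slack \emph{strictly less} than what would block progress — because $g$ was an \emph{augmenting direction at $x$} (so $\pr{c}{g} < 0$, and the step was maximal).

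The main step is to verify, using complementary slackness between the primal \eqref{eq:minratio-primal} and dual \eqref{eq:minratio-dual} at $x$, that $y$ remains feasible for \eqref{eq:minratio-dual} at $x'$ with the same $\varepsilon$. Concretely: for every $i \in N(x) \cap N(x')$ the constraint $\pr{\bar a_i}{y} \le \bar c_i + \varepsilon$ holds unchanged. For $i \in N(x') \setminus N(x)$, I would argue as follows. Such an $i$ corresponds to a coordinate that was at a bound in $x$ with $i \notin N(x)$, i.e., the reverse constraint was the binding one; the augmentation moved it off the bound. Because $g$ is a circuit conforming to the feasibility of the step and the step is maximal, the coordinates that \emph{close} (hit a bound in $x'$) are exactly those whose corresponding residual index leaves $N$, and by complementary slackness with the optimal basic $z$ of \eqref{eq:minratio-primal}, the dual constraints for the support of $z$ are tight at value $\varepsilon$. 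The indices that open are the ``reverse'' copies of indices that were in $\supp(z)$, and their dual constraint value is $\pr{\bar a_{n+j}}{y} = -\pr{\bar a_j}{y} \ge \bar c_{n+j} - \varepsilon = -c_j - \varepsilon$; tightness of the forward copy $\pr{\bar a_j}{y} = c_j + \varepsilon$ (if $j \in \supp(z)$) then gives $\pr{\bar a_{n+j}}{y} = -c_j - \varepsilon \le -c_j + \varepsilon = \bar c_{n+j} + \varepsilon$, so the constraint holds. Thus $y$ is feasible for \eqref{eq:minratio-dual} at $x'$ with objective value $\varepsilon$, and hence $\varepsilon(x^{(t+1)}) \le \varepsilon = \varepsilon(x^{(t)})$.

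The delicate point — and the part I expect to be the main obstacle — is correctly bookkeeping the transition $N(x) \to N(x')$ through the maximal augmentation, in particular confirming that an index $i$ can \emph{enter} $N(x')$ only if its ``partner'' index (the forward/reverse copy in $[2n]$) was in the support of the primal optimal $z$, where the dual constraint is tight at exactly $\varepsilon$. Degenerate cases (coordinates that are at a bound in both $x$ and $x'$, or the maximal step leaving several coordinates simultaneously at bounds) need to be handled: for these, either the index stays out of $N$, or it stays in $N$ with the constraint already checked. I would organize the verification by splitting $N(x') \setminus N(x)$ into ``lower-bound releases'' ($x_i = 0$, $x_i' > 0$, forcing $g_i > 0$) and ``upper-bound releases'' ($x_i = u_i$, $x_i' < u_i$, forcing $g_i < 0$), and in each case invoke the tightness of the complementary dual constraint of the opposite-direction copy. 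Once the combinatorial bookkeeping is pinned down, the inequality $\varepsilon(x^{(t+1)}) \le \varepsilon(x^{(t)})$ follows immediately since $y$ is a feasible point of the minimization \eqref{eq:minratio-dual} at $x^{(t+1)}$ of value $\varepsilon(x^{(t)})$.
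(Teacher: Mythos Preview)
Your approach is correct and essentially identical to the paper's: take the optimal dual $(y,\varepsilon)$ at $x^{(t)}$, and show it remains feasible for \eqref{eq:minratio-dual} at $x^{(t+1)}$ by checking that any newly-entered index $i\in N(x^{(t+1)})\setminus N(x^{(t)})$ has its partner copy in $\supp(z)$, so complementary slackness gives tightness there and the new constraint at $i$ follows with slack to spare. The paper's write-up is just a terser version of your bookkeeping (it handles the case $i=n+j$ explicitly and calls the case $i\in[n]$ analogous), and the degenerate cases you flag do not in fact require separate treatment.
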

\begin{proof}
Let $\varepsilon=\varepsilon(x^{(t)})$ and let $y$ be an optimal solution to \eqref{eq:minratio-dual} for $N(x^{(t)})$. We show that the same $y$ is also feasible for $N(x^{(t+1)})$; the claim follows immediately.  There is nothing to prove if $N(x^{(t+1)})\subseteq N(x^{(t)})$, so let $i\in N(x^{(t+1)})\setminus  N(x^{(t)})$.

Assume first $i\in [n+1,2n]$; let $i=n+j$. This means that $x^{(t)}_j=0<x_j^{(t+1)}$; therefore,  the augmenting direction $g$ has $g_{j}>0$. Thus, for the optimal solution $z$ to  \eqref{eq:minratio-primal}, we must have $z_j>0$. By primal-dual slackness, $\pr{a_j}{y}=c_j+\varepsilon$; thus, 
\[
\pr{\bar a_i}{y}=\pr{-a_j}{y}=-c_j-\varepsilon<-c_j=\bar c_j\, .
\]
The case $i\in [n]$ is analogous.
\end{proof}

The next lemma shows that within every $n$ iterations, $\varepsilon(x^{(t)})$ decreases by a factor depending on $\kappa_A$.
\begin{lemma}\label{lem:weakly-poly} For every iteration $t$, $\varepsilon(x^{(t+n)})\le \left(1-\frac{1}{1+(m-1)\kappa_A}\right) \varepsilon(x^{(t)})$.
\end{lemma}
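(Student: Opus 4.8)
The plan is to transplant the potential‑function analysis of the Goldberg--Tarjan minimum‑mean‑cycle‑cancelling algorithm to this setting, with the role of node potentials played by a single fixed dual vector. Fix the iteration index $t$ and abbreviate $\varepsilon_s:=\varepsilon(x^{(s)})$ and $\theta:=1/(1+(m-1)\kappa_A)$. Since $(\varepsilon_s)_s$ is nonincreasing by Lemma~\ref{lem:epsilon-mon}, it suffices to derive a contradiction from the assumption that $\varepsilon_s>(1-\theta)\varepsilon_t$ for every $s\in\{t,\dots,t+n\}$. I would start by fixing an optimal solution $y\in\R^m$ of the dual \eqref{eq:minratio-dual} at $x^{(t)}$, so that the reduced costs $\bar c^y_i:=\bar c_i-\langle\bar a_i,y\rangle$ satisfy $\bar c^y_i\ge-\varepsilon_t$ for all $i\in N(x^{(t)})$. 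Because $\bar c^y_{n+j}=-\bar c^y_j$ for every $j\in[n]$, at most one index of each antiparallel pair $\{j,n+j\}$ has negative reduced cost, so the count $\Gamma_s:=\lvert\{i\in N(x^{(s)}):\bar c^y_i<0\}\rvert$ satisfies $\Gamma_s\le n$ for every $s$.

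The heart of the argument is an induction on $s$ over the supposed ``bad phase'' maintaining two invariants: (a) $\bar c^y_i\ge-\varepsilon_t$ for all $i\in N(x^{(s)})$, and (b) $\Gamma_{s+1}\le\Gamma_s-1$. The key identity is that, since $\bar A z^{(s)}=0$, we have $\langle\bar c^y,z^{(s)}\rangle=\langle\bar c,z^{(s)}\rangle=-\varepsilon_s$, while $z^{(s)}\ge 0$, $\langle\1_{2n},z^{(s)}\rangle=1$ and $\supp(z^{(s)})\subseteq N(x^{(s)})$. Crucially, $z^{(s)}$ is the ($\ell_1$‑normalised) positive/negative split of an elementary vector $g^{(s)}$ of $\ker(A)$ with $\|g^{(s)}\|_1=\langle\1_{2n},z^{(s)}\rangle=1$, so by the bound recorded above the minimum nonzero entry of $z^{(s)}$ is at least $\|g^{(s)}\|_1/(1+(m-1)\kappa_A)=\theta$ — this is exactly where $\kappa_A$ enters and replaces the factor $1/|V|$ from the flow case. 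Hence, if some $i^*\in\supp(z^{(s)})$ had $\bar c^y_{i^*}\ge 0$, then invariant (a) would give $-\varepsilon_s=\langle\bar c^y,z^{(s)}\rangle\ge-(1-z^{(s)}_{i^*})\varepsilon_t\ge-(1-\theta)\varepsilon_t$, i.e.\ $\varepsilon_s\le(1-\theta)\varepsilon_t$, contradicting the phase hypothesis; so during the bad phase every reduced cost on $\supp(z^{(s)})$ is strictly negative. Now, since the augmentation along $g^{(s)}$ is maximal and the feasible box $0\le x\le u$ is bounded, some index $i^*$ of $\supp(z^{(s)})$ hits a bound and leaves $N$, and that index carries a negative reduced cost; conversely, any index that newly enters $N$ is the antiparallel partner of some $j\in\supp(z^{(s)})$ (its $x_j$ moved off a bound), hence has reduced cost $-\bar c^y_j>0$. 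As $\bar c^y$ does not depend on $x$, this re‑establishes (a) at $s+1$ and yields (b).

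Iterating (b) over the $n$ steps of the bad phase gives $\Gamma_{t+n}\le\Gamma_t-n\le 0$, so $\bar c^y_i\ge 0$ for all $i\in N(x^{(t+n)})$; but then $y$ is feasible for \eqref{eq:minratio-dual} at $x^{(t+n)}$ with objective value $0$, forcing $\varepsilon_{t+n}=0$ and contradicting $\varepsilon_{t+n}>(1-\theta)\varepsilon_t\ge 0$. The main obstacle I anticipate is the careful bookkeeping of the residual set $N(x^{(s)})$ under a maximal augmentation: one must verify that at least one index actually leaves $N$ (boundedness of $[0,u]^n$ and positivity of the step), that whichever index leaves from $\supp(z^{(s)})$ carries the negative reduced cost guaranteed by the phase hypothesis, and — the delicate point, and the true analogue of ``newly created residual arcs point the wrong way'' in Goldberg--Tarjan — that every newly entering index is the antiparallel partner of an index of $\supp(z^{(s)})$ and therefore has \emph{positive} reduced cost, so that $\Gamma$ can never increase. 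Stating cleanly the normalisation facts $\|g^{(s)}\|_1=1$ and $\min_{k\in\supp(z^{(s)})}z^{(s)}_k\ge\theta$, and that $z^{(s)}$ does not use any antiparallel pair simultaneously, is the other small point that needs care.
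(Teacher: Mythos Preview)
Your proposal is correct and follows essentially the same approach as the paper: fix the dual optimal $y$ at iteration $t$, track the set of residual indices with negative $y$-reduced cost (the paper calls this $T\cap N(x^{(r)})$, you count it by $\Gamma_s$), show it strictly shrinks as long as every index in $\supp(z^{(s)})$ lies in it, and invoke the $\kappa_A$ bound on the minimum entry of a normalized elementary vector once some support index has nonnegative reduced cost. The only cosmetic difference is that the paper argues directly by picking the first iteration $k\le t+n$ at which $\supp(z^{(k)})$ escapes the negative-reduced-cost set and bounds $\varepsilon(x^{(k)})$ there, whereas you wrap the same reasoning as a contradiction via $\Gamma_{t+n}\le 0\Rightarrow\varepsilon_{t+n}=0$.
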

\begin{proof}
Let us set $N=N(x^{(t)})$, $\varepsilon=\varepsilon(x^{(t)})$, and let $y=y^{(t)}$ be an optimal dual solution to \eqref{eq:minratio-dual} for $x^{(t)}$.
 Let 
 \[
 T := \{i \in N: \pr{\bar a_i}{y} > \bar c_i \} \subseteq [2n]\, ;
 \]
  that is, if $i\in [n]$ then $\pr{a_i}{y}>c_i$, and if $i\in [n+1,2n]$, $i=n+j$, then  $\pr{a_j}{y}<c_j$.  In particular, $|T\cap\{i,i+n\}|\le 1$ for every $i\in [n]$.
 Let $z^{(t)}$ be the basic optimal solution to \eqref{eq:minratio-primal} for $x^{(t)}$.
By complementary slackness, every $i\in \supp(z^{(t)})$ must have   $\pr{\bar a_i}{y}=\bar c_i+\varepsilon$, and thus,
 $\supp(z^{(t)})\subseteq T$.

\begin{claim}
Let us pick $k>t$ as the first iteration when for the basic optimal solution  $z^{(k)}$ to \eqref{eq:minratio-primal}, we have $\supp(z^{(k)})\setminus T\neq \emptyset$.
Then $k\le t+n$, and 
the solution $(y,\varepsilon)$ is still feasible for \eqref{eq:minratio-dual} for $x^{(k)}$.
\end{claim}
\begin{proof}
For $r\in[t,k-1]$, 
let  $T^{(r)}=T\cap N(x^{(r)})$. We show that $T^{(r+1)}\subsetneq T^{(r)}$. Since $|T|\le n$, this implies $k\le t+n$.
Let $z^{(r)}$ be the basic optimal solution  for \eqref{eq:minratio-primal}; recall that the augmenting direction is computed with $g_j=z^{(r)}_j-z^{(r)}_{n+j}$. By the choice of $k$, $\supp(z^{(r)})\subseteq T^{(r)}$. Thus, 
 we may only increase $x_i$ for $i\in T\cap [n]$ and decrease it for $i=j-n$ for $j\in T\cap [n+1,2n]$. Consequently, every index $i$ entering $N(x^{(r+1)})$ has $\pr{\bar a_i}{y}<\bar c_i$, and therefore $(y,\varepsilon)$ remains feasible throughout.

We now turn to the proof of $T^{(r+1)}\subsetneq T^{(r)}$.
Since we use a maximal augmentation, at least one index leaves $T^{(r)}$ at each iteration. We claim that $T^{(r+1)}\setminus T^{(r)}=\emptyset$. For a contradiction, assume  there exists $i\in T^{(r+1)}\setminus T^{(r)}$. If $i\in [n]$, then
$i+n$ must be in the support of $z^{(r)}$; in particular, $i+n\in T^{(r)}$. But this would mean that $\{i,i+n\}\subseteq T$, in contradiction with the definition of $T$. Similarly, for $i\in [n+1,2n]$.
\end{proof}

Let us now consider the optimal solution  $z=z^{(k)}$ to \eqref{eq:minratio-primal} at iteration $k$; by the above claim, $(y,\varepsilon)$ is still a feasible dual solution. Select an index  $j\in \supp(z)\setminus T$.
\[
\pr{-\bar c}{z}=\pr{\bar A^\top y - \bar c} z \le \varepsilon \sum_{i\in \supp(z)\setminus \{j\}} z_i = (1-z_j)  \varepsilon \le 
\left(1-\frac{1}{1+(m-1)\kappa_A}\right) \varepsilon\, .
\]
In the first inequality, we use that $\pr{\bar a_i}{y} - \bar c_i \le \varepsilon$ by the feasibility of $(y,\varepsilon)$, and $\pr{\bar a_j}{y} - \bar c_j \le 0$ by the choice of $j\notin T$. In the second equality, we use 
the constraint $\sum_i z_i=1$. The final inequality uses that
 $z$ is a basic solution, and therefore, an elementary vector in $\ker(\bar A)$. In particular $|\supp(z)|\le m$, and $z_i\le \kappa_{\bar A} z_j= \kappa_{A} z_j$. Consequently, $z_j\ge 1/(1+(m-1)\kappa_A)$.
\end{proof}

We say that the variable $j\in [2n]$ is \emph{frozen} at iteration $t$, if $j\notin N(x^{(t')})$ for any $t'\ge t$. Thus, for $j\in [n]$, $x_j=u_j$, and for $j\in [n+1,2n]$, $j=i+n$, $x_i=0$ for all subsequent iterations. We show that a new frozen variable can be found in every $O(nm\kappa_A\log(m\kappa_A))$ iterations; this implies
Theorem~\ref{thm:steepest-main}.

\begin{lemma}
For every iteration $t\ge 1$, there is a variable $j\in  N(x^{(t)})$ that is frozen at iteration $k$ for $k=t+O(nm\kappa_A\log(\kappa_A+n))$.
\end{lemma}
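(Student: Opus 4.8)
The strategy is a standard "potential drops geometrically, so a frozen variable appears within a bounded horizon" argument, adapting the Goldberg--Tarjan analysis. The potential is $\varepsilon(x^{(t)})$, which is monotonically non-increasing by Lemma~\ref{lem:epsilon-mon} and drops by a factor $1 - 1/(1+(m-1)\kappa_A)$ every $n$ iterations by Lemma~\ref{lem:weakly-poly}. So over $T = O(nm\kappa_A \log(\kappa_A+n))$ iterations the value $\varepsilon$ shrinks by a factor that is polynomially small in $\kappa_A$ and $n$; concretely, after $t + \Theta(nm\kappa_A \log(m\kappa_A(\kappa_A+n)))$ iterations we have $\varepsilon(x^{(k)}) < \varepsilon(x^{(t)}) / \big((1+(m-1)\kappa_A)(\kappa_A+n)^{c}\big)$ for a suitable constant $c$. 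The plan is to show that such a multiplicative gap forces at least one variable that is residual at time $t$ to become frozen by time $k$.

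To make that implication precise, I would use a proximity argument in the spirit of Theorem~\ref{thm:primal-fixing-upper} / Theorem~\ref{cor:dual-fix-weak} applied to the dual LP \eqref{LP-augment-dual}. Fix iteration $t$; let $y = y^{(t)}$ be the optimal dual solution to \eqref{eq:minratio-dual} with value $\varepsilon(x^{(t)})$, so $y$ is an exact dual feasible point once we pass from $c$ to the perturbed cost $c + \varepsilon(x^{(t)})\cdot(\text{sign pattern on }N(x^{(t)}))$; equivalently, $(x^{(t)}, y)$ are optimal primal-dual for a slightly perturbed instance whose cost differs from the true $c$ by something of $\ell_1$-norm $O(n\,\varepsilon(x^{(t)}))$. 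Now apply Theorem~\ref{thm:primal-fixing-upper} (or rather its proof idea) comparing this perturbed instance with the instance solved at a much later iteration $k$: the sets $R_0$ and $R_u$ there are defined by a threshold $(\kappa_W+1)\|c'-c''\|_1$. Since $\varepsilon$ has shrunk by a huge factor, the corresponding threshold at time $k$ becomes tiny compared to the "gap" $\varepsilon(x^{(t)})$ that separates the residual-set membership of variables in $N(x^{(t)})$ from their complement. Concretely, a variable $i$ with, say, $\pr{a_i}{y^{(t)}} - c_i$ bounded away from zero by $\varepsilon(x^{(t)})$ (which holds for the index $j\notin T$ witnessing the drop in Lemma~\ref{lem:weakly-poly}, and more generally for indices that entered $T$) will, at iteration $k$, satisfy the strict inequality needed to conclude $x^{(k)}_i = 0$ or $x^{(k)}_i = u_i$ — and by monotonicity of $N(\cdot)$ (the set $N(x^{(r)})$ can only shrink once $\varepsilon$ keeps decreasing, by the Claim in Lemma~\ref{lem:weakly-poly}) this persists for all later iterations, i.e., $i$ is frozen.

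The cleanest route is probably to track $|N(x^{(t)})|$ directly: it is non-increasing because whenever $i$ leaves $N(x^{(\cdot)})$ it never re-enters once $\varepsilon$ is strictly below the value at which $\pr{\bar a_i}{y}=\bar c_i$ could hold, and the proximity bound shows that after the potential has dropped past a $\mathrm{poly}(n,\kappa_A)$ factor, at least one coordinate that was residual at time $t$ has been eliminated. Then iterating this over the $\le 2n$ possible "freezing events" gives the total bound $O(n \cdot nm\kappa_A\log(\kappa_A+n)) = O(n^2 m\kappa_A \log(\kappa_A+n))$ claimed in Theorem~\ref{thm:steepest-main}.

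The main obstacle I expect is the bookkeeping that links the dual optimal value $\varepsilon(x^{(t)})$ — which controls the \emph{ratio} of the steepest-descent direction — to an honest proximity statement about \emph{which} variables are pinned to a bound in the true (unperturbed) optimum. One must be careful that $y^{(t)}$ is optimal only for a perturbed cost, quantify the perturbation's $\ell_1$-norm in terms of $\varepsilon(x^{(t)})$ and $n$, and then verify that the threshold in Theorem~\ref{thm:primal-fixing-upper} (which scales like $(\kappa_A+1)$ times that norm) is genuinely dominated by the spectral gap $\varepsilon(x^{(t)})$ after the logarithmically-many-in-$\kappa_A$ phases — this is exactly why the $\log(\kappa_A+n)$ factor, rather than $\log$ of the encoding length, appears. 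A secondary technical point is handling the lifted matrix $\bar A = (A\,|\,{-}A)$ and confirming $\kappa_{\bar A} = \kappa_A$ (already observed in the text) so that all constants stay in terms of $\kappa_A$ and not $\kappa_{\bar A}$ or $\bar\kappa_A$.
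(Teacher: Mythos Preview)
Your high-level plan --- geometric decay of $\varepsilon(x^{(\cdot)})$ combined with the proximity result Theorem~\ref{thm:primal-fixing-upper} --- matches the paper's, but the implementation has a real gap: you compare the wrong pair of iterations, and as a result the proximity threshold swallows the slack you need.

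You propose to treat $(x^{(t)},y^{(t)})$ as optimal for a cost $c'$ with $\|c'-c\|_\infty\le \varepsilon(x^{(t)})=\varepsilon$, and compare it with the instance at iteration $k$. Then $\|c'-c''\|_1=O(n\varepsilon)$, so the threshold in Theorem~\ref{thm:primal-fixing-upper} is $(\kappa_A+1)\cdot O(n\varepsilon)$. But the only slacks you can point to at time $t$ are of size at most $\varepsilon$ (for $j\in\supp(z^{(t)})$ complementary slackness gives $\pr{\bar a_j}{y^{(t)}}-\bar c_j=\varepsilon$ exactly), which is far below the threshold. No matter how small $\varepsilon'$ becomes, this comparison never freezes anything. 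Your parenthetical ``the index $j\notin T$ witnessing the drop in Lemma~\ref{lem:weakly-poly}'' does not help either: that index has $\pr{\bar a_j}{y^{(t)}}-\bar c_j\le 0$, not a large positive gap.

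The paper instead compares iteration $k$ with an arbitrary later $k''>k$, so both perturbations are bounded by $\varepsilon'$ and $\|c'-c''\|_1\le 2n\varepsilon'$. The step you are missing is an averaging argument linking the \emph{primal direction at time $t$} to the \emph{dual solution at time $k$}: since $\bar A z^{(t)}=0$ and $\sum_j z^{(t)}_j=1$,
\[
\varepsilon \;=\; -\pr{\bar c}{z^{(t)}} \;=\; \pr{\bar A^\top y^{(k)}-\bar c}{\,z^{(t)}}\,,
\]
so if every $j\in\supp(z^{(t)})$ had $\pr{\bar a_j}{y^{(k)}}-\bar c_j\le 2n(\kappa_A+1)\varepsilon'$ we would get $\varepsilon\le 2n(\kappa_A+1)\varepsilon'$, contradicting the choice of $k$. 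Hence some $j\in\supp(z^{(t)})\subseteq N(x^{(t)})$ has slack exceeding $2n(\kappa_A+1)\varepsilon'$ with respect to $y^{(k)}$, and now Theorem~\ref{thm:primal-fixing-upper} applied between $k$ and each $k''>k$ pins $x_j$ to its bound for all future iterations, i.e., $j$ is frozen.

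A smaller issue: the claim that ``$N(x^{(r)})$ can only shrink'' is not correct and not what the Claim inside Lemma~\ref{lem:weakly-poly} says; that claim concerns the set $T^{(r)}$, not $N(x^{(r)})$. Indices can re-enter $N(\cdot)$; the freezing conclusion comes from applying the proximity bound anew for every $k''>k$, not from monotonicity of $N$.
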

\begin{proof}
Let $\varepsilon=\varepsilon(x^{(t)})$. By Lemma~\ref{lem:weakly-poly}, we can choose $k=t+O(nm\kappa_A\log(n+\kappa_A))$ such that $\varepsilon'=\varepsilon(x^{(k)})<\varepsilon/(2n(\kappa_A+1))$.
Consider the primal and dual optimal solutions $(z,y,\varepsilon)$ to \eqref{eq:minratio-primal} and \eqref{eq:minratio-dual}  at iteration $t$ and $(z',y',\varepsilon')$ at iteration $k$. 

\begin{claim}\label{cl:largeindex} There exists a $j\in \supp(z)$ such that $\pr{\bar a_j}{y'}>\bar c_j + 2n(\kappa_A+1)\varepsilon'$. 
\end{claim}
\begin{proof}
For a contradiction, assume that $\pr{\bar a_j}{y'}-\bar c_j\le 2n(\kappa_A+1)\varepsilon'$ for every $j\in \supp(z)$.
 Then,
\[
\varepsilon= \pr{\bar c}{z}=\pr{\bar A^\top y -\bar c}{z} \le 2n(\kappa_A+1)\varepsilon'\sum_j z_j= 2n(\kappa_A+1)\varepsilon'\, , 
\]
contradicting the choice of $\varepsilon'$.
\end{proof}
We now show that all such indices are frozen at iteration $k$ by making use of Theorem~\ref{thm:primal-fixing-upper} on proximity. 
Let $x'=x^{(k)}$ and $x''=x^{(k'')}$ for any $k''>k$; let $(y'',\varepsilon'')$ be optimal to  \eqref{eq:minratio-dual} at iteration $k''$; we have $\varepsilon''\le\varepsilon'$ by Lemma~\ref{lem:epsilon-mon}.

Let us define the cost $c'\in\R^{n}$ by 
\[
c'_i:=\begin{cases}
\pr{\bar a_i}{y'}&\mbox{ if }0<x'_i<u_i\, \\
\max\{c_i, \pr{\bar a_i}{y'}\}&\mbox{ if }x'_i=u_i\, \\ 
\min\{c_i, \pr{\bar a_i}{y'}\}&\mbox{ if }x'_i=0\,.
\end{cases}
\]
If we replace the cost $c$ by $c'$, then  $x'$ and $y'$ satisfy complementary slackness, and hence are optimal to \ref{LP-augment} and \eqref{LP-augment-dual}. Moreover, the optimality of $(y',\varepsilon')$ to \eqref{eq:minratio-dual} guarantees that $\|c'-c\|_\infty\le \varepsilon'$.

We similarly construct $c''$ for $y''$, and note that $x''$ and $y''$ are primal and dual optimal solutions for the costs $c''$, $\|c''-c\|_\infty\le \varepsilon''$.  Further,
\[
\|c'-c''\|_1\le n\|c'-c''\|_\infty\le n\left(\|c'-c\|_\infty+\|c''-c\|_\infty\right)\le n(\varepsilon'+\varepsilon'')\le 2n\varepsilon'
\]
We thus apply Theorem~\ref{thm:primal-fixing-upper} for $(x',y')$ for $c'$ and $(x'',y'')$ for $c''$, showing that every variable $j$ as in Claim~\ref{cl:largeindex} must be frozen.
\end{proof}

\section{Circuits, integer proximity, and Graver bases}\label{sec:graver}
We now briefly discuss implications of circuit imbalances to the integer program (IP) of the form
\begin{equation}
\label{eq:IP} \tag{IP}
\begin{aligned}
\min \; &\pr{c}{x} \quad \\
Ax& =b \\
x &\geq 0, \\
x &\in \Z^n.
\end{aligned}
\end{equation}

Many algorithms for \eqref{eq:IP} solve the LP-relaxation first and deduce from the optimal solution of the relaxation information about the IP itself. The following proximity lemma shows that in case that \eqref{eq:IP} is feasible, the distance of an optimal integral solution to the optimal solution of the relaxation can be bounded in terms of max-circuit imbalance $\bar\kappa_A$. So, a local search within a radius of this guaranteed proximity will provide the optimal solution for the IP; see \cite[Proposition 4.1]{Lee89}.
\begin{lemma}\label{opt:IP}
Let $x^*$ be an optimal solution to \ref{LP_primal_dual}. Then there exists an optimal solution $\hat x$ to \eqref{eq:IP} such that $\|\hat x - x\|_1 \le n \bar\kappa_W$.
\end{lemma}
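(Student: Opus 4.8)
The plan is to adapt the classical conformal-decomposition (integer rounding) proximity argument, using the max-circuit imbalance $\bar\kappa_W$ in place of the usual subdeterminant bound. Write $W=\ker(A)$. We may assume \eqref{eq:IP} is feasible (otherwise there is nothing to prove); since $x^*$ witnesses that the relaxation is bounded, \eqref{eq:IP} then attains its optimum. Among all optimal solutions of \eqref{eq:IP}, let $\hat x$ be one minimizing $\|\hat x-x^*\|_1$, and set $z:=\hat x-x^*$. As $A\hat x=Ax^*=b$ we have $z\in W$, so by Lemma~\ref{lem:sign-cons} we may fix a conformal circuit decomposition $z=\sum_{k=1}^h g^k$ with $h\le n$ and each $g^k\in\EE(W)$ conformal to $z$. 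For each $k$ write $g^k=\lambda_k\bar g^k$, where $\lambda_k>0$ and $\bar g^k\in\{g^{C_k},-g^{C_k}\}$ is the primitive integer elementary vector with $\supp(\bar g^k)=\supp(g^k)$ that is conformal to $z$; note $\|\bar g^k\|_\infty\le\bar\kappa_W$.

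Two feasibility facts, both obtained by the routine sign bookkeeping that combines conformality with $\hat x,x^*\ge 0$, drive the argument. First, for every $k$ the vector $x^*+g^k$ is nonnegative, hence feasible for \ref{LP_primal_dual}; optimality of $x^*$ then gives $\pr{c}{g^k}\ge 0$, so $\pr{c}{\bar g^k}\ge 0$. Second, the rounded-down point
\[
\tilde x:=\hat x-\sum_{k=1}^h\lfloor\lambda_k\rfloor\,\bar g^k=x^*+\sum_{k=1}^h\{\lambda_k\}\,\bar g^k,\qquad \{\lambda_k\}:=\lambda_k-\lfloor\lambda_k\rfloor\in[0,1),
\]
is integral, satisfies $A\tilde x=b$, and is nonnegative (again by sign bookkeeping: on coordinates with $z_i<0$ one compares $\tilde x$ with $\hat x$ using $\{\lambda_k\}\le\lambda_k$). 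Moreover $\pr{c}{\tilde x}=\pr{c}{\hat x}-\sum_k\lfloor\lambda_k\rfloor\pr{c}{\bar g^k}\le\pr{c}{\hat x}$, so by optimality of $\hat x$ equality holds and $\tilde x$ is again optimal for \eqref{eq:IP}.

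Since $\tilde x-x^*=\sum_k\{\lambda_k\}\bar g^k$ is conformal to $z$, the $\ell_1$-norm adds coordinatewise: $\|\tilde x-x^*\|_1=\sum_k\{\lambda_k\}\|\bar g^k\|_1\le\sum_k\lambda_k\|\bar g^k\|_1=\|z\|_1=\|\hat x-x^*\|_1$, with strict inequality as soon as some $\lambda_k\ge 1$. By minimality of $\hat x$ this cannot happen, so $\lambda_k<1$ for all $k$, $\tilde x=\hat x$, and therefore $\|\hat x-x^*\|_1=\sum_{k=1}^h\lambda_k\|\bar g^k\|_1<\sum_{k=1}^h\|\bar g^k\|_1$, which one bounds by $n\bar\kappa_W$ using $h\le n$ and $\|\bar g^k\|_\infty\le\bar\kappa_W$. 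The main obstacle is exactly this last, quantitative step: the naive bound $\|\bar g^k\|_1\le|\supp(\bar g^k)|\bar\kappa_W$ only yields $O(n^2\bar\kappa_W)$, while the clean coordinatewise estimate gives $\|\hat x-x^*\|_\infty<n\bar\kappa_W$; to land the stated $n\bar\kappa_W$ in the $\ell_1$-norm one needs the conformal decomposition of $z$ chosen so that the circuits $C_k$ have controlled total support (e.g. by additionally minimizing $|\supp(\hat x-x^*)|$ over optimal $\hat x$, so that the pieces fit, roughly, within $n$ coordinates), which is the part of the argument that requires genuine care.
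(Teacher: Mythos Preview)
Your approach is essentially the same as the paper's: take an IP-optimum $\hat x$ closest to $x^*$, conformally decompose the difference into elementary vectors, use LP-optimality of $x^*$ to get $\pr{c}{\bar g^k}\ge 0$, and use IP-optimality together with the minimality of $\hat x$ to force all coefficients $\lambda_k<1$. Your rounding via $\tilde x$ is a slightly more elaborate way to reach $\lambda_k<1$; the paper does it with a single shift $\hat x\pm g^{C_i}$, but the content is identical.

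The discrepancy you flag at the end is real, and the paper does not resolve it either. The paper's own proof concludes with
\[
\|\hat x-x^*\|_\infty\le\sum_{k}\|g^{C_k}\|_\infty\le n\,\bar\kappa_W,
\]
i.e.\ exactly the $\ell_\infty$ bound you obtain cleanly; the $\ell_1$ in the displayed statement is a typo. So your ``main obstacle'' paragraph is not a gap in your argument relative to the paper---you have in fact matched (and been more careful than) the paper's proof. There is no need to chase a sharper control on $\sum_k|\supp(\bar g^k)|$: simply state the conclusion for $\|\cdot\|_\infty$.
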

\begin{proof}
  Let $\hat x$ be an optimal solution to \eqref{eq:IP} that minimizes $\|\hat x - x^*\|_1$ and consider $w = x^* - \hat x \in W$ and a conformal circuit decomposition $w = \sum_{i = 1}^k \lambda_i g^{C_i}$ for some $k \le n$ and circuits $C_1, \ldots, C_k$ and $\lambda_1, \ldots, \lambda_k \ge 0$. Then, $\pr{c}{g^{C_i}} \le 0$ for all $i \in [k]$ as otherwise $x^* - \lambda_i g^{C_i}$ would be a feasible solution to the primal of \ref{LP_primal_dual} with strictly better objective than $x^*$. Further, note that $\lambda_i \le 1$ for all $i \in [k]$ as otherwise $\hat x - g^{C_i}$ is a feasible solution to \eqref{eq:IP} that has an objective as least as good as $\hat x$ and would in $\ell_1$ norm be strictly closer to $x^*$ than $\hat x$.   
  Therefore, 
  \[\|\hat x - x^*\|_\infty \leq \sum_{i = 1}^k \|g^{C_i}\|_\infty \le n\bar \kappa_W. \]
\end{proof}
Another popular and well-studied quantity in integer programming is the \emph{Graver basis}, defined as follows.
\begin{Def}[Graver basis]
The \emph{Graver basis} of a matrix $A$, denoted by $\mathcal{G}(A)$, consists of all $g \in \ker(A) \cap \Z^n$ such that there exists no $h \in (\ker(A) \cap \Z^n)\setminus \set{g}$ such that $g$ and $h$ are conformal and $|h_i| \le |g_i|$ for all $i \in [n]$. 
We can further define  
\begin{equation}
  \mathfrak g_1(A) := \max_{v \in \mathcal{G}(A)} \|v\|_1, \qquad
  \mathfrak g_\infty(A) := \max_{v \in \mathcal{G}(A)} \|v\|_\infty.
\end{equation}
\end{Def}
See \cite{bookDeLoera} for extensive treatment of the Graver basis and \cite{eisenbrand2019algorithmic} for more recent developments.  
Clearly, elementary vectors, scaled such that its entries have greatest common divisor equal to one belong to the Graver basis: $\set{g \in \EE(W) \cap \Z^n: \gcd(g) = 1} \subseteq \mathcal{G}(A)$. 

We will furthermore see how the max-circuit imbalance measure and Graver basis are related.

\begin{lemma} $\bar \kappa_A \le \mathfrak g_\infty(A) \le n\bar\kappa_A$.
\end{lemma}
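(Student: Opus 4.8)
The plan splits into the two inequalities, of which only the upper bound requires work.

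For the lower bound $\bar\kappa_A\le\mathfrak g_\infty(A)$ I would simply invoke the inclusion noted just before the statement, that every gcd-normalized integer elementary vector lies in $\mathcal G(A)$. Concretely, for each circuit $C\in\circuits_A$ the representative $g^C\in\ker(A)\cap\Z^n$ has $\gcd(g^C)=1$, hence $g^C\in\mathcal G(A)$ and $\|g^C\|_\infty\le\mathfrak g_\infty(A)$; taking the maximum over $C\in\circuits_A$ gives $\bar\kappa_A\le\mathfrak g_\infty(A)$.

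For the upper bound $\mathfrak g_\infty(A)\le n\bar\kappa_A$, fix an arbitrary $v\in\mathcal G(A)$. Since $v\in\ker(A)$, Lemma~\ref{lem:sign-cons} provides a conformal circuit decomposition $v=\sum_{i=1}^k g^i$ with $k\le n$ and $g^i\in\EE(W)$ all conforming to $v$. I would write $g^i=\lambda_i g^{C_i}$ where $C_i=\supp(g^i)$ and the sign of $g^{C_i}$ is chosen so that $\lambda_i>0$; by conformality $|v_j|=\sum_{i=1}^k\lambda_i|g^{C_i}_j|$ for every $j\in[n]$. The one substantive step is to show $\lambda_i\le 1$ for every $i$. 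Suppose $\lambda_{i_0}>1$ for some $i_0$, and set $v':=v-g^{C_{i_0}}$. Because $g^{C_{i_0}}$ is an integer vector, $v'\in\ker(A)\cap\Z^n$. Rewriting $v'=(\lambda_{i_0}-1)g^{C_{i_0}}+\sum_{i\ne i_0}\lambda_i g^{C_i}$ as a nonnegative combination of vectors all conforming to $v$, I would check that (i) $v'$ conforms to $v$, (ii) $|v'_j|\le|v_j|$ for all $j$, and (iii) $v'\notin\{0,v\}$ (all using $\lambda_{i_0}>1$, the last because a nonnegative combination of vectors conformal to $v$ vanishes only if each summand does). This contradicts the minimality defining membership in $\mathcal G(A)$. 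Hence $\lambda_i\le1$ for all $i$, so $\|g^i\|_\infty=\lambda_i\|g^{C_i}\|_\infty\le\bar\kappa_A$, and consequently
\[
\|v\|_\infty\le\sum_{i=1}^k\|g^i\|_\infty\le k\bar\kappa_A\le n\bar\kappa_A.
\]
Since $v\in\mathcal G(A)$ was arbitrary, this yields $\mathfrak g_\infty(A)\le n\bar\kappa_A$.

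The main obstacle — really the only non-routine point — is the bound $\lambda_i\le 1$; the rest is bookkeeping with conformal sign patterns. The idea that makes it go through is to subtract the integral, gcd-$1$ representative $g^{C_i}$ from $v$ (rather than the possibly non-integral decomposition term $g^i$), which keeps the modified vector inside $\ker(A)\cap\Z^n$ so that the defining property of the Graver basis can be applied. A minor subtlety to state carefully is the exact phrasing of that defining property (excluding $h=0$ and $h=v$), but this matches the standard convention for Graver bases and causes no difficulty.
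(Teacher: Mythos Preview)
Your proof is correct and follows essentially the same route as the paper: take a conformal circuit decomposition of a Graver element, use Graver minimality to bound each coefficient by $1$, and sum. Your write-up is in fact a bit more careful than the paper's --- you use $\lambda_i\le 1$ (the paper claims $\lambda_i<1$, which fails when $v$ is itself a gcd-$1$ elementary vector) and you explicitly verify $v'\notin\{0,v\}$.
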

\begin{proof}
The first inequality follows from the paragraph above, noting that 
  $$\set{g^C : C\in \circuits(W)} \subseteq \mathcal{G}(A),$$
for the normalized elementary vectors $g^C$ with $\lcm(g^C)=1$.
  For the second inequality, let $g \in \mathcal{G}(A)$ and $g = \sum_{i = 1}^k{\lambda_i g^{C_i}}$ be  a conformal circuit decomposition where $k \le n $. Note that $\lambda_i < 1$ for all $i \in [k]$ as otherwise $h^i$ would contradict that $g \in \mathcal{G}(A)$. Therefore, 
  \begin{equation}
    \|g\|_\infty \le \sum_{i = 1}^k \lambda_i \|g^{C_i}\|_\infty \le n \bar \kappa_A. 
  \end{equation} 
\end{proof}

Using the Steinitz lemma, Eisenbrand and Weismantel  \cite[Lemma 2]{eisenbrand2018blockstructure} gave a bound on $\mathfrak g_1(A)$ that only depends on $m$ but is independent of $n$:

\begin{theorem}
  Let $A \in \Z^{m \times n}$. Then $\mathfrak g_1(A) \le (2 m \|A\|_{\max} + 1)^m$.
\end{theorem}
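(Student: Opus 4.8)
The plan is to combine a decomposition of a Graver basis element into signed unit vectors with the Steinitz lemma and a pigeonhole argument on partial sums. Recall the \emph{Steinitz lemma} in the Grinberg--Sevast'yanov form: if $v_1,\dots,v_k\in\R^m$ satisfy $\|v_i\|\le \Delta$ in some norm $\|\cdot\|$ and $\sum_{i=1}^k v_i=0$, then there is a permutation $\pi$ of $[k]$ such that every prefix sum $\sum_{i=1}^j v_{\pi(i)}$ has norm at most $m\Delta$. I would apply this with the $\ell_\infty$-norm on $\R^m$.

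First I would fix $g\in\mathcal G(A)$ and write it as a sequence $u_1,\dots,u_N$ of $N=\|g\|_1$ signed unit vectors, all conformal to $g$: for each coordinate $i$ with $g_i\neq 0$, take $|g_i|$ copies of $\operatorname{sgn}(g_i)\,e_i$. Then $\sum_{\ell=1}^N u_\ell=g$, each $u_\ell$ conforms to $g$, and any sub-multiset of the $u_\ell$ sums to a vector that is again conformal to $g$ and coordinatewise dominated by $g$. Applying $A$ gives vectors $Au_\ell\in\Z^m$, each equal to $\pm$ a column of $A$, hence $\|Au_\ell\|_\infty\le\|A\|_{\max}$, and $\sum_{\ell=1}^N Au_\ell=Ag=0$.

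Next I would apply the Steinitz lemma to $(Au_\ell)_\ell$ with $\Delta=\|A\|_{\max}$: after reordering by some $\pi$, all prefix sums $p_j:=\sum_{i=1}^j Au_{\pi(i)}$ for $j=0,\dots,N$ lie in the box $B=\{x\in\Z^m:\|x\|_\infty\le m\|A\|_{\max}\}$, which contains exactly $(2m\|A\|_{\max}+1)^m$ lattice points (note $m\|A\|_{\max}\in\Z_{\ge 0}$). Suppose for contradiction that $N=\|g\|_1>(2m\|A\|_{\max}+1)^m$. Then the $N$ points $p_0,\dots,p_{N-1}$ cannot be pairwise distinct, so $p_j=p_{j'}$ for some $0\le j<j'\le N-1$. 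Set $h:=\sum_{i=j+1}^{j'}u_{\pi(i)}$. Then $Ah=p_{j'}-p_j=0$, so $h\in\ker(A)\cap\Z^n$; $h$ is conformal to $g$ with $|h_i|\le|g_i|$ for all $i$, being a sub-multiset sum; $h\neq 0$ because it is a nonempty sum of conformal unit vectors, so no coordinate can cancel; and $h\neq g$, since $\{1,\dots,N\}\setminus\{j+1,\dots,j'\}$ is nonempty (as $j'\le N-1$) and the complementary sum $g-h$ would otherwise vanish, again impossible by conformity. This contradicts $g\in\mathcal G(A)$, and therefore $\|g\|_1\le(2m\|A\|_{\max}+1)^m$; taking the maximum over $g\in\mathcal G(A)$ proves the theorem.

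There is no genuine conceptual obstacle here; the only points requiring care are invoking the Steinitz lemma with constant $m$ (rather than a weaker universal constant such as $2m$ or $4m$), which is precisely what produces the clean exponent, and verifying that the extracted vector $h$ is a legitimate \emph{new} competitor for $g$ in the Graver basis, i.e.\ nonzero, distinct from $g$, conformal to $g$, and coordinatewise dominated by $g$ --- all of which follow from the no-cancellation property of a conformal decomposition into unit vectors.
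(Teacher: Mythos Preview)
Your proof is correct and is precisely the Eisenbrand--Weismantel argument. The paper does not give its own proof of this theorem; it merely cites \cite{eisenbrand2018blockstructure} and remarks that the bound is obtained ``using the Steinitz lemma,'' which is exactly the mechanism you have reconstructed.
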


\section{A decomposition conjecture}\label{sec:conjecture}

Let $W\subseteq \R^n$ be a linear space.
As the analogue of maximal augmentations, we say that a conformal circuit decomposition of $z\in W$ is \emph{maximal}, if it can be obtained as follows. If $z\in W$ is an elementary vector, return the decomposition containing the single vector $z$. Otherwise, select an arbitrary $g\in\EE(W)$ that is conformal with $z$ (in particular, $\supp(g)\subsetneq \supp(z)$), and set $g^1=\alpha g$ for the largest value $\alpha>0$ such that $z- g^1$ is conformal with $z$. Then, recursively apply this procedure to $z- g^1$ to obtain the other elementary vectors $g^2,\ldots,g^h$. We have $h\le n$, since the support decreases by at least one due to the maximal choice of $\alpha$.
If $\Le_W=\kd_W=1$, then it is easy to verify the following.

\begin{prop}\label{prop:kappa-1-decompose}
Let $W\subseteq \R^n$ be a linear space with $\Le_W=1$, and let $z\in W\cap \Z^n$. Then, for every maximal conformal circuit decomposition $z=\sum_{k=1}^h g^k$, we have $g^k\in \EE(W)\cap \Z^n$.
\end{prop}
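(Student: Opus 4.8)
The plan is to proceed by induction on the length $h$ of the maximal conformal circuit decomposition, exploiting the fact that $\Le_W=1$ forces every elementary vector of $W$ to have $\pm 1$ entries after scaling, together with the integrality of $z$. Recall from Theorem~\ref{thm:tu_iff_kappa_1} that $\Le_W=1$ implies $\kd_W=\bar\kappa_W=1$, so in particular every $g^C$ with $C\in\circuits_W$ has all entries in $\{0,\pm 1\}$. Thus the elementary vector $g$ picked at the first step of the maximal decomposition procedure, when normalized so that $\gcd$ of its entries is $1$, satisfies $g\in\{0,\pm1\}^n$.

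The key step is to show that the first scalar $\alpha$ produced by the maximal decomposition is a nonnegative integer, and that $z-\alpha g$ remains an integer vector in $W$ conformal with $z$; then the induction hypothesis applies to $z-\alpha g$ (whose support is strictly smaller), yielding integer elementary vectors $g^2,\dots,g^h$, and we are done. To see that $\alpha\in\Z_{\ge 0}$: since $g$ conforms to $z$ and $g\in\{0,\pm1\}^n$, the largest $\alpha>0$ for which $z-\alpha g$ still conforms to $z$ is
\[
\alpha=\min\left\{\,|z_i|:\ i\in\supp(g)\,\right\},
\]
because conformity of $z-\alpha g$ with $z$ requires $|z_i-\alpha g_i|$ to have the same sign as $z_i$ (or be zero) for each $i$, and with $|g_i|=1$ on $\supp(g)$ this is exactly the condition $\alpha\le |z_i|$ for all $i\in\supp(g)$, with equality attained (driving some coordinate to $0$) at the minimizing index. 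Since $z\in\Z^n$, this minimum is a nonnegative integer, hence $\alpha\in\Z_{\ge0}$ and $z-\alpha g\in W\cap\Z^n$. The base case $h=1$ is immediate: then $z$ itself is an elementary vector, and since $z\in\Z^n$ it is in particular an integer elementary vector (note the statement only claims $g^k\in\EE(W)\cap\Z^n$, not that $\gcd$ of the entries is $1$).

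The only mildly delicate point — and the one I would be most careful about — is the bookkeeping at the inductive step: after subtracting $\alpha g$, the residual $z-\alpha g$ is conformal with $z$ but I should check its maximal conformal decomposition "continues" the one for $z$, i.e.\ that the recursive structure in Proposition~\ref{prop:kappa-1-decompose}'s hypothesis is respected. This is really just unwinding the definition of "maximal conformal circuit decomposition" given just before the statement, so it is routine but worth spelling out. I would also note explicitly that $z-\alpha g\in W$ since both $z$ and $g$ lie in the subspace $W$, and that conformity is preserved (a vector conformal with $z$ stays conformal with $z$ after the subtraction, by the choice of $\alpha$), so the hypotheses of the proposition hold for $z-\alpha g$ with decomposition length $h-1$. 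No genuine obstacle is expected; the argument is essentially a verification that the $\{0,\pm1\}$ structure of elementary vectors in a regular space propagates through the greedy maximal peeling when the starting vector is integral.
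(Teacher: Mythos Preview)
Your proposal is correct and is exactly the natural verification the paper has in mind when it says the proposition ``is easy to verify'': since $\Le_W=1$ forces every normalized elementary vector to lie in $\{0,\pm1\}^n$, the maximal step size $\alpha=\min\{|z_i|:i\in\supp(g)\}$ is an integer, so $g^1=\alpha g\in\Z^n$ and $z-g^1\in W\cap\Z^n$, and the recursive definition of a maximal conformal circuit decomposition immediately gives the inductive step. Your ``mildly delicate point'' is in fact a non-issue, because the decomposition is \emph{defined} recursively on the residual $z-g^1$, so $g^2,\ldots,g^h$ is by definition a maximal conformal circuit decomposition of $z-g^1$ and the induction hypothesis applies directly.
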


We formulate a conjecture asserting that this property generalizes for arbitrary $\kd_W$ values. Note that in the conjecture, we only require the existence of some (not necessarily maximal) conformal circuit decomposition.
\begin{conj} \label{conj:decompose}
Let $W\subseteq\R^n$ be a rational linear subspace. Then, for every $z\in W\cap \Z^n$, there exists a conformal circuit decomposition
$z=\sum_{k=1}^h g^k$,  $h\le n$ such that each $g^k$ is  a $1/\dot\kappa_W$-integral  vector in $\EE(W)$.  
\end{conj}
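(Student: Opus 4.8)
The plan is to first reduce to the case $\supp(z)=[n]$. Any elementary vector conformal to $z$ is supported inside $\supp(z)$, so a conformal circuit decomposition of $z$ is the same thing as one of $\pi_{\supp(z)}(z)$ inside $W_{\supp(z)}$, and by Lemma~\ref{lem:circuit_minors} $\dot\kappa_{W_{\supp(z)}}\mid\dot\kappa_W$, so $1/\dot\kappa_{W_{\supp(z)}}$-integrality implies $1/\dot\kappa_W$-integrality; hence we may assume $\supp(z)=[n]$. Set $d:=\dim W$. The cone $F:=\{y\in W:\ y\text{ conforms to }z\}=\{y\in W:\ y_iz_i\ge 0\ \forall i\}$ is then full-dimensional in $W$ with $z$ in its relative interior, and, as in the proof of Lemma~\ref{lem:sign-cons}, its extreme rays are spanned exactly by the conformal primitive elementary vectors. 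Conic Carath\'eodory applied inside the $d$-dimensional space $W$ gives $z=\sum_{k=1}^r\lambda_k g^k$ with $r\le d\le n$, $\lambda_k>0$, and $g^1,\dots,g^r\in\EE(W)$ primitive, conformal to $z$, and linearly independent. Since the entries of a primitive elementary vector have $\gcd 1$, a positive multiple $\lambda g^C$ is $1/\dot\kappa_W$-integral iff $\lambda\in\tfrac1{\dot\kappa_W}\Z$; thus Conjecture~\ref{conj:decompose} is equivalent to asserting that \emph{some} such Carath\'eodory representation has all $\lambda_k\in\tfrac1{\dot\kappa_W}\Z$.

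Given such a representation, pick a coordinate set $S$ with $|S|=r$ and $G_S$ invertible, where $G=[g^1\mid\cdots\mid g^r]$; then $\lambda=G_S^{-1}z_S$ is $1/|\det G_S|$-integral by Cramer's rule, so it suffices to arrange $|\det G_S|\mid\dot\kappa_W$. This divisibility is automatic when the $g^k$ extend to a \emph{basic} circuit basis: if $N\subseteq[n]$ is a cobasis of $\mathcal M(W)$ and $\tilde b^1,\dots,\tilde b^d$ are the corresponding fundamental-circuit elementary vectors, normalised so that the $d\times n$ matrix $\tilde B$ with rows $\tilde b^j$ is the identity on the columns $N$, then $\tilde B$ is in basis form, $\ker(\tilde B)=W^\perp$, and Lemma~\ref{lem:sub-inverse} together with $\dot\kappa_{\tilde B}=\dot\kappa_{W^\perp}=\dot\kappa_W$ (Proposition~\ref{prop:kappadot-dual}) shows every square submatrix of $\tilde B$, equivalently of $[\tilde b^1\mid\cdots\mid\tilde b^d]$, has a $1/\dot\kappa_W$-integral inverse; with $S=N$ the relevant matrix is $I_d$ and the coefficients of $z$ are literally $z_N\in\Z^d$. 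So the cleanest attack is to show that the conformal cone $F$ contains a basic circuit basis each of whose members is conformal to $z$, i.e.\ that $\mathcal M(W)$ admits a basis $B_0$ every fundamental circuit of which, suitably signed, is sign-compatible with $z$. In that case $z=\sum_{n\in N}z_n\tilde b^{\,n}$ is an \emph{integer} conformal circuit decomposition, proving even more than the conjecture.

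The main obstacle is that such a sign-compatible basis need not exist; this is, I expect, precisely why the conjecture is phrased with $1/\dot\kappa_W$-integrality rather than integrality. When no basic conformal circuit basis is available, one is forced to work with a non-basic simplicial cell of $F$ containing $z$ — spanned by $r<d$ linearly independent conformal elementary vectors that do not extend to a basic circuit basis — and to prove one can still pick this cell so that $|\det G_S|\mid\dot\kappa_W$ for some $S$; equivalently, among the finitely many triangulations of $F$ into simplicial cones generated by elementary vectors, some cell containing $z$ has this property. Establishing this appears to require a genuinely new structural statement about how $\dot\kappa_W$ controls the lattice $F\cap\Z^n$ and its decompositions, and is the crux of the problem.

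Several weaker or special statements are within reach and would be natural stepping stones. First, the case $\kappa^*_W=1$ is provable: by Theorem~\ref{thm:kappa-star-1} there is $D=\Diag(d)\in\bD_n$ with integer entries dividing $\dot\kappa_W$ such that $D^{-1}W$ is regular; writing $w=\lcm(d)\,D^{-1}z\in D^{-1}W\cap\Z^n$, Proposition~\ref{prop:kappa-1-decompose} gives a maximal conformal circuit decomposition $w=\sum_{k=1}^h\tilde g^k$ into $\{0,\pm1\}$ elementary vectors of $D^{-1}W$ with $h\le n$, and pulling back, $z=\sum_{k=1}^h\tfrac1{\lcm(d)}D\tilde g^k$ is a conformal circuit decomposition whose pieces lie in $\EE(W)$ and are $1/\dot\kappa_W$-integral, since each entry is $0$ or $\pm d_i/\lcm(d)$ and $\lcm(d)/d_i\mid\dot\kappa_W$. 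Because by Theorem~\ref{thm:kappa-star} the odd prime-power case $\dot\kappa_W=p^\alpha$ ($p>2$) already has $\kappa^*_W=1$, the first genuinely open special case is $\dot\kappa_W=2^\alpha$, and the most concrete milestone is $\dot\kappa_W=2$, asserting a half-integral conformal circuit decomposition of every integer point (relevant e.g.\ for incidence matrices of undirected graphs). Finally, a weakened version with $\tfrac{1}{(\sqrt n\,\bar\kappa_W)^n}$ in place of $\tfrac1{\dot\kappa_W}$ follows immediately from the Carath\'eodory representation above and the Hadamard bound $|\det G_S|\le(\sqrt n\,\bar\kappa_W)^n$; pushing the bound down to $\tfrac1{\dot\kappa_W}$ is the real content.
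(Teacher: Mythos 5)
The statement you were asked to prove is posed in the paper explicitly as an open conjecture (Conjecture~\ref{conj:decompose}); the paper offers no proof of it, only partial evidence in the two propositions that follow it. Your proposal correctly recognises this, does not pretend to close the gap, and pinpoints the actual obstruction---that a sign-compatible fundamental-circuit basis need not exist---which is indeed the crux.

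Your special case $\kappa^*_W=1$ matches the paper's proposition after the conjecture in substance; the only difference is cosmetic (you apply $D^{-1}$ to $z$ and clear denominators with $\lcm(d)$, whereas the paper rescales $z$ by $D$ directly), and both arguments rest on Theorem~\ref{thm:kappa-star-1} together with Proposition~\ref{prop:kappa-1-decompose}. As the paper also notes, via Theorem~\ref{thm:kappa-star} this settles the odd prime-power case, and you rightly single out $\kd_W=2^\alpha$ (especially $\kd_W=2$) as the first genuinely open territory. Your Carath\'eodory reformulation---choosing linearly independent conformal primitive elementary vectors forming $G$, seeking $S$ with $|\det G_S|\divides\kd_W$, and noting that a conformal fundamental-circuit basis for a cobasis $N$ gives the trivially integral coefficients $\lambda=z_N$ via $\tilde B_N=I_d$, Lemma~\ref{lem:sub-inverse} and Proposition~\ref{prop:kappadot-dual}---is sound and is \emph{not} in the paper; it sharpens the picture and is a useful alternative lens on the problem.

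Two small caveats. Your phrase ``equivalent to asserting that some Carath\'eodory representation has all $\lambda_k\in\tfrac1{\kd_W}\Z$'' overstates: that formulation is \emph{sufficient} for the conjecture, but the conjecture only asks for some conformal decomposition with $h\le n$ pieces, not necessarily linearly independent ones, and refining a dependent decomposition to an independent one need not preserve $1/\kd_W$-integrality of coefficients. Also, the Hadamard weakening is phrased as $1/(\sqrt n\,\bar\kappa_W)^n$-integrality, which is not literally well-posed since $(\sqrt n\,\bar\kappa_W)^n$ need not be an integer; what Cramer plus Hadamard actually give is $1/|\det G_S|$-integrality with $|\det G_S|$ an integer bounded by $(\sqrt n\,\bar\kappa_W)^n$. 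Finally, the paper proves one further partial result you do not attempt: when $\kd_W=2^\alpha$ and a maximal conformal circuit decomposition of $z$ has at most three terms, those terms are $1/\kd_W$-integral.
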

Note that it is equivalent to require the same property for elements of the Graver basis $z\in \mathcal{G}(A)$. Hence, the conjecture asserts that every vector in the Graver basis is a `nice' combination of elementary vectors.

\medskip

We present some preliminary evidence towards this conjecture:
\begin{prop}
Let $W\subseteq\R^n$ be a rational linear subspace with $\Le^*_W=1$. Then, for every $z\in W\cap \Z^n$, and every maximal conformal circuit decomposition
$z=\sum_{k=1}^h g^k$, 
we have that $g^k$ is  a $1/\dot\kappa$-integral  vector in $\EE(W)$.  
\end{prop}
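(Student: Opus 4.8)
The plan is to reduce to the case $\kappa_W=1$ via a diagonal rescaling, using the hypothesis $\kappa^*_W=1$. By Theorem~\ref{thm:kappa-star-1} (outcome~\ref{i:outcome_TU}), since $\kappa^*_W=1$, there is a diagonal matrix $D\in\bD_n$ with $\kappa_{D^{-1}W}=1$; moreover, because $W$ is rational, we may take $D$ with \emph{integer} diagonal entries $d_i$ that divide $\dot\kappa_W$. Write $W'=D^{-1}W$, so $\kappa_{W'}=\bar\kappa_{W'}=\dot\kappa_{W'}=1$ by the remark following Definition~\ref{def:kappa}. The key structural facts I will use are: (i) the map $x\mapsto D^{-1}x$ is a bijection $W\to W'$ that preserves conformality and preserves the combinatorial structure of circuits (it sends $\EE(W)$ to $\EE(W')$ up to scaling, hence sends maximal conformal circuit decompositions to maximal conformal circuit decompositions, because maximality only depends on supports and ratios of signs, both of which are unaffected by a positive diagonal scaling); and (ii) an elementary vector $g\in\EE(W)$ is $1/\dot\kappa_W$-integral iff the corresponding elementary vector $D^{-1}g\cdot(\text{scalar})$ in $W'$ is integral after suitable normalization — here I must be careful, since $D$ has integer but not unit entries.

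Concretely, first I would fix $z\in W\cap\Z^n$ and a maximal conformal circuit decomposition $z=\sum_{k=1}^h g^k$ with $h\le n$. Applying $D^{-1}$ gives $z'=D^{-1}z=\sum_{k=1}^h (D^{-1}g^k)$, and I claim this is a maximal conformal circuit decomposition of $z'$ in $W'$: each $D^{-1}g^k$ is elementary in $W'$ (same support as $g^k$, support-minimality preserved), conformity with $z'$ is preserved since $d_i>0$, and maximality of each step is preserved since at step $k$ the condition "$z-\sum_{\ell\le k}g^\ell$ conforms to $z$" translates verbatim. Now $z'$ need not be integral, but it \emph{is} $1/\dot\kappa_W$-integral: indeed $z'_i=z_i/d_i$ and $d_i\divides\dot\kappa_W$. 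The next step is to invoke Proposition~\ref{prop:kappa-1-decompose} — but that proposition as stated requires $z\in W\cap\Z^n$. I would instead scale: let $k=\dot\kappa_W$ (or $\lcm$ of the $d_i$, which divides $\dot\kappa_W$), so $kz'\in W'\cap\Z^n$, and $kz'=\sum_k (kD^{-1}g^k)$ is still a maximal conformal circuit decomposition. Since $\kappa_{W'}=1$, Proposition~\ref{prop:kappa-1-decompose} gives that each $kD^{-1}g^k\in\EE(W')\cap\Z^n$, hence each $D^{-1}g^k$ is $1/k$-integral, i.e. $1/\dot\kappa_W$-integral, as a vector in $\EE(W')$.

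Finally I pull back: $g^k=D(D^{-1}g^k)$, so $g^k_i=d_i\cdot(D^{-1}g^k)_i$. Since $(D^{-1}g^k)_i$ is $1/\dot\kappa_W$-integral and $d_i$ is an integer, $g^k_i$ is $d_i/\dot\kappa_W$-integral, hence certainly $1/\dot\kappa_W$-integral (as $d_i\in\Z$). And $g^k\in\EE(W)$ since $D$ maps $\EE(W')$ to $\EE(W)$. That completes the argument. The main obstacle I anticipate is the bookkeeping in the normalization: Proposition~\ref{prop:kappa-1-decompose} is phrased for integer input vectors and integer output elementary vectors, whereas here the natural rescaled vectors are only $1/\dot\kappa_W$-integral, so one must either restate/reprove the $\kappa_W=1$ case for $1/k$-integral inputs (which is immediate — scale by $k$, apply the integer version, scale back) or carry the scalar $k$ through the whole decomposition. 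A second, minor subtlety is checking that maximality of a conformal decomposition is genuinely invariant under positive diagonal scaling; this needs a one-line verification that the "largest $\alpha$ such that $z-\alpha g$ conforms to $z$" is characterized purely by signs and coordinatewise ratios, both preserved by $D^{-1}$.
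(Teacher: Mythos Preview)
Your proof is correct and follows essentially the same approach as the paper: rescale to a space with $\kappa=1$, invoke Proposition~\ref{prop:kappa-1-decompose}, and pull back. The paper avoids your ``main obstacle'' (the normalization bookkeeping) by multiplying by $D$ rather than $D^{-1}$: since $D$ has integer entries, $Dz\in DW\cap\Z^n$ directly, so Proposition~\ref{prop:kappa-1-decompose} applies without the auxiliary scaling by $\dot\kappa_W$, and then $g^k_i=(Dg^k)_i/d_i$ is $1/\dot\kappa_W$-integral because $d_i\divides\dot\kappa_W$.
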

\begin{proof}
Assume $\Le_{DW}=1$ for some $D\in \bD_n$. By Theorem~\ref{thm:kappa-star-1}, 
we can select $D$ such that all diagonal entries $d_i=D_{ii}\in \Z$ and $d_i|\kd_W$. Let $z=\sum_{k=1}^h g^k$ be any maximal conformal circuit decomposition of $z\in W\cap \Z^n$. Clearly, $Dz=\sum_{k=1}^h Dg^k$ is also a
maximal conformal circuit decomposition of $Dz\in DW\cap \Z^n$. 
By Proposition~\ref{prop:kappa-1-decompose}, $Dg^k\in \EE(DW)\cap \Z^n$. Since $d_i \divides \kd_W$, this implies that $g^k$ is $1/\kd_W$-integral.
\end{proof}
By Theorem~\ref{thm:kappa-star}, this implies the conjecture whenever $\kd_W=p^\alpha$ for $p\in\primes$, $p>2$, $\alpha\in \N$. Let us now consider the case when $\kd_W$ is a power of 2. We verify the conjecture when the decomposition contains at most three terms.
\begin{prop}
Let $W\subseteq\R^n$ be a rational linear subspace with $\Le_W=2^\alpha$ for $\alpha\in \Z^n$. If $z\in W\cap \Z^n$ has a maximal conformal circuit decomposition
$z=\sum_{k=1}^h g^k$ with $h\le 3$, then each
 $g^k$ is  a $1/\dot\kappa$-integral  vector in $\EE(W)$.  
\end{prop}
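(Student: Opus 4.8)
Write each term of the decomposition as $g^k=\lambda_k g^{C_k}$, where $g^{C_k}$ is the primitive integral elementary vector with $\gcd(g^{C_k})=1$. Since $\dot\kappa_W=2^\alpha$ is a prime power, $W$ is anchored and $\kappa_W=\bar\kappa_W=\dot\kappa_W=2^\alpha$; hence every entry of $g^{C_k}$ is $0$ or $\pm 2^{j}$ with $0\le j\le\alpha$, and at least one entry is $\pm 1$. Because $g^{C_k}$ is integral, it suffices to prove $\lambda_k\in\tfrac{1}{2^\alpha}\Z$ for $k=1,2,3$. The basic tool is that, in a maximal conformal decomposition, the $k$-th step has a ``binding'' coordinate $i_k$, namely one that is zeroed out when $g^k$ is subtracted; a short sign-chase (as in the analysis of maximal circuit augmentations) shows $i_k\in C_k\setminus(C_{k+1}\cup\dots\cup C_h)$ and $\lambda_k = z^{(k)}_{i_k}/g^{C_k}_{i_k}$, where $z^{(k)}:=z-g^1-\dots-g^{k-1}$.

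First I would dispose of the odd primes and of $g^1$. For any odd prime $p$, no entry of any $g^{C_k}$ is divisible by $p$, so by induction on $k$ the formula $\lambda_k=z^{(k)}_{i_k}/g^{C_k}_{i_k}$ forces $\lambda_k$ to have no $p$ in its denominator (the only possible source of a $p$ in $z^{(k)}_{i_k}$ would be an earlier $\lambda_l$). Thus only the bound on the power of $2$ in the denominator of $\lambda_k$ is at stake. For $k=1$, since $i_1\in C_1\setminus(C_2\cup C_3)$ we get $z_{i_1}=g^1_{i_1}=\lambda_1 g^{C_1}_{i_1}$, so $\lambda_1=z_{i_1}/g^{C_1}_{i_1}\in\tfrac{1}{2^{\beta_1}}\Z$ where $\beta_1:=$ the $2$-exponent of $g^{C_1}_{i_1}$ satisfies $\beta_1\le\alpha$. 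Hence $g^1$ is $\tfrac{1}{2^\alpha}$-integral and $z':=z-g^1$ is $\tfrac{1}{2^{\beta_1}}$-integral.

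Next come the easy subcases for $g^2,g^3$. If the binding coordinate $i_2$ of the second step lies outside $C_1$, then $z'_{i_2}=z_{i_2}\in\Z$, so $\lambda_2=z_{i_2}/g^{C_2}_{i_2}\in\tfrac{1}{2^\alpha}\Z$ and $g^3=z-g^1-g^2$ is $\tfrac{1}{2^\alpha}$-integral; done. Symmetrically, if $C_3\not\subseteq C_1\cup C_2$, pick $l\in C_3\setminus(C_1\cup C_2)$; then $z_l=g^3_l=\lambda_3 g^{C_3}_l$, so $\lambda_3\in\tfrac{1}{2^\alpha}\Z$ and $g^2=z-g^1-g^3$ is $\tfrac{1}{2^\alpha}$-integral; done. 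So one is left with the hard case: $i_2\in(C_1\cap C_2)\setminus C_3$ and $C_3\subseteq C_1\cup C_2$. Here I would also record the structural reductions that $g^2,g^3$ span a rank-$2$ space and $g^1,g^2,g^3$ a rank-$\le 3$ space, whence $|C_2|,|C_3|\le 3$ and $|C_1|\le 4$, and $C_1\setminus C_2$ contains both $i_1$ and some element of $C_3$.

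The hard case is where the power-of-$2$ phenomenon must actually be used, and it is the main obstacle. The plan is to assume for contradiction that $\lambda_2$ is not $\tfrac{1}{2^\alpha}$-integral, i.e.\ has $2$-denominator at least $2^{\alpha+1}$, and to propagate this. From $z'_{i_2}=\lambda_2 g^{C_2}_{i_2}$ being $\tfrac1{2^{\beta_1}}$-integral one gets that the $2$-exponent of $g^{C_2}_{i_2}$ is at least $\alpha+1-\beta_1$; comparing with a $\pm1$ entry of $g^{C_2}$ inside the circuit $C_2$ and using $\kappa_W=2^\alpha$ forces $\beta_1\ge 1$. Looking at a $\pm1$ entry of $g^{C_2}$ and using $g^2=z-g^1-g^3$ together with $g^1$ being $\tfrac{1}{2^\alpha}$-integral shows $\lambda_3$ is likewise not $\tfrac1{2^\alpha}$-integral, and the cancellation forced by $z'=g^2+g^3$ being $\tfrac1{2^{\beta_1}}$-integral ties the $2$-exponents of $\lambda_2,\lambda_3$ and of the relevant entries of $g^{C_1},g^{C_2},g^{C_3}$ at $i_2$ and at the $C_3$-coordinate of $C_1\setminus C_2$. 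I expect the contradiction to be extracted by combining these local $2$-adic constraints through the multiplicative triangle inequality for pairwise imbalances (Theorem~\ref{thm:imbalance_triangle_inequality}) to produce a circuit with an entry ratio exceeding $2^\alpha=\kappa_W$. A cleaner alternative, which I would pursue if the valuation bookkeeping gets unwieldy, is to pass to the rank-$\le 3$ subspace $W'=\spn\{g^1,g^2,g^3\}$ (at most $10$ coordinates, all of $C_1,C_2,C_3$ of size $\le 4$, and each $g^k$ the same primitive elementary vector as in $W$), represent it via Proposition~\ref{prop:B_inverse} so that every nonsingular submatrix has determinant a power of $2$, and then verify the claim by checking the finitely many support/sign patterns directly -- in the spirit of the determinant identity $\pm 2^{a}\pm 2^{b}\mid 2^{\beta}$ used in the proof of Theorem~\ref{thm:kappa-star}, but now allowing the $p=2$ solutions. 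The genuinely hard part is this last step: turning the local $2$-adic obstructions into an honest violation of $\dot\kappa_W=2^\alpha$.
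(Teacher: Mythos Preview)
Your setup is fine and the treatment of $g^1$ matches the paper's, but the ``hard case'' contains two genuine gaps.

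First, the structural reduction is wrong. From the fact that $g^2,g^3$ (resp.\ $g^1,g^2,g^3$) span a $2$- (resp.\ $3$-) dimensional subspace of $W$ you cannot conclude $|C_2|,|C_3|\le 3$ or $|C_1|\le 4$. The size of a circuit of $W$ is governed by $\rk(\mathcal M(W))$, not by the dimension of the span of a few elementary vectors; circuits can be arbitrarily large here. This invalidates the proposed fallback of a finite case check on ``at most $10$ coordinates''.

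Second, and more importantly, the contradiction is not where you are looking. The valuation bookkeeping at $i_2$ and the triangle inequality will not by themselves produce a circuit with ratio exceeding $2^\alpha$. The paper's argument is a short parity-plus-support step that you missed. Set $\beta\ge\alpha$ minimal with $\lambda_2 g^{C_2},\lambda_3 g^{C_3}$ both $2^{-\beta}$-integral, put $\mu_k:=2^\beta\lambda_k\in\Z$, and assume $\beta>\alpha$. Evaluating $2^\beta z=\mu_1 g^{C_1}+\mu_2 g^{C_2}+\mu_3 g^{C_3}$ at a $\pm 1$ entry of $g^{C_2}$ (resp.\ $g^{C_3}$) shows $\mu_2$ and $\mu_3$ are both odd. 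Now look at any coordinate $j$ with $g^{C_1}_j=0$: then $2^\beta z_j=\mu_2 g^{C_2}_j+\mu_3 g^{C_3}_j$ with $\mu_2,\mu_3$ odd and $g^{C_2}_j,g^{C_3}_j$ each $0$ or $\pm 2^t$ with $t\le\alpha<\beta$. For this sum to be divisible by $2^\beta$ the two terms must have equal $2$-adic valuation, hence $|g^{C_2}_j|=|g^{C_3}_j|$, and by conformity $g^{C_2}_j=g^{C_3}_j$. Therefore $\supp(g^{C_2}-g^{C_3})\subseteq\supp(g^{C_1})$. The containment is strict because the binding index $i_1\in C_1\setminus(C_2\cup C_3)$ lies in $\supp(g^{C_1})$ but not in $\supp(g^{C_2}-g^{C_3})$. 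Since $g^{C_2}-g^{C_3}\in W\setminus\{0\}$, this contradicts $g^{C_1}\in\EE(W)$. That is the whole hard case; no triangle inequality and no case enumeration are needed.
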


\begin{proof}
Let us write the maximal conformal circuit decomposition in the form $z=\sum_{k=1}^h \lambda_k g^k$ such that $\lcm(g^k)=1$, and all entries $g^k_i\in\{\pm1,\pm 2,\pm 4,\ldots,\pm 2^\alpha\}$ for  $k\in [h]$, $i\in[n]$.
There is nothing to prove for $h=1$.
If $h=2$, then by the maximality of the decomposition, $\lambda_1 = \min_j\{{z_{j}}/{g^1_{j}}\}$. Hence, $\lambda_1$ is $1/2^\alpha$-integral. Consequently, both $\lambda_1 g^1$ and $\lambda_2 g^2=z-\lambda_1 g^1$ are  $1/2^\alpha$-integral.

If $h=3$, then
$\lambda_1 g^1$ is $1/2^\alpha$-integral as above. It also follows that  $\lambda_2 g^2$ and $\lambda_3 g^3$ are $1/2^\beta$-integral for some $\beta\ge \alpha$. Let us choose the smallest such $\beta$; we are done if $\beta=\alpha$.

Assume for a contradiction $\beta>\alpha$. Let $\mu_k=2^\beta \lambda_k$ for $k=1,2,3$. Thus, $\mu_k\in\Z$, $\mu_1$ is even, and at least one of $\mu_2$ and $\mu_3$ is odd. We show that both $\mu_2$ and $\mu_3$ must be odd. Let us first assume that $\mu_3$ is odd. There exists an $i\in [n]$ such that $|g^3_i|=1$. Then, $2^\beta z_i=\mu_1 g^1_i+\mu_2 g^2_i+\mu_3 g^3_i$ implies that $\mu_2$ must also be odd. Similarly, if $\mu_2$ is odd then $\mu_3$ must also be odd. 

Let us take any $j\in [n]$ such that $g^1_j=0$. Then, $2^\beta z_i=\mu_2 g^2_j+\mu_3 g^3_j$. Noting that $|g^2_j|$ and $|g^3_j|$ are powers of 2, both at most $2^\alpha$, it follows that $|g^2_j|=|g^3_j|$; by conformity, we have $g^2_j=g^3_j$. 

Consequently, $\supp(g^2-g^3)\subseteq \supp(g^1)$. Clearly, $g^2-g^3\in W\setminus\{ 0\}$, and the containment is strict by the maximality of the decomposition: there exists an index $i\in \supp(z)$ such that $z_j=\lambda_1 g^1_j$. This contradicts the fact that $g^1\in\EE(W)$.
\end{proof}

\section*{Acknowledgements}
The authors are grateful to Daniel Dadush for numerous inspiring discussions and joint work on circuit imbalances and linear programming, and to 
Luze Xu for pointing them to Jon Lee's papers \cite{Lee89,Lee90}.
The authors would also like to thank Jes{\'u}s De Loera, Martin Kouteck{\'{y}}, and the anonymous reviewers for their helpful comments and suggestions.
 
\clearpage
\bibliographystyle{customalpha}
\bibliography{circuits}

\appendix
\section{Appendix: Proof of Proposition~\ref{prop:counterexample}}

\counterex*

\begin{proof}
We know all other representations of the space like $\tilde{A}$ such that $\ker(\tilde{A}) = \ker(A)$ are of the form $\tilde{A} = BA$ where $B$ is a $2 \times 2$ invertible matrix. Since $A_{11} = 1$ then to get an integral $\tilde{A}$ we need to have integer $B_{11}$ and $B_{21}$. Furthermore since the g.c.d. of the numbers in the second column is equal to $1$, then $B_{12}$ and $B_{22}$ should be integers as well.

It can be verified by computer that the only $2\times 1$ matrices like $v$ such that all entries of $v^TA$ are divisors of $5850$ are
$$\pm \begin{bmatrix}9 \\ -4 \end{bmatrix},\pm \begin{bmatrix}10 \\ -3 \end{bmatrix},\pm \begin{bmatrix}13 \\-3\end{bmatrix},\pm \begin{bmatrix}0 \\1\end{bmatrix}$$

Checking all different $2 \times 2$ matrices we can get these matrices:

\begin{align*}
\begin{bmatrix}9 & -4\\ 10 & -3 \end{bmatrix}\begin{bmatrix} 1 & 3 & 4 & 3 \\ 0 & 13 & 9 & 10 \end{bmatrix}&=\begin{bmatrix}\textbf{9} & \textbf{-25} & 0 & -13\\ \textbf{10} & \textbf{-9} & 13 & 0 \end{bmatrix} \\
\begin{bmatrix}13 & -3\\ 10 & -3 \end{bmatrix}\begin{bmatrix} 1 & 3 & 4 & 3 \\ 0 & 13 & 9 & 10 \end{bmatrix}&=\begin{bmatrix}\textbf{13} & 0 & \textbf{25} & 9\\ \textbf{10} & -9 & \textbf{13} & 0 \end{bmatrix} \\
\begin{bmatrix}9 & -4\\13 & -3 \end{bmatrix}\begin{bmatrix} 1 & 3 & 4 & 3 \\ 0 & 13 & 9 & 10 \end{bmatrix}&=\begin{bmatrix}\textbf{9} & -25 & 0 & \textbf{-13}\\ \textbf{13} & 0 & 25 & \textbf{9} \end{bmatrix} \\
\begin{bmatrix}0 & 1\\ 9 & -4 \end{bmatrix}\begin{bmatrix} 1 & 3 & 4 & 3 \\ 0 & 13 & 9 & 10 \end{bmatrix}&=\begin{bmatrix}0 & \textbf{13} & 9 & \textbf{10}\\ 9 & \textbf{-25} & 0 & \textbf{-13} \end{bmatrix} \\
\begin{bmatrix}0 & 1\\ 10 & -3 \end{bmatrix}\begin{bmatrix} 1 & 3 & 4 & 3 \\ 0 & 13 & 9 & 10 \end{bmatrix}&=\begin{bmatrix}0 & \textbf{13} & \textbf{9} & 10\\ 10 & \textbf{-9} & \textbf{13} & 0 \end{bmatrix} \\
\begin{bmatrix}0 & 1\\ 13 & -3 \end{bmatrix}\begin{bmatrix} 1 & 3 & 4 & 3 \\ 0 & 13 & 9 & 10 \end{bmatrix}&=\begin{bmatrix}0 & 13 & \textbf{9} & \textbf{10}\\ 13 & 0 & \textbf{25} & \textbf{9} \end{bmatrix}
\end{align*}
All of these matrices contain a $2 \times 2$ submatrix such that its inverse is not $\frac{1}{5850}$-integral.
\end{proof} 

\end{document}